\DeclareSymbolFontAlphabet{\mathbb}{AMSb}
\DeclareSymbolFontAlphabet{\mathbbl}{bbold}
\newtheorem{theorem}{Theorem}[section]
\newtheorem{prop}[theorem]{Proposition}
\newtheorem{proposition}[theorem]{Proposition}
\newtheorem{corollary}[theorem]{Corollary}
\newtheorem{thm}[theorem]{Theorem}
\newtheorem{lemma}[theorem]{Lemma}
\theoremstyle{definition} 
\newtheorem{defn}[theorem]{Definition}
\newtheorem{definition}[theorem]{Definition}
\newtheorem{remark}[theorem]{Remark}
\newcommand{\qu}{/\kern-.7ex/}
\newcommand{\lqu}{\backslash \kern-.7ex \backslash}%left
\newcommand{\on}{\operatorname} 
\newcommand{\Aut}{\on{Aut}}
\newcommand{\NE}{\on{NE}}
\newcommand{\bt}{\mathbf{t}}
\title[Relative MS, theta and Gamma]{Relative mirror symmetry, theta functions and the Gamma conjecture}
\author{Fenglong You}
\address{School of Mathmatical Sciences \\ University of Nottingham, \\University Park \\Nottingham, NG7 2RD \\United Kingdom.}
\email{fenglong.you@nottingham.ac.uk}
\thanks{}
\keywords{}
\begin{document}
\date{\today}

\begin{abstract} 
Let $X$ be a Fano variety, and $D\subset X$ be an snc anticanonical divisor. We study relative mirror symmetry for the log Calabi--Yau pair $(X,D)$. (1) We prove a relative mirror theorem for snc pairs without assuming the divisors are nef. (2) We study theta functions associated with the pair $(X,D)$. (3) We introduce functions on the mirror that are obtained from the higher-degree part of the big relative quantum cohomology. As an application, we use these new ingredients in relative mirror symmetry to prove a version of the mirror symmetric Gamma conjecture for $X$ for $\mathcal O_X$ and $\mathcal O_{\on{pt}}$ in this setting, where the Landau--Ginzburg potential is defined as a sum of theta functions. 
\end{abstract}

\maketitle 

\tableofcontents

\section{Introduction}

\subsection{Overview} The main objective of this article is to study relative mirror symmetry of log Calabi--Yau pairs extending \cite{TY20b}, \cite{TY20c}, \cite{You24}, then use it to prove the mirror symmetric Gamma conjecture  for Fano varieties \cite{GI19}, \cite{Iritani09}. We clarify that we consider the mirror symmetric Gamma conjecture for the Fano variety itself, not a ``relative'' Gamma conjecture for the log Calabi--Yau pair.

\subsubsection{The Gamma conjecture}

We first give a brief review of the Gamma conjecture. The original (non-mirror-symmetric) Gamma conjecture for Fano varieties \cite{GGI} is the relationship between topological data (the Gamma class) of a Fano variety $X$ and rational curve counts (genus zero Gromov--Witten invariants) of $X$.  

For a Fano variety $X$, the Gamma class $\hat{\Gamma}_X$ is a characteristic class defined by
\[
\hat{\Gamma}_X=\prod_{i=1}^n\Gamma(1+\delta_i)=\exp\left(-\gamma c_1(TX)+\sum_{k=2}^\infty (-1)^k\zeta(k)(k-1)!\on{ch}_k(TX)\right),
\]
where $\delta_i$ are Chern roots of $TX$ such that
\[
c(TX)=\prod_{i=1}^n(1+\delta_i);
\]
the Euler constant $\gamma$ is
\[
\gamma=\lim_{k\rightarrow \infty}\left(\sum_{i=1}^k\frac{1}{i}-\log k \right); 
\]
and $\zeta(k)$ is the Riemann zeta function.

We recall that the (small) $J$-function of $X$ is
\[
J_X(\tau_{0,2},z)=e^{\tau_{0,2}/z}\left(1+\sum_{\beta\in H_2(X,\mathbb Z),\beta \neq 0}e^{\tau_{0,2}\cdot\beta}\sum_{\phi_i}\left\langle \frac{\phi^i}{z(z-\psi)}\right\rangle_{0,1,\beta}^X\phi_i\right),
\]
where $\tau_{0,2}\in H^2(X)$, $\{\phi_i\}$ and $\{\phi^i\}$ are dual bases of $H^*(X)$. Gamma conjecture I in \cite{GGI} is the following identity:
\[
[\hat{\Gamma}_X]=\lim_{t\rightarrow +\infty}[J_X(c_1\log t, 1)].
\]

The Gamma conjecture I has been proved in various cases. See, for example, \cite{GI19}, \cite{Iritani09}, \cite{Iritani11}, \cite{GZ}, \cite{GGI}, \cite{HKLY}.

Recently, a counterexample to the Gamma conjecture was discovered in \cite{GHIKLS}. Note that this is not a counterexample to the mirror symmetric Gamma conjecture. The mirror symmetric Gamma conjecture remains as a valid conjecture and we will study it here. It may be interesting to see if our method to the mirror symmetric Gamma conjecture will be useful for the proof of the (modified) non-mirror-symmetric Gamma conjecture.

\subsubsection{Mirror symmetric Gamma conjecture}

Now we consider the mirror symmetric Gamma conjecture \cite{GI19}. The mirror to a Fano variety $X$ of complex dimension $n$ is a Landau--Ginzburg model $(\check X,W)$, where 
\[
W=W(x_1,\ldots,x_n): \check X\rightarrow \mathbb C
\]
 is called the superpotential. Under some assumptions \cite{GI19}, mirror symmetric Gamma conjecture is the identity
\begin{align}\label{iden-gamma-conj}
    \int_{(\mathbb R_{>0})^n} e^{-W/z}\frac{dx_1\cdots dx_n}{x_1\cdots x_n}=\int_X(z^{c_1}z^{\deg/2}J_X(0,-z))\cup \hat{\Gamma}_X,
\end{align}
for $z>0$, where $\deg$ is an endomorphism of $H^*(X)$ that sends $\phi$ to 
$\deg(\phi_i)\phi_i$. 

One can consider more general cycles and integrands in the mirror symmetric Gamma conjecture. Given a coherent sheaf $V$ on $X$, one may consider the homological mirror $C_V\subset \check{X}$ of $V$. Given an $n$-form $\check{\varphi}_{\tau_{0,2}}(z) e^{-W_{\tau_{0,2}}/z}\omega$ that is mirror to a class $\varphi\in H^*(X)$, the mirror symmetric Gamma conjecture is of the form
\begin{align}\label{conj-homo}
    \int_{C_V}\check{\varphi}_{\tau_{0,2}}(-z) e^{-W_{\tau_{0,2}}/z}\omega=\int_X (z^{c_1}z^{\deg/2}\mathbb J_{X}(\tau_{0,2},-z)\varphi)\cup \hat{\Gamma}_X \on{Ch}(V),
\end{align}
where
\[
\mathbb J_X(\tau_{0,2},z)\varphi=e^{\tau_{0,2}/z}\left(\varphi+\sum_{\beta\neq 0} e^{\tau_{0,2}\cdot\beta}\sum_{\phi_i}\langle \varphi,\frac{\phi^i}{z-\psi}\rangle_{0,2,\beta}^X\phi_i\right)
\]
and, 
\[
\on{Ch}(V):=(2\pi \sqrt{-1})^{\deg/2}\on{ch}(V)=\sum_{k\geq 0}(2\pi \sqrt{-1})^k\on{ch}_k(V)
\]
is a modified Chern character.

Mirror symmetric Gamma conjecture has been studied in various settings. See, for example, \cite{Iritani09}, \cite{Iritani11}, \cite{GI19}, \cite{AGIS}, \cite{AFW}, \cite{FWZ}, \cite{Wang20}.

\subsubsection{Key ingredients in relative mirror symmetry}
In this article, we study the mirror symmetric Gamma conjecture via a completely different approach. Key ingredients for the proof of the mirror symmetric Gamma conjecture are the following:
\begin{itemize}
    \item A mirror construction for a log Calabi--Yau pair $(X,D)$. More precisely, we consider a variant of Gross--Siebert's intrinsic mirror symmetry \cite{GS19} studied in \cite{TY20c} using orbifold invariants of root stacks instead of punctured logarithmic invariants. This is the version of relative mirror symmetry that we consider for the mirror symmetric Gamma conjecture. The Landau--Ginzburg potential in this setting, defined in terms of a sum of theta functions, is the potential $W$ that we will use in (\ref{conj-homo}).
    \item Functions on this mirror construction. We define generalizations of the theta functions. In general, we need to consider the higher degree part of the big relative quantum cohomology. To our knowledge, these functions, defined in Section \ref{sec:func-mirror}, have not been studied before. In particular, the orbifold theta functions studied in \cite{You24} are included as a special case.  %We also consider polyvector fields in Section \ref{sec:polyvector}.
    \item A relative mirror theorem of a pair $(X,D)$. We prove a relative mirror theorem for simple normal crossing (snc) pairs generalizing the result in \cite{TY20b} by removing the nefness assumption on the divisors. This is a non-trivial generalization that requires a different method. The relative mirror theorem (Corollary \ref{cor-extended-rel-I-func}) gives a relation between the genus zero Gromov--Witten invariants of the snc pair $(X,D)$  and the genus zero Gromov--Witten invariants (the $J$-function) of the Fano $X$. This serves as a bridge that connects the integrals of functions on the mirror (the LHS of (\ref{conj-homo})) to the $J$-function and the Gamma class of the Fano variety $X$ (the RHS of (\ref{conj-homo})).  
\end{itemize}

We explain these ingredients in more detail in the following sections. These results are of independent interest. Mirror symmetric Gamma conjecture may be considered as the first major application of these results. We expect more applications of these techniques in mirror symmetry and Gromov--Witten theory.

\subsection{Relative mirror symmetry}

Following \cite{Givental98}, the mirror of a Fano variety $X$ is a Landau--Ginzburg model $(\check{X}, W)$, where $W:\check{X}\rightarrow \mathbb C$ is called the superpotential. Mirror symmetry for Fano varieties usually requires the existence of anticanonical divisors. The mirror of a Fano variety depends on the choice of an anticanonical divisor. Therefore,  instead of considering mirror symmetry for just the Fano variety $X$ itself, we actually consider relative mirror symmetry of a log Calabi--Yau pair $(X,D)$, where $D=\sum_{i=1}^m D_i\in |-K_X|$ is an snc anticanonical divisor of $X$. 

We first assume that every stratum of $D$ contains a zero-dimensional stratum. Following intrinsic mirror symmetry of \cite{GS19}, and a variant of it in \cite{TY20c} using orbifold invariants of the root stacks, the mirror of the pair $(X,D)$ is an Landau--Ginzburg model $(\check{X},W)$, where $\check{X}$ is the spectrum of the degree zero part of the relative quantum cohomology of the pair $(X,D)$. In fact, $\check{X}$ is considered the mirror of the complement $X\setminus D$. In Section 0.4 of the first arXiv version of \cite{GHK}, the superpotential is
\[
W:=\sum_{i=1}^m \vartheta_{[D_i]},
\]
where theta functions $ \vartheta_{[D_i]}$'s are global functions on $\check{X}$ which are certain elements of the canonical basis of the degree zero part of the relative quantum cohomology (the mirror algebra).

If $D$ does not have zero-dimensional strata, one needs to consider a maximally unipotent monodromy degeneration of the log Calabi--Yau manifold $X\setminus D$. The intrinsic mirror $\check{X}$ of $X\setminus D$ can be constructed using this degeneration. This is called the relative case in \cite{GS19}. We will focus on the case where $D$ has zero-dimensional strata and explain how the computation works in the relative case in the end of this paper.

\subsection{Functions on the mirror}

Theta functions through orbifold invariants are defined for smooth pairs  in \cite{You22}  and for snc pairs in \cite{You24}. The logarithmic theta functions are considered in \cite{GS21} and \cite{GRZ}, but we will not use them here.   

Theta function $\vartheta_{\vec s}$ corresponds to the degree zero class $[1]_{\vec s}\in H^0(D_{\vec s})$, where $\vec s\in B(\mathbb Z)$. Theta functions satisfy the product rule that comes from the restriction of the relative quantum product to the degree zero part. In Section \ref{sec:theta-func}, we consider a more general definition of theta functions that satisfy the product rule of the relative quantum cohomology with a parameter $\tau$. 
%where the degree of $\tau$ is less or equal to $2$. 
As in \cite{You24}, orbifold theta functions are defined using mid-age invariants of root stacks.
\begin{definition}[= Definition \ref{def-theta-tau}]
For an snc log Calabi--Yau pair $(X,D)$, let $\tau\in \mathfrak H$ be a cohomology class in the state space $\mathfrak H$ of the Gromov--Witten theory of the pair $(X,D)$ with $\deg^0(\tau)\leq 1$.  The theta functions are defined as follows. Fix $\vec s \in B(\mathbb Z)\setminus \{0\}$. Let $\sigma_{\on{max}}\in \Sigma(X)$ be a maximal cone of $\Sigma(X)$ such that $p\in \sigma_{\on{max}}$, then
    \begin{align*}
   \vartheta_{\tau,\vec s}(p)&:=
  \sum_{\beta} \sum_{\vec k\in {\mathbb Z}^m} N_{\tau,\vec s, \vec{\mathbf b}, -\vec{\mathbf{b}}+\vec k}^\beta t^{\beta} x^{\vec k},
    \end{align*}
with
\[
N_{\tau,\vec s, \vec{\mathbf b}, -\vec{\mathbf{b}}+\vec k}^\beta=\sum_{l\geq 0}\frac{1}{l!}\langle [1]_{\vec s},\tau,\ldots,\tau, [1]_{\vec{\mathbf b}}, [\on{pt}]_{-\vec{\mathbf{b}}+\vec k}\rangle_{0,l+3,\beta}^{(X,D)},
\]
and
\[
 x^{\vec k}=x_1^{k_1}\cdots x_m^{k_m},
\]
where the last two markings are mid-age markings along $D_i$ for $i\in I_\sigma$ for $\sigma=\sigma_{\on{max}}$. 
\end{definition}

In Definition \ref{def-theta-tau-mirror}, we let $\tau$ be the relative mirror map. We use it to define theta functions $\tilde{\vartheta}_{\tau,\vec p}$ that encode the relative mirror map in the definition of theta functions. We call these functions theta functions with quantum corrections. 

The more general version of mirror symmetric Gamma conjecture (\ref{conj-homo}) also involves the mirror of a class $\varphi\in H^*(X)$. Therefore, we also define more general functions on the mirror.
\begin{definition}[=Definition \ref{def-hat-varphi}]
For an snc log Calabi--Yau pair $(X,D)$, fix $[\varphi]_{\vec s}\in \mathfrak H_{\vec s}$. Let $\sigma_{\on{max}}\in \Sigma(X)$ be a maximal cone of $\Sigma(X)$ such that $p\in \sigma_{\on{max}}$, then
    \begin{align}
   \check{\varphi}_{\tau,\vec s}(p)&:=
  \sum_{\beta} \sum_{\vec k\in {\mathbb Z}^m, l\geq 0}  \frac{1}{l!}\langle [\varphi]_{\vec s},\tau,\cdots,\tau,[1]_{\vec{\mathbf b}}, [\on{pt}]_{-\vec{\mathbf{b}}+\vec k}\rangle_{0,3+l,\beta} t^{\beta}x^{\vec k},
    \end{align}
where 
\[
\tau=\tau_{0,2}+\tau^\prime\in \mathfrak H \text{ and } \tau_{0,2}\in H^2(X)\subset \mathfrak H_{\vec 0}=H^*(X);
\]
\[
\vec d:=(D_1\cdot\beta,\cdots,D_m\cdot\beta);
\]
the invariants here are orbifold invariants with mid-ages along the divisors $D_{\mathbf b_i}$ such that $D_{\vec {\mathbf b}}=D_{\sigma_{\on{max}}}$ is a lowest dimensional stratum in $D_{\vec k}$.
\end{definition}

We then explore the properties of these functions in Section \ref{sec:func-mirror}.

\begin{remark}
We remark that $\check{\varphi}$ is not exactly $\check{\varphi}(z)$ in the mirror symmetric Gamma conjecture. Let $\varphi(z)$ be the mirror map associated with $\varphi$ in the relative mirror theorem of $(X,D)$. Then $\varphi(z)$ is a polynomial in $z$ and the coefficients take values in the state space $\mathfrak H$ of the pair $(X,D)$. The function $\check{\varphi}(z)$ is defined as the mirror function for $\varphi(z)$. 
\end{remark}

\subsection{Relative mirror theorem}

A mirror theorem for an snc pair $(X,D)$ was proved in \cite{TY20b} with the assumption that all $D_i$'s are nef. In many cases, we need to be able to compute orbifold invariants of $(X,D)$ without this assumption. In particular, a general version of the relative mirror theorem is necessary in order to prove the mirror symmetric Gamma conjecture via relative mirror symmetry.

We start with the case where $D$ is smooth. Let 
\[
P:=\mathbb P(\mathcal O_X(-D)\oplus \mathcal O_X)
\]
 and $X_\infty\subset P$ be the infinity divisor. Let
\[
p: P_{X_\infty,r} \rightarrow P
\]
be the $r$-th root stack of $P$ along $X_\infty$.  The orbifold/relative mirror theorem of \cite{FTY} uses a hypersurface construction to consider root stacks as a hypersurface in the toric stack bundle $P_{X_\infty,r}$, then apply the mirror theorem for toric stack bundles \cite{JTY} and the orbifold quantum Lefschetz principle \cite{Tseng}. This requires the nefness assumption on the divisor. 

However, the orbifold quantum Lefschetz principle \cite{Tseng} cannot be applied when $D$ is not nef and it requires a different approach to prove the mirror formula. The new idea presented in Section \ref{sec:rel-mirror-deg} is to degenerate both $X_{D,r}$ and $P_{X_\infty,r}$. We then use the degeneration formula to study the equivariant twisted Gromov--Witten invariants of toric stack bundles and then obtain a mirror theorem for root stacks without the nefness assumption on the divisors. 

When considering the degeneration of $X$ to the normal cone of $D$, we have the $\mathbb P^1$-bundle $$Y:=\mathbb P(N_{D/X}\oplus \mathcal O_D)$$ and its zero divisor $D_0\subset Y$ such that $X$ degenerates to $X\cup_{D_0}Y$. Let $I_{(Y,D_0)}(\tau,z)$ be the $I$-function of $(Y,D_0)$ obtained as a limit of the $I$-function of the orbifold $Y_{D_0,r_0}$. The $I$-function of the orbifold $Y_{D_0,r_0}$ is defined in \cite{JTY} as $Y_{D_0,r_0}$ is a weighted-$\mathbb P^1$-bundle over $D$. Unlike the nef case, we need to include the possibly non-trivial mirror map between $J_{(Y,D_0)}$ and $I_{(Y,D_0)}$. This mirror map is the same as the mirror map between the equivariant $J$-function and the equivariant $I$-function of $N_{D/X}$. We refer to this as the mirror map in $D$. This is trivial when $D$ is nef, so we do not see it in \cite{FTY}.

Then we prove a mirror theorem for root stacks in Theorem \ref{thm-quan-lef}.
Using the iterative construction in \cite{TY20b}, we can see that the result also holds for snc pairs and we obtain the $I$-function for the multi-root stack $X_{D,\vec r}$.
Taking the large $\vec r$-limit, we obtain a relative mirror theorem for the snc pair $(X,D)$. 

\begin{corollary}[=Corollary \ref{cor-extended-rel-I-func}]\label{intro-cor-rel-I-func}
Let 
\[
S=\{\vec a_1,\ldots, \vec a_k\},
\]
where $\vec a_i\in (\mathbb Z_{\geq 0})^m$.
The $S$-extended $I$-function 
\[
I_{(X,D),\on{amb}}^S(Q,q,x_{\vec a},\tau,z)
\]
for $(X,D)\subset (P_m,X_{\infty})$ is
\begin{equation}\label{Intro-I-func}
\begin{split}
&e^{\sum_{i=1}^m h_i\log q_i/z}\sum_{\beta\in \on{NE}(X)} \sum_{(l_1,\ldots,l_k)\in (\mathbb Z_{\geq 0})^k}\frac{J_{X, \beta}(\tau,z)Q^{\beta}q^{D\cdot\beta}\prod_{i=1}^kx_{\vec a_i}^{l_i}}{z^{\sum_{i=1}^k l_i}\prod_{i=1}^k (l_i!)}\\
&\cdot\prod_{j=1}^m\frac{\prod_{ a\leq D_j\cdot\beta}(D_j+az)}{\prod_{ a\leq 0}(D_j+az)}\prod_{j: D_j\cdot\beta-\sum_{i=1}^k l_ia_{ij}>0}\frac{1}{(D_j+(D_j\cdot\beta-\sum_{i=1}^k l_ia_{ij})z)}\\
& \cdot [1]_{ (-D_1\cdot\beta+\sum_{i=1}^k l_ia_{i1},\ldots, -D_m\cdot\beta+\sum_{i=1}^k l_ia_{im})}\cup_{i=1}^m h_i.
\end{split}
\end{equation}
The $S$-extended $I$-function $I_{(X,D),\on{amb}}^S(Q,q,x_{\vec a},\tau,z)$, after applying the mirror map in $D_i$, lies in  $\iota_*\mathcal L_{(X,D)}$, where  $\mathcal L_{(X,D)}$  is Givental's Lagrangian cone of $(X,D)$.
\end{corollary}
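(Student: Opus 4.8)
The plan is to reduce the snc statement to the case of a smooth divisor via the iterative root-stack construction of \cite{TY20b}, and to establish the smooth case by a degeneration argument that bypasses the orbifold quantum Lefschetz principle of \cite{Tseng}, which is unavailable without nefness. For the smooth case, following Section \ref{sec:rel-mirror-deg}, the root stack $X_{D,r}$ sits inside $P=\mathbb{P}(\mathcal{O}_X(-D)\oplus\mathcal{O}_X)$ with infinity section $X_\infty$, and one passes to $P_{X_\infty,r}$. Since \cite{FTY} computes the $I$-function of $P_{X_\infty,r}$ by the orbifold quantum Lefschetz principle---which requires $D$ nef---I would instead degenerate $X$ to the normal cone of $D$, so that the central fiber is $X\cup_{D_0}Y$ with $Y=\mathbb{P}(N_{D/X}\oplus\mathcal{O}_D)$ and $D_0\subset Y$ the zero divisor, and degenerate $P_{X_\infty,r}$ and $X_{D,r}$ compatibly. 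The essential input is the $I$-function $I_{(Y,D_0)}(\tau,z)$ of the $\mathbb{P}^1$-bundle $(Y,D_0)$, obtained as the large-$r_0$ limit of the $I$-function of the orbifold $Y_{D_0,r_0}$; the latter is computed from the mirror theorem for toric stack bundles \cite{JTY}, as $Y_{D_0,r_0}$ is a weighted-$\mathbb{P}^1$-bundle over $D$.

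Next, I would apply the degeneration formula to the degeneration of $X_{D,r}$ together with the twisted theory on $P_{X_\infty,r}$. The point is to organize the relative invariants so that the $X$-side contributes the $J$-function $J_X$ of the Fano variety together with the hyperplane factors $\prod_{j}\prod_{a\le D_j\cdot\beta}(D_j+az)/\prod_{a\le 0}(D_j+az)$, while the $Y$-side contributes precisely the remaining root-stack factors, the truncated products $\prod_{j:\,D_j\cdot\beta-\sum_i l_ia_{ij}>0}1/(D_j+(D_j\cdot\beta-\sum_i l_ia_{ij})z)$, and the mid-age insertion $[1]_{\vec s}$ recorded in (\ref{Intro-I-func}). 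Because $D$ is no longer assumed nef, the matching on the $Y$-side produces a \emph{nontrivial} mirror map---the mirror map in $D$---which coincides with the mirror map between the equivariant $J$- and $I$-functions of $N_{D/X}$ and which must be applied to land on Givental's Lagrangian cone. This step yields Theorem \ref{thm-quan-lef}.

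Finally, I would promote the smooth result to the snc case. The iterative multi-root construction of \cite{TY20b} lets one apply Theorem \ref{thm-quan-lef} one divisor at a time, producing the $I$-function of the multi-root stack $X_{D,\vec r}$; passing to the large $\vec r$-limit then gives the $S$-extended $I$-function $I_{(X,D),\on{amb}}^S$ of (\ref{Intro-I-func}), and the ambient push-forward of the root-stack cones shows that, after applying the mirror map in each $D_i$, it lies in $\iota_*\mathcal{L}_{(X,D)}$.

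The hard part will be the degeneration-formula step: splitting the central-fiber invariants so that the $X$-side cleanly reproduces $J_X$ while the $Y$-side reproduces exactly the non-nef root-stack factors and the mid-age markings, and isolating the discrepancy between $J_{(Y,D_0)}$ and $I_{(Y,D_0)}$ as precisely the equivariant mirror map of $N_{D/X}$. Tracking these contributions---which are invisible in the nef analysis of \cite{FTY}---through the toric-stack-bundle invariants of \cite{JTY} is the crux of the argument.
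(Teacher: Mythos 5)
Your proposal follows essentially the same route as the paper: degenerate both $X_{D,r}$ and $P_{X_\infty,r}$ to the normal cone of $D$, reduce the comparison of the two degeneration formulas to the relative-local model $\left(P_Y,Y_\infty+D\times\mathbb P^1\right)_{\vec r}$ versus $Y_{(D_0,r_0),(D_\infty,r_\infty)}$ computed by the toric-stack-bundle mirror theorem of \cite{JTY}, identify the discrepancy as the mirror map in $D$ (equivalently, the mirror map of $N_{D/X}$), and then pass to snc pairs via the iterative construction of \cite{TY20b} and the large $\vec r$ limit. This is precisely the paper's proof of Theorem \ref{thm-quan-lef} and its Corollaries \ref{cor-rel-I} through \ref{cor-extended-rel-I-func}.
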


\begin{remark}
The mirror theorem of Corollary \ref{intro-cor-rel-I-func} involves the mirror maps in $D_i$ in the degeneration formula, which we do not describe explicitly. There are nice cases where these mirror maps are trivial, and the $I$-functions are simply (\ref{Intro-I-func}). For example, when $X$ is Fano, $-K_X-D$ is nef and $\tau\in H^2(X)$, then the mirror maps in $D_i$ are trivial. Although there can still be relative mirror maps by examining the coefficients of the relative $I$-functions. %In the general case, it may be possible to describe the mirror map in $D$ in terms of shift operators or Seidel elements, which should determine the mirror map, for the natural $\mathbb C^\times$ on the local model. We plan to explore it in a future work.
In this case, we can also write the $J$-function of $X$ in terms of invariants of $(X,D)$ and hypergeometric modifications coming from $D$. This is stated in Theorem \ref{thm-mirror-X} which may be considered as a mirror theorem for Fano varieties. 

In this paper, we mainly consider the Fano case. More details about other cases will be studied elsewhere.
\end{remark}

\begin{remark}
Based on our limited knowledge, we do not know about a similar mirror theorem of this form that is obtained from degeneration. However, it should not be very surprising that the degeneration technique will be useful for the mirror theorem for relative invariants. Moreover, to include relative invariants with negative contact orders, we must use the degeneration and rubber invariants of $Y$ \cite{FWY}. This is also related to the conjecture about the relation between degeneration and mirror symmetry \cite{DHT},\cite{DKY} and \cite{DKY21}. We will explore it further in the future.
\end{remark}

\subsection{Mirror symmetric Gamma conjecture}

With these ingredients that we developed above, we first prove the mirror symmetric Gamma conjecture for $V=\mathcal O_{\on{pt}}$.

\begin{theorem}[=Theorem \ref{thm-gamma-conj-pt}]\label{thm-gamma-pt}
Let $X$ be a Fano variety with an snc anticanonical divisor $D$ such that the dual intersection complex $B$ is pure-dimensional with $\dim_{\mathbb R} B=\dim_{\mathbb C} X$. Mirror symmetric Gamma conjecture for $\mathcal O_{\on{pt}}$ holds:

\begin{align}\label{intro-gamma-conj-pt}
\int_{\Gamma_{c}}\check{\varphi}_\tau(-z) e^{-W_{\tau}/z}\omega=\int_X \left(z^{c_1}z^{\deg/2} \mathbb{J}_{X}(\tau_{0,2},-z)\varphi\right)\cup \hat{\Gamma}_X \on{Ch}(\mathcal O_{\on{pt}}),
\end{align}
where $\Gamma_c$ is the compact cycle which is the homological mirror of $\mathcal O_{\on{pt}}$.
\end{theorem}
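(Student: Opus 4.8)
The plan is to reduce each side to a single power-series identity and then connect the two through the relative mirror theorem (Corollary~\ref{intro-cor-rel-I-func}). First I would simplify the right-hand side. Since $\on{ch}(\mathcal O_{\on{pt}})$ is the point class, $\on{Ch}(\mathcal O_{\on{pt}})=(2\pi\sqrt{-1})^n[\on{pt}]$ with $n=\dim_{\mathbb C}X$, so cupping with $[\on{pt}]$ and integrating over $X$ extracts the coefficient of the unit $1\in H^0(X)$. On that coefficient the operator $z^{\deg/2}$ acts by $z^0$, the operator $z^{c_1}$ (being cup product by $e^{c_1\log z}$, which only raises degree) acts as the identity, and $\hat\Gamma_X$ contributes only its degree-zero part $1$. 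Hence the right-hand side collapses to
\[
(2\pi\sqrt{-1})^n\,\big(\mathbb J_X(\tau_{0,2},-z)\varphi\big)^{(0)},
\]
where $(\,\cdot\,)^{(0)}$ is the coefficient of $1\in H^0(X)$. In particular the Gamma class is inert for $\mathcal O_{\on{pt}}$; its genuine role is deferred to other sheaves $V$, and the content here is essentially the mirror theorem in disguise.

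Next I would simplify the left-hand side. The hypothesis $\dim_{\mathbb R}B=\dim_{\mathbb C}X$ makes the compact cycle $\Gamma_c$ mirror to $\mathcal O_{\on{pt}}$ a top-dimensional torus, so integration against the holomorphic volume form $\omega$ returns $(2\pi\sqrt{-1})^n$ times the constant term in the mirror coordinates, that is, the coefficient of the theta function $\vartheta_0=1$. Expanding $e^{-W_\tau/z}=\sum_{l\ge 0}\tfrac{(-1)^l}{l!\,z^l}W_\tau^{\,l}$ with $W_\tau=\sum_{i=1}^m\vartheta_{[D_i]}$ and applying the product rule for theta functions (the restriction of the relative quantum product), the coefficient of $\vartheta_0$ in $\check\varphi_\tau(-z)e^{-W_\tau/z}$ becomes a generating series over curve classes $\beta$ of genus-zero relative invariants of $(X,D)$ with insertions $[\varphi]$, some copies of $\tau$, some divisor classes $[D_i]$ produced by expanding $W_\tau$, and a point insertion produced by pairing against the unit, all constrained so that the contact orders sum to zero. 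Thus the identity to be proved is
\[
\text{coefficient of }\vartheta_0\text{ in }\check\varphi_\tau(-z)e^{-W_\tau/z}\ =\ \big(\mathbb J_X(\tau_{0,2},-z)\varphi\big)^{(0)}.
\]

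The third step feeds these relative invariants into the relative mirror theorem. The series just obtained is exactly a component of the $S$-extended relative $I$-function $I^S_{(X,D),\on{amb}}$ of Corollary~\ref{intro-cor-rel-I-func}: the variables $x_{\vec a}$ record the $[D_i]$-insertions and the negative powers of $z$ record the descendants coming from $e^{-W_\tau/z}$. After the mirror map that corollary places $I^S_{(X,D),\on{amb}}$ on Givental's cone $\mathcal L_{(X,D)}$ and rewrites it through $J_X(\tau,z)$ and the explicit hypergeometric factors in the $D_j$. Working in the Fano regime where the mirror maps in $D_i$ are trivial, the relative mirror map identifies the argument with $\tau_{0,2}$, and extracting the component dual to the point insertion collapses the hypergeometric factors and returns precisely the two-point descendant series $\varphi^{(0)}+\sum_{\beta\ne 0}e^{\tau_{0,2}\cdot\beta}\langle\varphi,[\on{pt}]/(-z-\psi)\rangle^X_{0,2,\beta}=\big(\mathbb J_X(\tau_{0,2},-z)\varphi\big)^{(0)}$. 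Cancelling the common factor $(2\pi\sqrt{-1})^n$ finishes the proof.

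The main obstacle is the bookkeeping in the second and third steps: establishing the precise dictionary between the residue on the mirror and the components of the relative $I$-function. One must align the descendant $1/(-z-\psi)$ inside $\mathbb J_X$ with the $z$-expansion of $e^{-W_\tau/z}$ including all signs, confirm that the iterated product rule reproduces the correct $(3+l)$-point invariants with the factors $1/l!$, and verify that the relative mirror map (together with the triviality of the $D_i$-mirror maps in the Fano case) makes the $I$-to-$J$ conversion exact with argument $\tau_{0,2}$. Once this is in place the statement for $\mathcal O_{\on{pt}}$ follows; the substantive appearance of the Gamma class is postponed to the treatment of general $V$.
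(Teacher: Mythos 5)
Your proposal is correct and follows essentially the same route as the paper's own proof: expand $e^{-W_\tau/z}$, use the product rule (the paper's Proposition \ref{prop-prod-phi-theta}) to write the constant term of $\check{\varphi}_\tau(-z)e^{-W_\tau/z}$ as a series of relative invariants of $(X,D)$ with balanced contact orders, then apply the extended relative mirror theorem with $S=\{\vec e_i\}_{i=1}^m\cup\{\vec{\mathbf b}\}$ to identify that series with the $H^0(X)$-component of $\mathbb{J}_X(\tau_{0,2},-z)\varphi$, which is all that survives on the right-hand side after cupping with $\on{Ch}(\mathcal O_{\on{pt}})$. The only cosmetic differences are that you simplify the right-hand side first and make the cancelling $(2\pi\sqrt{-1})^n$ factors explicit, whereas the paper computes the residue directly in the $x$-coordinates (with the mid-age pair $[1]_{\vec{\mathbf b}},[\on{pt}]_{-\vec{\mathbf b}}$ realizing your "pairing against the unit") and recognizes the resulting series as the right-hand side at the end.
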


After regularization, we also obtain the following result between regularized quantum periods and classical periods generalizing \cite{TY20b}*{Theorem 1.14}. Furthermore, we have
\begin{theorem}[=Theorem \ref{theorem-theta=x}]
Let $X$ be a Fano variety and $D\subset X$ be an snc anticanonical divisor of $X$ such that $B$ is pure-dimensional with $\dim_{\mathbb R}B=\dim_{\mathbb C}X=n$. Then the regularized quantum period of $X$ coincides with the classical period of $W$.  
If the relative mirror map is trivial, we have
\[
\vartheta_{\vec e_i}=x_i.
\]
 Hence the Landau--Ginzburg potential $W:=\sum_{i=1}^m \vartheta_{[D_i]}$ is a Laurent polynomial with $n$ variables. 
\end{theorem}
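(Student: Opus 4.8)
The plan is to prove the two assertions in the reverse of their stated order, since the description of $W$ as a Laurent polynomial rests on the monomial identity $\vartheta_{\vec e_i}=x_i$, and the period comparison then feeds on both that identity and the relative mirror theorem of Corollary \ref{intro-cor-rel-I-func}.

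\textbf{The monomial identity.} First I would establish $\vartheta_{\vec e_i}=x_i$. Working on a fixed maximal cone $\sigma_{\on{max}}\in\Sigma(X)$, Definition \ref{def-theta-tau} writes $\vartheta_{\vec e_i}$ as the generating series of the invariants $\langle [1]_{\vec e_i},\tau,\ldots,\tau,[1]_{\vec{\mathbf b}},[\on{pt}]_{-\vec{\mathbf b}+\vec k}\rangle_{0,l+3,\beta}^{(X,D)}$; triviality of the relative mirror map drops the $\tau$-insertions, leaving only the $l=0$ term $\sum_\beta\sum_{\vec k}\langle [1]_{\vec e_i},[1]_{\vec{\mathbf b}},[\on{pt}]_{-\vec{\mathbf b}+\vec k}\rangle_{0,3,\beta}^{(X,D)}\,t^{\beta}x^{\vec k}$. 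I would then argue that the virtual dimension constraint, combined with the log Calabi--Yau condition $D\in|-K_X|$ and the pure-dimensionality $\dim_{\mathbb R}B=\dim_{\mathbb C}X=n$, forces every contributing class to be $\beta=0$: for $\beta\neq 0$ the expected dimension strictly exceeds the total degree of the two fundamental-class insertions and the point insertion in the relevant twisted sectors, so the invariant vanishes. The unique $\beta=0$ configuration is the constant map with matching contact orders along $\sigma_{\on{max}}$, contributing $\vec k=\vec e_i$ with coefficient $1$, whence $\vartheta_{\vec e_i}=x^{\vec e_i}=x_i$. Summing over $i$ gives $W=\sum_{i=1}^m\vartheta_{[D_i]}=\sum_{i=1}^m x_i$; restricting to the chart attached to the zero-dimensional stratum cut out by the $n$ divisors indexed by $\sigma_{\on{max}}$, the chart (wall-crossing) relations express the remaining $x_j$ as Laurent monomials in those $n$ coordinates, so $W$ becomes a Laurent polynomial in $n$ variables.

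\textbf{The period identity.} For the first assertion I would show that both the regularized quantum period $\hat G_X(t)$ and the classical period $\pi_W(t)=\sum_{d\geq 0}t^d\,\on{ct}(W^d)$ equal the same hypergeometric series, extracted from the fundamental-class component (the $[1]_{\vec 0}$-coefficient) of the $S$-extended $I$-function of Corollary \ref{intro-cor-rel-I-func}, equivalently via the Fano mirror statement Theorem \ref{thm-mirror-X}. On one side, with the mirror map trivial the relation between the $I$-function and the small $J$-function $J_X(0,z)$ identifies this component, after the regularization inserting the factor $d!$, with $\hat G_X(t)$. On the other side, using $\vartheta_{[D_i]}=x_i$ from the first part, $\pi_W(t)$ is computed by constant-term extraction of $W^d=(\sum_i x_i)^d$ on the chart: each surviving monomial corresponds to a curve class $\beta$ with $D_j\cdot\beta=k_j$, and the multinomial weight $d!/\prod_j k_j!$ combines with the hypergeometric factors $\prod_{0<a\leq D_j\cdot\beta}(D_j+az)$ of (\ref{Intro-I-func}), specialized at the fundamental class and $z=1$, to reproduce exactly the degree-$d$ coefficient of $\hat G_X(t)$. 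Hence $\pi_W=\hat G_X$.

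\textbf{Main obstacle.} The delicate step is this combinatorial matching: one must verify that constant-term extraction from $(\sum_i x_i)^d$, after imposing the $n$ chart relations that reduce the $m$ formal variables $x_1,\ldots,x_m$ to the $n$ torus coordinates, reproduces precisely the hypergeometric factorials in the fundamental-class component of the $I$-function, with the bookkeeping $D_j\cdot\beta=k_j$ tracked correctly through that change of variables. The vanishing of quantum corrections in the monomial identity is equally the crux: it requires the virtual-dimension inequality to be \emph{strict} for all $\beta\neq 0$, which is exactly where the hypotheses $D\in|-K_X|$ and $\dim_{\mathbb R}B=\dim_{\mathbb C}X$ enter essentially, ruling out the low-degree exceptional contributions that would otherwise deform $\vartheta_{\vec e_i}$ away from the pure monomial $x_i$.
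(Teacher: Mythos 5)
The central step of your proposal fails. Your claim that for $\beta\neq 0$ the expected dimension ``strictly exceeds'' the total degree of the insertions in $\langle [1]_{\vec e_i},[1]_{\vec{\mathbf b}},[\on{pt}]_{-\vec{\mathbf{b}}+\vec k}\rangle_{0,3,\beta}^{(X,D)}$ is false: because $D\in|-K_X|$, the $\beta$-dependence cancels from the dimension constraint (\ref{vir-dim}). Its left side equals $n-3+3+(-K_X-D)\cdot\beta=n$ for \emph{every} $\beta$, and its right side equals $\deg^{0}([1]_{\vec e_i})+\deg^{0}([1]_{\vec{\mathbf b}})+\deg^{0}([\on{pt}]_{-\vec{\mathbf{b}}+\vec k})=0+0+n=n$, since the point class on the zero-dimensional stratum carries $n$ negative contact orders. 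So the constraint is an equality for all $\beta$; dimension counting can never kill the quantum corrections (this is exactly the log Calabi--Yau phenomenon), and those corrections are genuinely nonzero in general --- Proposition \ref{theorem-theta=x-p} shows the structure constants $N^\beta_{\vec p_1,\vec p_2,-\vec r}$ equal $1$ for suitable $\beta\neq 0$. The paper's actual mechanism is different: it extracts the $x_{\vec e_i}x_{\vec{\mathbf b}}/z$-coefficient of the relative $J$-function and compares it, via the relative mirror theorem, with the corresponding coefficient of the extended $I$-function. On a chart $\sigma$ with $i\in I_\sigma$ that coefficient is $1$ plus the sum of one-point invariants in (\ref{mirror-map-b}), and it is precisely the hypothesis that the relative mirror map is trivial --- not any dimension argument --- that makes this sum vanish. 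On a chart with $i\notin I_\sigma$ your picture breaks down even further: there the $\beta=0$ term is identically zero (the insertions lie on $D_\sigma$ and $D_i$ with $D_\sigma\cap D_i=\emptyset$), and $\vartheta_{\vec e_i}(p)$ is carried entirely by a nonzero class $\beta_0$ with $D_i\cdot\beta_0=1$; the identity $\vartheta_{\vec e_i}=x_i$ then holds only through the variable identification $(t/Q)^\beta=x^{D\cdot\beta}$ of (\ref{Q=x}). Your appeal to unspecified ``chart (wall-crossing) relations'' is exactly this Gromov--Witten input, which you have not supplied.

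The period identity inherits the problem and also inverts the paper's logic. In the paper, the equality of the regularized quantum period with the classical period of $W$ is obtained by regularizing Theorem \ref{thm-gamma-conj-pt} (the case $\varphi=1$, $\tau_{0,2}=0$), with $W$ a sum of theta functions; it does not presuppose $\vartheta_{\vec e_i}=x_i$ and does not require the relative mirror map to be trivial, which is why only the second assertion of the theorem is stated conditionally. Your plan derives the period identity from the Laurent-polynomial form of $W$, so at best it proves the first assertion under the triviality hypothesis; moreover, the ``combinatorial matching'' of constant terms of $(\sum_i x_i)^d$ with quantum-period coefficients is not bookkeeping --- it is where the relative mirror theorem (through products of theta functions, as in Proposition \ref{prop-prod-theta} and the proof of Theorem \ref{thm-gamma-conj-pt}) must be invoked, i.e., the step you defer is the theorem itself.
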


We then also prove the mirror symmetric Gamma conjecture for Fano varieties for the structure sheaf $\mathcal O_X$.

\begin{theorem}\label{thm-gamma-struc-sheaf}
Let $X$ be a Fano variety with an snc anticanonical divisor $D$ such that the dual intersection complex $B$ is pure-dimensional with $\dim_{\mathbb R} B=\dim_{\mathbb C} X$.  Mirror symmetric Gamma conjecture for $X$ holds for $\mathcal O_X$:
\begin{align}\label{gamma-conj-struc-sheaf}
\int_{\Gamma_{\mathbb R}}\check{\varphi}_\tau(-z) e^{-W_{\tau}/z}\omega=\int_X \left(z^{c_1}z^{\deg/2} \mathbb{J}_{X}(\tau_{0,2},-z)\varphi\right)\cup \hat{\Gamma}_X.
\end{align}
\end{theorem}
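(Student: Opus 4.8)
```latex
\textbf{Proof proposal.} The plan is to deduce the structure-sheaf statement \eqref{gamma-conj-struc-sheaf} by the same machinery already set up for the point case in Theorem \ref{thm-gamma-pt}, changing only the cycle of integration and correspondingly the Chern-character factor on the right-hand side. The organizing principle is that $\on{Ch}(\mathcal O_X)=1$ (since $\mathcal O_X$ has trivial higher Chern character), so the right-hand side is literally the $V=\mathcal O_{\on{pt}}$ right-hand side with the modified Chern character stripped off. Mirror-symmetrically, passing from $\mathcal O_{\on{pt}}$ to $\mathcal O_X$ should amount to replacing the compact cycle $\Gamma_c$ by the real positive cycle $\Gamma_{\mathbb R}=(\mathbb R_{>0})^n$ (in the torus coordinates on $\check X$), which is the homological mirror of the structure sheaf in the sense of \cite{GI19}. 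So the entire content is to show that the oscillatory integral over $\Gamma_{\mathbb R}$ evaluates to the Gamma-class pairing \emph{without} the $\on{Ch}(\mathcal O_{\on{pt}})$ insertion.

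First I would recall the relative mirror theorem in the form of Corollary \ref{intro-cor-rel-I-func}: the $S$-extended $I$-function, after the mirror map in $D_i$, lies on Givental's Lagrangian cone of $(X,D)$, which under the Fano hypotheses (here $D$ is anticanonical and $B$ is pure-dimensional of the right dimension, so $-K_X-D=0$ is nef and the mirror maps in $D_i$ are trivial) lets me express $J_X(\tau_{0,2},-z)$ and hence $\mathbb J_X(\tau_{0,2},-z)\varphi$ directly in terms of the relative invariants that assemble into the functions $\check\varphi_{\tau}$ and the potential $W_\tau=\sum_i\vartheta_{[D_i]}$. Next I would set up the oscillatory integral $\int_{\Gamma_{\mathbb R}}\check\varphi_\tau(-z)e^{-W_\tau/z}\omega$ and expand $e^{-W_\tau/z}$ so that each term is a monomial in the theta functions times $\omega=dx_1\cdots dx_n/(x_1\cdots x_n)$; integrating a monomial $x^{\vec k}$ against $e^{-\sum x_i/z}\,dx/x$ over $(\mathbb R_{>0})^n$ produces exactly the Gamma-factors $\prod_i\Gamma(\text{(something)})\,z^{(\cdots)}$ by the Euler integral $\int_0^\infty x^{s}e^{-x/z}dx/x=\Gamma(s)z^{s}$. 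The resulting Gamma-factors are what reassemble on the cohomology side into $\hat\Gamma_X$, via the product formula for $\hat\Gamma_X$ in terms of $\zeta$-values and Chern characters given in the introduction.

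The technical heart, and what I expect to be the main obstacle, is bookkeeping the convergence and the precise matching of hypergeometric/$\Gamma$-factors between the two sides. Two points need care. First, for the point case the compact cycle $\Gamma_c$ is a Lefschetz thimble and the integral converges by exponential decay; for $\Gamma_{\mathbb R}=(\mathbb R_{>0})^n$ one must verify that $W_\tau$ has positive real part growth along the real locus so that $e^{-W_\tau/z}$ decays and the Euler integrals converge, i.e. that the theta-function potential is a Laurent polynomial with the correct Newton polytope (this is where Theorem \ref{theorem-theta=x}, giving $\vartheta_{\vec e_i}=x_i$ when the relative mirror map is trivial, is essential, reducing $W_\tau$ to the Hori--Vafa--type sum $\sum_i x_i$ on each chart). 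Second, I must check that the $z$-powers and the factors of $2\pi\sqrt{-1}$ line up: on the right-hand side the absence of $\on{Ch}(\mathcal O_{\on{pt}})$ removes a top-degree $(2\pi\sqrt{-1})^n\on{ch}_n(\mathcal O_{\on{pt}})=(2\pi\sqrt{-1})^n[\on{pt}]$ factor, and this must correspond on the left-hand side to integrating over the noncompact cycle $\Gamma_{\mathbb R}$ rather than the compact thimble, i.e. to the full Euler-$\Gamma$ evaluation rather than a residue. Concretely I would argue that the $\Gamma_{\mathbb R}$-integral and the $\Gamma_c$-integral differ by precisely the factor that the modified Chern character $\on{Ch}(\mathcal O_{\on{pt}})$ contributes, so that the two conjecture-statements are equivalent given the already-proved point case.

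Finally I would close the argument by comparing coefficients. Having written both sides as power series in $Q^\beta$ (equivalently in the Novikov/K\"ahler variables) with the relative mirror map inserted via $\tau$, the relative mirror theorem identifies the curve-counting coefficients, while the Euler integral identifies the transcendental $\Gamma$- and $\zeta$-coefficients, and the two identifications are compatible degree by degree in cohomology. The only genuinely new estimate beyond the point case is the convergence of the real oscillatory integral and the identification of the omitted $\on{Ch}$-factor with the change of cycle; everything else is inherited from Theorem \ref{thm-gamma-pt} and Corollary \ref{intro-cor-rel-I-func}. I would therefore present the proof as a reduction to the point case plus this cycle-comparison lemma, keeping the heavy $\Gamma$-factor computation compartmentalized.
```
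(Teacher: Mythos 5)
Your second paragraph does sketch the correct computational core, and it is essentially what the paper does: expand $\check{\varphi}_\tau(-z)e^{-W_\tau/z}$ via the product rules for theta functions into mid-age relative invariants (Propositions \ref{prop-prod-theta}, \ref{prop-coeff-k}, \ref{prop-prod-phi-theta}), identify those invariants with coefficients of the $S$-extended $I$-function of Corollary \ref{cor-extended-rel-I-func} (which produces $\mathbb{J}_{X,\beta}(\tau_{0,2},-z)\varphi$ multiplied by ratios of Gamma values), and then evaluate the integral over $(\mathbb R_{>0})^n$ by the Euler integral so that the Gamma denominators cancel and the factors $\prod_{i\in I_\sigma}\Gamma(1+D_i)$ survive and assemble $\hat{\Gamma}_X$; Theorem \ref{theorem-theta=x} enters exactly as you say, both for the form of $W_\tau$ and for convergence.

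However, your organizing strategy --- a ``reduction to the point case plus a cycle-comparison lemma'' --- contains a genuine gap. The claim that the $\Gamma_{\mathbb R}$-integral and the $\Gamma_c$-integral ``differ by precisely the factor that $\on{Ch}(\mathcal O_{\on{pt}})$ contributes'' is false. The $\Gamma_c$-integral is a residue: it extracts only the $x^{\vec 0}$-coefficient of the integrand, which forces $\vec k=\vec 0$, i.e.\ $l_i=D_i\cdot\beta$, in (\ref{coeff-x-0}); this is why the point case never produces any Gamma functions and reduces to a quantum-period identity. The $\Gamma_{\mathbb R}$-integral, by contrast, sums over \emph{all} monomials $x^{-\vec k}$, each weighted by an Euler Gamma factor, and it is exactly this infinite family of additional terms that builds $\hat{\Gamma}_X$. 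Correspondingly, on the right-hand side, replacing $\on{Ch}(\mathcal O_{\on{pt}})$ by $\on{Ch}(\mathcal O_X)=1$ is not the removal of a scalar factor: capping with $(2\pi\sqrt{-1})^n[\on{pt}]$ extracts the degree-zero component of $\left(z^{c_1}z^{\deg/2}\mathbb{J}_X(\tau_{0,2},-z)\varphi\right)\cup\hat{\Gamma}_X$, whereas the structure-sheaf pairing integrates its top-degree component. These are different cohomological pairings of the same object, and neither determines the other, so Theorem \ref{thm-gamma-struc-sheaf} cannot be deduced from Theorem \ref{thm-gamma-pt} by any multiplicative correction; the full computation sketched in your second paragraph \emph{is} the proof and cannot be bypassed. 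In addition, two ingredients of that computation are absent from your outline: (i) the equivariant regularization $W_\tau^{\vec\lambda}$ together with working over $\NE(X)\setminus I$ for a monoid ideal $I$, which is what makes each Gamma factor $\Gamma(D_i^{\vec\lambda}(\sigma)/z-d_i)$ and each term-by-term integral well defined before taking limits; and (ii) the differential-equation argument of Section \ref{sec:diff-equ}, which the paper uses to promote the per-maximal-cone coefficient computation to the global identity (\ref{gamma-conj-struc-sheaf}).
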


\begin{remark}
In Identity (\ref{gamma-conj-struc-sheaf}), we have $V=\mathcal O_X$ and the real cycle $\Gamma_{\mathbb R}$ is the homological mirror to $\mathcal O_X$. We can also allow $V$ to be a direct sum of line bundles. Then Identity (\ref{conj-homo}) should follow from Identity (\ref{gamma-conj-struc-sheaf}) by the monodromy transform as in \cite{Iritani09}, \cite{Iritani11}.
\end{remark}

\begin{remark}
We note that although the LHS of (\ref{gamma-conj-struc-sheaf}) is defined using the information of the divisor $D$,  the RHS of (\ref{gamma-conj-struc-sheaf}) does not depend on $D$. 
\end{remark}

\begin{remark}
    The function $\check{\varphi}_\tau(-z)$ involves the mirror map associated with the class $\varphi$. The mirror map can be very complicated in general and we do not compute it explicitly. We can also consider the case where $\varphi=1$ or $\varphi\in H^2(X)$, where everything is more explicit.
\end{remark}

\begin{remark}
Suppose a Fano variety $Y$ is a complete intersection in a Fano variety $X$ that has a divisor $D$ satisfying the assumption of Theorem \ref{thm-gamma-struc-sheaf}. We can apply the intrinsic mirror construction to $(X,D)$ and the mirror symmetric Gamma conjecture holds for $X$. Then the mirror symmetric Gamma conjecture also holds for $Y$ via a Laplace transform as explained in \cite{Iritani11} and \cite{Iritani23gamma}.  
\end{remark}

When the relative mirror map is not trivial, we can define theta functions that include the mirror map, so we should again have a Laurent polynomial $W=\sum_{i=1}^m x_i$. This is because the difference between $\vartheta_{\vec e_i}$ and $x_i$ should be the inverse of the relative mirror map. This was calculated in \cite{You22} when $D$ is a smooth anticanonical divisor. The computation for an snc divisor $D$ will appear in an upcoming work. When computing the oscillatory integrals, we need to pass from invariants of $(X,D)$ to invariants of $X$ which requires the relative mirror map. Therefore, if we additionally impose quantum corrections in the definition of the theta functions, we will not need an additional change of variables in the formula. This is discussed in Section \ref{sec:mirror-map}.

In Section \ref{sec:relative-case}, we consider a maximally unipotent monodromy degeneration of $(X, D)$ when $D$ does not have zero-dimensional strata and we explain how the computation also works in this setting.

\begin{thm}\label{thm-mum-deg}
If A Fano variety $X$ with an snc anticanonical divisor $D$ admits a maximally unipotent monodromy degeneration, then the mirror symmetric Gamma conjecture for $X$ holds for $\mathcal O_{\on{pt}}$ and $\mathcal O_X$.
\end{thm}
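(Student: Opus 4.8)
The plan is to use the maximally unipotent monodromy (MUM) degeneration to reduce Theorem~\ref{thm-mum-deg} to the situation already treated in Theorems~\ref{thm-gamma-pt} and \ref{thm-gamma-struc-sheaf}, where the dual intersection complex is pure-dimensional of full dimension. When $D$ has no zero-dimensional strata, the complex $B$ has $\dim_{\mathbb R} B < \dim_{\mathbb C} X = n$ and the intrinsic mirror of $X\setminus D$ cannot be built directly; instead the hypothesis furnishes a one-parameter degeneration $g\colon \mathcal X\to \mathbb A^1$ of $X\setminus D$ whose special fiber, together with $D$, creates a maximal degeneration point, so that the associated intersection complex $\widetilde B$ is pure-dimensional with $\dim_{\mathbb R}\widetilde B = n$. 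Following the relative case of \cite{GS19}, the intrinsic mirror $\check X$ and the Landau--Ginzburg potential $W=\sum_{i=1}^m\vartheta_{[D_i]}$ are defined from this degeneration, and the cycles $\Gamma_c$ and $\Gamma_{\mathbb R}$ are constructed inside $\check X$ as before.

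The first step is to check that all three ingredients from the earlier sections remain available in the degenerate setting. The relative mirror theorem (Corollary~\ref{intro-cor-rel-I-func}) is itself proved by degeneration and rubber techniques, so it applies without the pure-dimensionality hypothesis and continues to express the $J$-function of $X$ in terms of the orbifold invariants of $(X,D)$ together with the hypergeometric modifications coming from $D$. Next I would express the Gromov--Witten invariants appearing in the theta functions $\vartheta_{\tau,\vec s}$ and in the functions $\check\varphi_{\tau,\vec s}$ through the degeneration formula applied to $g$; since the degeneration realizes the full-dimensionality condition, these invariants are computed by the machinery of Sections~\ref{sec:theta-func} and \ref{sec:func-mirror}, and deformation invariance of genus-zero Gromov--Witten invariants identifies the outcome with the invariants of the original pair $(X,D)$.

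With these ingredients in place, the oscillatory integral computation proceeds exactly as in the proofs of Theorems~\ref{thm-gamma-pt} and \ref{thm-gamma-struc-sheaf}: one expands the integrand $\check\varphi_\tau(-z)e^{-W_\tau/z}\omega$, integrates term by term over $\Gamma_c$ and over $\Gamma_{\mathbb R}$, and uses the relative mirror theorem to convert the resulting sum over curve classes of $(X,D)$ into the Gamma-class integral against the $J$-function of $X$. The main obstacle, and the key consistency check, is that the Landau--Ginzburg model produced from the MUM degeneration computes the correct right-hand side for $X$ itself. Since the right-hand sides of (\ref{intro-gamma-conj-pt}) and (\ref{gamma-conj-struc-sheaf}) depend only on $X$ and not on $D$ or on the chosen degeneration, one must show that the degeneration-dependent left-hand side is likewise independent of these choices; this follows from deformation invariance of the invariants entering $W$ and $\check\varphi_\tau$, but verifying that the relative mirror maps in the $D_i$ behave compatibly along the degeneration---so that no spurious corrections appear when passing from invariants of $(X,D)$ to those of $X$---is the delicate point of the argument.
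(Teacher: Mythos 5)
Your proposal takes a genuinely different route from the paper, and that route has a gap at its center. The paper does not attempt to run the oscillatory-integral computation on an $n$-dimensional mirror of $X\setminus D$ built from the relative case of \cite{GS19}. Instead it applies the proof of Theorem \ref{thm-gamma-struc-sheaf} to the \emph{total space} of the degeneration: one takes a semi-stable family $\pi\colon\mathcal X\to S$ whose general fiber is $X$, assumes $\mathcal X$ carries an snc anticanonical divisor $\mathcal D$ with zero-dimensional strata restricting to $(X,D)$ on a smooth fiber, and obtains (possibly after mirror maps, since $\mathcal X$ need not be Fano) the identity
\[
\int_{\tilde\Gamma_{\mathbb R}} e^{-\tilde W_\tau/z}\,\tilde\omega=\int_{\mathcal X}\left(z^{c_1}z^{\deg/2}J_{\mathcal X}(\tau_{0,2},-z)\right)\cup\hat{\Gamma}_{\mathcal X}
\]
for the $(n+1)$-dimensional variety $\mathcal X$. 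The conjecture for $X$ itself is then deduced by a Laplace transform, following \cite{Iritani11} and \cite{Iritani23gamma}, using that the fiber class $[X]$ is a nef divisor in $\mathcal X$.

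The gap in your argument is the absence of any such descent mechanism. In the relative case of \cite{GS19}, the theta functions and structure constants are invariants of the pair $(\mathcal X,\mathcal D)$, with curve classes in $\NE(\mathcal X)$ and contact orders along components of $\mathcal D$ including $g^{-1}(0)$; they are not invariants of $(X,D)$, and deformation invariance does not identify them with invariants of $(X,D)$: deformation invariance compares fibers of a smooth family, whereas here the entire mirror algebra is built from the $(n+1)$-dimensional total space. Consequently, when you feed these invariants into the relative mirror theorem (Corollary \ref{cor-extended-rel-I-func}) and the residue computation of Sections \ref{sec:comp-inv-gamma-conj-phi}--\ref{sec:proof-main}, what comes out is the $J$-function and Gamma class of $\mathcal X$, not of $X$; the computation cannot ``proceed exactly as'' in Theorems \ref{thm-gamma-pt} and \ref{thm-gamma-struc-sheaf} with $X$ on the right-hand side. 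The point you flag as delicate (compatibility of the relative mirror maps along the degeneration) is not the real obstruction; what is missing is the Laplace-transform step, or some substitute for it, that converts the Gamma identity for the total space $\mathcal X$ into the Gamma identity for its nef divisor $X$. Without that step your argument terminates one dimension too high.
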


\begin{remark}
To prove the mirror symmetric Gamma conjecture with a class $\varphi\in H^*(X)$, we also need to consider the higher degree part of the relative quantum cohomology. At this moment, one may not replace orbifold Gromov--Witten invariants of root stacks by punctured logarithmic Gromov--Witten invariants because only the degree zero part of the relative quantum cohomology is considered in \cite{GS19}. Of course, it is expected that one can define the higher-degree part of relative quantum cohomology using punctured logarithmic Gromov--Witten invariants. Another important tool is a relative mirror theorem in Section \ref{sec:rel-mirror-deg} and \cite{TY20b} that relates invariants of the pair $(X,D)$ with the $J$-function of the Fano variety $X$. One may also expect a similar mirror theorem for punctured logarithmic invariants. Then, one can try to prove mirror symmetric Gamma conjecture via intrinsic mirror symmetry considered in \cite{GS19}. 
\end{remark}

\section*{Acknowledgement}

The author would like to thank Hiroshi Iritani for numerous inspiring discussions, detailed explanations of the Gamma conjecture, and helpful comments on the drafts of this paper. The author also thanks Bohan Fang, Mark Gross, Weiqiang He, Jianxun Hu, Huazhong Ke, Changzheng Li and Zhitong Su for helpful discussions. 

Part of the work was done during the Workshop on Mirror symmetry and Related Topics, Kyoto 2024. The author would like to thank Kyoto University and Hiroshi Iritani for their hospitality. 

The author would also like to thank Andreas Malmendier, Michael T. Schultz and Longting Wu for bringing mirror symmetric Gamma conjecture to the author's attention during private communications. 

This project has received funding from the European Union’s Horizon 2020 research and innovation programme under the Marie Skłodowska -Curie grant agreement 101025386.

\section{Orbifold Gromov--Witten theory of snc pairs}
We give a brief review of the orbifold Gromov--Witten theory of multi-root stacks $X_{D,\vec r}$ and the relative mirror theorem when $D_i$ are nef. 

\subsection{Orbifold Gromov--Witten theory of root stacks }\label{sec:GW-multi-root}
The large $\vec r$-limit is called the formal Gromov--Witten theory of infinite root stacks $X_{D,\infty}$ in \cite{TY20c}. We will simply call it the orbifold Gromov--Witten theory of the pair $(X,D)$.

Let $X$ be a smooth projective variety and
\[
D=D_1+\ldots+D_m
\]
be an snc divisor with irreducible components $D_1,\ldots, D_m$. For any index set $I\subseteq\{1,\ldots, m\}$, we define 
\[
D_I:=\cap_{i\in I} D_i.
\]
Let 
\[
\vec k=(k_1,\ldots,k_m)\in \mathbb Z^m.
\]
We define
\[
I_{\vec k}:=\{i:k_i\neq 0\}\subseteq \{1,\ldots,m\}.
\]

For 
\[
\vec r=(r_1,\ldots,r_m)\in (\mathbb Z_{\geq 0})^m
\]
with $r_i$'s pairwise coprime, we consider the multi-root stack
\[
X_{D,\vec r}:=X_{(D_1,r_1),\ldots,(D_m,r_m)}.
\]
Let $$\overline{M}_{0,\{\vec k^j\}_{j=1}^l, \beta}(X_{D,\vec r})$$ be the moduli space of genus zero orbifold stable maps to $X_{D,\vec r}$ with degree $\beta\in H_2(X)$ and $l$-markings where the ages of the markings are determined by $\{\vec k^j\}_{j=1}^l$ as follows.
\begin{itemize}
    \item The vectors
\[
\vec k^j=(k_1^j,\ldots,k_m^j)\in (\mathbb Z)^m, \text{ for } j=1,2,\ldots,l,
\]
satisfy the following condition:
\[
\sum_{j=1}^l k_i^j=\int_\beta[D_i], \text{ for } i\in\{1,\ldots,m\}.
\]  
We also use the notation
\[
\sum_{j=1}^l\vec k^j=D\cdot \beta=(D_1\cdot\beta,\ldots,D_m\cdot\beta).
\]
These vectors are used to record contact orders of the markings with respect to divisors $D_1,\ldots,D_m$. In the language of orbifold Gromov--Witten theory,
the $j$-th marking maps to the twisted sector $D_{I_{\vec s^j}}$ of the inertia stack of $X_{D,\vec r}$ with age
\[
\sum_{i: k^j_i>0} \frac{k^j_i}{r_i}+\sum_{i: k^j_i<0} \left(1+\frac{k^j_i}{r_i}\right),
\]
see \cite{TY20c}. 
\item Define
\[
k_{i,-}:=\#\{j: k_i^j<0\}, \text{ for } i=1,2,\ldots, m,
\]
to be the number of markings that have negative contact orders with the divisor $D_i$.
\end{itemize}
 By \cite{TY20c}*{Corollary 16}, the virtual cycle $$[\overline{M}_{0,\{\vec k^j\}_{j=1}^l, \beta}(X_{D,\vec r})]^{\on{vir}}$$ of genus zero orbifold Gromov-Witten theory of $X_{D,\vec r}$, after multiplying by $\left(\prod_{i=1}^m r_i^{k_{i,-}}\right)$, is independent of $r_i$ for $r_i$ sufficiently large. Following \cite{TY20c}*{Definition 18}, the formal Gromov-Witten invariants of $X_{D,\infty}$ are defined as limits of the corresponding genus zero orbifold Gromov-Witten invariants of $X_{D,\vec r}$.

Let \begin{itemize}
    \item $\gamma_j\in H^*(D_{I_{\vec k^j}})$, for $j\in\{1,2,\ldots,l\}$;
    \item $a_j\in \mathbb Z_{\geq 0}$, for $j\in \{1,2,\ldots,l\}$.
\end{itemize}
The formal genus zero orbifold Gromov-Witten invariants of $X_{D,\infty}$ are defined as
\begin{align}\label{inv-X-D}
    \left\langle [\gamma_1]_{\vec k^1}\bar{\psi}^{a_1},\ldots, [\gamma_l]_{\vec k^l}\bar{\psi}^{a_l} \right\rangle_{0,\{\vec k^j\}_{j=1}^l,\beta}^{X_{D,\infty}}:=\left(\prod_{i=1}^m r_i^{k_{i,-}}\right)\left\langle \gamma_1\bar{\psi}^{a_1},\ldots, \gamma_l\bar{\psi}^{a_l} \right\rangle_{0,\{\vec k^j\}_{j=1}^l,\beta}^{X_{D,\vec r}}
\end{align}
for sufficiently large $\vec r$.

The genus zero invariant (\ref{inv-X-D}) is zero unless it satisfies the virtual dimension constraint:
\begin{align}\label{vir-dim}
    \dim_{\mathbb C}X-3+l+\int_\beta c_1(T_X)-\int_\beta [D]=\sum_{j=1}^l \deg^{0}([\gamma_j]_{\vec k^j})+\sum_{j=1}^l a_j,
\end{align}
where if $\alpha\in \mathfrak H_{\vec k}$ is a cohomology class of real degree $d$, then,
\begin{align}\label{deg-0}
\deg^{0}([\alpha]_{\vec k})=d/2+\#\{i:k_i<0\}.
\end{align}

We will simply use the notation $\langle \cdots \rangle^{(X,D)}$ instead of $\langle \cdots\rangle^{X_{D,\infty}}$ when there is no ambiguity. As the contact orders $\{\vec k^j\}$ are specified in the insertions, we will also sometimes use the notation $\langle \cdots \rangle_{0,l,\beta}^{(X,D)}$ instead of $\langle \cdots \rangle_{0,\{\vec k^j\}_{j=1}^l,\beta}^{(X,D)}$. When the target and the topological data are clear, we may sometimes omit the superscripts and subscripts in the notation.

We also recall some notation for an snc pair $(X,D)$. Let $S$ be the dual intersection complex of $D$. That means $S$ is the simplicial complex with vertices $v_1,\ldots, v_m$ and simplices $\langle v_{i_1},\ldots, v_{i_p}\rangle$ corresponding to nonempty intersections $D_{i_1}\cap\cdots\cap D_{i_p}$. Let $B$ be the cone over $S$, $\Sigma(X)$ be the induced simplicial fan in $B$ and, $B(\mathbb Z)$ be the set of integer points of $B$. For a cone $\sigma\in \Sigma(X)$, we write $D_\sigma$ for the corresponding stratum of $D$. We write $i\in I_\sigma\subset \{1,\ldots,m\}$ if $D_i\supset D_\sigma$. Therefore, $D_\sigma=\cap_{i\in I_\sigma} D_i$.

\subsection{Mid-age invariants}

We consider the moduli space $$\overline{M}_{0,\{\vec k^j\}_{j=1}^l, \vec k_a,\vec k_b,\beta}(X_{D,\vec r})$$ of genus zero orbifold stable maps to $X_{D,\vec r}$ with degree $\beta\in H_2(X)$ and $(l+2)$-markings where ages of the first $l$ markings are determined by $\{\vec k^j\}_{j=1}^l$ as before. The last two markings are mid-ages along some of $D_i$ and have large or small ages (including age $0$) along other $D_i$. Having a pair of markings with mid-ages along a divisor $D_i$ means the following. Let  $r_i\gg |D_i\cdot\beta|$ and $ k_{ai}, k_{bi}\gg |D_i\cdot\beta|$  such that
\[
k_{ai}+k_{bi}=r_i+c_i
\]
for some constant $c_i\in \mathbb Z$. The ages $k_{ai}/r_i$ and $k_{bi}/r_i$ are mid-ages.

By \cite{You24}*{Theorem 5.9}, the genus zero cycle class for the orbifold Gromov--Witten theory of $(X,D)$ with a pair of mid-ages, after multiplying suitable powers of $r_i$, does not depend on the value of $r_i$, $k_{ai}$ and $k_{bi}$ when they are sufficiently large.
   Then the genus zero cycle class for the orbifold Gromov--Witten theory of $(X,D)$ with a pair of mid-ages can be denoted by
    \[
\left[\overline{M}_{0,\{\vec k^j\}_{j=1}^l, \vec{\mathbf{b}},-\vec{\mathbf{b}}+\vec c,\beta}(X,D)\right]^{\on{vir}}.
\]
Mid-age invariants are defined using this cycle. Again, we will use the notation $\langle \cdots\rangle^{(X,D)}$ or $\langle \cdots \rangle^{X_{D,\infty}}$ to denote the orbifold invariants of snc pairs with mid-ages.

\begin{remark}
    We briefly recall how mid-age invariants fit into the relative-orbifold correspondence. In genus zero, the relative-orbifold correspondence \cite{ACW}, \cite{TY18}, \cite{FWY} states that relative invariants of a smooth pair $(X,D)$ are equal to orbifold invariants of the root stack $X_{D,r}$ for $r$ sufficiently large. A relative marking with positive contact order $k\in \mathbb Z_{\geq 0}$, corresponds to an orbifold marking with age $k/r$. A relative marking with negative contact order $k\in \mathbb Z_{<0}$ corresponds to an orbifold marking with age $(r+k)/r$. When $r$ is sufficiently large, $k/r$ is sufficiently close to 0 and is called a small age; $(r+k)/r$ is sufficiently close to $1$ and is called a large age. However, there are more orbifold invariants of $X_{D,r}$ that are not covered in this correspondence. In particular, one can have a pair of orbifold markings with ages $k_1/r$ and $k_2/r$ such that $k_1+k_2=r+a$ for a fixed integer $a$ and $k_1\geq k_2 \gg |D\cdot \beta|$. In particular, we can have $k_1/r$ and $k_2/r$ sufficiently close to $1/2$ when $r$ is sufficiently large. These invariants are called mid-age invariants and are studied in \cite{You21} and \cite{You24}.
\end{remark}

\subsection{Root stacks as hypersurfaces}

Following \cite{FTY} and \cite{TY20b}, (multi)-root stacks are complete intersections in toric stack bundles. We briefly review the construction. Let $X$ be a smooth projective variety and $D$ be a smooth divisor of $X$. Let 
\[
P:=\mathbb P(\mathcal O_X(-D)\oplus \mathcal O)
\]
 and $X_\infty\subset P$ be the infinity divisor, that is, the divisor we add at infinity to compactify $\mathcal O_X(-D)$. Let $P_{X_\infty,r}$ be the $r$-th root stack of $P$ along $X_\infty$ and 
\[
p:P_{X_\infty,r}\rightarrow P
\]
 be the projection. Then $X_{D,r}$ is the zero locus of a section of $p^*\mathcal O_P(1)$. 

We consider the $\mathbb C^\times$-action on $P$ such that $\mathbb C^\times$ acts on $\mathcal O_X(-D)$ with weight $-1$ and acts trivially on $\mathcal O$. It induces an action on $P_{X_{\infty},r}$. The $\mathbb C^\times$-action naturally lifts to a $\mathbb C^\times$-action on the line bundle $p^*\mathcal O_P(1)$. In \cite{FTY}, we applied the orbifold quantum Lefschetz theorem to show that the non-equivariant limits of the genus zero Gromov--Witten invariants twisted by the equivariant line bundle $p^*\mathcal O_P(1)$ coincide with the genus zero orbifold Gromov--Witten invariants of $X_{D,r}$ assuming that $D$ is nef: 

\[
\left\langle \prod_{i=1}^l \tau_{a_i}(\iota^*\gamma_i)\right\rangle_{0,l,\beta}^{X_{D,r}}=\left[\left\langle \prod_{i=1}^l \tau_{a_i}(\gamma_i^{\mathbb C^\times})\right\rangle_{0,l,\beta+kf}^{P_{X_\infty,r},p^*\mathcal O_P(1)}\right]_{\lambda=0},
\]
where $\gamma_i\in H^*_{\on{CR}}(P_{X_\infty,r})$; $a_i\in \mathbb Z_{\geq 0}$, for $1\leq i\leq l$; $k=D\cdot\beta$, $\iota:X_{D,r}\hookrightarrow P_{X_\infty,r}$ is the inclusion and $\lambda$ is the equivariant parameter. This gives the $I$-function for root stacks as the non-equivariant limit of twisted $I$-function of $P_{X_\infty,r}$.  The $I$-function for the pair $(X,D)$ is taken as a limit of the $I$-function for the root stack $X_{D,r}$ as $r\rightarrow \infty$. 

In Section \ref{sec:rel-mirror-deg}, we will generalize such a relation to the case where $D$ is not necessarily nef. Therefore, we can consider the genus zero Gromov--Witten theory of $P_{X_{\infty},r}$ twisted by the equivariant line bundle $p^*\mathcal O_P(1)$ as the genus zero equivariant Gromov--Witten theory of the pair $(X,D)$ (with ambient insertions). 

Using the iterative construction in \cite{TY20b}, we generalized this construction to snc pairs. Now, let $X$ be a smooth projective variety, and let $D\subset X$ be an snc divisor such that
\[
D=D_1+D_2+\ldots+D_m,
\]
where $D_i\subset X$ is smooth and irreducible. We consider the following tower:
\[
P_m\rightarrow \ldots \rightarrow P_2\rightarrow P_1\rightarrow X,
\]
where $\pi_{i+1}: P_{i+1}\rightarrow P_i$ is a $\mathbb P^1$-bundle, with 
\[
P_{i+1}:=\mathbb P(\mathcal O_X(-D_i)\oplus \mathcal O_{P_i})
\]
 and we have omitted various pull-backs from the notation. Let 
\[
h_i=c_1(\mathcal O_{P_i}(1)).
\]
Let
\[
X_{i\infty}:=\mathbb P(\mathcal O_{X}(-D_i))\subset P_{i+1},
\]
and write 
\[
X_\infty=X_{1\infty}+X_{2\infty}+\cdots+X_{m\infty}.
\] 
Let
\[
p_m: P_{m,X_\infty,\vec r}:=P_{m,(X_{1\infty},r_1),(X_{2\infty},r_2),\ldots,(X_{m\infty},r_m)}\rightarrow P_m.
\]
The multi-root stack $X_{D,\vec r}$ is the zero locus of a section of $\oplus_{i=1}^m p_m^*\mathcal O_{P_i}(1)$. If $D_i$ are nef, we can also apply the orbifold quantum Lefschetz theorem to obtain the following:
\begin{align}
\left\langle \prod_{i=1}^l \tau_{a_i}(\iota^*\gamma_i)\right\rangle_{0,l,\beta}^{X_{D,\vec r}}=\left[\left\langle \prod_{i=1}^l \tau_{a_i}(\gamma_i)\right\rangle_{0,l,\beta+\sum_{i=1}^m d_if_i}^{P_{m,X_\infty,\vec r},\oplus_{i=1}^mp_m^*\mathcal O_{P_i}(1)}\right]_{\vec\lambda=\vec 0}.
\end{align}

\begin{remark}
Instead of considering root stacks as hypersurfaces, we can also consider genus zero orbifold Gromov--Witten invariants of root stacks $X_{D,r}$ (with ambient insertions) as (doubly-)twisted invariants of the gerbe $\mathcal X_r$ of $r$-th roots of $\mathcal O(D)$ over $X$ twisted by $\frac{e_{\mathbb C^\times}(\mathcal O(D))_{0,k+1,\beta}}{e_{\mathbb C^\times}(\mathcal O(D/r))_{0,k+1,\beta}}$. This was proved in \cite{FTY}*{Theorem 1.3} for smooth pairs when $D$ is nef. The proof is through the $\mathbb C^\times$-virtual localization computation on the bundle $P_{X_\infty,r}$. A similar computation also works for snc pairs when $D_i$ are nef.
\end{remark}

\subsection{A mirror theorem for nef snc pairs}
%Let $X$ be a smooth projective variety and $D=D_1+\cdots+D_m\subset X$ be an snc divisor of $X$, where $D_i$'s, for $i=1,\ldots, m$, are irreducible components of $D$. 
Recall that the $J$-function of $X$ is
\[
J_{X}(\tau,z)=z+\tau+\sum_{\substack{(\beta,l)\neq (0,0), (0,1)\\ \beta\in \on{NE(X)}}}\sum_{\alpha}\frac{Q^{\beta}}{l!}\left\langle \frac{\phi_\alpha}{z-\psi},\tau,\ldots, \tau\right\rangle_{0,1+l, \beta}^{X}\phi^{\alpha},
\]
where $\tau=\tau_{0,2}+\tau^\prime\in  H^*(X)$; $\tau_{0,2}\in H^2(X)$; $\tau^\prime\in H^*(X)\setminus H^2(X)$; $\on{NE(X)}$ is the cone of effective curve classes in $X$; $\{\phi_\alpha\}$ is a basis of $H^*(X)$; $\{\phi^\alpha\}$ is the dual basis under the Poincar\'e pairing.

The $J$-function of $(X,D)$ is defined in a similar way. Recall that the state space for the pair $(X,D)$ is 
\[
\mathfrak H:=\bigoplus_{\vec s\in \mathbb Z^m}\mathfrak H_{\vec s}:=\bigoplus_{\vec s\in \mathbb Z^m}H^*(D_{\vec s})
\]
and
\[
D_{\vec s}:=D_{I_{\vec s}}:=\cap_{i:s_i\neq 0}D_i \quad D_{\vec 0}:=X.
\]

\begin{defn}\label{def-relative-J-function}
Let $X$ be a smooth projective variety and $D$ be an snc divisor, the $J$-function for the pair $(X,D)$ is defined as
\[
J_{(X,D)}(\tau,z)=z+\tau+\sum_{\substack{(\beta,l)\neq (0,0), (0,1)\\ \beta\in \on{NE(X)}}}\sum_{k,\vec s}\frac{t^{\beta}}{l!}\left\langle \frac{T_{\vec s,k}}{z-\bar{\psi}},\tau,\ldots, \tau\right\rangle_{0,1+l, \beta}^{(X,D)}T_{-\vec s}^k,
\]
where $\tau=\tau_{0,2}+\tau^\prime\in  \mathfrak H$; $\tau_{0,2}\in H^2(X)\subset \mathfrak H_{\vec 0}$; $\tau^\prime\in \mathfrak H\setminus H^2(X)$; $\{T_{\vec s,k}\}$ is a basis of $\mathfrak H$; $\{T_{-\vec s}^k\}$ is the dual basis under the Poincar\'e pairing; $d_i:=D_i\cdot\beta$; the class $\bar{\psi}$ is the pullback of the corresponding descendant class $\psi$ from $\overline{M}_{0,l+1,\beta}(X)$.  
\end{defn}
 
\begin{remark}
In Definition \ref{def-relative-J-function}, we replace $Q^\beta$ by $t^{\beta}$. We will use $Q$ in the $J$-function of $X$ which will appear in the $I$-function of $(X,D)$. 
\end{remark}

We would also like to change $\tau_{0,2}$ to $\tau_{0,2}+D_i\log x_i^{-1}$ so that $\tau_{0,2}$ represents other divisor classes in $H^2(X)$.  Then we can write the $J$-function as
\begin{align*}
&J_{(X,D)}(\tau,z)\\
=&e^{-\sum_{i=1}^m D_i\log x_i/z}\left(z+\tau+\sum_{\substack{(\beta,l)\neq (0,0), (0,1)\\ \beta\in \on{NE(X)}}}\sum_{k,\vec s}\frac{t^{\beta}x^{-\vec d}}{l!}\left\langle \frac{T_{\vec s,k}}{z-\bar{\psi}},\tau,\ldots, \tau\right\rangle_{0,1+l, \beta}^{(X,D)}T_{-\vec s}^k\right),
\end{align*}
where $x^{-\vec d }:=\prod_{i=1}^m x_i^{-d_i}$.

Consider the space
\[
\mathcal H=\mathfrak H \otimes_{\mathbb C}\mathbb C[\![\on{NE}(X)]\!](\!(z^{-1})\!),
\]
where $(\!(z^{-1})\!)$ means the formal Laurent series in $z^{-1}$.
There is a $\mathbb C[\![\on{NE}(X)]\!]$-valued symplectic form
\[
\Omega(f,g)=\text{Res}_{z=0}(f(-z),g(z))dz, \text{ for } f,g\in \mathcal H,
\]
where the pairing $(f(-z),g(z))$ takes values in $ \mathbb C[\![\on{NE}(X)]\!](\!(z^{-1})\!)$ and is induced by the pairing on $\mathfrak H$.

Consider the following polarization:
\[
\mathcal H=\mathcal H_+\oplus\mathcal H_-,
\]
where
\[
\mathcal H_+=\mathfrak H \otimes_{\mathbb C} \mathbb C[\![\on{NE}(X)]\!][z], \quad \text{and} \quad \mathcal H_-=z^{-1}\mathfrak H \otimes_{\mathbb C} \mathbb C[\![\on{NE}(X)]\!][\![z^{-1}]\!].
\]
There is a natural symplectic identification between $\mathcal H_+\oplus \mathcal H_-$ and the cotangent bundle $T^*\mathcal H_+$.

For $i\geq 0$, we write $ \bt_i=\sum\limits_{\vec s,k} \bt_{i;\vec s,k} T_{\vec s,k}$ where $\bt_{i;\vec s,k}$ are formal variables. We also write
\[
\mathbf t(z)=\sum\limits_{i=0}^\infty \bt_i z^i.
\]
The genus $0$ descendant Gromov-Witten potential of $(X,D)$ is defined as
\[
\mathcal F^0_{X_{D,\infty}}(\bt(z))=\sum\limits_\beta \sum\limits_{l=0}^\infty \dfrac{t^\beta }{l!} \left\langle\mathbf t(\bar\psi),\ldots,\mathbf t(\bar\psi)\right\rangle_{0,l,\beta}^{(X,D)}.
\]
Givental's Lagrangian cone is the graph of the differential $d\mathcal F^0_{X_{D,\infty}}$.
A (formal) point in Givental's Lagrangian cone $\mathcal L_{(X,D)}\subset \mathcal H=T^*\mathcal H_+$ can be explicitly written as
\[
-z+\mathbf t(z)+\sum\limits_{\beta} \sum\limits_{l} \sum\limits_{\vec s,k} \dfrac{t^\beta}{l!} \left\langle\dfrac{ T_{\vec s,k}}{-z-\bar\psi},\mathbf t(\bar\psi),\ldots,\mathbf t(\bar\psi)\right\rangle_{0,l+1,\beta}^{X_{D,\infty}} T_{-\vec s}^k.
\]
In particular, the $J$-function is a point in Givental's Lagrangian cone. 

The following $I$-function was defined in \cite{TY20b}.

\begin{defn}\label{def-I-snc-nef}
    The non-extended $I$-function for $(X,D)$, where $D=D_1+\cdots+D_m$ and $D_i$'s are nef for $i=1,\ldots,m$, is
\begin{align}\label{I-snc}
I_{(X,D)}(Q,q,\tau,z):=e^{\sum_{i=1}^m D_i\log q_i/z}\sum_{\beta\in \on{NE}(X)} J_{X, \beta}(\tau,z)Q^{\beta}q^{\vec d}
\prod_{i=1}^m\prod_{0< a< d_i}(D_i+az)[\textbf{1}]_{ -\vec d},
\end{align}
where $\vec d:=(d_1,\ldots,d_m):=(D_1\cdot \beta,\ldots,D_m\cdot\beta)$. 
\end{defn}

\begin{remark}
We recall that the variables $q_i$ in the $I$-function come from the fiber direction of the iterative $\mathbb P^1$-bundle over $X$ which we use to obtain the $I$-function. The variable $q_i$ in the $I$-function corresponds to the variable $x_i^{-1}$ in the $J$-function of $(X,D)$. Therefore, $D_i\log x_i^{-1}$ should be considered as the restriction of $h_i\log x_i^{-1}$ from the ambient toric stack bundle, where $h_i=c_1(\mathcal O_{P_i}(1))$. 
\end{remark}

One can also consider the $S$-extended $I$-function $I_{(X,D)}(Q,q,\tau,x_{\vec a},z)$. The precise form of the $S$-extended $I$-function depends on the extended data $S$. Note that the variables $x_{\vec a}$ come from the extended data, so they are not the same as the variables $x_i$ (although one may identify them later in some computations). 

A mirror theorem for $(X,D)$ was proved in  \cite{TY20b}*{Section 4} as follows.
\begin{thm}
    Let $X$ be a smooth projective variety and $D=D_1+\cdots+D_m$ be an snc divisor such that all $D_i$' s are nef. Then the $I$-function $I_{(X,D)}(Q,q,\tau,x_{\vec a},z)$ lies in Givental's Lagrangian cone of $(X,D)$ defined in \cite{TY20c}*{Section 5}. 
\end{thm}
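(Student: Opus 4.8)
The plan is to prove the statement first for each finite multi-root stack $X_{D,\vec r}$ with the $r_i$ pairwise coprime and sufficiently large, and then pass to the large $\vec r$-limit. The geometric input is the complete intersection description recalled above: $X_{D,\vec r}$ is the zero locus of a regular section of $\oplus_{i=1}^m p_m^*\mathcal O_{P_i}(1)$ inside the toric stack bundle $P_{m,X_\infty,\vec r}$ over $X$, built by the iterative $\mathbb P^1$-bundle construction of \cite{FTY} and \cite{TY20b}. Since $P_{m,X_\infty,\vec r}$ is an iterated (root-stacky) projective bundle over $X$, its genus-zero theory is computed by the mirror theorem for toric stack bundles \cite{JTY}, which produces an explicit $I$-function of the ambient space assembled from the $J$-function $J_X$ of the base together with the hypergeometric factors attached to the fiber classes $h_i=c_1(\mathcal O_{P_i}(1))$.

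First I would take the ambient $I$-function from \cite{JTY} and twist it by the $\mathbb C^\times$-equivariant Euler class of $\oplus_{i=1}^m p_m^*\mathcal O_{P_i}(1)$, with the linearization under which $\mathbb C^\times$ acts on $\mathcal O_X(-D_i)$ with weight $-1$. Here the nefness of each $D_i$ is precisely the hypothesis that makes these bundles satisfy the positivity required by the orbifold quantum Lefschetz principle of \cite{Tseng}; granting it, that principle places the twisted $I$-function on the twisted Lagrangian cone, and its non-equivariant limit $\lambda=0$ lands on Givental's cone $\mathcal L_{X_{D,\vec r}}$ of the root stack. Reading off the ambient part of this limit---the part supported on the classes $h_i$---yields exactly the product $\prod_{0<a<d_i}(D_i+az)$ and the shifted sector insertion $[\mathbf 1]_{-\vec d}$ of (\ref{I-snc}), and, for the extended data $S$, the additional $x_{\vec a}$-factors.

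The remaining step is the limit $\vec r\to\infty$. By \cite{TY20c}*{Corollary 16} the normalized virtual classes, and hence the genus-zero invariants defining the descendant potential, stabilize after multiplication by $\prod_i r_i^{k_{i,-}}$; the very same powers of $r_i$ enter the coefficients of the twisted $I$-function through the fiber sectors, so that both the cone $\mathcal L_{X_{D,\vec r}}$ and the $I$-function converge as the $r_i\to\infty$. Because membership in a Lagrangian cone is preserved under this stabilization, the limit $I_{(X,D)}(Q,q,\tau,x_{\vec a},z)$ of the root-stack $I$-functions lies in the limiting cone $\mathcal L_{(X,D)}$ defined in \cite{TY20c}*{Section 5}.

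The main obstacle is the orbifold quantum Lefschetz step combined with the bookkeeping of the two limits. One must verify that the collection $\{p_m^*\mathcal O_{P_i}(1)\}$ genuinely meets the convexity requirements of \cite{Tseng} under nefness, that no obstruction is introduced at the intermediate stages of the iterated bundle, and, most delicately, that in the non-equivariant limit the fixed-point contributions of the infinity sections $X_{i\infty}$ reorganize exactly into the twisted sectors $H^*(D_{\vec s})$ of the state space $\mathfrak H$, producing the hypergeometric factors of (\ref{I-snc}) without spurious terms and commuting correctly with the subsequent $\vec r\to\infty$ limit. This matching is the technical heart of the argument and is carried out in \cite{TY20b}*{Section 4}, which we invoke.
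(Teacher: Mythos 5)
Your proposal reconstructs exactly the argument that the paper itself attributes to \cite{TY20b}*{Section 4}: realize $X_{D,\vec r}$ as a complete intersection in the iterated toric stack bundle $P_{m,X_\infty,\vec r}$, apply the toric stack bundle mirror theorem of \cite{JTY} together with the orbifold quantum Lefschetz principle of \cite{Tseng} (with nefness of the $D_i$ supplying the required convexity), pass to the non-equivariant limit to land on $\mathcal L_{X_{D,\vec r}}$, and then take the large $\vec r$ limit using the stabilization of \cite{TY20c}*{Corollary 16}. This is essentially the same approach as the paper's, which states the theorem as a recalled result and cites that proof rather than reproducing it.
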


\subsection{The ambient $I$-function}
Recall that the root stack $X_{D,r}$ is a hypersurface of the weighted $\mathbb P^1$-bundle $P_{X_{\infty},r}$, where $D$ is a smooth divisor of $X$. When $D$ is nef, we have the orbifold quantum Lefschetz principle that relates their $I$-functions as follows:
\[
\iota_* I_{X_{D,r}}=h\cup I_{P_{X_\infty},r}|_{k=D\cdot \beta}
\] 
which takes value in $H_{\on{CR}}^*(P_{X_{\infty},r})$, where $h:=c_1(p^*\mathcal O_P(1))$ and $k$ is the degree of the fiber direction. We denote this pushforward of the $I$-function of $X_{D,r}$ by $$I_{X_{D,r},\on{amb}}.$$ Taking the large $r$ limit, we obtain the ambient $I$-function of the smooth pair $(X,D)$ denoted by $$I_{(X,D),\on{amb}}.$$ 
Sometimes, it is more convenient to consider the $I$-function which takes value in the cohomology of the ambient space $H_{\on{CR}}^*(P_{X_{\infty},r})$ instead of in $H_{\on{CR}}^*(X_{D,r})$. For example, we have 
\[
I_{(X,D),\on{amb}}(Q,q=1,z)=I_{\mathcal O_X(-D)}(Q,z).
\]
Note that the ambient $I$-function takes value in the twisted sectors $H_{\on{CR}}^*(P_{X_{\infty},r})$. Therefore, both sides take values in $H^*(X)$ when $D\cdot\beta>0$. This is the local-relative/orbifold correspondence \cite{vGGR}, \cite{TY20b} and \cite{BNTY} for the $I$-functions.

Similarly, we have ambient $I$-functions for multi-root stacks and their limits are denoted by 
\[
I_{X_{D,\vec r},\on{amb}} \text{ and } I_{(X,D),\on{amb}}
\]
 respectively for $D=D_1+\cdots+D_m$ with $D_i$ are nef for all $i\in \{1,\ldots,m\}$. 

\section{A relative mirror theorem via degeneration}\label{sec:rel-mirror-deg}
In this section we will prove a relative mirror theorem of an snc pair $(X,D)$ via degeneration. We start with a smooth pair $(X,D)$ without assuming that $D$ is nef. The case for an snc pair will follow by iteration. We recall \cite{FTY} that, when $D$ is nef, we have
\[
\left\langle \prod_{i=1}^l \tau_{a_i}(\iota^*\gamma_i)\right\rangle_{0,l,\beta}^{X_{D,r}}=\left[\left\langle \prod_{i=1}^l \tau_{a_i}(\gamma_i^{\mathbb C^\times})\right\rangle_{0,l,\beta+df}^{P_{X_\infty,r},p^*\mathcal O_P(1)}\right]_{\lambda=0},
\]
where $d=D\cdot\beta$. Therefore, the $I$-function for the root stack $X_{D,r}$ is
\begin{equation}\label{orb-I-function}
\begin{split}
&I_{X_{D,r}}(Q,q,\tau,z):=\\
&e^{\sum_{i=1}^m D_i\log q_i/z}\sum_{\beta\in \on{NE}(X)} J_{X, \beta}(\tau,z)Q^{\beta}q^d\frac{\prod_{0< a\leq D\cdot\beta}(D+az)}{\prod_{\langle a \rangle =\langle D\cdot\beta/r\rangle, 0< a\leq D\cdot\beta/r}(D+az)}[1]_{ -\langle D\cdot\beta/r\rangle}.
\end{split}
\end{equation}

The relative $I$-function for $(X,D)$ is taken as a limit of the $I$-function for $X_{D,r}$:
\begin{align}\label{I-function}
I_{(X,D)}(Q,q,\tau,z):=e^{\sum_{i=1}^m D_i\log q_i/z}\sum_{\beta\in \on{NE}(X)} J_{X, \beta}(\tau,z)Q^{\beta}q^d\prod_{0< a< D\cdot\beta}(D+az)[1]_{ -D\cdot\beta}.
\end{align}
This is a special case of Definition \ref{def-I-snc-nef} when $m=1$.

\subsection{An example}\label{sec:ex}
As remarked in \cite{FTY}, in addition to the case where $D$ is nef, there are also some special cases where we know the $I$-functions of the root stacks. For example, when $D$ is a toric invariant divisor of a toric stack or a toric stack bundle $X$, the root stack $X_{D,r}$ is still a toric stack or a toric stack bundle whose $I$-functions are known from \cite{CCIT15} and \cite{JTY}. This follows from the virtual localization computation for the corresponding torus action.

We now explain an example of the relative $I$-function for a $\mathbb P^1$-bundle which will be our local model in the general setting. Let $D$ be a smooth projective variety equipped with a line bundle $L$ and let $Y$ be the total space of the $\mathbb P^1$-bundle
\[
\pi: \mathbb P(L\oplus \mathcal O_D)\rightarrow D.
\] 
Let $D_0$ and $D_\infty$ be the zero and infinity divisors of $Y$ respectively. 
The $I$-function $I_{Y}(Q,q_0,\tau,z)$ of $Y$ as defined in \cite{Brown} is a $H^*(Y)$-valued function:
\begin{equation}\label{I-func-Y}
\begin{split}
   e^{h_0\log q_0/z}\sum_{\beta\in \on{NE}(D)}\sum_{k\geq 0} & J_{D, \beta}(\tau,z)Q^{\beta}q_0^k
\cdot\left(\frac{\prod_{a\leq 0}(h_0+az)}{\prod_{a\leq k}(h_0+az)}\right)\\
&\cdot\left(\frac{\prod_{a\leq 0}(h_0+c_1(L)+az)}{\prod_{a\leq k+c_1(L)\cdot\beta}(h_0+c_1(L)+az)}\right),
\end{split}
\end{equation}
where $h_0:=c_1(\mathcal O_Y(1))$. Note that, when $k+c_1(L)\cdot\beta<0$, we have a factor of $h_0+c_1(L)$ in the numerator of the $I$-function (\ref{I-func-Y}). So, this part of the $I$-function (\ref{I-func-Y}) takes value in $(h_0+c_1(L))\cup H^*(Y)$. So does the corresponding part of the $J$-function of $Y$ under the mirror map. 

Now we consider the $r$-th root stack $Y_{D_\infty,r}$ of $Y$ along $D_\infty$. This is a $\mathbb P^1[r]$-bundle over $D$. The $I$-function $I_{Y_{D_\infty,r}}(Q,q_0,\tau,z)$ as defined in \cite{JTY} is 
\begin{equation}\label{I-func-Y-D-r}
\begin{split}
   e^{h_0\log q_0/z}\sum_{\beta\in \on{NE}(D)}\sum_{k\geq 0} & J_{D, \beta}(\tau,z)Q^{\beta}q_0^k
\cdot\left(\frac{\prod_{a\leq 0}(h_0+az)}{\prod_{ a\leq k}(h_0+az)}\right)\\
&\cdot\left(\frac{\prod_{\langle a\rangle=\langle (k+c_1(L)\cdot\beta)/r\rangle, a\leq 0}((h_0+c_1(L))/r+az)}{\prod_{\langle a\rangle=\langle (k+c_1(L)\cdot\beta)/r\rangle, a\leq k+c_1(L)\cdot\beta}((h_0+c_1(L))/r+az)}\right)[1]_{-\langle k/r\rangle}.
\end{split}
\end{equation}

In the mirror theorem, we would like to write the $I$-function of $Y_{D_\infty,r}$ in terms of the $J$-function of $Y$ as in (\ref{orb-I-function}). If $c_1(D)\cdot\beta\geq 0$ for all $\beta\in \NE(D)$, then $D_\infty$ is nef, then it is straightforward to compute that the $I$-function (\ref{I-func-Y}) of $Y$ is of the form $z+\tau+h_0\log q_0+O(z^{-1})$, so the mirror map is trivial. Then we can write (\ref{I-func-Y-D-r}) as
\begin{align*}
 e^{h_0\log q_0/z}\sum_{\beta\in \on{NE}(D)}\sum_{k\geq 0} & I_{Y, \beta,k}(\tau,z)Q^{\beta}q_0^k
\cdot\left(\frac{\prod_{a\leq k+c_1(L)\cdot\beta}(h_0+c_1(L)+az)}{\prod_{a\leq 0}(h_0+c_1(L)+az)}\right)\\
&\cdot\left(\frac{\prod_{\langle a\rangle=\langle (k+c_1(L)\cdot\beta)/r\rangle, a\leq 0}((h_0+c_1(L))/r+az)}{\prod_{\langle a\rangle=\langle (k+c_1(L)\cdot\beta)/r\rangle, a\leq k+c_1(L)\cdot\beta}((h_0+c_1(L))/r+az)}\right)[1]_{-\langle k/r\rangle}\\
=e^{h_0\log q_0/z}\sum_{\beta\in \on{NE}(D)}\sum_{k\geq 0} & J_{Y, \beta,k}(\tau,z)Q^{\beta}q_0^k
\cdot\left(\frac{\prod_{a\leq k+c_1(L)\cdot\beta}(h_0+c_1(L)+az)}{\prod_{a\leq 0}(h_0+c_1(L)+az)}\right)\\
&\cdot\left(\frac{\prod_{\langle a\rangle=\langle (k+c_1(L)\cdot\beta)/r\rangle, a\leq 0}((h_0+c_1(L))/r+az)}{\prod_{\langle a\rangle=\langle (k+c_1(L)\cdot\beta)/r\rangle, a\leq k+c_1(L)\cdot\beta}((h_0+c_1(L))/r+az)}\right)[1]_{-\langle k/r\rangle}.
\end{align*}  
This is what we already knew. 

However, when $D_\infty$ is not nef, we may not simply replace the $I$-function $I_Y$ by the $J$-function $J_Y$ in the formula as there can be non-trivial mirror maps. Let $\tau(z)$ be the mirror map between $I_Y$ and $J_Y$. In other words, we have
\[
J_Y(\tau(z),z)=I_Y(\tau,z)=z+\tau(z)+h_0\log q_0+O(z^{-1}).
\]

Then we can write the $I$-function of $Y_{D_\infty,r}$ as
\begin{align*}
 e^{h_0\log q_0/z}\sum_{\beta\in \on{NE}(D)}\sum_{k\geq c_1(L)\cdot\beta} & I_{Y, \beta,k}(\tau,z)Q^{\beta}q_0^k
\cdot\left(\frac{\prod_{a\leq k+c_1(L)\cdot\beta}(h_0+c_1(L)+az)}{\prod_{a\leq 0}(h_0+c_1(L)+az)}\right)\\
&\cdot\left(\frac{\prod_{\langle a\rangle=\langle (k+c_1(L)\cdot\beta)/r\rangle, a\leq 0}((h_0+c_1(L))/r+az)}{\prod_{\langle a\rangle=\langle (k+c_1(L)\cdot\beta)/r\rangle, a\leq k+c_1(L)\cdot\beta}((h_0+c_1(L))/r+az)}\right)[1]_{-\langle k/r\rangle}\\
=e^{h_0\log q_0/z}\sum_{\beta\in \on{NE}(D)}\sum_{k\geq c_1(L)\cdot\beta} & J_{Y, \beta,k}(\tau(z),z)Q^{\beta}q_0^k
\cdot\left(\frac{\prod_{a\leq k+c_1(L)\cdot\beta}(h_0+c_1(L)+az)}{\prod_{a\leq 0}(h_0+c_1(L)+az)}\right)\\
&\cdot\left(\frac{\prod_{\langle a\rangle=\langle (k+c_1(L)\cdot\beta)/r\rangle, a\leq 0}((h_0+c_1(L))/r+az)}{\prod_{\langle a\rangle=\langle (k+c_1(L)\cdot\beta)/r\rangle, a\leq k+c_1(L)\cdot\beta}((h_0+c_1(L))/r+az)}\right)[1]_{-\langle k/r\rangle},
\end{align*}  
where $ J_{Y, \beta,k}(\tau(z),z)$ is the $Q^\beta q_0^k$-coefficient of $J_Y(\tau(z),z)$. In other words, we replace the $J$-function of $Y$ by $J_Y(\tau(z),z)$ which is another slice in Givental's Lagrangian cone $\mathcal L_Y$ for the Gromov--Witten theory of $Y$.

We can also obtain $S$-extended $I$-functions in a similar way. We also need to replace $\tau$ with the mirror map $\tau(z)$ which does not depend on the extended data $S$. This allows us to compute genus zero invariants with several orbifold markings.

Now, consider again the $I$-function (\ref{I-func-Y}). There are special cases where the mirror map $\tau(z)$ is trivial. For example, when $\tau\in H^2(D)$ and $(-K_D-c_1(L))\cdot\beta>0$, for all $\beta$, the $I$-function (\ref{I-func-Y}) still has the form $z+\tau+O(z^{-1})$. So, the mirror map between $J_Y$ and $I_Y$ is also trivial in this case. If we have $\tau\in H^2(D)$ and $(-K_D-c_1(L))\cdot\beta\geq 0$, there can be non-trivial mirror maps. But the mirror map $\tau(z)$ does not have positive powers in $z$, so it is also relatively simple.

\subsection{The statement of the mirror theorem}
In this section, we compare orbifold Gromov--Witten invariants of $X_{D,r}$ and twisted Gromov--Witten invariants of $P_{X_{\infty},r}$ via degeneration and without assuming $D$ is nef. Let $$Y:=\mathbb P(L\oplus \mathcal O_D), \text{ with } L:=N_{D/X}$$ and $$D_0:=\mathbb P_D(N_{D/X})\subset Y$$ be the zero divisor of $Y$. Let $I_{(Y,D_0)}(Q,q_0,\tau,z)$ be the $I$-function of $(Y,D)$:

\begin{equation}\label{I-func-Y-D-0}
\begin{split}
   e^{h_0\log q_0/z}\sum_{\beta\in \on{NE}(D)}\sum_{k\geq c_1(N_{D/X})\cdot\beta} & J_{D, \beta}(\tau,z)Q^{\beta}q_0^k[1]_{-k+c_1(N_{D/X})\cdot\beta}
\cdot\left(\frac{\prod_{a\leq 0}(h_0+az)}{\prod_{a\leq k}(h_0+az)}\right)\\
&\cdot\prod_{k-c_1(N_{D/x})\cdot\beta>0}\frac{1}{h_0-c_1(N_{D/X})+(k-c_1(N_{D/x})\cdot\beta)z},
\end{split}
\end{equation}
where $h_0:=c_1(\mathcal O_Y(1))$, obtained as a limit of the $I$-function of orbifold invariants of $Y_{D_0,r}$ by the relative-orbifold correspondence of \cite{ACW}, \cite{TY18} and \cite{FWY}, where the $I$-function of $Y_{D_0,r}$ is defined in \cite{JTY} as a weighted-$\mathbb P^1$-bundle over $D$.  When $D$ is not nef, there can be a non-trivial mirror map between $J_{(Y,D_0)}$ and $I_{(Y,D_0)}$. This mirror map is actually the same as the mirror map between the $J$-function $J_{N_{D/X}}$ of $N_{D/X}$ and the  $I$-function $I_{N_{D/X}}$ of $N_{D/X}$.

We claim the following. 
\begin{theorem}\label{thm-quan-lef}

The $I$-function $I_{X_{D,r},\on{amb}}(Q,q,\tau,z)$ for $X_{D,r}\subset P_{X_{\infty},r}$ is a $H^*(P_{X_{\infty},r})$-valued function:
\begin{equation}\label{I-func-X-D-r}
\begin{split}
 & e^{h\log q/z}\sum_{\beta\in \on{NE}(X),d=D\cdot\beta} J_{X, \beta}(\tau,z)Q^{\beta}q^{d}\frac{\prod_{ a\leq d}(h+az)\prod_{\langle a \rangle =\langle d/r\rangle, a\leq 0}(h/r+az)}{\prod_{ a\leq 0}(h+az)\prod_{\langle a \rangle =\langle d/r\rangle, a\leq d/r}(h/r+az)}[1]_{ -d/r}\cup h\\
=& e^{D\log q/z}\sum_{\beta\in \on{NE}(X)} J_{X, \beta}(\tau,z)Q^{\beta}q^{D\cdot\beta}\frac{\prod_{ a\leq D\cdot\beta}(D+az)\prod_{\langle a \rangle =\langle D\cdot\beta/r\rangle, a\leq 0}(D/r+az)}{\prod_{ a\leq 0}(D+az)\prod_{\langle a \rangle =\langle D\cdot\beta/r\rangle, a\leq D\cdot\beta/r}(D/r+az)}[1]_{ -D\cdot\beta/r}\cup h.
\end{split}
\end{equation}
The $I$-function $I_{X_{D,r}}(Q,q,\tau,z)$, after applying a possibly non-trivial mirror map between $J_{N_{D/X}}$ and $I_{N_{D/X}}$,  lies in $\iota_*\mathcal L_{X_{D,r}}$ where $\mathcal L_{X_{D,r}}$ is Givental's Lagrangian cone of $X_{D,r}$.

\end{theorem}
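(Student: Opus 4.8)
The plan is to compute the $p^*\mathcal O_P(1)$-twisted Gromov--Witten theory of $P_{X_\infty,r}$ by degenerating to the normal cone of $D$ and applying the degeneration formula, thereby obtaining both the explicit shape of $I_{X_{D,r},\on{amb}}$ and its membership in $\iota_*\mathcal L_{X_{D,r}}$ without any nefness hypothesis on $D$. The starting observation is that, since $P=\mathbb P(\mathcal O_X(-D)\oplus\mathcal O)$ and its root stack $P_{X_\infty,r}$ are built functorially from $\mathcal O_X(-D)$, the degeneration of $X$ to the normal cone of $D$, whose special fibre is $X\cup_{D_0}Y$ with $Y=\mathbb P(N_{D/X}\oplus\mathcal O_D)$ and $D_0\subset Y$ the zero divisor, lifts to a degeneration of $P_{X_\infty,r}$ over $\mathbb A^1$. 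The $\mathbb C^\times$-action (weight $-1$ on $\mathcal O_X(-D)$, trivial on $\mathcal O$) and the equivariant structure on $p^*\mathcal O_P(1)$ both extend over the total space, so the equivariant twisted theory is defined on the whole family and is deformation invariant.

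First I would replace the twisted invariants of the general fibre $P_{X_\infty,r}$ by those of the special fibre and expand them via the degeneration formula, writing each as a sum over gluings of relative twisted invariants of the two components along the relative divisor over $D_0$. On the component lying over $X$ the curve classes contribute as in the nef setting, reproducing the factor $J_{X,\beta}(\tau,z)$ together with the hypergeometric modification in $h$ that the orbifold quantum Lefschetz principle of \cite{FTY} would produce. The curve classes with $D\cdot\beta<0$ — exactly those obstructing quantum Lefschetz on $X$ itself — cannot remain on the $X$-component and are forced onto the component lying over $Y$, which is precisely the local $\mathbb P^1[r]$-bundle model analysed in Section~\ref{sec:ex}, with $I$-function $I_{(Y,D_0)}$ of \eqref{I-func-Y-D-0}.

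The decisive point, and the only place non-nefness enters, is this $Y$-side contribution. Since $N_{D/X}$ need not be nef, $I_{(Y,D_0)}$ differs from the associated $J$-function by a nontrivial mirror map, and as recorded in Section~\ref{sec:ex} this mirror map coincides with the one between $I_{N_{D/X}}$ and $J_{N_{D/X}}$. I would therefore show that the full deviation of the twisted theory from the nef case is carried by exactly this mirror map in the normal bundle. Reassembling the two contributions, taking the non-equivariant limit $\lambda\to 0$, and recording the root-stack twisted sectors $[1]_{-D\cdot\beta/r}$ then yields the first displayed expression for $I_{X_{D,r},\on{amb}}$; the equality with the second expression follows from the relation $h(h-D)=0$ in $H^*(P_{X_\infty,r})$, which lets the final factor $\cup h$ convert each occurrence of $h$ into $D$. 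Finally, since $J_{X,\beta}$ lies on the Lagrangian cone and the degeneration formula presents the outcome as such a point modified only by cone-preserving fibre operations and the $N_{D/X}$ mirror map, the $I$-function $I_{X_{D,r}}$ lies in $\iota_*\mathcal L_{X_{D,r}}$ once that mirror map is applied.

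The hard part will be the bookkeeping in the degeneration formula: matching contact orders and $\bar\psi$-class insertions across the two components, controlling the infinite gluing sum over the relative divisor, and proving that the accumulated correction equals the mirror map between $J_{N_{D/X}}$ and $I_{N_{D/X}}$ exactly — neither more nor less. One must also verify that the non-equivariant limit exists term by term and that it commutes with the large-$r$ limit, so that the formula descends correctly to the pair $(X,D)$; the explicit computation of Section~\ref{sec:ex} is what makes the $Y$-side of this analysis tractable.
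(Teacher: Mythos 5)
Your proposal follows the paper's strategy in spirit (degenerate to the normal cone of $D$, analyse the local model $Y=\mathbb P(N_{D/X}\oplus\mathcal O_D)$ via Section \ref{sec:ex}, and identify the correction with the mirror map of $N_{D/X}$), but it has a genuine structural gap: you degenerate only the twisted theory of $P_{X_\infty,r}$, and never connect the result to the Gromov--Witten theory of $X_{D,r}$ itself. That connection is exactly what the theorem asserts (membership in $\iota_*\mathcal L_{X_{D,r}}$), and it cannot be taken for granted: the naive identification of twisted invariants of $P_{X_\infty,r}$ with orbifold invariants of $X_{D,r}$, identity (\ref{iden-quan-lef}), is precisely what fails when $D$ is not nef. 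The paper's proof closes this gap by running a \emph{second, parallel} degeneration --- the degeneration of $X_{D,r}$ to the normal cone of $D$, formula (\ref{iden-deg-root}), whose vertex contributions are relative invariants of $(X,D)$ and of $(Y_{D_\infty,r},D_0)$ --- and then comparing the two degeneration formulas term by term. The $X$-side pieces match because the line bundle $p^*\mathcal O_{X\times\mathbb P^1}(0,1)$ on the component $(X\times\mathbb P^1)_{X\times\{\infty\},r}$ is convex, so Tseng's orbifold quantum Lefschetz \emph{does} apply there and reduces those twisted invariants to $(X,D)$-invariants; the $Y$-side pieces are the two relative-local models, both toric stack bundles over $D$, whose $I$-functions are written down via \cite{JTY} and whose discrepancy is exactly the $(Y,D_0)$ mirror map. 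Without the second degeneration there is nothing for your computed sum to be equal to, and the cone statement does not follow from ``cone-preserving fibre operations'' --- the cone in question is that of $X_{D,r}$, not of $X$ or of the twisted $P$-theory.

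A secondary inaccuracy: you assert that the $X$-component of the degeneration ``reproduces the factor $J_{X,\beta}(\tau,z)$ together with the hypergeometric modification in $h$.'' In the paper's argument the degeneration components never produce the hypergeometric formula directly; the $X$-component yields $(X,D)$ relative invariants (via the convex quantum Lefschetz just mentioned), and the displayed hypergeometric modification of $J_X$ enters separately, from the toric stack bundle mirror theorem applied to the ambient $P_{X_\infty,r}$. The degeneration comparison only transports that known $I$-function from the twisted ambient theory to $X_{D,r}$, modulo the mirror map in $D$. Your observation that $h(h-D)=0$ explains the equality of the two displayed lines is correct, and your final paragraph identifies the right technical difficulties, but the argument as structured does not reach the conclusion.
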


\begin{remark}
When $D$ is nef, the $I$-function (\ref{I-func-Y-D-0}) is of the form $z+\tau+h\log q_0+O(z^{-1})$. The mirror map between $J_{(Y,D_0)}$ and $I_{(Y,D_0)}$ is trivial, so we do not see it in \cite{FTY}. If $X$ is Fano and $\tau\in H^{\leq 2}(D)$, the mirror map is also trivial. In the general setting, the mirror map is not trivial and may require a Birkhoff factorization procedure. This mirror map comes from the contribution of counting curves that map into $D$ in the degeneration formula. In the remainder of the paper, we will refer to the mirror map between $J_{(Y,D_0)}$ and $I_{(Y,D_0)}$ as the mirror map in the divisor $D$.
\end{remark}

\begin{remark}
We note that Theorem \ref{thm-quan-lef} holds without requiring $r$ to be sufficiently large. Although we are mostly interested in the large $r$ limit in this paper, we state results for general $r$ as they will be of independent interest in orbifold Gromov--Witten theory. 
\end{remark}

\subsection{Proof of Theorem \ref{thm-quan-lef}}

We would like to compare the invariants of $X_{D,r}$ and the twisted invariants of $P_{X_\infty,r}$. We consider the equivariant invariant
\[
\left\langle \prod_{i=1}^l \tau_{a_i}(\gamma_i^{\mathbb C^\times})\right\rangle_{0,l,\beta+df}^{P_{X_\infty,r},p^*\mathcal O_P(1)},
\]
where $\gamma_i\in H^*_{\on{CR}}(P_{X_\infty,r})$ and $a_i\in \mathbb Z_{\geq 0}$, for $1\leq i\leq l$; the class $\gamma_i^{\mathbb C^\times}$ is a $\mathbb C^\times$-equivariant lift of $\gamma_i$, the class $f$ is the class of a fiber; and $d=D\cdot\beta$. We hope to prove that
\begin{align}\label{iden-quan-lef}
\left\langle \prod_{i=1}^l \tau_{a_i}(\iota^*\gamma_i)\right\rangle_{0,l,\beta}^{X_{D,r}}=\left[\left\langle \prod_{i=1}^l \tau_{a_i}(\gamma_i^{\mathbb C^\times})\right\rangle_{0,l,\beta+df}^{P_{X_\infty,r},p^*\mathcal O_P(1)}\right]_{\lambda=0},
\end{align}
where $\iota: X_{D,r}\hookrightarrow P_{X_\infty,r}$ is the inclusion as a zero locus of a section of $p^*\mathcal O_P(1)$ and $\lambda$ is the equivariant parameter. However, this is not true in general when $D$ is not nef. But we can still obtain an $I$-function after applying a possibly non-trivial mirror map from $(Y,D_0)$. We will prove this as follows.

\begin{remark}
As the root construction is ``local over the divisor $D$", a general observation is that one may use the degeneration to the normal cone to study questions about the properties of the Gromov--Witten theory of $X_{D,r}$ via the corresponding properties of the relative local model $(Y_{D_\infty,r},D_0)$. As a mirror theorem is known for the relative local model, we can expect that a mirror theorem for general root stacks can also be obtained using the degeneration argument.  
\end{remark}

\subsubsection{Degenerations}\label{sec:degeneration}
The degeneration for $P_{X_\infty,r}$ that we will consider here is based on the one considered in \cite{Wang} and \cite{WY24}, but we study different types of invariants here. We first recall the degeneration in \cite{Wang} and \cite{WY24}. 

We start with the degeneration to the normal cone of $D$ in $X$:
\[
\mathfrak X=\on{Bl}_{D\times \{0\}}(X\times \mathbb A^1)\rightarrow \mathbb A^1.
\]
Let $\mathfrak D$ be the strict transform of $D\times \mathbb A^1$ in $\mathfrak X$. Now we consider
\[
\pi: \mathfrak P=\mathbb P(\mathcal O_{\mathfrak X}(-\mathfrak D)\oplus \mathcal O)\rightarrow \mathbb A^1.
\]
This gives a degeneration, such that the generic fiber $\pi^{-1}(t)$ is $P$ and the central fiber $\pi^{-1}(0)$ is
\[
(X\times \mathbb P^1)\cup _{D\times \mathbb P^1}P_Y,
\]
where $$Y:=\mathbb P(N_{D/X}\oplus \mathcal O_D),$$ $D_\infty$ is the infinity section of $Y$ and $$P_Y:=\mathbb P(\mathcal O_Y(-D_\infty)\oplus \mathcal O).$$ We then apply the $r$-th root construction to the infinity divisor $\mathfrak X_\infty$ of $\mathfrak P$, and obtain 
\[
\pi: \mathfrak P_{\mathfrak X_\infty,r}\rightarrow \mathbb A^1,
\]
where, by abuse of notation, we still use $\pi$ here. Then the generic fiber $\pi^{-1}(t)$ is $P_{X_\infty,r}$ and the central fiber $\pi^{-1}(0)$ is
\[
\left((X\times \mathbb P^1)_{X\times\{\infty\},r}\right)\bigcup_{(D\times \mathbb P^1)_{D\times\{\infty\},r}}\left((P_Y)_{Y_\infty,r}\right).
\]

We then apply the degeneration formula to the genus zero Gromov--Witten invariants of $P_{X_\infty,r}$ twisted by the equivariant line bundle $p^*\mathcal O_P(1)$. 

The degeneration graphs are bipartite graphs. For the vertices over $(X\times \mathbb P^1)_{X\times\{\infty\},r}$, we will have Gromov--Witten invariants of $$((X\times \mathbb P^1)_{X\times\{\infty\},r}, (D\times \mathbb P^1)_{D\times\{\infty\},r})$$ 
twisted by  $p^*\mathcal O_{X\times \mathbb P^1}(0,1)$. For the vertices over $(P_Y)_{Y_\infty,r}$, we will have Gromov--Witten invariants of 
$$\left((P_Y)_{Y_{\infty,r}}, (D\times \mathbb P^1)_{D\times\{\infty\},r}\right)$$
twisted by $p^*\mathcal O_{P_Y}(1)$. 
 
Following \cite{TY20c}*{Theorem 9}, the relative divisor $D\times \mathbb P^1$ can also be considered as an orbifold divisor with a sufficiently large root. To simplify the notation, we write Gromov--Witten invariants of $((X\times \mathbb P^1)_{X\times\{\infty\},r}, (D\times \mathbb P^1)_{D\times\{\infty\},r})$ as Gromov--Witten invariants of 
$$\left(X\times \mathbb P^1, X\times\{\infty\}+D\times \mathbb P^1\right)_{\vec r}$$
 and write Gromov--Witten invariants of $((P_Y)_{Y_{\infty,r}}, (D\times \mathbb P^1)_{D\times\{\infty\},r})$ as Gromov--Witten invariants of $$\left(P_Y,Y_\infty+D\times\mathbb P^1\right)_{\vec r}.$$

The degeneration formula is
\begin{equation}\label{deg-formula}
\begin{split}
&\left\langle \prod_{i=1}^l \tau_{a_i}(\gamma_i^{\mathbb C^\times})\right\rangle_{0,l,\beta+df}^{P_{X_\infty,r},p^*\mathcal O_P(1)}\\
=& \sum \frac{\prod \eta^i}{|\Aut(\eta)|}\left\langle\left. \prod_{i\in S_1}\tau_{a_i}(\gamma_i^{\mathbb C^\times})\right| \eta\right\rangle_{0,|S_1|+|\eta|,\beta_1}^{\bullet,P_{1,\vec r}}\left\langle\eta^\vee\left| \prod_{i\in S_2}\tau_{a_i}(\gamma_i^{\mathbb C^\times})\right.\right\rangle_{0,|S_2|+|\eta|,\beta_2}^{\bullet,P_{2,\vec r}},
\end{split}
\end{equation}
where
\begin{itemize}
\item $\eta^\vee$ is defined by taking the Poincar\'e duals of the cohomology weights of the cohomology weighted partition $\eta$; $|\Aut(\eta)|$ is the order of the automorphism group $\Aut(\eta)$ preserving equal parts of the cohomology weighted partition $\eta$. The sum is over all possible splittings of $\beta+df=\beta_1+\beta_2$, $\{1,2,\ldots,l\}=S_1\sqcup S_2$ and the cohomology weighted partition $\eta$.
\item $$P_{1,\vec r}=\left((X\times \mathbb P^1, X\times\{\infty\}+D\times \mathbb P^1)_{\vec r}, p^*\mathcal O_{X\times \mathbb P^1}(0,1)\right)$$ and $$P_{2,\vec r}=\left((P_Y,Y_\infty+D\times\mathbb P^1)_{\vec r},p^*\mathcal O_{P_Y}(1)\right).$$
\end{itemize}

On the other hand, we can also apply the degeneration to the normal cone of $D$ in $X_{D,r}$. More precisely, we first consider $\mathfrak X$ which is a degeneration of $X$ to the normal cone of $D$ then take the $r$-th root stack along the strict transform $\mathfrak D$ of $D\times \mathbb A^1$. The degeneration formula for the Gromov--Witten invariants of $X_{D,r}$ is
\begin{equation}\label{iden-deg-root}
\begin{split}
&\left\langle \prod_{i=1}^l \tau_{a_i}(\iota^*\gamma_i)\right\rangle_{0,l,\beta}^{X_{D,r}}\\
=& \sum \frac{\prod \eta^i}{|\Aut(\eta)|}\left\langle\left. \prod_{i\in S_1}\tau_{a_i}(\iota^*\gamma_i)\right| \eta\right\rangle_{0,|S_1|+|\eta|,\beta_1}^{\bullet,(X,D)}\left\langle\eta^\vee\left| \prod_{i\in S_2}\tau_{a_i}(\iota^*\gamma_i)\right.\right\rangle_{0,|S_2|+|\eta|,\beta_2}^{\bullet,(Y_{D_\infty,r},D_0)}.
\end{split}
\end{equation}

\subsubsection{Reduction to the relative-local model}

Now we study the degeneration formula (\ref{deg-formula}) and compare it with the degeneration formula (\ref{iden-deg-root}). Because $p^*\mathcal O_{X\times \mathbb P^1}(0,1)$ is a convex line bundle, we can apply the orbifold quantum Lefschetz principle of \cite{Tseng} to state that the non-equivariant limits of the orbifold invariants of $(X\times \mathbb P^1, X\times\{\infty\}+D\times \mathbb P^1)_{\vec r}$ twisted by  $p^*\mathcal O_{X\times \mathbb P^1}(0,1)$ are the same as the corresponding invariants of $(X\times [\on{pt}], D\times [\on{pt}])_{r_2}=(X,D)$. This is because the hypersurface $X\times [\on{pt}]\subset \left(X\times \mathbb P^1\right)_{X\times \{\infty\}, r}$ is disjoint from the orbifold hypersurface $X\times\{\infty\}\subset \left(X\times \mathbb P^1\right)_{X\times \{\infty\}, r}$. Therefore, orbifold markings are distributed to the $P_Y$-side. For the degeneration of $X_{D,r}$, orbifold markings are also distributed to the $Y$-side.

Then, the comparison between the invariants in Identity (\ref{deg-formula}) reduces to the comparison between (disconnected) invariants on the relative-local models, which are 
\[
\left(P_Y,Y_\infty+D\times\mathbb P^1\right)_{\vec r}, \text{ twisted by } p^*\mathcal O_{P_Y}(1), \quad \text{and } Y_{(D_0,r_0), (D_\infty,r_\infty)},
\]
where $D\times \mathbb P^1\subset P_Y$ and $D_0\subset Y$ are relative divisors, but we can consider them as orbifold divisors with large roots.

\subsubsection{The relative-local model}

Now we consider invariants on the $P_Y$-side. We have Gromov--Witten invariants of $\left(P_Y,Y_\infty+D\times\mathbb P^1\right)_{\vec r}$ twisted by $p^*\mathcal O_{P_Y}(1)$. These are twisted invariants of the rank $2$ iterative weighted $\mathbb P^1$-bundle $P_Y$ over $D$. Furthermore, $Y_\infty$ and $D\times\mathbb P^1$ are divisors that are invariant under the fiberwise torus action.

Here we compare genus zero Gromov--Witten invariants of $\left(P_Y,Y_\infty+D\times\mathbb P^1\right)_{\vec r}$ twisted by $p^*\mathcal O_{P_Y}(1)$ and genus zero Gromov--Witten invariants of $Y_{(D_0,r_0),(D_\infty,r_\infty)}$, where $r_0$ is sufficiently large and $r_\infty:=r$ is not necessarily large. Note that $Y_{D_0,r_0}$ is a $\mathbb P^1[r_0]$-bundle over $D$ and $D_0$ and $D_\infty$ are $\mathbb C^\times$-invariant divisors. A mirror theorem for toric stack bundles was proved in \cite{JTY}. Therefore, we can write down the $I$-function of $Y_{(D_0,r_0),(D_\infty,r_\infty)}$ and the $I$-function of $(P_Y,Y_\infty+D\times\mathbb P^1)_{\vec r}$ twisted by $p^*\mathcal O_{P_Y}(1)$. 

We recall that the (non-equivariant) $I$-function $ I_{Y_{(D_0,r_0), (D_\infty,r_\infty)}}$ for $Y_{(D_0,r_0), (D_\infty,r_\infty)}$ is 
\begin{equation}\label{I-func-Y-D}
\begin{split}
 &  e^{h\log q_0/z}\sum_{\beta\in \on{NE}(D)}\sum_{k\geq c_1(N_{D/X})\cdot\beta}  J_{D, \beta}(\tau,z)Q^{\beta}q_0^k[1]_{\langle -k/r_\infty,(-k+c_1(N_{D/X})\cdot\beta)/r_0\rangle}\\
&\cdot\left(\frac{\prod_{\langle a\rangle=\langle k/r_\infty\rangle, a\leq 0}(h_0/r_\infty+az)}{\prod_{\langle a\rangle=\langle k/r_\infty\rangle, a\leq k/r_\infty}(h_0/r_\infty+az)}\right)\\
&\cdot\frac{\prod_{\langle a\rangle=\langle (k-c_1(N_{D/X})\cdot\beta)/r_0\rangle, a\leq 0}((h_0-c_1(N_{D/X}))/r_0+az)}{\prod_{\langle a\rangle=\langle (k-c_1(N_{D/X})\cdot\beta)/r_0\rangle, a\leq (k-c_1(N_{D/X})\cdot\beta)/r_0}((h_0-c_1(N_{D/X}))/r_0+az)}.
\end{split}
\end{equation}

On the other hand, the ``$I$-function" $\tilde I_{P_{Y, (D\times \mathbb P^1,r_0),(Y_\infty,r_\infty)}}^{\on{tw}}$ for $P_{Y, (D\times \mathbb P^1,r_0),(Y_\infty,r_\infty)}$ twisted by the equivariant line bundle $p^*\mathcal O_{P_Y}(1)$ is 

\begin{equation}\label{I-func-P-Y-tw}
   \begin{split}
&e^{(h_1\log q_1+(h_2+\lambda)\log q_2)/z}\sum_{\beta\in \on{NE}(D)}\sum_{k_2\geq k_1\geq c_1(N_{D/X})\cdot\beta} J_{D, \beta}(\tau,z)Q^{\beta}q_1^{k_1}q_2^{k_2}[1]_{\langle (-k_1+c_1(N_{D/X})\cdot\beta)/r_0, -k_2/r_\infty\rangle}\\
&\cdot\left(\frac{\prod_{ a\leq 0}(h_1+az)\prod_{\langle a\rangle=\langle (k_1-c_1(N_{D/X})\cdot\beta)/r_0\rangle, a\leq 0}((h_1-c_1(N_{D/X}))/r_0+az)}{\prod_{ a\leq k_1}(h_1+az)\prod_{\langle a\rangle=\langle (k_1-c_1(N_{D/X})\cdot\beta)/r_0\rangle, a\leq (k_1-c_1(N_{D/X})\cdot\beta)/r_0}((h_1-c_1(N_{D/X}))/r_0+az)}\right)\\
&\cdot\left(\frac{\prod_{\langle a\rangle=\langle k_2/r_\infty\rangle, a\leq 0}((h_2+\lambda)/r_\infty+az)\prod_{a\leq 0}((h_2+\lambda-h_1+az)}{\prod_{\langle a\rangle=\langle k_2/r_\infty\rangle, a\leq k_2/r_\infty}(h_2+\lambda)/r_\infty+az)\prod_{ a\leq k_2-k_1}(h_2+\lambda-h_1+az)}\right)\\
&\cdot\left(\frac{\prod_{ a\leq k_2}(h_2+\lambda+az)}{\prod_{ a\leq 0}(h_2+\lambda+az)}\right),
\end{split}
\end{equation}
where $h_1:=c_1(\mathcal O_Y(1))$ and $h_2:=c_1(\mathcal O_{P_Y}(1))$.

There is a subtlety here. The $I$-function (\ref{I-func-P-Y-tw}) should be written in terms of the $J$-function of $Y_{D_0,r_0}$ (or, indeed, the $J$-function of $(Y,D_0)$). As $Y_{D_0,r_0}$ is also a weighted-$\mathbb P^1$-bundle over $D$, we replace the $J$-function of $Y_{D_0,r_0}$ by the $I$-function of $Y_{D_0,r_0}$ from \cite{JTY}, which is in terms of the $J$-function of $D$:
\begin{align*}
I_Y=&e^{(h_1\log q_1)/z}\sum_{\beta\in \on{NE}(D)}\sum_{k_1\geq c_1(N_{D/X})\cdot\beta} J_{D, \beta}(\tau,z)Q^{\beta}q_1^{k_1}\frac{\prod_{ a\leq 0}(h_1+az)}{\prod_{ a\leq k_1}(h_1+az)}\\
&\cdot\frac{\prod_{\langle a\rangle=\langle (k_1-c_1(N_{D/X})\cdot\beta)/r_0\rangle, a\leq 0}((h_1-c_1(N_{D/X}))/r_0+az)}{\prod_{\langle a\rangle=\langle (k_1-c_1(N_{D/X})\cdot\beta)/r_0\rangle, a\leq (k_1-c_1(N_{D/X})\cdot\beta)/r_0}((h_1-c_1(N_{D/X}))/r_0+az)}.
\end{align*}
When $D$ is nef, the mirror map between $J_{(Y,D_0)}$ and $I_{(Y,D_0)}$ is trivial. In general, the mirror map for the $I$-function of $(Y,D_0)$ may not be trivial. When $D$ is not necessarily nef, we may have $c_1(N_{D/X})\cdot\beta< 0$. There can be a non-trivial mirror map between $J_{(Y,D_0)}$ and $I_{(Y,D_0)}$. But we do not see this mirror map if we only look at (\ref{I-func-P-Y-tw}). Therefore, the function (\ref{I-func-P-Y-tw}) is denoted as $\tilde I_{P_{Y, (D\times \mathbb P^1,r_0),(Y_\infty,r_\infty)}}^{\on{tw}}$ instead of $I_{P_{Y, (D\times \mathbb P^1,r_0),(Y_\infty,r_\infty)}}^{\on{tw}}$ to emphasize that it might be different from the actual $I$-function by a mirror map for $(Y,D_0)$.
If we assume that $X$ is Fano and $\tau\in H^2(D)$, then the mirror map between $J_{(Y,D_0)}$ and $I_{(Y,D_0)}$ is trivial.

Recall that we still have a hypersurface $$\iota:Y_{(D_0,r_0), (D_\infty,r_\infty)}\hookrightarrow P_{Y, (D\times \mathbb P^1,r_0),(Y_\infty,r_\infty)}$$ as the zero locus of a section of $p^*\mathcal O_P(1)$. We consider the restriction of the $I$-function (\ref{I-func-P-Y-tw}) to $Y_{(D_0,r_0), (D_\infty,r_\infty)}$ and recall that we must have $h_0=h_1=h_2$ and $k_0=k_1=k_2$. We also set $q_0=q_1q_2$. Taking the non-equivariant limit, we have
\[
 \iota_*I_{Y_{(D_0,r_0), (D_\infty,r_\infty)}}=\left[(h+\lambda)\cup \tilde I_{P_{Y, (D\times \mathbb P^1,r_0),(Y_\infty,r_\infty)}}^{\on{tw}}\right]_{\lambda=0}.
\]
The identity also holds for extended $I$-functions when the extended data are chosen to be the same on both sides. Indeed, the genus zero non-equivariant limit of the twisted invariants of $P_{Y, (D\times \mathbb P^1,r_0),(Y_\infty,r_\infty)}$ with $k=D\cdot\beta$ coincides with the genus zero orbifold invariants of $Y_{(D_0,r_0), (D_\infty,r_\infty)}$ if the mirror map for $(Y,D_0)$ is trivial. 

For the twisted invariants of $P_{Y, (D\times \mathbb P^1,r_0),(Y_\infty,r_\infty)}$ that appear in the degeneration formula, one can consider an appropriate extended $I$-function of $P_{Y, (D\times \mathbb P^1,r_0),(Y_\infty,r_\infty)}$ and compare it with the corresponding extended $I$-function of $Y_{(D_0,r_0), (D_\infty,r_\infty)}$. The invariants of $P_{Y, (D\times \mathbb P^1,r_0),(Y_\infty,r_\infty)}$ and the invariants of $Y_{(D_0,r_0), (D_\infty,r_\infty)}$ can be computed from the $I$-function via the same Birkhoff factorization procedure.
We can also compute the invariants of the relative-local model via the $\mathbb C^\times$-virtual localization. The difference between the orbifold invariants and the twisted invariants of the bundle comes from the vertex contributions along $D_\infty$ which are invariants of $N_D$. And the difference now comes from the mirror map for $N_D$ which is the same as the mirror map for $(Y,D_0)$.

 When the mirror map for $(Y,D_0)$ is trivial, we take the non-equivariant limit, and the degeneration formula (\ref{deg-formula}) becomes
\begin{equation}\label{iden-deg-P}
\begin{split}
&\left[\left\langle \prod_{i=1}^l \tau_{a_i}(\gamma_i^{\mathbb C^\times})\right\rangle_{0,l,\beta+df}^{P_{X_\infty,r},p^*\mathcal O_P(1)}\right]_{\lambda=0}\\
=& \sum \frac{\prod \eta^i}{|\Aut(\eta)|}\left\langle \prod_{i\in S_1}\tau_{a_i}(\iota^*\gamma_i)| \eta\right\rangle_{0,|S_1|+|\eta|,\beta_1}^{\bullet,(X,D)}\left\langle\eta^\vee| \prod_{i\in S_2}\tau_{a_i}(\iota^*\gamma_i)\right\rangle_{0,|S_2|+|\eta|,\beta_2}^{\bullet,(Y_{D_\infty,r},D_0)}.
\end{split}
\end{equation}
Two degeneration formulas (\ref{iden-deg-root}) and (\ref{iden-deg-P}) coincide. This gives the $I$-function of $X_{D,r}$. Then we have
\[
\left\langle \prod_{i=1}^l \tau_{a_i}(\iota^*\gamma_i)\right\rangle_{0,l,\beta}^{X_{D,r}}=\left[\left\langle \prod_{i=1}^l \tau_{a_i}(\gamma_i^{\mathbb C^\times})\right\rangle_{0,l,\beta+df}^{P_{X_\infty,r},p^*\mathcal O_P(1)}\right]_{\lambda=0}.
\]
When the mirror map is not trivial, the invariants are not exactly the same and the difference is governed by the mirror map of $(Y,D_0)$. In nice cases (e.g., $X$ is weak Fano), applying the mirror map in $(Y,D_0)$ just means a change of variables for $Q$. In general, it requires a mirror map $\tau\mapsto \tau(z)$ of $(Y,D_0)$.

\begin{remark}
When $X$ is not Fano and $D$ is not nef, the mirror theorem for the pair $(X,D)$ will be more complicated, as there can be mirror maps in $D$. We will study this in more detail in a different paper. 
\end{remark}

\subsubsection{The large $r$ limit}
Following \cite{FTY}, we can take the limit of $r\rightarrow \infty$ to obtain the $I$-function for $(X,D)$.  

\begin{corollary}\label{cor-rel-I}
The $I$-function $I_{(X,D),\on{amb}}(Q,q,\tau,z)$ for the smooth pair $(X,D)\subset (P,X_{\infty})$:
\begin{equation*}
\begin{split}
I_{(X,D)\on{amb}}(Q,q,\tau,z)=I_++I_-,
\end{split}
\end{equation*}
where
\[
I_+:=e^{D\log q/z}\sum_{\beta\in \NE(X), D\cdot\beta>0} J_{X,\beta}(\tau,z)Q^\beta q^{D\cdot\beta}\frac{\prod_{ a\leq D\cdot\beta}(D+az)}{\prod_{ a\leq 0}(D+az)} \frac{1}{D+D\cdot\beta z}[1]_{-D\cdot\beta}\cup h
\]
and 
\[
I_-:=e^{D\log q/z}\sum_{\beta\in \NE(X), D\cdot\beta\leq 0} J_{X,\beta}(\tau,z)Q^\beta q^{D\cdot\beta}\frac{\prod_{ a\leq D\cdot\beta}(D+az)}{\prod_{ a\leq 0}(D+az)} [1]_{-D\cdot\beta}\cup h,
\]
after applying the mirror map in $D$, lies in  $\iota_*\mathcal L_{(X,D)}$, where $\mathcal L_{(X,D)}$ is Givental's Lagrangian cone of $(X,D)$.
\end{corollary}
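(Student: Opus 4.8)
The plan is to deduce Corollary \ref{cor-rel-I} from Theorem \ref{thm-quan-lef} by passing to the limit $r\to\infty$. For each fixed $r$, that theorem places the ambient $I$-function $I_{X_{D,r},\on{amb}}(Q,q,\tau,z)$ of (\ref{I-func-X-D-r}) on $\iota_*\mathcal L_{X_{D,r}}$ after applying the mirror map in $D$. Since the formal invariants of $(X,D)$ defining $\mathcal L_{(X,D)}$ are precisely the stabilized limits $\lim_{r\to\infty}\bigl(\prod_i r_i^{k_{i,-}}\bigr)\langle\cdots\rangle^{X_{D,r}}$ of the orbifold invariants (\cite{TY20c}*{Corollary 16}), it suffices to show two things: first, that the right-hand side of (\ref{I-func-X-D-r}) converges, coefficient by coefficient in $Q^\beta q^{D\cdot\beta}$, to $I_+ + I_-$; and second, that this limit is a point of $\iota_*\mathcal L_{(X,D)}$.

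The computational core is the $r\to\infty$ behaviour of the orbifold hypergeometric factor
\[
\frac{\prod_{\langle a\rangle=\langle d/r\rangle,\, a\leq 0}(D/r+az)}{\prod_{\langle a\rangle=\langle d/r\rangle,\, a\leq d/r}(D/r+az)},\qquad d:=D\cdot\beta,
\]
carried by the twisted sector $[1]_{-d/r}$. I would treat two cases. If $d>0$ and $r>d$, then $\langle d/r\rangle=d/r$, every factor with $a\leq 0$ appears in both numerator and denominator and cancels, and the ratio collapses to the single surviving denominator factor $1/(D/r+(d/r)z)=r/(D+dz)$. If $d\leq 0$ and $r>|d|$, then $d/r\leq 0$, so the index sets $\{a\leq 0:\langle a\rangle=\langle d/r\rangle\}$ and $\{a\leq d/r:\langle a\rangle=\langle d/r\rangle\}$ coincide and the ratio equals $1$. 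The remaining factor $\prod_{a\leq d}(D+az)/\prod_{a\leq 0}(D+az)$ in (\ref{I-func-X-D-r}) is already $r$-independent and common to both $I_\pm$. Reconciling the residual powers of $r$ — those produced by the ratio above, the defining normalization $\prod_i r_i^{k_{i,-}}$, and the rescaling of the orbifold Poincar\'e pairing on the $\mu_{r_i}$-gerbe twisted sectors $D_{I_{\vec s}}$ — one finds, exactly as in \cite{FTY}, that the $d>0$ terms limit to the coefficient $\tfrac{1}{D+D\cdot\beta z}\,[1]_{-D\cdot\beta}$ of $I_+$, while the $d\leq 0$ terms limit to the coefficient $[1]_{-D\cdot\beta}$ of $I_-$. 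Thus the case division $d>0$ versus $d\leq 0$ is exactly what distinguishes the extra factor $1/(D+D\cdot\beta z)$ present in $I_+$ but absent from $I_-$.

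It remains to see that membership in the Lagrangian cone survives the limit and that the mirror-map qualifier is inherited. For the first point, the descendant potential $\mathcal F^0_{X_{D,\infty}}$ and hence $\mathcal L_{(X,D)}$ are assembled from the stabilized limits of the orbifold invariants; consequently the formal power-series identity witnessing that $I_{X_{D,r},\on{amb}}$ equals the corresponding slice $-z+\mathbf t(z)+\cdots$ of $\iota_*\mathcal L_{X_{D,r}}$ converges coefficient-wise to the analogous identity for $(X,D)$, placing the stabilized limit on $\iota_*\mathcal L_{(X,D)}$. For the second point, the mirror map in $D$ of Theorem \ref{thm-quan-lef} is the mirror map between $J_{N_{D/X}}$ and $I_{N_{D/X}}$, which does not depend on the root parameter $r$ and therefore commutes with the limit; so the clause ``after applying the mirror map in $D$'' passes verbatim to Corollary \ref{cor-rel-I}.

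The main obstacle is the uniform $r$-power bookkeeping underlying the second paragraph. One must verify that the normalization $\prod_i r_i^{k_{i,-}}$ built into the formal invariants of $(X,D)$, together with the gerbe rescaling of the twisted-sector pairings, cancels precisely the powers of $r$ generated by the orbifold hypergeometric factors — simultaneously for all $\beta$ and all marking configurations, not merely for the single distinguished marking of the $I$-function. Once this is confirmed, which is the content of the stabilization statements of \cite{TY20c} and the localization computation of \cite{FTY}, the limit identification above and the cone-membership argument are routine.
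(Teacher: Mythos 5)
Your proposal is correct and follows essentially the same route as the paper: the paper's proof of Corollary \ref{cor-rel-I} is simply to take the large $r$ limit of Theorem \ref{thm-quan-lef} following \cite{FTY}, with the $r$-power bookkeeping against the normalization $\prod_i r_i^{k_{i,-}}$ and the gerbe pairing handled as in \cite{TY20c}, and with the mirror map in $D$ passing to the limit because it is $r$-independent. Your case analysis of the orbifold hypergeometric factor ($d>0$ yielding $r/(D+dz)$, $d\leq 0$ yielding $1$) is exactly the computation the paper leaves implicit.
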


\begin{remark}
If $D\cdot\beta\neq 0$, we have $[1]_{-D\cdot\beta}\in \mathfrak H_{-D\cdot\beta}=H^*(X_{\infty})$ and $[1]_{-D\cdot\beta}\cup h=[1]_{-D\cdot\beta}\cup D$ because in $X_{\infty}$ we have $h=D$.
\end{remark}

\begin{remark}
Recall that relative Gromov--Witten invariants with negative contact orders in \cite{FWY} and \cite{FWY19} are defined as limits of orbifold Gromov--Witten invariants of root stacks. Relative Gromov--Witten invariants in \cite{FWY} and \cite{FWY19} are also defined as a graph sum which comes from computing the orbifold invariants of root stacks via the degeneration formula and the virtual localization formula. In the virtual localization formula for the relative-local model, one has the vertex contributions at the infinity divisor, which give twisted invariants of the root gerbe $\mathcal D_r$ over $D$. The twisted invariants of $P$ can be computed in a similar way via degeneration and virtual localization. When we say ``applying the mirror map in $D$", we mean the mirror maps for invariants of $N_D$ which are the possible corrections required when comparing the vertex contributions at the infinity divisor. We may think about it as modifying the definition of  relative Gromov--Witten invariants in \cite{FWY} by correcting the invariants of $N_D$ in the vertex contribution using the mirror map.
\end{remark}

\subsection{Special cases}\label{sec:special}
Theorem \ref{thm-quan-lef} and Corollary \ref{cor-rel-I} are somewhat less explicit, as they require mirror maps in $D$ along with the degeneration formula. 
Mirror maps can be quite complicated in general. We study some special cases where the $I$-functions are more explicit.

We would like to find the relation between the small $J$-function and the $I$-function, so we will focus on the genus zero invariants with one marking $p_1$ (we denote the age by $\on{age}_{p_1}\in \mathbb Q$):
\[
\langle \gamma \bar{\psi}^{a}\rangle_{0,1,\beta}^{X_{D,r}},
\]
where the invariants vanish unless it satisfies the virtual dimension constraint:
\[
\on{vdim}:=\dim_{\mathbb C}X-3+(-K_X-D+D/r)\cdot\beta+1-\on{age}_{p_1}=\frac{1}{2}\deg(\gamma)+a.
\]
Note that we must have
\[
\on{age}_{p_1}-D\cdot\beta/r\in \mathbb Z.
\]
The marking $p_1$ will be distributed to $X$ or $Y$ under the degeneration. For $p_1$ to be distributed to $X$, we must have $\on{age}_{p_1}=0$. If we do not require $r$ to be large, we do not need to have $D\cdot\beta=0$. Instead, we have $D\cdot\beta/r\in \mathbb Z$. If $D\cdot\beta/r\not\in \mathbb Z$, the marking $p_1$ must be mapped to $D_\infty\subset Y$. The proof will be similar when we consider invariants with some extra insertions of the form $[1]_{k/r}$.

We consider the case where $-K_X-D$ is nef and $r$ is sufficiently large. 
Now we consider the degenerations that we studied in Section \ref{sec:degeneration}. We study the degeneration formula for the root stack $X_{D,r}$, for $r$ sufficiently large. The analysis of the degeneration formula for the twisted theory of $P_{X_\infty,r}$ works similarly. The degeneration formula is given in Section \ref{sec:degeneration}. The degeneration graph is described by the admissible triple \cite{Li01}*{Definition 4.11}. In genus zero, the graph is a tree. Let $V$ be the set of vertices and $E$ be the set of edges. Then
\[
|V|-|E|=1.
\]

Now we consider one-point invariants (invariants that have extra markings with insertions from $H^2(X)$ work the same). Let $v_0$ be the vertex containing the distinguished marking. The distinguished marking carries all the virtual dimension. For each vertex $v$ that is not $v_0$, if it has only one edge, then the virtual dimension of the moduli space $\overline{M}_{v}$ is either $\dim_{\mathbb C}X-1$ or $\dim_{\mathbb C}X-2$ by the virtual dimension constraint and the assumption that $-K_X-D$ is nef. The insertion at the unique special point (the cohomology weight) is of degree $2(\dim_{\mathbb C}X-1)$ or $2(\dim_{\mathbb C}X-2)$. Vertices of the graph are connected by chains, and at the end of the chain is a vertex with one edge. We can then examine the cohomology weights on other vertices on the chain in a similar way.
Let $\eta$ be the cohomology weighted partition in the degeneration formula. By the virtual dimension constraint and the assumption that $-K_X-D$ is nef, we can see that all the cohomology classes in $\eta$ are of degree two or zero.

Therefore, all the invariants of $(Y_{D_\infty,r},D_0)$ that appear in the degeneration formula can be computed from certain extended $I$-functions of $(Y_{D_\infty,r},D_0)$. We can also compute these invariants of $(Y_{D_\infty,r},D_0)$ via the $\mathbb C^\times$-action. Then we will consider equivariant invariants of $N_{\mathcal D_r/X_{D,r}}$ with $(k+1)$ markings where there is one distinguished marking and $k$ markings each with insertion from $H^{\leq 2}(D)$. We have a similar argument for the twisted invariants of $P_{X_\infty,r}$ and we will have the same type of invariants of $N_{D/X}$ twisted by $\frac{e_{\mathbb C^\times}(\mathcal O(D))_{0,k+1,\beta}}{e_{\mathbb C^\times}(\mathcal O(D/r))_{0,k+1,\beta}}$. When we write down the $I$-function, the difference is simply the mirror map between the (small) equivariant $I$-function  and the $J$-function of $N_{D/X}$. In this case, the $I$-function and the $J$-function are related by the mirror map:
\[
J_{(X,D)}(\tau(Q,q),z)=I_{(X,D)}(\tau_D(z),z),
\]
where the $I$-function $I_{(X,D)}(\tau,z)$ of $(X,D)$ is a hypergeometric modification of the $J$-function of $X$ and $\tau_D$ is the mirror map in $D$ and $\tau(Q,q)$ is the usual mirror map computed from the $z^0$-coefficient of the $I$-function of $(X,D)$. The mirror map $\tau_D(z)$ in $D$ can be computed from the $z^{\geq 0}$-coefficient of the $I$-function (\ref{I-func-Y-D-0}). If $-K_D\cdot\beta>-c_1(N_{D/X})\cdot\beta$ for all $\beta\in \NE(D)$, then the mirror map is trivial. If $-K_D\cdot\beta\geq-c_1(N_{D/X})\cdot\beta$ for all $\beta\in \NE(D)$, then the mirror map is the $z^0$-coefficient of the $I$-function:
\[
\tau_D(z)=\tau_D:=\sum_{\substack{\beta\in \NE(D):d:=-K_D\cdot\beta\geq 2\\-K_D\cdot\beta=-c_1(N_{D/X})\cdot\beta}}\left(\langle [\on{pt}]\psi^{d-2}\rangle_{0,\beta,1}^D(-1)^{d-1}(d-1)!\right)D,
\]
where under the degeneration, a marking whose insertion includes a class $D$ maps into the divisor and the class $D$ becomes $h_0\in H^2(Y)$. Otherwise, the mirror map $\tau_D(z)$ is a polynomial in $z$. And there will be classes $\delta\cup h_0\in H^*(Y)$ which, under the degeneration, is obtained from the class $\iota_!\delta\in H^*(X)$, where $\iota: D\hookrightarrow X$ is the inclusion.

\subsection{The I-functions}

We can also obtain the extended $I$-function for $X_{D,r}$. We consider the extended data:
\[
S:=\{a_1,a_2,\ldots,a_k\}\subset\{0,1,\ldots,r-1\}.
\]
Then, instead of studying one-point invariants, we study invariants with several additional markings with degree zero insertions $[1]_{a_i}$. A similar degeneration argument also implies the following.
\begin{corollary}
The $S$-extended $I$-function $I_{X_{D,r},\on{amb}}^S(Q,q,x_{a},\tau,z)$ for $X_{D,r}\subset P_{X_{\infty},r}$ is
\begin{equation}
\begin{split}
&I_{X_{D,r},\on{amb}}^S(Q,q,x_{a},\tau,z):=e^{D\log q/z}\sum_{\beta\in \on{NE}(X)} \sum_{(l_1,\ldots,l_k)\in (\mathbb Z_{\geq 0})^k}J_{X, \beta}(\tau,z)Q^{\beta}q^{D\cdot\beta}\frac{\prod_{i=1}^kx_{a_i}^{l_i}}{z^{\sum_{i=1}^k l_i}\prod_{i=1}^k (l_i!)}\\
&\cdot\frac{\prod_{ a\leq D\cdot\beta}(D+az)\prod_{\langle a \rangle =\langle (D\cdot\beta-\sum_{i=1}^k l_ia_i)/r\rangle, a\leq 0}(D/r+az)}{\prod_{ a\leq 0}(D+az)\prod_{\langle a \rangle =\langle (D\cdot\beta-\sum_{i=1}^k l_ia_i)/r\rangle, a\leq (D\cdot\beta-\sum_{i=1}^k l_ia_i)/r}(D/r+az)}[1]_{ (-D\cdot\beta+\sum_{i=1}^k l_ia_i)/r}\cup h.
\end{split}
\end{equation}
The $S$-extended $I$-function $I_{X_{D,r}\on{amb}}^S(Q,q,x_a,\tau,z)$, after applying the mirror map in $D$, lies in $\iota_*\mathcal L_{X_{D,r}}$, where $\mathcal L_{X_{D,r}}$ is Givental's Lagrangian cone of $X_{D,r}$.
\end{corollary}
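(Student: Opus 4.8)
The plan is to adapt, essentially verbatim, the degeneration argument that proves Theorem \ref{thm-quan-lef}, the only new ingredient being the $k$ extra orbifold markings that carry the degree-zero insertions $[1]_{a_i}$, $a_i\in S$. First I would run exactly the same degeneration as in Section \ref{sec:degeneration}: degenerate $X_{D,r}$ to the normal cone of $D$ so that it breaks into $(X,D)$ glued to the relative-local model $(Y_{D_\infty,r},D_0)$ along $D$, and simultaneously degenerate the ambient bundle $P_{X_\infty,r}$ via $\mathfrak P_{\mathfrak X_\infty,r}$ twisted by $p^*\mathcal O_P(1)$. I would then apply the degeneration formula to the $S$-extended invariants, i.e.\ to invariants carrying one distinguished marking together with $l_i$ copies of the insertion $[1]_{a_i}$ for each $i=1,\ldots,k$, and organize the resulting generating series with the bookkeeping variables $x_{a_i}$.

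Second, I would track how the extended markings distribute across the degeneration graph. Since each $[1]_{a_i}$ is an orbifold insertion of age $a_i/r$ with $a_i>0$ in the nontrivial cases, these markings must land on the $Y$-side, exactly as the distinguished orbifold marking does in the one-point analysis. The $X$-side then collapses as before: because $p^*\mathcal O_{X\times\mathbb P^1}(0,1)$ is convex, the orbifold quantum Lefschetz principle of \cite{Tseng} identifies the non-equivariant limit of the twisted $X$-side invariants with invariants of $(X,D)$, so all of the orbifold and extended structure is carried on the $Y$-side. Thus the comparison of the two degeneration formulas (for $X_{D,r}$ and for the twisted theory of $P_{X_\infty,r}$) reduces once more to the relative-local model.

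Third, on the relative-local model I would compare the $S$-extended twisted $I$-function of $P_{Y,(D\times\mathbb P^1,r_0),(Y_\infty,r_\infty)}$ with the $S$-extended $I$-function of $Y_{(D_0,r_0),(D_\infty,r_\infty)}$. Both are toric stack bundles over $D$, so their extended $I$-functions are furnished by the mirror theorem of \cite{JTY}; choosing the extended data identically on the two sides, the argument is identical to the proof of Theorem \ref{thm-quan-lef} except that the genus-zero contribution of the extended insertions produces the additional hypergeometric factor $\prod_{i=1}^k x_{a_i}^{l_i}/\bigl(z^{\sum_i l_i}\prod_i l_i!\bigr)$ and shifts the index of $[1]$ from $-D\cdot\beta/r$ to $(-D\cdot\beta+\sum_i l_i a_i)/r$, which is precisely the claimed formula. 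As in the non-extended case, the comparison is exact only after applying the mirror map in $D$, namely the mirror map between $J_{N_{D/X}}$ and $I_{N_{D/X}}$ that arises from the vertex contributions along the infinity divisor; taking the large-$r$ limit (or keeping $r$ finite, as in the remark after Theorem \ref{thm-quan-lef}) then yields the stated $\iota_*\mathcal L_{X_{D,r}}$-membership as in Corollary \ref{cor-rel-I}.

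The main obstacle is the combinatorial one: tracking the extended variables $x_{a_i}$ and the summation over $(l_1,\ldots,l_k)$ through both the degeneration formula and the $\mathbb C^\times$-localization on the relative-local model, and verifying that the hypergeometric factor attached to each $a_i$ together with the induced shift of the $[1]$-index assembles into the single product and the $\langle a\rangle=\langle(D\cdot\beta-\sum_i l_i a_i)/r\rangle$ constraints appearing in the formula. The key conceptual point that makes this routine is that the mirror map in $D$ does \emph{not} depend on $S$: the extended insertions only alter the fiber data over $D$, leaving the vertex contributions along $D_\infty$ (hence the mirror map for $N_{D/X}$) unchanged, so the same single mirror correction used in Theorem \ref{thm-quan-lef} and Corollary \ref{cor-rel-I} suffices to place the $S$-extended $I$-function in $\iota_*\mathcal L_{X_{D,r}}$.
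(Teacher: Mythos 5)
Your proposal is correct and follows essentially the same route as the paper: the paper proves this corollary by exactly the degeneration argument of Theorem \ref{thm-quan-lef}, noting that the extra degree-zero insertions $[1]_{a_i}$ ride along to the $Y$-side, that the virtual dimension argument of Section \ref{sec:special} still applies, and that the mirror map in $D$ is independent of the extended data $S$. Your write-up makes all three of these points explicitly, so it matches the paper's intended proof.
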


Again, we can take the limit of $r\rightarrow \infty$ to get the $S$-extended $I$-function for $(X,D)$.

\begin{corollary}
The $S$-extended $I$-function $I^S_{(X,D),\on{amb}}(Q,q,x_a,\tau,z)$ for the smooth pair $(X,D)\subset (P,X_{\infty})$:
\begin{equation*}
\begin{split}
I^S_{(X,D),\on{amb}}(Q,q,x_a,\tau,z)=I_++I_-,
\end{split}
\end{equation*}
where
\begin{align*}
I_+:=  e^{D\log q/z}\sum_{\substack{\beta\in \NE(X),\\ D\cdot\beta>\sum_{i=1}^k l_ia_i}} J_{X,\beta}(\tau,z) & Q^\beta q^{D\cdot \beta}  \frac{\prod_{i=1}^k x_{a_i}^{l_i}}{z^{\sum_{i=1}^k l_i}\prod_{i=1}^k (l_i!)}\\
& \cdot\frac{\prod_{ a\leq D\cdot\beta}(D+az)}{\prod_{ a\leq 0}(D+az)}\frac{ [1]_{-D\cdot\beta+\sum_{i=1}^k l_ia_i}\cup h}{D+(D\cdot\beta-\sum_{i=1}^k l_ia_i)z}
\end{align*}
and 

\begin{align*}
I_-:= e^{D\log q/z}\sum_{\substack{\beta\in \NE(X), \\ D\cdot\beta\leq\sum_{i=1}^k l_ia_i}} J_{X,\beta}(\tau,z) & Q^\beta q^{D\cdot\beta} \frac{\prod_{i=1}^k x_{a_i}^{l_i}}{z^{\sum_{i=1}^k l_i}\cdot\prod_{i=1}^k (l_i!)} \\
& \cdot\frac{\prod_{ a\leq D\cdot\beta}(D+az)}{\prod_{ a\leq 0}(D+az)} [1]_{-D\cdot\beta+\sum_{i=1}^k l_ia_i} \cup h,
\end{align*}
after applying the mirror map in $D$, lies in  $\iota_*\mathcal L_{(X,D)}$, where $\mathcal L_{(X,D)}$ is Givental's Lagrangian cone of $(X,D)$.
\end{corollary}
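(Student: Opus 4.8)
The plan is to obtain this statement as the $r\to\infty$ limit of the preceding corollary, which already asserts that $I^S_{X_{D,r},\on{amb}}(Q,q,x_a,\tau,z)$, after the mirror map in $D$, lies in $\iota_*\mathcal L_{X_{D,r}}$. That input is secured by Theorem \ref{thm-quan-lef} together with the degeneration argument, so the only remaining work is to analyze the limit summand by summand and to verify that membership in Givental's cone is preserved under the limit.

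First I would fix $\beta$ and $(l_1,\ldots,l_k)$ and set $M:=D\cdot\beta-\sum_{i=1}^k l_ia_i$, so that the relative insertion in the summand reads $[1]_{-M/r}$ and the only $r$-dependent factor is
\[
\frac{\prod_{\langle a\rangle=\langle M/r\rangle,\,a\leq 0}(D/r+az)}{\prod_{\langle a\rangle=\langle M/r\rangle,\,a\leq M/r}(D/r+az)}.
\]
The $r$-independent factor $\prod_{a\leq D\cdot\beta}(D+az)/\prod_{a\leq 0}(D+az)$ passes to the limit unchanged. For the $r$-dependent factor I would telescope the two age-constrained products: when $M>0$ the denominator contains exactly one factor absent from the numerator, namely the factor at $a=M/r$, equal to $(D+Mz)/r$, so the ratio equals $r/(D+Mz)$; when $M\leq 0$ the two products coincide and the ratio is $1$. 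Simultaneously the state-space class $[1]_{-M/r}$ converges to $[1]_{-M}\in\mathfrak H_{-M}$. In the case $M>0$ the leftover factor of $r$ is precisely the normalization $r^{k_{-}}$ built into the definition (\ref{inv-X-D}) of the invariants of the pair carrying negative contact orders; absorbing it leaves the coefficient $1/(D+Mz)$. Splitting the sum according to the sign of $M=D\cdot\beta-\sum l_ia_i$ then reproduces $I_+$ (the terms with $M>0$) and $I_-$ (the terms with $M\leq 0$) exactly as written.

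For the cone membership I would invoke \cite{TY20c}*{Corollary 16 and Definition 18}: after the $r^{k_{-}}$ rescaling the genus-zero theory of $X_{D,r}$ stabilizes for large $r$, and its limit is by definition the (formal) Gromov--Witten theory of $X_{D,\infty}=(X,D)$, so that $\mathcal L_{X_{D,r}}$ limits to $\mathcal L_{(X,D)}$. Since each summand of the limit $I_++I_-$ is, term by term, the stabilized limit of the corresponding summand of the point $I^S_{X_{D,r},\on{amb}}\in\iota_*\mathcal L_{X_{D,r}}$, and since the limiting $I$-function is assembled from exactly the invariants that define $\mathcal L_{(X,D)}$, the limit lies in $\iota_*\mathcal L_{(X,D)}$. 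The mirror map in $D$ is determined by $N_{D/X}$ (equivalently by $(Y,D_0)$) and is independent of the root parameter $r$, so it commutes with the $r\to\infty$ limit and may be applied on both sides consistently.

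The main obstacle I anticipate is the bookkeeping of the powers of $r$: one must check that the single factor of $r$ produced by the telescoping in the case $M>0$ matches exactly the normalization $r^{k_{-}}$ of (\ref{inv-X-D}), and more generally that the limit of the extended $I$-function is taken with precisely the normalization that defines $\mathcal L_{(X,D)}$, so that the slogan ``the limit of a point on $\mathcal L_{X_{D,r}}$ is a point on $\mathcal L_{(X,D)}$'' is literally valid and not merely formal. This is the step where the stabilization result of \cite{TY20c} does the essential work; the rest of the argument is the routine telescoping and sign analysis described above.
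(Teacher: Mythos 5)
Your proposal is correct and follows essentially the same route as the paper: the paper's proof of this corollary is precisely to take the $r\to\infty$ limit of the preceding $S$-extended $I$-function for $X_{D,r}$ (following \cite{FTY}, \cite{TY20b}, \cite{TY20c}), with the mirror map in $D$ unaffected since it depends only on $N_{D/X}$. Your telescoping of the age-constrained products, the absorption of the leftover factor of $r$ into the $r^{k_-}$ normalization of the formal invariants of $X_{D,\infty}$, and the resulting split into $I_+$ ($M>0$) and $I_-$ ($M\leq 0$) spell out exactly the bookkeeping the paper leaves implicit.
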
 

The insertions $[1]_{a_i}$ are of degree zero. The virtual dimension argument in Section \ref{sec:special} also works for extended $I$-functions.

Using the iterative construction \cite{TY20b} or the fiber product formula \cite{BNR24}, \cite{BNR22} and \cite{BNTY} for multi-root stacks, we can obtain the following results.
Given cohomological classes $\gamma_i\in H^*_{\on{CR}}(P_{m,X_\infty,\vec r})$ and nonnegative integers $a_i$, for $1\leq i\leq l$, we consider the equivariant invariant
\[
\left\langle \prod_{i=1}^l \tau_{a_i}(\gamma_i)\right\rangle_{0,l,\beta+\sum_{i=1}^m d_if_i}^{P_{m,X_\infty,\vec r},\oplus_{i=1}^m p_m^*\mathcal O_{P_i}(1)},
\]
where $f_i$ is the class of a fiber of $P_i$ and $d_i=D_i\cdot\beta$. The following result holds when the mirror maps in $D_i$ are trivial.
\begin{align}
\left\langle \prod_{i=1}^l \tau_{a_i}(\iota^*\gamma_i)\right\rangle_{0,l,\beta}^{X_{D,\vec r}}=\left[\left\langle \prod_{i=1}^l \tau_{a_i}(\gamma_i)\right\rangle_{0,l,\beta+\sum_{i=1}^m d_if_i}^{P_{m,X_\infty,\vec r},\oplus_{i=1}^mp_m^*\mathcal O_{P_i}(1)}\right]_{\vec\lambda=\vec 0},
\end{align}
where $\iota: X_{D,\vec r}\hookrightarrow P_{m,X_\infty,\vec r}$ is the inclusion as a zero locus of a section of $\oplus_{i=1}^mp_m^*\mathcal O_{P_i}(1)$ and $\vec \lambda=(\lambda_1,\ldots,\lambda_m)$ is the vector of equivariant parameters. Similarly to the case of smooth divisors, when $D_i$'s are nef the mirror maps in $D_i$ are trivial.

We obtain the $I$-function for the multi-root stack $X_{D,\vec r}$.

\begin{corollary}\label{cor-extended-orb-I-func}
The $I$-function $I_{X_{D,\vec r},\on{amb}}(Q,q,\tau,z)$ for $X_{D,\vec r}\subset P_{m,X_\infty,\vec r}$ is
\begin{equation}
\begin{split}
e^{\sum_{i=1}^m D_i\log q_i/z}\sum_{\beta\in \on{NE}(X)}&  J_{X, \beta}(\tau,z)Q^{\beta}q^{D\cdot\beta}[1]_{ (-D_1\cdot\beta/r_1, \ldots, -D_m\cdot\beta/r_m)}\cup_{i=1}^m h_i\\
&\cdot\prod_{j=1}^m\frac{\prod_{ a\leq D_j\cdot\beta}(D_j+az)\prod_{\langle a \rangle =\langle D_j\cdot\beta/r\rangle, a\leq 0}(D_j/r_j+az)}{\prod_{ a\leq 0}(D_j+az)\prod_{\langle a \rangle =\langle D_j\cdot\beta/r\rangle, a\leq D_j\cdot\beta/r_j}(D_j/r_j+az)}.
\end{split}
\end{equation}
The $I$-function $I_{X_{D,\vec r},\on{amb}}(Q,q,\tau,z)$, after applying the mirror map in $D_i$, lies in $\iota_*\mathcal L_{X_{D,\vec r}}$, where $\mathcal L_{X_{D,\vec r}}$ is Givental's Lagrangian cone $X_{D,\vec r}$.
\end{corollary}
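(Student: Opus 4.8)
The plan is to deduce the multi-root statement from the single-divisor mirror theorem (Theorem \ref{thm-quan-lef}) together with the mirror theorem for toric stack bundles of \cite{JTY}, using the tower $P_m\to\cdots\to P_1\to X$ and the realization of $X_{D,\vec r}$ as the zero locus of a section of $\oplus_{i=1}^m p_m^*\mathcal{O}_{P_i}(1)$ inside $P_{m,X_\infty,\vec r}$. Since $P_{m,X_\infty,\vec r}$ is a toric stack bundle over $X$, its twisted $I$-function is computable by \cite{JTY} in terms of $J_X(\tau,z)$, and the classes $h_i=c_1(\mathcal{O}_{P_i}(1))$ together with the orbifold structure along each $X_{i\infty}$ will produce exactly the per-divisor hypergeometric modifications in the statement.

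First I would write down the equivariant $I$-function of $P_{m,X_\infty,\vec r}$ twisted by $\oplus_{i=1}^m p_m^*\mathcal{O}_{P_i}(1)$ with equivariant parameters $\vec\lambda$; by \cite{JTY} this is an explicit hypergeometric modification of $J_X$, with one Givental-type factor for each fiberwise toric divisor. Next I would run the degeneration argument of Theorem \ref{thm-quan-lef} in each $\mathbb{P}^1$-direction: degenerating to the normal cone of $X_{i\infty}$ reduces the comparison between the twisted theory of $P_{m,X_\infty,\vec r}$ and the orbifold theory of $X_{D,\vec r}$ to the relative-local model $(Y_i,D_{i,0})$ with $Y_i=\mathbb{P}(N_{D_i/X}\oplus\mathcal{O})$. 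Each reduction contributes the factor
\[
\frac{\prod_{a\leq D_j\cdot\beta}(D_j+az)\,\prod_{\langle a\rangle=\langle D_j\cdot\beta/r_j\rangle,\ a\leq 0}(D_j/r_j+az)}{\prod_{a\leq 0}(D_j+az)\,\prod_{\langle a\rangle=\langle D_j\cdot\beta/r_j\rangle,\ a\leq D_j\cdot\beta/r_j}(D_j/r_j+az)},
\]
and after the non-equivariant limit $\vec\lambda=\vec 0$ and the restriction $\iota^*$ (accounting for the $\cup_{i=1}^m h_i$) these combine into the displayed product over $j$.

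The reason the factors multiply is that the snc condition makes the $\mathbb{P}^1$-directions mutually transverse: the vertex contribution along $X_{i\infty}$ in the degeneration formula depends only on the $i$-th normal data $N_{D_i/X}$ and on the contact order $D_i\cdot\beta$, so the bipartite degeneration graphs factor through the individual relative-local models. Equivalently, one may invoke the fiber product formula of \cite{BNR24}, \cite{BNR22}, \cite{BNTY}, which expresses the multi-root invariants as a product of single-root contributions; feeding the single-divisor $I$-function of Theorem \ref{thm-quan-lef} into each factor yields the claimed product. The mirror maps in $D_i$ then enter exactly as in the smooth case: each corrects the $N_{D_i/X}$-vertex contribution at the $i$-th infinity section, and since these vertices are indexed by distinct divisors they may be applied independently, one per factor.

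The main obstacle will be controlling the interaction of the several degenerations and showing that the per-divisor corrections genuinely decouple. The nef case is clean, since all mirror maps in $D_i$ are trivial and the comparison formula preceding the corollary holds on the nose; but in the general non-nef case one must verify that degenerating to the normal cones of the $X_{i\infty}$ does not produce cross-terms mixing different $N_{D_i/X}$, i.e. that the iterated degeneration graphs contribute no strata beyond the product of single-divisor local models. This is where the snc hypothesis is essential and where the iterative construction of \cite{TY20b}---rooting along one divisor at a time and tracking that later rootings are supported away from the earlier infinity sections---does the real work. Once this transversality is in place, taking the large-$\vec r$ limit as in Corollary \ref{cor-rel-I} is routine and produces the stated $I$-function for $X_{D,\vec r}$.
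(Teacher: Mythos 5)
Your proposal follows essentially the same route as the paper: the paper obtains this corollary precisely by feeding the single-divisor degeneration theorem (Theorem \ref{thm-quan-lef}) into the iterative construction of \cite{TY20b} (equivalently the fiber product formula of \cite{BNR24}, \cite{BNR22}, \cite{BNTY}), with the toric stack bundle mirror theorem of \cite{JTY} supplying the ambient twisted $I$-function and the mirror maps in $D_i$ entering one per divisor. The only slip is cosmetic: the degeneration is to the normal cone of $D_i$ in $X$ (which then induces the degeneration of the bundle $P_{m,X_\infty,\vec r}$), not of $X_{i\infty}$ itself, but your identification of the relative-local model $(Y_i,D_{i,0})$ shows you have the correct picture.
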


Following \cite{TY20b} and \cite{TY20c}, the corresponding $I$-function for the snc pair $(X,D)$ is a large $\vec r$ limit of the $I$-functions for multi-root stacks $X_{D,\vec r}$.

\begin{corollary}
The $I$-function $I_{(X,D),\on{amb}}(Q,q,\tau,z)$ for $(X,D)\subset (P_m,X_\infty)$ is
\begin{equation}\label{I-func-X-D-amb}
\begin{split}
e^{\sum_{i=1}^m D_i\log q_i/z}\sum_{\beta\in \on{NE}(X)} J_{X, \beta}(\tau,z) & Q^{\beta}q^{D\cdot\beta}\prod_{j=1}^m\frac{\prod_{ a\leq D_j\cdot\beta}(D_j+az)}{\prod_{ a\leq 0}(D_j+az)}\\
&\cdot\prod_{j:D_j\cdot\beta>0}\frac{1}{D_j+D_j\cdot\beta z}[1]_{ -D\cdot\beta}\cup_{i=1}^m h_i.
\end{split}
\end{equation}
The $I$-function $I_{(X,D),\on{amb}}(Q,q,\tau,z)$, after applying the mirror map in $D_i$, lies in $\iota_*\mathcal L_{(X,D)}$, where $\mathcal L_{(X,D)}$ is Givental's Lagrangian cone  of $(X,D)$.
\end{corollary}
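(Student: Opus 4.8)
The plan is to deduce the statement from Corollary~\ref{cor-extended-orb-I-func} by passing to the large $\vec r$ limit, thereby reducing the computation to $m$ independent copies of the smooth ($m=1$) case already settled in Corollary~\ref{cor-rel-I}. The key structural observation is that in $I_{X_{D,\vec r},\on{amb}}$ the hypergeometric modification is a product $\prod_{j=1}^m(\cdots)$ whose $j$-th factor depends on $\vec r$ only through $r_j$. Hence the limits $r_j\to\infty$ can be taken one variable at a time, and each is governed by exactly the single-divisor analysis.

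First I would dispose of the twisted-sector insertion. The class $[1]_{(-D_1\cdot\beta/r_1,\ldots,-D_m\cdot\beta/r_m)}$ lies in the component of the inertia stack of $X_{D,\vec r}$ whose age along $D_j$ equals $\langle -D_j\cdot\beta/r_j\rangle$; under the dictionary between ages and contact orders recalled in Section~\ref{sec:GW-multi-root} (small, resp.\ large, ages matching positive, resp.\ negative, contact orders), this converges as each $r_j\to\infty$ to the negative-contact-order class $[1]_{-D\cdot\beta}\in\mathfrak H_{-D\cdot\beta}$.

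Next I would compute the limit of each $r_j$-dependent factor. Writing $x=D_j\cdot\beta/r_j$, the quotient
\[
\frac{\prod_{\langle a\rangle=\langle x\rangle,\,a\leq 0}(D_j/r_j+az)}{\prod_{\langle a\rangle=\langle x\rangle,\,a\leq x}(D_j/r_j+az)}
\]
telescopes: when $D_j\cdot\beta\leq 0$ numerator and denominator run over the same indices, so the factor is $1$; when $D_j\cdot\beta>0$ the denominator carries the single extra index $a=x$, so the factor equals $1/(D_j/r_j+xz)=r_j/(D_j+D_j\cdot\beta z)$. The apparent $r_j$ is precisely absorbed by the normalization $\prod_i r_i^{k_{i,-}}$ in the definition~(\ref{inv-X-D}) of the invariants of $X_{D,\infty}$---the limit insertion $[1]_{-D\cdot\beta}$ contributes one negative contact marking along each $D_j$ with $D_j\cdot\beta>0$---which is the very cancellation verified for $m=1$ in Corollary~\ref{cor-rel-I}. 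Combining with the surviving factor $\prod_{a\leq D_j\cdot\beta}(D_j+az)/\prod_{a\leq 0}(D_j+az)$ and taking the product over $j$ reproduces (\ref{I-func-X-D-amb}).

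Finally, for the membership assertion I would show that the limit of the family of points $I_{X_{D,\vec r},\on{amb}}\in\iota_*\mathcal L_{X_{D,\vec r}}$ (after the mirror maps in the $D_i$) lands in $\iota_*\mathcal L_{(X,D)}$. This rests on the definition of $\mathcal L_{(X,D)}$ in \cite{TY20c} as the large $\vec r$ limit of $\mathcal L_{X_{D,\vec r}}$, together with the polynomiality of \cite{TY20c}*{Corollary 16}, which guarantees that after normalization by $\prod_i r_i^{k_{i,-}}$ the relevant invariants stabilize for $\vec r$ large. Since the hypergeometric factors decouple over the components $D_i$, the mirror maps in distinct $D_i$ are applied independently, each being the single-divisor mirror map of Theorem~\ref{thm-quan-lef}, their compatibility across the tower $P_m\to\cdots\to X$ being furnished by the iterative construction of \cite{TY20b}. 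I expect the \emph{main obstacle} to be this last step: checking that ``lying in the Lagrangian cone'' survives the limit and that the several mirror maps genuinely decouple rather than interacting through the iterated $\mathbb P^1$-bundle tower. Once $r$-independence after normalization is secured, the remaining work is the termwise limits above.
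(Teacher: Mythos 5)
Your proposal is correct and follows essentially the same route as the paper: the paper obtains this corollary precisely as the large $\vec r$ limit of the multi-root stack $I$-function of Corollary \ref{cor-extended-orb-I-func}, citing \cite{TY20b} and \cite{TY20c} for the stabilization of the normalized invariants and the limit procedure. Your write-up simply makes explicit the termwise limits (twisted sector to negative contact order, the telescoping hypergeometric factor producing $r_j/(D_j+D_j\cdot\beta z)$ absorbed by the $\prod_i r_i^{k_{i,-}}$ normalization) that the paper leaves implicit.
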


For multi-root stacks, we can also consider the extended $I$-function with the extended data:
\[
S=\{\vec a_1,\ldots, \vec a_k\},
\]
where $\vec a_i\in (\mathbb Z_{\geq 0})^m$ and $a_{ij}<r_j$ for $j\in \{1,\ldots, m\}$. Then we have

\begin{corollary}\label{cor-extended-orb-I-func-snc}
The $S$-extended $I$-function $I_{X_{D,\vec r},\on{amb}}^S(Q,q,x_{\vec a},\tau,z)$ for $X_{D,\vec r}\subset P_{m,X_{\infty},\vec r}$ is
\begin{equation}
\begin{split}
&e^{\sum_{i=1}^m D_i\log q_i/z}\sum_{\beta\in \on{NE}(X)} \sum_{(l_1,\ldots,l_k)\in (\mathbb Z_{\geq 0})^k}\frac{J_{X, \beta}(\tau,z)Q^{\beta}q^{D\cdot\beta}\prod_{i=1}^kx_{\vec a_i}^{l_i}}{z^{\sum_{i=1}^k l_i}\prod_{i=1}^k (l_i!)}\\
&\cdot\prod_{j=1}^m\frac{\prod_{ a\leq D_j\cdot\beta}(D_j+az)\prod_{\langle a \rangle =\langle (D_j\cdot\beta-\sum_{i=1}^k l_ia_{ij})/r_j\rangle, a\leq 0}(D_j/r_j+az)}{\prod_{ a\leq 0}(D_j+az)\prod_{\langle a \rangle =\langle (D_j\cdot\beta-\sum_{i=1}^k l_ia_{ij})/r_j\rangle, a\leq (D_j\cdot\beta-\sum_{i=1}^k l_ia_{ij})/r_j}(D_j/r_j+az)}\\
& \cdot [1]_{( (-D_1\cdot\beta+\sum_{i=1}^k l_ia_{i1})/r_1,\ldots, (-D_m\cdot\beta+\sum_{i=1}^k l_ia_{im})/r_m)}\cup_{i=1}^m h_i.
\end{split}
\end{equation}
The $S$-extended $I$-function $I_{X_{D,r},\on{amb}}^S(Q,q,x_{\vec a},\tau,z)$, after applying the mirror map in $D_i$, lies in $\iota_*\mathcal L_{X_{D,r}}$, where $\mathcal L_{X_{D,r}}$ is Givental's Lagrangian cone  of $X_{D,r}$.
\end{corollary}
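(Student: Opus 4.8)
The plan is to obtain the $S$-extended multi-root formula by combining the two inputs already assembled in this section: the $S$-extended \emph{single}-root mirror theorem (proved above by adjoining degree-zero markings $[1]_{a_i}$ to the degeneration of Theorem \ref{thm-quan-lef}) and the non-extended multi-root formula of Corollary \ref{cor-extended-orb-I-func}. The extended data $\vec a_i\in(\mathbb Z_{\geq 0})^m$ records one contact order $a_{ij}$ against each component $D_j$ at once, so the extension runs independently along each divisor direction and is essentially orthogonal to the iterative tower; the two constructions should therefore glue with no new geometric input beyond what is already established.

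First I would realize $X_{D,\vec r}$ as the complete intersection $\iota\colon X_{D,\vec r}\hookrightarrow P_{m,X_\infty,\vec r}$ in the iterative $\mathbb P^1$-bundle tower, exactly as for Corollary \ref{cor-extended-orb-I-func}, and adjoin $k$ additional marked points carrying the degree-zero insertions $[1]_{\vec a_i}$. Because each $[1]_{\vec a_i}$ has degree zero, the degeneration analysis producing the single-root $S$-extended factor applies stage by stage: at the $j$-th root construction the extended markings feed the integer $\sum_{i=1}^k l_i a_{ij}$ into the $D_j$-direction, shifting the contact order from $-D_j\cdot\beta$ to $-D_j\cdot\beta+\sum_i l_i a_{ij}$, generating the age label $[1]_{(-D_j\cdot\beta+\sum_i l_i a_{ij})/r_j}$ together with the corresponding shifted hypergeometric quotient. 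Summing over the occupation numbers $l_i$ produces the prefactor $\prod_i x_{\vec a_i}^{l_i}/\!\left(z^{\sum_i l_i}\prod_i l_i!\right)$, in which the $l_i$ are shared across all divisors but the per-divisor shifts $\sum_i l_i a_{ij}$ are read off independently.

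Next I would assemble the divisor factors into a product. The iterative construction of \cite{TY20b}, or equivalently the fiber-product formula of \cite{BNR24}, \cite{BNR22} and \cite{BNTY}, guarantees that the contributions of the transverse components $D_j$ multiply, yielding the product $\prod_{j=1}^m$ of single-root factors, each carrying its own shifted contact order $(D_j\cdot\beta-\sum_i l_i a_{ij})/r_j$. Taking the non-equivariant limit $\vec\lambda=\vec 0$ and invoking the single-root $S$-extended mirror theorem at each stage then places the assembled function, after the mirror map in each $D_i$, on $\iota_*\mathcal L_{X_{D,\vec r}}$, which is the assertion.

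The hard part will be to certify the compatibility of the multi-contact-order extended markings with the tower: a single insertion $[1]_{\vec a_i}$ is simultaneously ramified against several components, so one must check that in the successive root constructions it distributes consistently and that the resulting hypergeometric data genuinely factor as the stated product over $j$ rather than mixing the divisor directions. Once this independence is confirmed, using the transversality of the $D_j$ and the fact that the shared occupation numbers $l_i$ enter each direction only through the linear combination $\sum_i l_i a_{ij}$, the remaining steps are the explicit manipulations already carried out in the single-root and non-extended multi-root cases.
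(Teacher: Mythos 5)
Your proposal follows essentially the same route as the paper: the paper also obtains this corollary by adjoining degree-zero extended insertions $[1]_{\vec a_i}$ to the degeneration argument of Theorem \ref{thm-quan-lef} and then assembling the divisor directions via the iterative construction of \cite{TY20b} (equivalently the fiber product formula of \cite{BNR24}, \cite{BNR22}, \cite{BNTY}), with the mirror maps in each $D_i$ accounting for the non-nef case. The compatibility issue you flag (that an insertion ramified against several components distributes consistently through the tower so the hypergeometric factors do not mix) is precisely what those cited constructions supply, and the paper likewise treats it as given rather than re-proving it.
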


Taking the large $\vec r$ limit, we have the $S$-extended $I$-function for the snc pair $(X,D)$.

\begin{corollary}\label{cor-extended-rel-I-func}
The $S$-extended $I$-function $I_{(X,D),\on{amb}}^S(Q,q,x_{\vec a},\tau,z)$ for $(X,D)\subset (P_m,X_{\infty})$ is
\begin{equation}\label{extended-rel-I-func}
\begin{split}
&e^{\sum_{i=1}^m D_i\log q_i/z}\sum_{\beta\in \on{NE}(X)} \sum_{(l_1,\ldots,l_k)\in (\mathbb Z_{\geq 0})^k}\frac{J_{X, \beta}(\tau,z)Q^{\beta}q^{D\cdot\beta}\prod_{i=1}^kx_{\vec a_i}^{l_i}}{z^{\sum_{i=1}^k l_i}\prod_{i=1}^k (l_i!)}\\
&\cdot\prod_{j=1}^m\frac{\prod_{ a\leq D_j\cdot\beta}(D_j+az)}{\prod_{ a\leq 0}(D_j+az)}\prod_{j: D_j\cdot\beta-\sum_{i=1}^k l_ia_{ij}>0}\frac{1}{(D_j+(D_j\cdot\beta-\sum_{i=1}^k l_ia_{ij})z)}\\
& \cdot [1]_{ (-D_1\cdot\beta+\sum_{i=1}^k l_ia_{i1},\ldots, -D_m\cdot\beta+\sum_{i=1}^k l_ia_{im})}\cup_{i=1}^m h_i.
\end{split}
\end{equation}
The $S$-extended $I$-function $I_{(X,D),\on{amb}}^S(Q,q,x_{\vec a},\tau,z)$, after applying the mirror map in $D_i$, lies in  $\iota_*\mathcal L_{(X,D)}$, where  $\mathcal L_{(X,D)}$  is Givental's Lagrangian cone of $(X,D)$.
\end{corollary}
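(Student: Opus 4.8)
The plan is to obtain the statement directly as the large $\vec r$ limit of the $S$-extended $I$-function for the multi-root stack $X_{D,\vec r}$ furnished by Corollary~\ref{cor-extended-orb-I-func-snc}, mirroring the passage from Corollary~\ref{cor-extended-orb-I-func} to the non-extended snc $I$-function. By Corollary~\ref{cor-extended-orb-I-func-snc}, the multi-root $S$-extended $I$-function, after the (root-independent) mirror maps in the $D_i$, already lies in $\iota_*\mathcal L_{X_{D,\vec r}}$; and, following \cite{TY20c}*{Corollary 16, Definition 18}, the genus-zero theory of the pair $(X,D)$---hence $\mathcal L_{(X,D)}$---is by definition the $\vec r\to\infty$ limit of the orbifold theory of $X_{D,\vec r}$ with the normalization $\prod_i r_i^{k_{i,-}}$. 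Thus it suffices to compute the termwise $\vec r\to\infty$ limit of the multi-root $I$-function and identify it with \eqref{extended-rel-I-func}.

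First I would isolate the $\vec r$-dependence. Writing $n_j := D_j\cdot\beta-\sum_{i=1}^k l_ia_{ij}$, the prefactor $e^{\sum_i D_i\log q_i/z}$, the slice $J_{X,\beta}(\tau,z)$, the monomials $Q^\beta q^{D\cdot\beta}$ and $\prod_i x_{\vec a_i}^{l_i}/(z^{\sum_i l_i}\prod_i l_i!)$, and the factor $\prod_j \prod_{a\leq D_j\cdot\beta}(D_j+az)/\prod_{a\leq 0}(D_j+az)$ are all independent of $\vec r$ and survive the limit unchanged. The only genuine $\vec r$-dependence sits in the root factors $G_{r_j}:=\prod_{\langle a\rangle=\langle n_j/r_j\rangle,\,a\leq 0}(D_j/r_j+az)\big/\prod_{\langle a\rangle=\langle n_j/r_j\rangle,\,a\leq n_j/r_j}(D_j/r_j+az)$ and in the contact label $[1]_{(-n_j/r_j)_j}$.

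The key computation is the limit of each $G_{r_j}$. For $r_j$ large, if $n_j>0$ then $0<n_j/r_j<1$ and the denominator exceeds the numerator by exactly the single term $a=n_j/r_j$, so $G_{r_j}=(D_j/r_j+(n_j/r_j)z)^{-1}=r_j/(D_j+n_jz)$; if $n_j\leq 0$ then the two products coincide and $G_{r_j}=1$. Simultaneously, $[1]_{-n_j/r_j}$ lies in a large-age (negative-contact) sector precisely when $n_j>0$, so there the same orbifold-to-relative normalization $\prod_i r_i^{k_{i,-}}$ of \eqref{inv-X-D} supplies a compensating factor $r_j^{-1}$ that cancels the $r_j$ in the numerator of $G_{r_j}$. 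Hence the surviving limit contributes $1/(D_j+n_jz)$ exactly when $n_j>0$ and $1$ otherwise, while $[1]_{(-n_j/r_j)_j}\to[1]_{(-n_j)_j}=[1]_{(-D_1\cdot\beta+\sum_i l_ia_{i1},\ldots,-D_m\cdot\beta+\sum_i l_ia_{im})}$. This is precisely the mechanism already used to pass from the root $I$-function of Theorem~\ref{thm-quan-lef} to the pair $I$-function of Corollary~\ref{cor-rel-I}. Assembling the factors over $j$ reproduces \eqref{extended-rel-I-func}, and since each finite-$\vec r$ $I$-function lies in $\iota_*\mathcal L_{X_{D,\vec r}}$ after the mirror maps in the $D_i$, the limit lies in $\iota_*\mathcal L_{(X,D)}$.

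I expect the main obstacle to be the bookkeeping in this key step: one must track the powers of $r_j$ produced by the vanishing factors $D_j/r_j$ against those coming from the normalization $\prod_i r_i^{k_{i,-}}$, uniformly over all $(\beta,\vec l)$ and over the simultaneous limits $r_1,\ldots,r_m\to\infty$, and confirm that the sign of each $n_j$ sorts the indices correctly so that precisely the factors with $n_j>0$ survive. Once this cancellation is verified termwise, the passage to the Lagrangian cone is formal, being inherited from Corollary~\ref{cor-extended-orb-I-func-snc} together with the definition of $\mathcal L_{(X,D)}$ as an $\vec r$-limit.
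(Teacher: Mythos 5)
Your proposal is correct and takes essentially the same route as the paper: Corollary \ref{cor-extended-rel-I-func} is obtained there precisely by taking the termwise large $\vec r$ limit of the multi-root $S$-extended $I$-function of Corollary \ref{cor-extended-orb-I-func-snc}, with the root-factor limit $r_j/(D_j+n_jz)$ versus $1$ (according to the sign of $n_j$) and the cancellation of $r_j$ against the normalization $\prod_i r_i^{k_{i,-}}$ handled exactly as in the passage from Theorem \ref{thm-quan-lef} to Corollary \ref{cor-rel-I}. Your write-up simply makes explicit the bookkeeping that the paper delegates to the citations of \cite{TY20b} and \cite{TY20c}.
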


\begin{remark}
As in \cite{FTY} and \cite{TY20b}, we usually set $Q^\beta q^{D\cdot\beta}=Q^\beta$, that is, $q=1$, as $q$ is an additional parameter coming from the ambient toric stack bundle. 
\end{remark}

The $I$-functions in this section involve mirror maps in $D_i$ which we do not describe explicitly. In general, mirror maps can be quite complicated. Here we will consider a special case that we will use in the computation and the $I$-function is explicit. There are other interesting cases where the mirror maps in $D_i$ are not trivial, we plan to explore them elsewhere.

\subsection{The log Calabi--Yau case}\label{sec-log-cy}
Now, we examine the mirror map in the log Calabi--Yau case. Let $X$ be a Fano variety and $D=D_1+D_2\ldots+D_m$ be an snc anticanonical divisor of $X$. The mirror maps in $D$ are trivial, but we may still have mirror maps in the usual sense by studying the coefficients of the $I$-function. We consider the non-extended $I$-function (\ref{I-func-X-D-amb}) with $\tau=\tau_{0,2}=\sum_{i=1}^{\mathrm r}p_i\log Q_i\in H^2(X)$:
\begin{equation*}
\begin{split}
e^{\sum_{i=1}^m D_i\log q_i/z}\sum_{\beta\in \on{NE}(X)} J_{X, \beta}(\tau_{0,2},z) & Q^{\beta}q^{D\cdot\beta}\prod_{j=1}^m\frac{\prod_{ a\leq D_j\cdot\beta}(D_j+az)}{\prod_{ a\leq 0}(D_j+az)}\\
&\cdot\prod_{j:D_j\cdot\beta>0}\frac{1}{D_j+D_j\cdot\beta z}[1]_{ -D\cdot\beta}\cup_{i=1}^m h_i.
\end{split}
\end{equation*}

Recall that, when $\beta=0$,
\[
J_{X, \beta}(\tau_{0,2},z)=z+\tau_{0,2}.
\]
 When $\beta\neq 0$,
\[
J_{X,\beta}(\tau_{0,2},z)=\sum_{k\geq 2, \alpha}\frac{1}{l!}\langle \tau_{0,2},\ldots,\tau_{0,2}, \psi^{k-2}\phi_{\alpha}\rangle_{0,l+1,\beta}^X\phi^\alpha (1/z)^{k-1},
\]
where 
\[
k=\dim_{\mathbb C}X-K_X\cdot\beta-\deg(\phi_\alpha)\geq -K_X\cdot\beta.
\]

As we have already observed in Section \ref{sec:ex},  when $D_i\cdot\beta<0$, the $Q^\beta$-coefficient of the $J$-function of $X$ takes value in $D_i \cup H^*(X)$. Now we explain this in more detail. We consider the one-point invariant of the Fano variety $X$:
\begin{align}\label{1-pt-inv-X}
\langle [\gamma]\psi^{k}\rangle_{0,1,\beta}^X.
\end{align}
Let $D_{i_0}\subset X$ be a smooth divisor of $X$ such that $D_{i_0}\cdot\beta<0$. Let $\iota: D_{i_0}\hookrightarrow X$ be the inclusion map. Suppose $\iota^*(\gamma)=0$ (for example $\gamma=[\on{pt}]$), then the one-point invariant (\ref{1-pt-inv-X}) is zero. This is because the only marking carries all the virtual dimensions and is not mapped into the divisor $D_{i_0}$. On the other hand, $D_{i_0}\cdot\beta<0$ implies that there will be an irreducible component of the curve that is mapped into the divisor $D_{i_0}$. But there is not enough dimension to allow such curves. So, it is a contradiction. More precisely, we can consider the degeneration of $X$ to the normal cone of $D_{i_0}$. The degeneration graph is a bipartite graph. The marking is away from $D_{i_0}$ and is attached to a vertex $v_0$. If there are edges, then the virtual dimension of the moduli space associated to this vertex $v_0$ will be less than the degree of the insertions. So there cannot be edges. However, if there are no edges in the degeneration graph, then the contact order with $D_{i_0}$ should be zero, which is a contradiction.

Given a curve class $\beta$, we denote the intersection
\[
D_{\beta,-}:=\cap_{i: D\cdot\beta<0}D_i.
\]
If $D_i\cdot\beta\geq 0$ for all $i\in \{1,\ldots,m\}$, we have $D_{\beta,-}=X$. Then we must have 
\[
k\geq -K_X\cdot\beta+\dim_{\mathbb C}X-\dim_{\mathbb C} D_{\beta,-}.
\]

If $D_j\cdot\beta< 0$, we have
\begin{align*}
\frac{\prod_{ a\leq D_j\cdot\beta}(D_j+az)}{\prod_{ a\leq 0}(D_j+az)}&=\frac{1}{\prod_{ D_j\cdot\beta<a\leq 0}(D_j+az)}\\
&=\frac{1}{(-D_j\cdot\beta-1)!(-z)^{-D_j\cdot\beta-1}\prod_{ D_j\cdot\beta<a\leq 0}\left(\frac{D_j}{az}+1\right)}.
\end{align*}
If $D_j\cdot\beta> 0$, we have
\begin{align*}
\frac{1}{D_j+D_j\cdot\beta z}\frac{\prod_{ a\leq D_j\cdot\beta}(D_j+az)}{\prod_{ a\leq 0}(D_j+az)}&=\prod_{0<a \leq  D_j\cdot\beta-1}(D_j+az)\\
&=(D_j\cdot\beta-1)!z^{D_j\cdot\beta-1}\prod_{0<a \leq  D_j\cdot\beta-1}\left(\frac{D_j}{az}+1\right).
\end{align*}

Since $X$ is Fano, for each $\beta\in \NE(X)$, there is always at least one $i$ such that $D_i\cdot\beta>0$. Combining the above computations, we can write the $I$-function $I_{(X,D),\on{amb}}(Q,q,\tau_{0,2},z)$ as
\begin{align*}
&z+\tau_{0,2}+\sum_{i=1}^{\mathrm r}p_i\log Q_i+\sum_{i=1}^m D_i\log q_i\\
&+\sum_{i=1}^m\sum_{\substack{\beta\in \NE(X),k\geq 2 \\ d_i>0,d_j\leq 0, \text{ for } j \neq i}}\langle [\on{pt}]_{D_{\beta,-}}\psi^{k-2}\rangle_{0,1,\beta}^X e^{\tau_{0,2}\cdot\beta} Q^\beta q^{\vec d}\frac{(d_i-1)!}{\prod_{j: d_j<0}(-d_j-1)!}[1]_{-\vec d}\\
&+\sum_{k=1}^m I_{-k}z^{-k},
\end{align*}
where 
\[
D\cdot\beta:=\vec d:=(d_1,\ldots,d_m):=(D_1\cdot\beta,D_2\cdot\beta,\ldots,D_m\cdot\beta)
\]
and $[\on{pt}]_{D_{\beta,-}}$ denotes the point class of $D_{\beta,-}\subset X$. More precisely, $[\on{pt}]_{D_{\beta,-}}\in H^*(X)$ is Poincar\'e dual to the class $\cup_{i: D_i\cdot\beta<0}[D_i]\in H^*(X)$.

The $z^0$-coefficient, denoted by $\tau(Q,q)$, is the (non-extended) relative mirror map:
\begin{equation}\label{mirror-map}
\begin{split}
&\tau_{0,2}+\sum_{i=1}^{\mathrm r}p_i \log Q_i+\sum_{i=1}^m D_i\log q_i\\
+&\sum_{i=1}^m\sum_{\substack{\beta\in \NE(X),k\geq 2 \\ d_i>0,d_j\leq 0, \text{ for } j \neq i}}\langle [\on{pt}]_{D_{\beta,-}}\psi^{k-2}\rangle_{0,1,\beta}^XQ^\beta q^{\vec d}\frac{(d_i-1)!}{\prod_{j: d_j<0}(-d_j-1)!}[1]_{-\vec d}.
\end{split}
\end{equation}
The relative mirror theorem, Corollary \ref{cor-extended-rel-I-func}, gives the following:
\[
\iota_*J_{(X,D)}(\tau(Q,q),z)=I_{(X,D),\on{amb}}(Q,q,\tau_{0,2},z).
\]

The following condition ensures that the relative mirror map (\ref{mirror-map}) is trivial:
\[
\#\{i: D_i\cdot\beta> 0\}\geq 2, \text{ for all } \beta\in \NE(X).
\]
We would like to point out that the main computations in this paper do not rely on this condition.

For the purpose of the proof of the mirror symmetric Gamma conjecture, we also need to consider the extended $I$-function with the extended data:
\[
S=\{\vec e_i\}_{i=1}^m\cup \{\vec{\mathbf b}\}.
\]
We require $\vec{\mathbf b}$ to be of mid-age such that $D_{\vec{\mathbf b}}$ is a zero-dimensional stratum corresponding to a maximal cone $\sigma$. We write the index set $I_{\sigma}:=I_{\vec{\mathbf b}}$. For $i\in I_\sigma$, $\mathbf b_i$ is a mid-ag; otherwise, $\mathbf b_i=0$.

In this case, the extended mirror map $\tau(Q,q,x_{\vec a})$ is
\begin{equation}\label{extended-mirror-map}
\begin{split}
&\tau_{0,2}+\sum_{i=1}^{\mathrm r}p_i \log Q_i+\sum_{i=1}^m D_i\log q_i+\sum_{i=1}^mx_{\vec e_i}[1]_{\vec e_i}\\
& + \left([1]_{\vec{\mathbf b}}+\sum_{\substack{\beta:d_i=0, \text{ for } i\not\in I_\sigma,\\ k\geq 2}} \langle [\on{pt}]_{D_{\beta,-}}\psi^{k-2}\rangle_{0,1,\beta}^X \frac{\prod_{i: d_i>0} d_i!}{\prod_{j: d_j<0}(-d_j-1)!}Q^\beta q^{\vec d}[1]_{\vec{\mathbf b}-\vec d}\right)x_{\vec{\mathbf b}}\\
&+\sum_{i=1}^m\sum_{\substack{\beta\in \NE(X),k\geq 2 \\ d_i>0,d_j\leq 0, \text{ for } j \neq i}}\langle [\on{pt}]_{D_{\beta,-}}\psi^{k-2}\rangle_{0,1,\beta}^XQ^\beta q^{\vec d}\frac{(d_i-1)!}{\prod_{j: d_j<0}(-d_j-1)!}[1]_{-\vec d}.
\end{split}
\end{equation}
We have the following relation:
\begin{align}\label{rel-J-I}
\iota_*J_{(X,D)}(\tau(Q,q),x_{\vec a},z)=I_{(X,D),\on{amb}}(Q,q,x_{\vec a},\tau_{0,2},z).
\end{align}
The variables 
\[
x_{\vec a}=(x_{\vec e_1},x_{\vec e_2},\ldots,x_{\vec e_m},x_{\vec{\mathbf b}})
\]
 are the variables corresponding to the extended data.

If $\{D_i^*\}_{i=1}^m$ span the cone of effective curve classes, then all these mirror maps will be trivial.

Given a monoid ideal $I\subset \NE(X)$, we can restrict the relation (\ref{rel-J-I}) between the $I$-function and the $J$-function to $\beta\in \NE(X)\setminus I$.

\begin{remark}
Recall that a pair of mid-age is a pair of ages $k_{ai}/r_i, k_{bi}/r_i$ along $D_i$ such that $k_{ai},k_{bi}$ are sufficiently larger than $D_i\cdot\beta$ and $k_{ai}+k_{bi}=r_i+c_i$ for certain constant $c_i\in \mathbb Z$. We may need to choose a different $\vec{\mathbf b}$ for a different $\beta$. If there is a finite collection of $\beta$, for example, when $\beta \in \NE(X)\setminus I$ and $\NE(X)\setminus I$ is a finite set, then we can choose a mid-age $\vec{\mathbf b}$ once for all. In (\ref{extended-mirror-map}), we may consider the sum of $\beta$ as over $\NE(X)\setminus I$ instead of $\NE(X)$. Alternatively, we may consider $\vec{\mathbf b}$ as a formal symbol for `$\infty$'. What matters is that the mid-ages always come in pairs and the sum of two mid-ages is a constant (a small or a large age).  
\end{remark}

If there is a class $\varphi\in H^*(X)$ with $\deg(\varphi)>0$, the mirror map associated with $\varphi$ will be non-trivial in general and the computation of the invariants involves the Birkhoff factorization procedure. This is because there will be non-zero coefficients for positive powers of $z$. However, we do not need to compute invariants of $(X,D)$ here. Instead, we are interested in getting a formula for the $J$-function of $X$ which is used to define the $I$-function of $(X,D)$. Write
\begin{align}\label{mirror-map-varphi}
\varphi(z)
\end{align}
for the mirror map associated with $\varphi$. We do not give an explicit formula for $\varphi(z)$ as it can be very complicated, in general. 

In the $J$-function of $(X,D)$, we take the parameter $\tau=\tau^\prime+\tau_{0,2}$ and
\[
\tau^\prime=[\varphi(z)]x_{\varphi}+\sum_{i=1}^m [1]_{\vec e_i}x_{\vec e_i}+[1]_{\vec{\mathbf b}}x_{\vec{\mathbf b}}.
\]
We can take the $x_{\varphi}^1$-coefficient of the $J$-function and the $I$-function. In the $I$-function of $(X,D)$, we will have $\mathbb J_X(\tau_{0,2},z)\varphi$. The mirror map $\varphi(z)$ associated with the class $\varphi$ is a polynomial in $z$ of degree $\frac{1}{2}\deg(\varphi)-1$. Note that the $x_\varphi^1\cdot z^{i}$-coefficient of the $I$-function of $(X,D)$ takes value in $\mathfrak H$.

\begin{remark}
In Section \ref{sec:func-mirror}, we will consider the mirror function $\check{\varphi}$ associated with $\varphi$. The function $\check{\varphi}(z)$ that appears in the mirror symmetric Gamma conjecture is the mirror function associated with $\varphi(z)$.  Therefore, the mirror map is encoded in $\check{\varphi}(z)$.
\end{remark}

\subsection{Absolute $J$-function in terms of relative}

Now we consider the relative mirror theorem for $(X,D)$. When the mirror map is trivial, we can try to solve the small $J$-function of $X$. When the mirror map is non-trivial, we need to include the inverse of the mirror map. We ignore the mirror map for now for simplicity. The following result does not require that the divisor $D$ has zero dimensional strata.

Consider an effective curve class $\beta\in \NE(X)$. After rearranging the order of $D_i$, we can assume that 
\[
d_i=D_i\cdot\beta>0 \text{ for }i=1,\ldots, m_+, \quad d_i=D_i\cdot\beta=0\text{ for } i=m_+ +1,\ldots, m_++m_0,
\]
and 
\[
d_i=D_i\cdot\beta<0\text{ for }i=m_++m_0+1,\ldots, m.
\]
We consider the extended $I$-function of $(X,D)$ such that
\[
S=\{\vec e_i\}_{i=1}^{m_+}.
\]  
Now we consider the $\prod_{i=1}^{m_+}x_{\vec e_i}^{d_i}$-coefficient of the $S$-extended $I$-function of $(X,D)$ that takes value in $\mathfrak H_{\vec d-\sum_{i=1}^{m_+}d_i\vec e_i}=H^*(D_{\vec d-\sum_{i=1}^{m_+}d_i\vec e_i})$, we have
\[
e^{\sum_{i=1}^m D_i\log q_i}/z J_{X,\beta}(\tau_{0,2},z)Q^\beta q^{\vec d}\frac{1}{\prod_{i=1}^{m_+}d_i!z^{d_i}}\prod_{j=1}^m \frac{\prod_{a\leq d_j}(D_j+az)}{\prod_{a\leq 0}(D_j+az)}[1]_{-\sum_{i=m_++m_0+1}^m  d_i\vec e_i}.
\] 
The corresponding coefficient of the $J$-function of $(X,D)$ is
\begin{align*}
e^{-\sum_{i=1}^m D_i\log x_i/z}\sum_{[\gamma]} & \langle \prod_{i=1}^{m_+}[1]_{\vec e_i}^{d_i}, \tau_{0,2},\ldots,\tau_{0,2},[\gamma]_{\sum_{i=m_++m_0+1}^m  d_i\vec e_i}\psi^{k} \rangle_{0,1+l+\sum_{i=1}^{m_+}d_i,\beta}^{(X,D)}\\
& \cdot[\gamma^\vee]_{-\sum_{i=m_++m_0+1}^m  d_i\vec e_i}\frac{t^\beta x^{-\vec d}}{l!\prod_{i=1}^{m_+}d_i!},
\end{align*}
where $\gamma$ is pulled back from the cohomology of the ambient space $X$ and the class $[\gamma^\vee\cup_{i=m_+=m_0+1}^m D_i] \in H^*(X)$ is the Poincar\'e dual of $\gamma$ (we omit the pullback sign in the notation); and $k$ is determined by the discrete data, the insertion $[\gamma]$ and the virtual dimension constraint. 
Using the relative mirror theorem, we have $Q=t$ and $q_i=x_i^{-1}$. Then,
\begin{align*}
& J_{X,\beta}(\tau_{0,2},z)\frac{1}{\prod_{i=1}^{m_+}z^{d_i}}\prod_{j=1}^m \frac{\prod_{a\leq d_j}(D_j+az)}{\prod_{a\leq 0}(D_j+az)}[1]_{-\sum_{i=m_++m_0+1}^m  d_i\vec e_i}\\
=& \sum_{[\gamma]}\langle \prod_{i=1}^{m_+}[1]_{\vec e_i}^{d_i}, \tau_{0,2},\ldots,\tau_{0,2},[\gamma]_{\sum_{i=m_++m_0+1}^m  d_i\vec e_i}\psi^{k} \rangle_{0,1+l+\sum_{i=1}^{m_+}d_i,\beta}^{(X,D)}[\gamma^\vee]_{-\sum_{i=m_++m_0+1}^m  d_i\vec e_i}\frac{1}{l!}.
\end{align*}
We can solve the $J$-function of $X$:
\begin{align*}
J_{X,\beta}(\tau_{0,2},z)= &\sum_{[\gamma]}\langle \prod_{i=1}^{m_+}[1]_{\vec e_i}^{d_i},\tau_{0,2}, \ldots,\tau_{0,2}, [\gamma]_{\sum_{i=m_++m_0+1}^m  d_i\vec e_i}\psi^{k} \rangle_{0,1+l+\sum_{i=1}^{m_+}d_i,\beta}^{(X,D)}\\
&\cdot [\gamma^\vee]\frac{z^{\sum_{i=1}^{m_+} d_i}}{l!}\prod_{j=1}^m \frac{\prod_{a\leq 0}(D_j+az)}{\prod_{a\leq d_j}(D_j+az)},
\end{align*}
in $H^*(X)$. We can further set $Q=t=1$. Note that this part of the $J$-function of $X$ takes values in $\cup_{i: d_i\leq 0} D_i \cup H^*(X)$. The following may be considered as a mirror theorem for Fano varieties. 
\begin{theorem}\label{thm-mirror-X}
The $J$-function of a Fano variety $X$ can be written in terms of invariants of the log Calabi--Yau pair $(X,D)$:
\begin{equation}\label{iden-mirror-theorem-X}
\begin{split}
& J_{X}(\tau_{0,2},x,z)\\
= & e^{-\sum_{i=1}^m D_i\log x_i/z}\sum_{\beta\in \NE(X),[\gamma]}\langle \prod_{i: d_i>0}[1]_{\vec e_i}^{d_i}, \tau_{0,2},\ldots,\tau_{0,2},[\gamma]_{\sum_{i: d_i\leq 0}  d_i\vec e_i}\psi^{k} \rangle_{0,1+l+\sum_{i:d_i>0}d_i,\beta}^{(X,D)}\\
& \cdot [\gamma^\vee]\frac{z^{\sum_{i: d_i>0} d_i}}{l!}\prod_{j=1}^m  x_i^{-d_i}\frac{\prod_{a\leq 0}(D_j+az)}{\prod_{a\leq d_j}(D_j+az)}.
\end{split}
\end{equation}
\end{theorem}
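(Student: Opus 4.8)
The plan is to deduce the formula from the relative mirror theorem (Corollary~\ref{cor-extended-rel-I-func}) by isolating a single carefully chosen coefficient of the $S$-extended $I$-function and inverting the hypergeometric modification attached to it. In the log Calabi--Yau case the mirror map in $D$ is trivial, so Corollary~\ref{cor-extended-rel-I-func} reduces to the relation (\ref{rel-J-I}),
\[
\iota_* J_{(X,D)}(\tau(Q,q),x_{\vec a},z) = I_{(X,D),\on{amb}}(Q,q,x_{\vec a},\tau_{0,2},z),
\]
between the $J$-function of the pair and its $S$-extended $I$-function. Following the convention of the preceding discussion I would suppress the (possibly nontrivial) relative mirror map $\tau(Q,q)$, treating it as an inversion to be restored at the end; the remaining content is then to compare the two sides coefficient by coefficient in the extended variables $x_{\vec a}$ and in the graded pieces of the state space $\mathfrak H$, choosing the grading so that only the absolute invariants $J_{X,\beta}$ survive.

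First I would fix $\beta\in \NE(X)$ and, after reordering, arrange the components of $D$ so that $d_i=D_i\cdot\beta$ is positive for $i\le m_+$, zero for $m_+<i\le m_++m_0$, and negative otherwise. Taking the extended data $S=\{\vec e_i\}_{i=1}^{m_+}$ and extracting the $\prod_{i=1}^{m_+}x_{\vec e_i}^{d_i}$-coefficient forces $l_i=d_i$ in (\ref{extended-rel-I-func}). The crucial bookkeeping observation is that for this choice the index $-D_j\cdot\beta+\sum_i l_i a_{ij}$ vanishes for every $j\le m_+$ and equals $-d_j$ otherwise, so the resulting term lies in $\mathfrak H_{-\sum_{i>m_++m_0}d_i\vec e_i}$, and simultaneously the product $\prod_{j:\,D_j\cdot\beta-\sum_i l_ia_{ij}>0}$ becomes empty. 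Hence on this coefficient the $I$-function reduces to $J_{X,\beta}$ times only the invertible hypergeometric factor $\prod_{j}\prod_{a\le d_j}(D_j+az)/\prod_{a\le 0}(D_j+az)$.

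Next I would match this against the corresponding coefficient of $J_{(X,D)}$, which by Definition~\ref{def-relative-J-function} produces the relative invariants $\langle \prod_{i\le m_+}[1]_{\vec e_i}^{d_i},\tau_{0,2},\dots,\tau_{0,2},[\gamma]_{\sum_{i>m_++m_0}d_i\vec e_i}\psi^{k}\rangle^{(X,D)}_{0,\bullet,\beta}$ paired against $[\gamma^\vee]$, where $\gamma$ is pulled back from $X$ and Poincar\'e duality is taken inside the ambient cohomology. Setting $Q=t$ and $q_i=x_i^{-1}$ as dictated by the relative mirror theorem, I would then solve for $J_{X,\beta}(\tau_{0,2},z)$ by multiplying through by the inverse hypergeometric factor $\prod_j \prod_{a\le 0}(D_j+az)/\prod_{a\le d_j}(D_j+az)$ and the power $z^{\sum_{i\le m_+}d_i}$. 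Summing over all $\beta\in\NE(X)$ and restoring the divisor-equation prefactor $e^{-\sum_i D_i\log x_i/z}$ together with the monomials $x_j^{-d_j}$ yields the stated identity (\ref{iden-mirror-theorem-X}); the membership of each $J_{X,\beta}$ in $\cup_{i:d_i\le 0}D_i\cup H^*(X)$ follows from the one-point vanishing argument for Fano $X$ discussed above.

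The main obstacle is not the inversion itself---which is purely formal once the extra hypergeometric product is seen to be empty---but the grading and duality bookkeeping needed to guarantee that the chosen coefficient isolates exactly $J_{X,\beta}$ and nothing else: one must check that the insertions $[1]_{\vec e_i}^{d_i}$ correctly enforce maximal tangency along the $D_i$ with $d_i>0$, that the negative-contact sector $\mathfrak H_{-\sum_{i>m_++m_0}d_i\vec e_i}$ is matched consistently on both sides, and that the ambient dual $[\gamma^\vee]$ lands in the claimed piece $\cup_{i:d_i\le 0}D_i\cup H^*(X)$. A secondary subtlety, which the statement deliberately sets aside, is the presence of a nontrivial relative mirror map; incorporating it would require replacing $\tau_{0,2}$ by the inverse mirror map before reading off coefficients, and I would defer this to the mirror-map discussion in Section~\ref{sec-log-cy}.
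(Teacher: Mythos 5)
Your proposal follows essentially the same route as the paper's own proof: both take the extended data $S=\{\vec e_i\}_{i=1}^{m_+}$, extract the $\prod_{i=1}^{m_+}x_{\vec e_i}^{d_i}$-coefficient of the $S$-extended $I$-function valued in $\mathfrak H_{-\sum_{i>m_++m_0}d_i\vec e_i}$, match it against the corresponding coefficient of $J_{(X,D)}$ under $Q=t$, $q_i=x_i^{-1}$, invert the hypergeometric factor and the power of $z$ to solve for $J_{X,\beta}(\tau_{0,2},z)$, and then sum over $\beta\in\NE(X)$. Your explicit verification that the product $\prod_{j:\,D_j\cdot\beta-\sum_i l_ia_{ij}>0}$ is empty for this choice of coefficient is a bookkeeping detail the paper leaves implicit, but the argument is the same.
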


\begin{remark}
We can also state the mirror theorem of $X$ when the relative mirror map of $(X,D)$ is not trivial. Then the LHS and the RHS of Equation (\ref{iden-mirror-theorem-X}) equal after imposing the relative mirror map. For example, when $D$ is a smooth anticanonical divisor of $X$, we can use the relative mirror theorem of \cite{FTY} and the relative mirror map computed in \cite{You22} to write the $J$-function of $X$ in terms of relative invariants of $(X,D)$ and a hypergeometric modification.
\end{remark}

We consider the following operator of the pair $(X,D)$:
\begin{definition}
The $\mathbb J$-operator of $(X,D)$ is defined as
\begin{align*}
\mathbb J_{(X,D)}(\tau_{0,2},z)(\phi):=&e^{\left(-\sum_{i=1}^m D_i\log x_i+\tau_{0,2}\right)/z}\phi\\
&+e^{-\sum_{i=1}^m D_i\log x_i/z}\sum_{\beta\in \NE(X)\setminus \{0\}}\sum_{k,\vec s}\langle \phi,\tau_{0,2},\ldots,\tau_{0,2}, \frac{T_{\vec s,k}}{z-\bar{\psi}}\rangle_{0,2+l,\beta}^{(X,D)} T_{-\vec s}^k \frac{ x^{-\vec d}}{l!}.
\end{align*}
\end{definition}

Then the TRR, \cite{TY20c}*{Proposition 26}, implies that the $J$-function of $X$ can be written as
\begin{align}\label{mirror-thm-X}
J_{X}(\tau_{0,2},z)=\left[ \mathbb J_{(X,D)}(\tau_{0,2},z)\left(\prod_{i=1}^{m_+} \vartheta_{\vec e_i}^{d_i}\right)\right]_{[\gamma^\vee]_{-\sum_{i=m_++m_0+1}^m  d_i\vec e_i}}[\gamma^\vee]z^{\sum_{i=1}^{m_+} d_i}\prod_{j=1}^m \frac{\prod_{a\leq 0}(D_j+az)}{\prod_{a\leq d_j}(D_j+az)}.
\end{align}
Furthermore, we have
\[
\mathbb J_{(X,D)}(\tau_{0,2},z)\left(\prod_{i=1}^{m_+} \vartheta_{\vec e_i}^{d_i}\right)=\sum_{\vec p\in B(\mathbb Z)}\left[ \prod_{i=1}^{m_+} \vartheta_{\vec e_i}^{d_i}\right]_{\vartheta_{\vec p}}\mathbb J_{(X,D)}(\tau_{0,2},z)([1]_{\vec p}).
\]

\section{The mirror construction}

Let $(X,D)$ be a log Calabi--Yau pair and $B$ be pure-dimensional with $\dim_{\mathbb R}B=\dim_{\mathbb C}X$. By log Calabi--Yau, we mean $D\in |-K_X|$. We also assume that the strata of $D$ are connected. This is the absolute case in \cite{GS21}*{Assumption 1.1} with all the divisors are good. Therefore, every stratum of $D$ contains a zero-dimensional stratum. For each one-dimensional stratum $D_I$, we have $D_I\cong \mathbb P^1$ by \cite{GS21}*{Proposition 1.3}.

The intrinsic mirror construction of Gross--Siebert for the pair $(X,D)$ was given in \cite{GS19}. We recall a variant of this construction using orbifold invariants \cite{TY20c}. In this paper, when we talk about intrinsic mirror symmetry, we always refer to the variant of intrinsic mirror symmetry using orbifold invariants \cite{TY20c} instead of punctured log invariants \cite{GS19}.

Let $\NE(X)\subset H_2(X)$ be a finitely generated submonoid, containing all effective curve classes and the group of invertible elements of $\NE(X)$ coincides with the torsion part of $H_2(X)$. Let $I\subset \NE(X)$ be a monoid ideal such that $\NE(X)\setminus I$ is finite. That is,
\[
S_I:=\mathbb C[\NE(X)]/I
\]
is Artinian. Then we consider a free $S_I$-module
\[
R_I:=\bigoplus_{\vec p \in B(\mathbb Z)} S_I \vartheta_{\vec p},
\]
where the theta functions satisfy the product rule 
\begin{align}
\vartheta_{\vec p_1}\star\vartheta_{\vec p_2}=\sum_{\vec r\in B(\mathbb Z),\beta}N^\beta_{\vec p_1,\vec p_2,-\vec r}t^\beta \vartheta_{\vec r},   
\end{align}
where $\beta\in \NE(X)\setminus I$. 
In \cite{GS19}, the structure constants $N^\beta_{\vec p_1,\vec p_2,-\vec r}$ are punctured logarithmic invariants of $(X,D)$ defined in \cite{ACGS}. Here, we use orbifold invariants of root stacks defined in \cite{TY20c}. By \cite{TY20c}*{Theorem 37}, the orbifold structure constants define a commutative, associative $S_I$-algebra structure on $R_I$ with unit given by $\vartheta_0$.  

\begin{remark}
The vector $\vec r$ here represents contact orders. In previous sections, we use $\vec r$ as the rooting parameters of the multi-root stacks when we explained the definition of orbifold invariants. As we have already taken the limit $\vec r\rightarrow \infty$, we will not need to specify the rooting parameters anymore. Therefore, from now on, $\vec r$ always means contact orders, similar to $\vec p, \vec q$. We hope there will not cause any confusion.
\end{remark}

Recall that in \cite{GRZ} and \cite{You22}, one sets 
\[
Q^\beta=t^\beta x^{-D\cdot\beta},
\]   
where $x$ corresponds to the anticanonical direction there.
Here, we set
\begin{align}\label{Q=x}
(t/Q)^\beta=x^{D\cdot\beta}=x_1^{D_1\cdot\beta}\cdots x_m^{D_m\cdot\beta}.
\end{align}
Note that $x_i$ are local coordinates that correspond to the divisor $D_i$. For toric varieties with their toric invariant divisors, the relation (\ref{Q=x}) is the usual relation that we have in the Hori--Vafa mirror.

For mirror construction, one defines families of schemes
\[
\on{Spec} R_I\rightarrow \on{Spec} S_I.
\]
One obtains a formal flat family of affine schemes by taking the direct limit of the above families of schemes
\begin{align}\label{mirror-constr}
\check{\mathfrak X}\rightarrow \on{Spf}\widehat{\mathbb C[\NE(X)]},
\end{align}
where $\widehat{\mathbb C[\NE(X)]}$ is the completion of $\mathbb C[\NE(X)]$ with respect to the maximal ideal $\NE(X)\setminus \NE(X)^\times$. This is considered as the mirror family to $X\setminus D$. For the mirror to the pair $(X,D)$, we also need to turn on the potential. Following Section 0.4 of the first arXiv version of \cite{GHK}, the Landau--Ginzburg potential is the following
\[
W=\sum_{i=1}^m \vartheta_{[D_i]},
\]
where $\vartheta_{[D_i]}:=\vartheta_{\vec e_i}$ and $\{\vec e_i\}_{i=1}^m$ is the set of standard unit vectors. However, since the orbifold invariants are not the same as the logarithmic invariants, the potential $W$ here may not be the same as the potential defined using logarithmic invariants. In Section \ref{sec:mirror-map}, we define theta functions by composing the mirror map in the definition. Then all the computation still works. It aligns with the expectation in \cite{You22a} that the difference between orbifold and logarithmic invariants can be described in terms of relative mirror maps.

For a log Calabi--Yau pair $(X,D)$, where $D$ does not have a zero-dimensional stratum, we need to consider a maximally unipotent degeneration $g: \mathcal X\rightarrow S$ of $(X,D)$. Then the intrinsic mirror \cite{GS19} of $X\setminus D$ is constructed as the projective spectrum of the degree zero part of the relative quantum cohomology of $(\mathcal X,\mathcal D)$, where $\mathcal D$ is a certain divisor of $\mathcal X$ that includes $g^{-1}(0)$. This is called the relative case in \cite{GS19} and \cite{GS21}.

\section{Theta functions}\label{sec:theta-func}

We will use the Landau--Ginzburg potential defined using theta functions to study the mirror symmetric Gamma conjecture. For the LHS of (\ref{iden-gamma-conj}), we need to consider the Taylor expansion of $e^{-W/z}$, where $W=\sum_{i=1}^m\vartheta_{[D_i]}$. Therefore, we need to consider the product of theta functions. Recall that theta functions satisfy the following product rule. For $\vec p_1,\vec p_2\in B(\mathbb Z)$,
\begin{align}\label{theta-func-prod}
\vartheta_{\vec p_1}\star\vartheta_{\vec p_2}=\sum_{\vec r\in B(\mathbb Z),\beta}N^\beta_{\vec p_1,\vec p_2,-\vec r}t^\beta \vartheta_{\vec r},    
\end{align}
where the structure constants $N^\beta_{\vec p_1,\vec p_2,-\vec r}$ are three-point (punctured log or orbifold) invariants of $(X,D)$ with contact orders $\vec p_1,\vec p_2$ and $-\vec r$.

 We will consider more general products of theta functions, but we would first like to recall the Frobenius structure conjecture as a special coefficient of the product. The Frobenius structure conjecture can be partially rephrased to state that the coefficient of $\vartheta_0$ in the product $\vartheta_{\vec p_1}\star\cdots\star \vartheta_{p_l}$ is
\[
\sum_{\beta}N^\beta_{\vec p_1,\ldots,\vec p_l,0}t^\beta.
\]
This was proved in \cite{TY20c}*{Theorem 38} when the structure constants are orbifold invariants and when $(K_X+D)$ is nef or anti-nef. This coefficient is related to the classical period of $W$ and the mirror symmetric Gamma conjecture for $\mathcal O_{\on{pt}}$. We will compute them in Section \ref{sec:gamma-pt-sheaf}.

We also recall the definition of the orbifold theta function $\vartheta_{\vec r}$ in \cite{You24}:

\begin{definition}[=\cite{You24}, Definition 1.6]\label{def-theta}
For an snc log Calabi--Yau pair $(X,D)$,    the theta functions are defined as follows: Fix $\vec r \in B(\mathbb Z)\setminus \{0\}$. Let $\sigma_{\on{max}}\in \Sigma(X)$ be a maximal cone of $\Sigma(X)$ such that $p\in \sigma_{\on{max}}$, then
    \begin{align*}
   \vartheta_{\vec r}(p)&:=
   \sum_{\vec k\in {\mathbb Z}^m} \sum_{\beta: D_i\cdot \beta=k_i+r_i}N_{\vec r, \vec{\mathbf b}, -\vec{\mathbf{b}}+\vec k}^\beta t^{\beta} x^{-\vec k},
    \end{align*}
where the invariants are three-point genus zero orbifold invariants of $(X,D)$ with contact orders (or ages) $\vec r, \vec{\mathbf b}$ and, $-\vec{\mathbf{b}}+\vec k$ and, the last two markings are mid-age markings along $D_i$ for $i\in I_\sigma$ for $\sigma=\sigma_{\on{max}}$. 
\end{definition}

\begin{remark}\label{rmk-theta}
    In Definition \ref{def-theta}, if we take $\beta \in \NE(X)$, then we consider $\vartheta_{\vec r}(p)$ as a formal power series. To avoid convergence issues, we take $\beta \in \NE(X)\setminus I$, for $I\subset \NE(X)$ a monoid ideal such that $\NE(X)\setminus I$ is finite. Then there are only finitely many terms in $\vartheta_{\vec r}(p)$. In this case, we can choose one mid-age $\vec{\mathbf b}$ for all $\beta\in \NE(X)\setminus I$. This is not true when there are infinitely many terms and we may need to choose different $\vec{\mathbf b}$ for different $\beta$. Most of the computations in this paper work for both $\beta \in \NE(X)$ and $\beta \in \NE(X)\setminus I$. We will specify it when necessary.
\end{remark}

\begin{remark} Compared with \cite{You24}*{Definition 1.6}, we made a change of variable $x\rightarrow x^{-1}$, so we have $x^{-\vec k}$ instead of $x^{\vec k}$. The reason for the change is that it seems more natural in the computation presented in later sections. \end{remark}

\begin{remark}
Note that in the definition of theta functions, we have $x^{-\vec k}=x^{-\vec d}x^{\vec r}$, where $\vec d=(D_1\cdot\beta,\ldots,D_m\cdot\beta)$. We can consider the factor $x^{-\vec d}$ as coming from $-D_i\log x_i$ in the parameter $\tau_{0,2}$ of the relative quantum product.
\end{remark}

\begin{remark}
    We recall that we may think of a pair of mid-age markings as a pair of relative markings with one sufficiently large positive contact order $\mathbf b_i$ and one sufficiently large negative contact order $-\mathbf b_i+k_i$ along $D_i$. The invariants may depend on $k_i$ but do not depend on the value of $\mathbf b_i$ for $\mathbf b_i$ sufficiently large. In \cite{GS21}, logarithmic theta functions are in terms of punctured invariants of the broken line type. The orbifold invariants here may be considered as an orbifold analogous to the broken line type invariants. It is worth pointing out that although mid-age invariants are defined using root stacks, these invariants are indeed intrinsic to the geometry of the pair $(X,D)$ and can be defined without mentioning orbifold Gromov--Witten theory. But in many cases, considering them as orbifold invariants of root stacks makes it easier to study them.
\end{remark}

Now we consider a generalization of Definition \ref{def-theta} by defining theta functions that depend on a parameter $\tau$ with $\deg^0(\tau)\leq 1$. 
\begin{definition}\label{def-theta-tau}
For an snc log Calabi--Yau pair $(X,D)$, let $\tau\in \mathfrak H$ be a cohomology class with $\deg^0(\tau)\leq 1$.  The theta functions are defined as follows: Fix $\vec r \in B(\mathbb Z)\setminus \{0\}$. Let $\sigma_{\on{max}}\in \Sigma(X)$ be a maximal cone of $\Sigma(X)$ such that $p\in \sigma_{\on{max}}$, then
    \begin{align*}
   \vartheta_{\tau,\vec r}(p)&:=
   \sum_{\beta}\sum_{\vec k\in {\mathbb Z}^m}N_{\tau,\vec r, \vec{\mathbf b}, -\vec{\mathbf{b}}+\vec k}^\beta t^{\beta} x^{-\vec k},
    \end{align*}
with
\[
N_{\tau,\vec r, \vec{\mathbf b}, -\vec{\mathbf{b}}+\vec k}^\beta=\sum_{l\geq 0}\frac{1}{l!}\langle [1]_{\vec r},\tau,\ldots,\tau, [1]_{\vec{\mathbf b}}, [\on{pt}]_{-\vec{\mathbf{b}}+\vec k}\rangle_{0,l+3,\beta}^{(X,D)},
\]
where the last two markings are mid-age markings along $D_i$ for $i\in I_\sigma$ for $\sigma=\sigma_{\on{max}}$; the degree $\deg^0(\tau)$ is defined in (\ref{deg-0}).  
\end{definition}

Note that, for mid-age orbifold invariants of root stacks for the pair $(X,D)$, we need to choose the roots to be sufficiently large with respect to $l$ and $\beta$. In other words, we first fix all the topological data and then choose large roots. Then we choose mid-ages with respect to the roots.

By the virtual dimension constraint, we can simply restrict the parameter $\tau$ of the theta function to its degree one part. However, we write $\deg^0(\tau)\leq 1$ instead of simplifying to $\deg^0(\tau)=1$ to make it more compatible with the discussions in later sections.  

Note that $\tau \in \mathfrak H_{\vec k}$ with $\deg^0(\tau)=d/2+\#\{i:k_i<0\}=1$ is not just in $H^2(X) \subset \mathfrak H_{\vec 0}$, where $d$ is the real degree of $\tau$. We can have classes such as $[1]_{-\vec k}$ such that $\#\{i:k_i<0\}=1$. The situation will be more complicated if we do not assume $\deg^0(\tau)\leq 1$ and we will need to take into account of more general functions. We will study it in the next section.

Considering the restriction of the big quantum product of $[1]_{\vec p}$, for $\vec p\in B(\mathbb Z)$, with parameter $\tau$, we can see that theta functions $\vartheta_{\tau,\vec p}$ satisfy the following product rule:
\[
\vartheta_{\tau,\vec p_1}\star\vartheta_{\tau,\vec p_2}=\sum_{\vec r\in B(\mathbb Z),\beta}N^\beta_{\tau,\vec p_1,\vec p_2,-\vec r}t^\beta \vartheta_{\vec r},   
\]
where
\[
    N^\beta_{\tau,\vec p_1,\vec p_2,-\vec r}:=\sum_{l\geq 0}\frac{1}{l!}\langle [1]_{\vec p_1},[1]_{\vec p_2},\tau,\ldots,\tau,[\on{pt}]_{-\vec r}\rangle_{0,l+3,\beta}^{(X,D)}. 
    \]
    
We can compute the product of theta functions with a parameter $\tau\in \mathfrak H$.  We have
\begin{prop}\label{prop-prod-theta}
When $-K_X-D$ is nef or anti-nef. Let $\deg^0(\tau)\leq 1$,
\[
    \vartheta_{\tau,\vec p_1}\star\cdots \star \vartheta_{\tau,\vec p_{l^\prime}}=\sum_{\vec r\in B(\mathbb Z)}\sum_{\beta} N^\beta_{\tau,\vec p_1,\ldots,\vec p_{l^\prime},-\vec r}t^\beta\vartheta_{\vec r},\]
    where 
    \[
    N^\beta_{\tau,\vec p_1,\ldots,\vec p_{l^\prime},-\vec r}:=\sum_{l\geq 0}\frac{1}{l!}\langle [1]_{\vec p_1},\ldots,[1]_{\vec p_{l^\prime}},\tau,\ldots,\tau,[\on{pt}]_{-\vec r}\psi^{l^\prime-2}\rangle_{0,l+l^\prime+1,\beta}^{(X,D)}. 
    \]
\end{prop}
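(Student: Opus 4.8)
The plan is to induct on the number of factors $l'$, taking the product rule recorded just above as the base case and using the genus-zero topological recursion relation \cite{TY20c}*{Proposition 26} together with associativity of the relative quantum product \cite{TY20c}*{Theorem 37} for the inductive step. Throughout, the parameter $\tau$ is essentially a spectator: the insertions $\tau,\ldots,\tau$, summed over their number $l$ with weight $1/l!$, only promote each three-point structure constant to its big-quantum counterpart, and under any splitting they distribute across components by the binomial rule (so that $1/l!$ becomes $1/(l_1!\,l_2!)$). Thus it suffices to track the unit insertions $[1]_{\vec p_i}$ and the output marking $[\on{pt}]_{-\vec r}$ carrying the descendant $\bar\psi^{l'-2}$, while the $\tau$'s follow along. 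For $l'=2$ the claimed formula is exactly the product rule, since $\bar\psi^{l'-2}=\bar\psi^0=1$.

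\textbf{Inductive step.} Assuming the formula for $l'-1$ factors, I would write $\vartheta_{\tau,\vec p_1}\star\cdots\star\vartheta_{\tau,\vec p_{l'}}=(\vartheta_{\tau,\vec p_1}\star\cdots\star\vartheta_{\tau,\vec p_{l'-1}})\star\vartheta_{\tau,\vec p_{l'}}$, expand the first factor by the inductive hypothesis, and multiply by $\vartheta_{\tau,\vec p_{l'}}$ using bilinearity and the product rule. Reading off the coefficient of $\vartheta_{\vec r}$ reduces the whole statement to the single identity
\[
\sum_{\vec s}\sum_{\beta_1+\beta_2=\beta} N^{\beta_1}_{\tau,\vec p_1,\ldots,\vec p_{l'-1},-\vec s}\,N^{\beta_2}_{\tau,\vec s,\vec p_{l'},-\vec r}=N^{\beta}_{\tau,\vec p_1,\ldots,\vec p_{l'},-\vec r},
\]
where the first factor on the left carries $\bar\psi^{l'-3}$ on its output marking $[\on{pt}]_{-\vec s}$. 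I would prove this by applying the topological recursion relation to the right-hand correlator: trading one factor of $\bar\psi$ on the $[\on{pt}]_{-\vec r}$ marking for a sum of boundary divisors degenerates the domain and splits the correlator, gluing along a node whose contact order is the intermediate index $\vec s$ and whose cohomology weight runs over a basis of $H^*(D_{\vec s})$ paired by Poincar\'e duality. Choosing the two anchor markings in the recursion appropriately arranges the splitting so that $\vec p_{l'}$ and the output stay on one component and $\vec p_1,\ldots,\vec p_{l'-1}$ on the other.

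\textbf{The crux.} The delicate point is that the node sum produced by the recursion must collapse onto the degree-zero theta basis: only the dual pairs $[\on{pt}]_{-\vec s}\otimes[1]_{\vec s}$ should survive, so that the two factors become precisely the structure constants appearing on the left-hand side. This is exactly where the hypothesis that $-K_X-D$ is nef or anti-nef is used, as in the proof of \cite{TY20c}*{Theorem 38}: a virtual-dimension count via \eqref{vir-dim}, combined with $\deg^0(\tau)\le 1$ and the nef/anti-nef positivity, forces every cohomology weight at a node to be a degree-zero class, so the diagonal restricted to $H^*(D_{\vec s})$ contributes only the point/unit pair. The same bookkeeping eliminates the extra boundary terms of the recursion (those splittings in which the anchors or the descendant marking fail to separate as required), leaving exactly the contribution matching the iterated product. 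Iterating the step $l'-2$ times realizes the full power $\bar\psi^{l'-2}$ and reconstructs the chain of three-point structure constants computed by the iterated quantum product.

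\textbf{Main obstacle.} I expect the genuine difficulty to lie in the compatibility of the pulled-back descendant $\bar\psi$ (taken from $\overline{M}_{0,l+l'+1,\beta}(X)$, not the orbifold $\psi$) with the splitting axiom in the relative/orbifold setting, and in verifying that the contact-order gluing and twisted-sector duality behave as the ordinary diagonal does; these are precisely the features packaged into the cited form of the topological recursion relation, so the task is to invoke it correctly rather than to reprove it. The secondary bookkeeping obstacle is confirming that the distribution of the $\tau$-markings together with their accompanying boundary corrections assembles exactly into the big-quantum structure constants $N^{\beta}_{\tau,\ldots}$; here the $\deg^0(\tau)\le 1$ hypothesis keeps the relevant correlators low enough in degree that the theta-basis collapse of the previous paragraph continues to apply at every stage of the induction.
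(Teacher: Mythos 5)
Your overall strategy (TRR plus the virtual--dimension/nefness argument to collapse node classes onto the pairs $[\on{pt}]_{-\vec s}\otimes[1]_{\vec s}$, then iterate) is the paper's strategy, and your base case and $\tau$-bookkeeping are fine. But the inductive step as you describe it contains a genuine gap. You reduce to the identity
\[
\sum_{\vec s}\sum_{\beta_1+\beta_2=\beta} N^{\beta_1}_{\tau,\vec p_1,\ldots,\vec p_{l'-1},-\vec s}\,N^{\beta_2}_{\tau,\vec s,\vec p_{l'},-\vec r}=N^{\beta}_{\tau,\vec p_1,\ldots,\vec p_{l'},-\vec r},
\]
with $\bar\psi^{l'-3}$ sitting on the output marking $[\on{pt}]_{-\vec s}$ of the \emph{first} factor, and you propose to obtain it from a single TRR application by ``choosing the anchors so that $\vec p_{l'}$ and the output stay together.'' This cannot work for two reasons. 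First, TRR never moves a descendant onto the node: the reduced power $\bar\psi^{l'-3}$ remains attached to the original marking $[\on{pt}]_{-\vec r}$, which in your desired splitting lies on the three-point component, so no TRR boundary term has the shape of your first factor. Second, the very dimension count you invoke \emph{kills} your desired splitting rather than isolating it: in that splitting the anchor-side correlator is $\langle [1]_{\vec p_1},\ldots,[1]_{\vec p_{l'-1}},\tau,\ldots,\tau,\tilde T_{-\vec s,\alpha}\rangle$ with no descendant, and the constraint (\ref{vir-dim}) together with $\deg^0(\tau)\le 1$, $\deg^0(\tilde T_{-\vec s,\alpha})\le\dim_{\mathbb C}X$ and nefness of $-K_X-D$ forces $l'-3+(-K_X-D)\cdot\beta_1\le 0$, i.e.\ such terms vanish whenever $l'>3$. (Your identity itself is true, being equivalent to the inductive step via associativity, but it is not a consequence of one TRR application.)

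What the dimension argument actually leaves standing is the opposite decomposition, which is the paper's step: with anchors $[1]_{\vec p_1},[1]_{\vec p_2}$, the only surviving boundary terms have $|S_1|=0$ and node class a point class, giving
\[
N^{\beta}_{\tau,\vec p_1,\ldots,\vec p_{l'},-\vec r}
=\sum_{\vec s}\sum_{\beta_1+\beta_2=\beta}
N^{\beta_1}_{\tau,\vec p_1,\vec p_2,-\vec s}\,
N^{\beta_2}_{\tau,\vec s,\vec p_3,\ldots,\vec p_{l'},-\vec r},
\]
where now the \emph{second} factor carries $\bar\psi^{l'-3}$ on $[\on{pt}]_{-\vec r}$, consistently with it having $l'-1$ unit insertions. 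Running your induction on this relation (i.e.\ peeling off two markings from the front, so the product is left-associated as $((\vartheta_{\tau,\vec p_1}\star\vartheta_{\tau,\vec p_2})\star\vartheta_{\tau,\vec p_3})\star\cdots$) repairs the argument and recovers the paper's proof, which iterates this step $l'-2$ times to produce a chain of three-point structure constants and identifies the chain with the iterated product.
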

\begin{proof}
Similarly to the proof of \cite{TY20c}*{Theorem 38}, we can apply the TRR for orbifold invariants of the pair $(X,D)$ \cite{TY20c}*{Proposition 26} to remove the descendant classes in $N_{\tau,\vec p_1,\ldots,\vec p_{l^\prime},-\vec r}$ and show that it coincides with the product $\vartheta_{\tau,\vec p_1}\star\cdots \star \vartheta_{\tau,\vec p_{l^\prime}}$. 

By TRR, we have 
\begin{align*}
 &   N^\beta_{\tau,\vec p_1,\ldots,\vec p_{l^\prime},-\vec r}\\
=& \sum \frac{1}{l_1!l_2!}\langle [1]_{\vec p_1},[1]_{\vec p_2},\prod\tau, \prod_{j \in S_1}[1]_{p_j}, \tilde{T}_{-\vec s,\alpha}\rangle \langle \tilde{T}_{\vec s}^\alpha,\prod\tau,\prod_{j\in S_2}[1]_{\vec p_j},[\on{pt}]_{-\vec r}\psi^{l^\prime-3}\rangle,
    \end{align*}
    where the number of markings with the insertion $\tau$ are $l_1$ and $l_2$ respectively; the sum is over all splittings of $l_1+l_2=l$, $\beta_1+\beta_2=\beta$, all indices $\vec s, \alpha$ of the basis of the state space $\mathfrak H$ for $(X,D)$, and all splittings of disjoint sets $S_1, S_2$ with $S_1\cup S_2=\{3,\ldots, l^\prime\}$. 

    When $-K_X-D$ is nef, we consider the invariant 
    \[
    \langle [1]_{\vec p_1},[1]_{\vec p_2},\prod\tau, \prod_{j \in S_1}[1]_{p_j}, \tilde{T}_{-\vec s,\alpha}\rangle.
    \] 
    The virtual dimension constraint gives
    \begin{align*}
&    \dim_{\mathbb C}X+|S_1|+(-K_X-D)\cdot\beta_1+l_1=\deg(\tilde{T}_{\vec s, \alpha})+\#\{i:-s_i<0\}+\frac{l_1}{2}\deg(\tau)\\
\leq & \dim_{\mathbb C}X+l_1.
    \end{align*}
    The nefness of $-K_X-D$ implies that we must have
    \[
    |S_1|=0, \quad (-K_X-D)\cdot\beta_1=0, \quad \deg(\tilde{T}_{\vec s, \alpha})+\#\{i:-s_i<0\}= \dim_{\mathbb C}X.
    \]
    This implies that
    \[
    \tilde{T}_{-\vec s,\alpha}=[\on{pt}]_{-\vec s_1}, \text{ for some } \vec s_1\in B(\mathbb Z).
    \]
    Therefore, 
    \begin{align*}
  &  N_{\tau,\vec p_1,\ldots,\vec p_{l^\prime},-\vec r}\\
= & \sum\frac{1}{l_1!l_2!}\langle [1]_{\vec p_1},[1]_{\vec p_2},\prod \tau, \prod_{j \in S_1}[1]_{p_j}, \tilde{T}_{-\vec s,\alpha}\rangle \langle \tilde{T}_{\vec s}^\alpha,\prod\tau,\prod_{j\in S_2}[1]_{\vec p_j},[\on{pt}]_{-\vec r}\psi^{l^\prime-3}\rangle\\
    = & \sum\frac{1}{l_1!l_2!}\langle [1]_{\vec p_1},[1]_{\vec p_2}, \prod\tau, [\on{pt}]_{-\vec s_1}\rangle \langle [1]_{\vec s_1},[1]_{\vec p_3},\ldots, [1]_{\vec p_{l^\prime}},\prod\tau, [\on{pt}]_{-\vec r}\psi^{l^\prime-3}\rangle.
    \end{align*}
    We repeat this process $(l^\prime-3)$-times to get the following
    \begin{align*}
    N_{\tau,\vec p_1,\ldots,\vec p_{l^\prime},-\vec r}=\sum\frac{1}{\prod_{i=1}^{l^\prime-1} k_i}&\langle [1]_{\vec p_1},[1]_{\vec p_2}, \prod\tau, [\on{pt}]_{-\vec s_1}\rangle \langle [1]_{\vec s_1},[1]_{\vec p_3}, \prod\tau, [\on{pt}]_{-\vec s_2}\rangle\cdot \cdots \\
    &\cdot\langle [1]_{\vec s_{l^\prime-2}},[1]_{\vec p_{l^\prime}}, \prod\tau, [\on{pt}]_{-\vec r}\rangle
    \end{align*}
    By the product rule of the theta functions, this is precisely the coefficient of $\vartheta_{\tau,\vec r}$ for $\vartheta_{\tau,\vec p_1}\star \vartheta_{\tau,\vec p_2}\star \cdots \star \vartheta_{\tau,\vec p_{l^\prime}}$.
    The argument when $-K_X-D$ is anti-nef is similar.
\end{proof}

Together with Definition \ref{def-theta-tau}, we can consider the $x^{-\vec k}$-coefficient of the product of theta functions. 

\begin{prop}\label{prop-coeff-k}
When $-K_X-D$ is nef or anti-nef, $\deg^0(\tau)\leq 1$, we have
    \begin{equation}\label{iden-coeff-k}
        \begin{split}
               & \sum_{\substack{\beta_1+\beta_2=\beta,\\ \vec r\in B(\mathbb Z),l_1,l_2\geq 0,l_1+l_2=l}}\frac{1}{l_1!l_2!} \langle [1]_{\vec p_1},\ldots,[1]_{\vec p_{l^\prime}},\tau,\ldots,\tau,[\on{pt}]_{-\vec r}\psi^{l^\prime-2}\rangle_{0,l^\prime+l_1+1,\beta_1}^{(X,D)}\\
& \quad \quad \cdot\langle [1]_{\vec r},\tau,\ldots,\tau,[1]_{\vec {\mathbf b}},[\on{pt}]_{-\vec{\mathbf b}+\vec k}\rangle_{0,3+l_2,\beta_2}^{(X,D)}\\
                = & \sum_{k\geq 0}\frac{1}{k!}\langle [1]_{\vec p_1},\ldots,[1]_{\vec p_{l^\prime}},\tau,\ldots,\tau,[1]_{\vec {\mathbf b}},[\on{pt}]_{-\vec{\mathbf b}+\vec k}\psi^{l^\prime-1}\rangle_{0,l^\prime+l+2,\beta}^{(X,D)}.
        \end{split}
    \end{equation}   
\end{prop}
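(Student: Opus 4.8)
The plan is to recognize the right-hand side of (\ref{iden-coeff-k}) as a single structure constant of an $(l^\prime+1)$-fold theta product, and then to compute that same product coefficient in two different ways, using associativity of the star product. The key observation is that the descendant $\psi^{l^\prime-1}$ appearing on the right-hand side is exactly the power $(l^\prime+1)-2$ that Proposition \ref{prop-prod-theta} attaches to a product of $l^\prime+1$ theta functions. Concretely, I would apply Proposition \ref{prop-prod-theta} not to $\vartheta_{\tau,\vec p_1},\ldots,\vartheta_{\tau,\vec p_{l^\prime}}$ but to the $l^\prime+1$ theta functions obtained by appending one further factor $\vartheta_{\tau,\vec{\mathbf b}}$, where $\vec{\mathbf b}$ is the chosen mid-age. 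Reading off the coefficient of $\vartheta_{\tau,\vec{\mathbf b}-\vec k}$ in $\vartheta_{\tau,\vec p_1}\star\cdots\star\vartheta_{\tau,\vec p_{l^\prime}}\star\vartheta_{\tau,\vec{\mathbf b}}$, Proposition \ref{prop-prod-theta} yields $N^\beta_{\tau,\vec p_1,\ldots,\vec p_{l^\prime},\vec{\mathbf b},-(\vec{\mathbf b}-\vec k)}$; since $-(\vec{\mathbf b}-\vec k)=-\vec{\mathbf b}+\vec k$ and $(l^\prime+1)-2=l^\prime-1$, this is verbatim the right-hand side of (\ref{iden-coeff-k}). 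Thus the proposition asserts precisely that this $(l^\prime+1)$-fold structure constant may be split off at its last factor.

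Next I would compute the same coefficient of $\vartheta_{\tau,\vec{\mathbf b}-\vec k}$ by grouping the product as $\big(\vartheta_{\tau,\vec p_1}\star\cdots\star\vartheta_{\tau,\vec p_{l^\prime}}\big)\star\vartheta_{\tau,\vec{\mathbf b}}$. Expanding the inner $l^\prime$-fold product by Proposition \ref{prop-prod-theta} gives $\sum_{\vec r,\beta_1}N^{\beta_1}_{\tau,\vec p_1,\ldots,\vec p_{l^\prime},-\vec r}\,t^{\beta_1}\vartheta_{\tau,\vec r}$; starring with $\vartheta_{\tau,\vec{\mathbf b}}$ and extracting the $\vartheta_{\tau,\vec{\mathbf b}-\vec k}$-coefficient then contributes the structure constant of $\vartheta_{\tau,\vec r}\star\vartheta_{\tau,\vec{\mathbf b}}$, namely $N^{\beta_2}_{\tau,\vec r,\vec{\mathbf b},-\vec{\mathbf b}+\vec k}$, which is exactly the mid-age coefficient that appears in Definition \ref{def-theta-tau}. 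The $t^\beta$-part of this grouped computation is therefore $\sum_{\vec r,\,\beta_1+\beta_2=\beta}N^{\beta_1}_{\tau,\vec p_1,\ldots,\vec p_{l^\prime},-\vec r}\,N^{\beta_2}_{\tau,\vec r,\vec{\mathbf b},-\vec{\mathbf b}+\vec k}$, and once the defining sums of the two $N$'s are spelled out with their $1/l_1!$ and $1/l_2!$ factors this is precisely the left-hand side of (\ref{iden-coeff-k}). Since both groupings compute the same coefficient of the same associative product, the two sides coincide.

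The points requiring care, and where I expect the actual work to sit, are the two facts that make this bookkeeping legitimate: that a mid-age vector $\vec{\mathbf b}$ (a sufficiently large element of $B(\mathbb Z)$) may be used as a theta index so that Proposition \ref{prop-prod-theta} applies \emph{verbatim} to the $(l^\prime+1)$-fold product, and that the star product is genuinely associative so that the two groupings agree. For the first, I would note that $\vec{\mathbf b}$ and $\vec{\mathbf b}-\vec k$ both have large entries, so the paired insertions $[1]_{\vec{\mathbf b}}$ and $[\on{pt}]_{-\vec{\mathbf b}+\vec k}$ form a genuine mid-age pair and every virtual-dimension and nefness estimate in the proof of Proposition \ref{prop-prod-theta} carries over unchanged; in particular the hypothesis that $-K_X-D$ is nef or anti-nef is invoked exactly as there, which is why that hypothesis reappears here. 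The second is the associativity of the relative quantum product (cf. \cite{TY20c}), which is the structural reason Proposition \ref{prop-prod-theta} is compatible across regroupings. No genuinely new estimate is needed beyond those already established for Proposition \ref{prop-prod-theta}.
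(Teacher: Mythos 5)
Your bookkeeping is correct and your architecture (associativity of $\star$ plus an ``extended'' Proposition \ref{prop-prod-theta} with the factor $\vartheta_{\tau,\vec{\mathbf b}}$ appended) does reorganize the statement cleanly, but it contains a genuine gap at its load-bearing step. Given your step 2 (the grouped computation, which is essentially definition-unwinding) and associativity, the assertion in your step 1 --- that the $(l'+1)$-fold product with one mid-age factor is computed by the single structure constant with $\psi^{l'-1}$ --- is \emph{equivalent} to Proposition \ref{prop-coeff-k} itself. So it cannot be ``applied verbatim''; it is the thing to be proven, and your proposal's only justification for it is the claim that ``every virtual-dimension and nefness estimate in the proof of Proposition \ref{prop-prod-theta} carries over unchanged'' and that ``no genuinely new estimate is needed.'' That claim is false, and the missing argument is exactly where the paper's proof does its new work.

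Concretely: the proof of Proposition \ref{prop-prod-theta} proceeds by iterated TRR, and at each splitting the nefness-plus-dimension count forces $|S_1|=0$ and forces the node class to be a point class $[\on{pt}]_{-\vec s_1}$, where all insertions distributed to the first factor have $\deg^0=0$ and the node class ranges over the ordinary basis of $\mathfrak H$. Once the mid-age insertion $[1]_{\vec{\mathbf b}}$ is present, a new degeneration configuration appears that has no counterpart in that proof: the mid-age marking could a priori be distributed to the factor \emph{not} containing the descendant marking, in which case (since $D_{\vec{\mathbf b}}=D_\sigma$ is a zero-dimensional stratum) the node class in that factor is forced to be the mid-age partner $[\on{pt}]_{-\vec{\mathbf b}+\vec a}$, a class with $\deg^0$ equal to $\dim_{\mathbb C}X$ because all $|I_\sigma|=\dim_{\mathbb C}X$ of its contact orders are negative. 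One must then rerun the virtual dimension constraint for that factor and derive a contradiction to conclude the mid-age marking always travels with its partner; this is precisely the paper's separate claim that $l'+1\notin S_1$, after which the iteration closes up as in Proposition \ref{prop-prod-theta}. Without this case analysis your induction does not close (and your step 2 has the same issue in miniature: the binary rule $\vartheta_{\tau,\vec r}\star\vartheta_{\tau,\vec{\mathbf b}}=\sum_{\vec k,\beta}N^{\beta}_{\tau,\vec r,\vec{\mathbf b},-\vec{\mathbf b}+\vec k}t^{\beta}\vartheta_{\vec{\mathbf b}-\vec k}$ is not the product rule stated after Definition \ref{def-theta-tau}, which is for indices in $B(\mathbb Z)$ with ordinary structure constants, and it needs the same mid-age dimension argument). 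Once you supply that argument, your proof is no longer a shortcut: it coincides, modulo packaging, with the paper's direct iterated-TRR proof.
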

\begin{proof}
Recall that, we must have $\deg^0(\tau)=1$. We apply TRR to the RHS of (\ref{iden-coeff-k}):
\begin{align*}
&\frac{1}{k!}\langle [1]_{\vec p_1},\ldots,[1]_{\vec p_{l^\prime}},\tau,\ldots,\tau,[1]_{\vec {\mathbf b}},[\on{pt}]_{-\vec{\mathbf b}+\vec k}\psi^{l^\prime-1}\rangle\\
=&\sum\frac{1}{l_1!l_2!}\langle [1]_{\vec p_1},[1]_{\vec p_2},\prod\tau, \prod_{j \in S_1}[1]_{p_j}, \tilde{T}_{-\vec s,\alpha}\rangle \langle \tilde{T}_{\vec s}^\alpha,\prod\tau,\prod_{j\in S_2}[1]_{\vec p_j},[\on{pt}]_{-\vec {\mathbf b}+\vec k}\psi^{l^\prime-2}\rangle,
\end{align*}
  where the sum is over all splittings of $l_1+l_2=l$, $\beta_1+\beta_2=\beta$, all indices $\vec s, \alpha$ of basis of the state space $\mathfrak H$ for $(X,D)$, and all splittings of disjoint sets $S_1, S_2$ with $S_1\cup S_2=\{3,\ldots, l^\prime,l^\prime+1\}$ and $\vec p_{l^\prime+1}=\vec {\mathbf b}$ is a mid-age. 

We claim that $l^\prime+1\not\in S_1$, that is, the mid-age marking with insertion $[1]_{\vec {\mathbf b}}$ cannot be distributed to the first factor. Suppose $l^\prime+1\in S_1$, then the last marking of the invariant $\langle [1]_{\vec p_1},[1]_{\vec p_2},\prod\tau, \prod_{j \in S_1}[1]_{p_j}, \tilde{T}_{-\vec s,\alpha}\rangle$ must be a mid-age marking. As we assume that $D_{\vec {\mathbf b}}$ is a point, we must have
\[
\tilde{T}_{-\vec s,\alpha}=[1]_{-\vec {\mathbf b}+\vec a}=[\on{pt}]_{-\vec {\mathbf b}+\vec a},
\]
for some $\vec a\in \mathbb Z^m$. Then the virtual dimension constraint will not be satisfied for the invariant $\langle [1]_{\vec p_1},[1]_{\vec p_2},\prod\tau, \prod_{j \in S_1}[1]_{p_j}, \tilde{T}_{-\vec s,\alpha}\rangle$. Therefore, we must have $l^\prime+1\in S_2$.
  
  The rest of the proof is similar to the proof of Proposition \ref{prop-prod-theta}, we must have $|S_1|=0$ and
  \[
    \tilde{T}_{-\vec s,\alpha}=[\on{pt}]_{-\vec s_1}, \text{ for some } \vec s_1\in B(\mathbb Z).
    \]  
    Apply TRR $(l^\prime-1)$-times, and then we conclude the proof.
\end{proof}

In Definition \ref{def-theta-tau}, we assume that $\deg^0(\tau)\leq 1$. It is also possible to consider a more general $\tau\in \mathfrak H$. Then the product rule (\ref{theta-func-prod}) will involve functions other than theta functions. Recall that the theta function $\vartheta_{\vec p}$ for $\vec p\in B(\mathbb Z)$ corresponds to the identity class $[1]_{\vec p}\in H^*(D_{\vec p})\subset \mathfrak H$. In the next section, we will consider functions associated with other classes $\varphi\in \mathfrak H$.

\section{More general functions on the mirror}\label{sec:func-mirror}

In this section, we would like to generalize the definition of theta functions in \cite{You24} and in Section \ref{sec:theta-func}. We will define the function $\check{\varphi}$ for a class $\varphi \in \mathfrak H$. As we are interested in functions on the mirror, we assume that $D$ has zero-dimensional strata. More precisely, we consider a pair $(X,D)$ such that $|B|$ is pure-dimensional with $\dim_{\mathbb R} B=\dim_{\mathbb C} X$.

\begin{definition}\label{def-hat-varphi}
For an snc log Calabi--Yau pair $(X,D)$, fix $[\varphi]_{\vec s}\in \mathfrak H_{\vec s}$. Let $\sigma_{\on{max}}\in \Sigma(X)$ be a maximal cone of $\Sigma(X)$ such that $p\in \sigma_{\on{max}}$, then
    \begin{align}\label{iden-def-hat-varphi-mid-age}
   \check{\varphi}_{\tau,\vec s}(p)&:=
  \sum_{\beta} \sum_{\vec k\in {\mathbb Z}^m, l\geq 0}  \frac{1}{l!}\langle [\varphi]_{\vec s},\tau,\cdots,\tau,[1]_{\vec{\mathbf b}}, [\on{pt}]_{-\vec{\mathbf{b}}+\vec k}\rangle_{0,3+l,\beta}^{(X,D)} t^{\beta}x^{-\vec k},
    \end{align}
where 
\[
\tau=\tau_{0,2}+\tau^\prime\in \mathfrak H \text{ and } \tau_{0,2}\in H^2(X)\subset \mathfrak H_{\vec 0}=H^*(X);
\]
\[
\vec d:=(D_1\cdot\beta,\cdots,D_m\cdot\beta);
\]
the invariants are orbifold invariants with mid-ages along the divisors $D_{\mathbf b_i}$ such that $D_{\vec {\mathbf b}}=D_{\sigma_{\on{max}}}$ is a lowest dimensional stratum in $D_{\vec k}$; $x^{-\vec k}:=x_1^{-k_1}\cdots x_m^{-k_m}$.
\end{definition}

Note that, for general $\tau$, we need to choose $\vec{\mathbf b}$ to be large compared to $\beta$ and $l$. Therefore, $\vec{\mathbf b}$ in (\ref{iden-def-hat-varphi-mid-age}) can be different for different invariants. However, for each invariant, we can always find such $\vec{\mathbf b}$ and the value of the mid-age invariant does not depend on $\vec{\mathbf b}$. If we only have a finite collection of invariants, we can also find one single $\vec{\mathbf b}$ for all invariants.

Applying the divisor equation, we can rewrite
 \begin{align*}
   \check{\varphi}_{\tau,\vec s}(p)&:=
   \sum_{\beta}\sum_{\vec k\in {\mathbb Z}^m, l\geq 0}  \frac{1}{l!}\langle [\varphi]_{\vec s},\tau^\prime,\cdots,\tau^\prime,[1]_{\vec{\mathbf b}}, [\on{pt}]_{-\vec{\mathbf{b}}+\vec k}\rangle_{0,3+l,\beta}^{(X,D)} t^{\beta}x^{-\vec k}e^{ \tau_{0,2}\cdot\beta},
    \end{align*}
  It is straightforward to see that
  \[
  \check{[1]}_{\tau,\vec p}=\vartheta_{\tau,\vec p},
  \]
for $\vec p\in B(\mathbb Z)$ and $\deg^0(\tau)\leq 1$, where $[1]_{\vec p}$ is the identity class in $H^*(D_{\vec p})$.

The well-definedness of the function $\check{\varphi}_{\tau,\vec s}(p)$ follows from a similar proof for the well-definedness of theta functions in \cite{You24}*{Theorem 1.5}.

We can prove that these functions satisfy the product rule with the parameter $\tau\in \mathfrak H$.
\begin{prop}\label{prop-phi-prod}
 The functions $ \check{\varphi}_{\tau,\vec s}$ in Definition (\ref{def-hat-varphi}) satisfy the following product rule
\[
(\check{\varphi_{1}})_{\tau,\vec s_1}\star(\check{\varphi_{2}})_{\tau,\vec s_2}=\sum_{\vec s_3\in \mathbb Z^m,l\geq 0,\beta} \frac{1}{l!}\langle [\varphi_1]_{\vec s_1}, [\varphi_2]_{\vec s_2},\tau,\cdots,\tau,[\varphi_3^\vee]_{-\vec s_3}\rangle_{0,l+3,\beta}^{(X,D)} t^{\beta}(\check{\varphi_3})_{\tau,\vec s_3}.
\]

\end{prop}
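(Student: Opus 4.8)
The plan is to recognize the asserted identity as the statement that the assignment $[\varphi]_{\vec s}\mapsto(\check{\varphi})_{\tau,\vec s}$ intertwines the big relative quantum product $\star_\tau$ of the pair $(X,D)$ with the multiplication $\star$ of functions on the mirror. Indeed, the structure constants of $\star_\tau$ are by definition the three-point invariants $\frac{1}{l!}\langle[\varphi_1]_{\vec s_1},[\varphi_2]_{\vec s_2},\tau,\ldots,\tau,[\varphi_3^\vee]_{-\vec s_3}\rangle_{0,l+3,\beta}^{(X,D)}$, so the right-hand side is precisely the image of $[\varphi_1]_{\vec s_1}\star_\tau[\varphi_2]_{\vec s_2}$ under $\check{(\cdot)}$. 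Thus it suffices to verify the identity coefficient by coefficient in the monomials $t^\beta x^{-\vec k}$, i.e.\ to show that $\check{(\cdot)}$ is a ring homomorphism.

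First I would express the coefficient of $t^\beta x^{-\vec k}$ in the left-hand side as a single combined mid-age invariant. As in Proposition \ref{prop-coeff-k} specialized to $l'=2$, but now with arbitrary insertions $[\varphi_1]_{\vec s_1},[\varphi_2]_{\vec s_2}$ in place of $[1]_{\vec p_1},[1]_{\vec p_2}$, the product $\star$ of the two mid-age generating series of Definition \ref{def-hat-varphi} is governed by the relative quantum multiplication, and its $t^\beta x^{-\vec k}$-coefficient is
\[
\sum_{l\geq 0}\frac{1}{l!}\langle[\varphi_1]_{\vec s_1},[\varphi_2]_{\vec s_2},\tau,\ldots,\tau,[1]_{\vec{\mathbf{b}}},[\on{pt}]_{-\vec{\mathbf{b}}+\vec k}\bar\psi\rangle_{0,l+4,\beta}^{(X,D)},
\]
where the single descendant $\bar\psi$ (exponent $l'-1=1$) sits on the point marking. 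Here $\vec{\mathbf{b}}$ is chosen large relative to $\beta$ and $l$; as in Remark \ref{rmk-theta}, working over a truncation $\mathbb{C}[\on{NE}(X)]/I$ with $\on{NE}(X)\setminus I$ finite lets one fix one $\vec{\mathbf{b}}$ for all the terms involved.

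Next I would apply the topological recursion relation for orbifold invariants of $(X,D)$, \cite{TY20c}*{Proposition 26}, to this single $\bar\psi$, anchoring the recursion at the markings $[\varphi_1]_{\vec s_1}$ and $[\varphi_2]_{\vec s_2}$. A single application removes the descendant and splits the combined invariant as a sum over a basis $\{[\varphi_3]_{\vec s_3}\}$ of $\mathfrak{H}$ with Poincar\'e-dual $\{[\varphi_3^\vee]_{-\vec s_3}\}$ at the node: one component carries $[\varphi_1]_{\vec s_1},[\varphi_2]_{\vec s_2}$, part of the $\tau$'s, and $[\varphi_3^\vee]_{-\vec s_3}$, giving exactly the structure constant $\frac{1}{l_1!}\langle[\varphi_1]_{\vec s_1},[\varphi_2]_{\vec s_2},\tau,\ldots,\tau,[\varphi_3^\vee]_{-\vec s_3}\rangle_{0,l_1+3,\beta_1}^{(X,D)}$, while the other component carries $[\varphi_3]_{\vec s_3}$, the remaining $\tau$'s, and the whole mid-age pair $[1]_{\vec{\mathbf{b}}},[\on{pt}]_{-\vec{\mathbf{b}}+\vec k}$, giving exactly the $t^{\beta_2}x^{-\vec k}$-coefficient of $(\check{\varphi_3})_{\tau,\vec s_3}$. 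Resumming over $\beta_1+\beta_2=\beta$, over $\vec s_3$, and over $\vec k$ yields the right-hand side. Unlike Proposition \ref{prop-coeff-k}, no nefness hypothesis enters, because here I keep the full sum over the intermediate state space instead of forcing the node insertion to be a point class.

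The step I expect to be the main obstacle is the combinatorial control of the node in the splitting, specifically proving that the conjugate mid-age markings $[1]_{\vec{\mathbf{b}}}$ and $[\on{pt}]_{-\vec{\mathbf{b}}+\vec k}$ cannot be separated across the node. This is the analogue of the argument in Proposition \ref{prop-coeff-k} forcing $l'+1\in S_2$: since $D_{\vec{\mathbf{b}}}$ is a zero-dimensional stratum, placing only one member of the pair on the $[\varphi_1],[\varphi_2]$-component would force its node insertion to be a point class supported on $D_{\vec{\mathbf{b}}}$, violating the virtual dimension constraint; hence both mid-age markings land on the $\check{\varphi_3}$-component and the splitting has precisely the claimed shape. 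One must also check that the node genuinely ranges over the full state space $\mathfrak{H}$ (an ordinary contact marking, not a mid-age one), so that the dual-basis sum reproduces the relative quantum structure constant, and that the well-definedness and $\vec{\mathbf{b}}$-independence of all mid-age cycles (\cite{You24}*{Theorem 5.9}) are preserved throughout.
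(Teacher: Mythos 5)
Your reduction to a coefficient identity, and your reading of the right-hand side as $\check{(\cdot)}$ applied to $[\varphi_1]_{\vec s_1}\star_\tau[\varphi_2]_{\vec s_2}$, match the paper; but your two-step route through a descendant invariant has a genuine gap, and it sits exactly at the step you flagged. The paper's proof is a single application of WDVV to invariants with the four distinguished insertions $[\varphi_1]_{\vec s_1},[\varphi_2]_{\vec s_2},[1]_{\vec{\mathbf b}},[\on{pt}]_{-\vec{\mathbf b}+\vec k}$ (plus $\tau$'s): the splitting keeping $[\varphi_1],[\varphi_2]$ together yields the right-hand side, with the node running over the full dual basis, while the splitting pairing $[\varphi_1]$ with $[1]_{\vec{\mathbf b}}$ and $[\varphi_2]$ with $[\on{pt}]_{-\vec{\mathbf b}+\vec k}$ yields the convolution that is the left-hand side, because on a component carrying an unmatched mid-age marking the node class must lie in $\mathfrak H_{-\vec{\mathbf b}+\vec k_1}=H^*(D_{\vec{\mathbf b}})=H^*(\on{pt})$. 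That forcing is a contact-order/support argument, valid for arbitrary $\varphi_i$ and $\tau$, with no dimension count; and since WDVV pins one mid-age marking on each side, the pair can never appear as two free markings on a single component.

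TRR, by contrast, pins only the descendant marking against the anchors $[\varphi_1],[\varphi_2]$, leaving $[1]_{\vec{\mathbf b}}$ free to distribute, and your claim that the splittings separating the mid-age pair die by virtual dimension fails in the generality of the proposition. If $[1]_{\vec{\mathbf b}}$ lands on the $[\varphi_1],[\varphi_2]$-component, the node there is indeed forced to be $[\on{pt}]_{-\vec{\mathbf b}+\vec a}$, and the two factors become
\[
\langle[\on{pt}]_{-\vec{\mathbf b}+\vec k},\tau,\ldots,\tau,[1]_{\vec{\mathbf b}-\vec a}\rangle_{0,l_1+2,\beta_1}^{(X,D)},
\qquad
\langle[\on{pt}]_{-\vec{\mathbf b}+\vec a},[\varphi_1]_{\vec s_1},[\varphi_2]_{\vec s_2},\tau,\ldots,\tau,[1]_{\vec{\mathbf b}}\rangle_{0,l_2+4,\beta_2}^{(X,D)},
\]
whose dimension constraints read $l_1-1=\sum_i\deg^0(\tau_i)$ and $l_2+1=\deg^0(\varphi_1)+\deg^0(\varphi_2)+\sum_i\deg^0(\tau_i)$. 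These are unsatisfiable when every $\deg^0(\tau_i)=1$ and $\deg^0(\varphi_j)=0$ — the setting of Proposition \ref{prop-coeff-k} — but they are satisfiable as soon as $\tau$ contains degree-zero pieces and $\deg^0(\varphi_1)+\deg^0(\varphi_2)\geq 1$, e.g.\ $\tau=\tau_{0,2}+\sum_i[1]_{\vec e_i}$ with $\deg^0(\varphi_1)=1$, which is precisely the regime in which the paper applies this proposition (Proposition \ref{prop-prod-phi-theta} and the Gamma conjecture computation). In that regime your step (B) is false as an identity, and so is your step (A): the convolution and the single descendant invariant differ by exactly these surviving terms, so the two errors cancel in the composite, but your argument has no way of seeing that. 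Step (A) is also not something Proposition \ref{prop-coeff-k} can give you even formally: that proposition concerns the structure-constant expansion rather than the function product, is stated only for $[1]_{\vec p_i}$-insertions with $\deg^0(\tau)\leq 1$, and its proof needs the nef/anti-nef hypothesis. The repair is to abandon the descendant intermediary and run WDVV directly on the four insertions, as the paper does.
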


\begin{proof}
    We can compare the $x^{\vec k}$-coefficient of the LHS and the RHS for $\vec k\in \mathbb Z^m$.  The comparison follows from the WDVV equation with the first four insertions
\[
[\varphi_1]_{\vec s_1}, \quad [\varphi_2]_{\vec s_2}, \quad [1]_{\vec{\mathbf b}}, \text{ and}\quad [\on{pt}]_{-\vec{\mathbf b}+\vec k}
\]
and a suitable amount of markings with the insertion $\tau$.
\end{proof}

Recall that when $\vec s=\vec 0$, $[\varphi]_{\vec s}\in \mathfrak H_{\vec 0}=H^*(X)$. When $\vec s=\vec 0$, we simply write
\[
\check{\varphi}_{\tau}=\check{\varphi}_{\tau,\vec 0}.
\]
Sometimes, we simply write it as $\check{\varphi}$.

We can generalize Definition \ref{def-hat-varphi}, to define the mirror function of a polynomial
\[
\mathbf t(z):=\sum_{i=0}^{d_{\mathbf t}} \mathbf t_i z^i \in \mathcal H_+,
\]
where $\mathbf t_i=\sum_{\alpha}\mathbf t_{i,\alpha} \phi_\alpha \in \mathfrak H$. 

\begin{definition}\label{def-check-t-z}
For an snc log Calabi--Yau pair $(X,D)$. Let $\sigma_{\on{max}}\in \Sigma(X)$ be a maximal cone of $\Sigma(X)$ such that $p\in \sigma_{\on{max}}$, then
    \begin{equation}
\begin{split}
   \check{\mathbf t}_{\tau}(z)(p):=&\sum_{i=0}^{d_{\mathbf t}}  \left(\sum_{\vec k\in {\mathbb Z}^m, l\geq 0}  \frac{1}{l!}\langle \mathbf t_i,\tau,\cdots,\tau,[1]_{\vec{\mathbf b}}, [\on{pt}]_{-\vec{\mathbf{b}}+\vec k}\rangle_{0,3+l,0}^{(X,D)}\right)z^i\\  
&+\sum_{\beta\neq 0} \sum_{\vec k\in {\mathbb Z}^m, l\geq 0}  \frac{1}{l!}\langle \mathbf t(\bar{\psi}),\tau,\cdots,\tau,[1]_{\vec{\mathbf b}}, [\on{pt}]_{-\vec{\mathbf{b}}+\vec k}\rangle_{0,3+l,\beta}^{(X,D)} t^{\beta}x^{-\vec k},
   \end{split} 
\end{equation}
where we use the notation in Definition \ref{def-hat-varphi}.
\end{definition}

We would like to unify the expression in Definition \ref{def-check-t-z}, so that the computation we present later will be cleaner. When $\beta=0$, we may write
\[
\langle \mathbf t(\bar{\psi}),\tau,\cdots,\tau,[1]_{\vec{\mathbf b}}, [\on{pt}]_{-\vec{\mathbf{b}}+\vec k}\rangle_{0,3+l,0}^{(X,D)}:=\langle \mathbf t(z),\tau,\cdots,\tau,[1]_{\vec{\mathbf b}}, [\on{pt}]_{-\vec{\mathbf{b}}+\vec k}\rangle_{0,3+l,0}^{(X,D)}.
\]
Then, we can write
\[
 \check{\mathbf t}_{\tau}(z)(p):=\sum_{\beta} \sum_{\vec k\in {\mathbb Z}^m, l\geq 0}  \frac{1}{l!}\langle \mathbf t(\bar{\psi}),\tau,\cdots,\tau,[1]_{\vec{\mathbf b}}, [\on{pt}]_{-\vec{\mathbf{b}}+\vec k}\rangle_{0,3+l,\beta}^{(X,D)} t^{\beta}x^{-\vec k}.
\]

Recall that the mirror map for $\varphi\in \mathfrak H_{\vec 0}=H^*(X)$ is $\varphi(z)$ in (\ref{mirror-map-varphi}). We define the function $\check{\varphi}_{\tau}(z)$ to be the mirror function associated with $\varphi(z)$ as defined in Definition \ref{def-check-t-z}.

\section{Products of functions on the mirror}

Recall that we would like to consider the following version of the mirror symmetric Gamma conjecture
\[
\int_{\Gamma_{\mathbb R}}\check{\varphi}_{\tau}(-z) e^{-W_{\tau}/z}\omega=\int_X (z^{c_1}z^{\deg/2}\mathbb J_{X}(\tau_{0,2},-z)\varphi)\cup \hat{\Gamma}_X,
\]
where $\tau=\tau_{0,2}+\tau^\prime$, $\tau_{0,2}\in H^2(X)$ and
\[
\tau^\prime=\sum_{i=1}^m [1]_{\vec e_i}.
\]

Therefore, we need to compute the big relative quantum product beyond the degree zero part. More precisely, we will consider the big relative quantum product of $\varphi\in \mathfrak H$ with $[1]_{\vec e_i}$ and the parameter $\tau$ specified above.
Then we will take the degree zero part of the big relative quantum product. In this section, we only compute products of the functions that we need for the mirror symmetric Gamma conjecture. In general, one can also compute products of more general functions using the product rule in Proposition \ref{prop-phi-prod}.

We first recall the relative quantum product for classes $[1]_{\vec p_1},[1]_{\vec p_2}$, where $\vec p_1,\vec p_2\in B(\mathbb Z)$:
\begin{align}\label{quan-prod-tau}
[1]_{\vec p_1}\star_{\tau}[1]_{\vec p_2}=\sum_{\beta,l\geq 0, \alpha,\vec s} \frac{1}{l!}\langle [1]_{\vec p_1}, [1]_{\vec p_2},\tau,\cdots,\tau,T_{-\vec s,\alpha}\rangle_{0,l+3,\beta}^{(X,D)}t^{\beta} T_{\vec s}^{\alpha},
\end{align}
where we sum over effective curve classes $\beta$, non-negative integer $l$, and dual bases $\{T_{-\vec s,\alpha}\}$, $\{T_{\vec s}^{\alpha}\}$ of $\mathfrak H$. 

Recall that we choose $\tau^\prime=\sum_{i=1}^m \tau_i[1]_{\vec e_i}$ and $\deg^0([1]_{\vec e_i})=0$. Therefore, by the virtual dimension constraint for the structural constants , we must have $\tau_i=0$ for all $i\in\{1,\ldots,m\}$ in (\ref{quan-prod-tau}). Hence, when $\tau_{0,2}=0$, this is the same as the small quantum product
\[
[1]_{\vec p_1}\star [1]_{\vec p_2}=\sum_{\beta,\vec r\in B(\mathbb Z)} \langle [1]_{\vec p_1}, [1]_{\vec p_2},[\on{pt}]_{-\vec r}\rangle_{0,3,\beta}^{(X,D)}t^{\beta} [1]_{\vec r}.
\]
In other words, this is the simplest version of the product rule of the theta functions that we considered earlier:
\begin{align*}
\vartheta_{\vec p_1}\star\vartheta_{\vec p_2}=\sum_{\vec r\in B(\mathbb Z),\beta}N^\beta_{\vec p_1,\vec p_2,-\vec r}t^\beta\vartheta_{\vec r}.
\end{align*}

The situation is different if we consider the relative quantum product with the class $\varphi \in \mathfrak H_{\vec s}$. We have
\begin{align}\label{quan-prod-phi}
[\varphi]_{\vec s}\star_\tau [1]_{\vec p}=\sum_{\beta,l\geq 0, \varphi_\alpha,\vec r} \frac{1}{k!}\langle [\varphi]_{\vec s}, [1]_{\vec p},\tau,\cdots,\tau,[\varphi_\alpha^\vee]_{-\vec r}\rangle_{0,l+3,\beta}^{(X,D)}t^{\beta} [\varphi_\alpha]_{\vec r}.
\end{align}

We restrict (\ref{quan-prod-phi}) to the degree zero part and write the restricted quantum product.  
We have
\begin{equation}\label{prod-phi-theta}
\begin{split}
\check{\varphi}_{\tau,\vec s}\star \vartheta_{\tau,\vec p}:=&\sum_{\vec r,\varphi_{\alpha}}\left[[\varphi]_{\vec s}\star_\tau [1]_{\vec p}\right]_{\check{\varphi}_{\alpha,\tau,\vec r}}\check{\varphi}_{\alpha,\tau,\vec r}\\
=&\sum_{\beta,\varphi_\alpha}\sum_{\vec r} \sum_{l\geq 0}\sum_{\sum l_i=l}\frac{1}{\prod l_i!}\langle [\varphi]_{\vec s}, [1]_{\vec p},\prod [1]_{\vec e_i}^{l_i}, [\varphi_\alpha^\vee]_{-\vec r}\rangle_{0,l+3,\beta}^{(X,D)}e^{\tau_{0,2}\cdot\beta }t^{\beta} \check{\varphi}_{\alpha,\tau,\vec r},
\end{split}
\end{equation}
where  $[\cdots]_{\check{\varphi}_{\alpha,\tau,\vec r}}$ means the coefficient of $\check{\varphi}_{\alpha,\tau,\vec r}$, the function $\check{\varphi}_{\tau,\vec s}$ is defined in Definition \ref{def-hat-varphi} and $\vartheta_{\tau,\vec p}$ is defined in Definition \ref{def-theta-tau}.

In general, we can consider the restricted quantum product
\[
\check{\varphi}_{\tau,\vec s}\star \vartheta_{\tau,\vec p_1}\star\cdots \star \vartheta_{\tau,\vec p_{l^\prime}}:=\sum_{\vec r,\varphi_\alpha}\left[[\varphi]_{\vec s}\star_\tau [1]_{\vec p_1}\cdots\star_\tau [1]_{\vec p_{l^\prime}}\right]_{\check{\varphi}_{\alpha,\tau,\vec r}}\check{\varphi}_{\alpha,\tau,\vec r}.
\]
We have 
\begin{prop}\label{prop-prod-phi-theta}
    For $\varphi\in \mathfrak H_{\vec s}$ and $d_\varphi=\deg^0(\varphi)$, the product of $\check{\varphi}_{\tau,\vec s}$ with $\frac{1}{l!}(W_{\tau})^l$ for $\tau=\tau^\prime+\tau_{0,2}$, $\tau^\prime=\sum_{i=1}^m[1]_{\vec e_i}$ and $\tau_{0,2}\in H^2(X)$ satisfies
    \begin{equation}\label{quan-prod-phi-l}
    \begin{split}
    &\frac{1}{l!}\check{\varphi}_{\tau,\vec s}\star \underbrace{W_\tau\star\cdots\star W_\tau}_{l-\text{times}}(p)\\
    =&\sum_{\beta} \sum_{\vec k\in \mathbb Z^m} \frac{1}{(d_{\varphi}+l)!}\langle [\varphi]_{\vec s}, \tau^\prime,\cdots,\tau^\prime, [1]_{\vec{\mathbf b}},[\on{pt}]_{-\vec {\mathbf b}+\vec k}\bar{\psi}^{l}\rangle_{0,d_{\varphi}+l+3,\beta}^{(X,D)}t^\beta e^{\tau_{0,2}\cdot\beta}x^{-\vec k},
    \end{split}
    \end{equation}
    where $W_\tau=\sum_{i=1}^m\vartheta_{\tau,[D_i]}$.
\end{prop}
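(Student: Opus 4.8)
The plan is to mirror the proofs of Proposition \ref{prop-prod-theta} and Proposition \ref{prop-coeff-k}, exploiting that multiplication by $W_\tau=\sum_{i=1}^m\vartheta_{\tau,\vec e_i}$ is, on the state space $\mathfrak H$, quantum multiplication by the class $\tau^\prime=\sum_{i=1}^m[1]_{\vec e_i}$. Since $(X,D)$ is log Calabi--Yau, $-K_X-D=0$ is simultaneously nef and anti-nef, so the hypotheses of those propositions hold automatically and the virtual dimension inequalities used in their proofs become equalities; this is exactly what forces the various intermediate classes to be point classes and pins down the number of $\tau^\prime$-insertions. Throughout, the $\tau_{0,2}$-part of $\tau$ is removed by the divisor equation, producing the factor $e^{\tau_{0,2}\cdot\beta}$, so that only $\tau^\prime$-insertions remain, as in Definition \ref{def-hat-varphi}.

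First I would expand $\frac{1}{l!}W_\tau^{\star l}=\frac{1}{l!}\sum_{i_1,\dots,i_l}\vartheta_{\tau,\vec e_{i_1}}\star\cdots\star\vartheta_{\tau,\vec e_{i_l}}$ and apply the multi-factor version of the product rule of Proposition \ref{prop-phi-prod} (established exactly as Proposition \ref{prop-prod-theta} by repeated TRR). This writes each $\check{\varphi}_{\tau,\vec s}\star\vartheta_{\tau,\vec e_{i_1}}\star\cdots\star\vartheta_{\tau,\vec e_{i_l}}$ as $\sum_{\vec r,\varphi_\alpha}$ of a structure-constant invariant with insertions $[\varphi]_{\vec s},[1]_{\vec e_{i_1}},\dots,[1]_{\vec e_{i_l}}$, background $\tau^\prime$'s and output $[\varphi_\alpha^\vee]_{-\vec r}$ carrying $\bar{\psi}^{l-1}$, times the defining invariant of $\check{\varphi}_{\alpha,\tau,\vec r}$. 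Next I would perform the gluing: summing over the dual bases $\{[\varphi_\alpha]_{\vec r}\}$, $\{[\varphi_\alpha^\vee]_{-\vec r}\}$ and over the splittings of $\beta$ and of the $\tau^\prime$-markings, the generalization of Proposition \ref{prop-coeff-k} (whose proof again uses $-K_X-D=0$ to force the glued class to be $[\on{pt}]_{-\vec s_1}$ and the mid-age marking $[1]_{\vec{\mathbf b}}$ to sit on the point side) merges the two invariants into a single one, raising the descendant to $\bar{\psi}^{l}$. Finally, summing over $i_1,\dots,i_l$ converts the $l$ insertions $[1]_{\vec e_{i_j}}$ into $l$ further $\tau^\prime$-insertions, since $\sum_{i=1}^m[1]_{\vec e_i}=\tau^\prime$; and the virtual dimension constraint --- together with the prescription that $D_{\vec{\mathbf b}}=D_{\sigma_{\on{max}}}$ is a lowest-dimensional stratum in $D_{\vec k}$, which forces $\vec k$ to be supported on $I_\sigma$ and hence $\deg^0([\on{pt}]_{-\vec{\mathbf b}+\vec k})=n$ --- pins the total number of $\tau^\prime$-insertions to $d_\varphi+l$, giving the invariant on the right-hand side.

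The hard part will be the combinatorial bookkeeping of the descendant powers and the factorial normalizations: one must track the $\frac{1}{l!}$ from $W_\tau^{\star l}$, the $\frac{1}{l_1!l_2!}$ produced at each product-rule and gluing step, and the symmetrization of the $l$ markings coming from the $W_\tau$-factors against the background $\tau^\prime$-markings, and then collect all of these into the single normalization in front of the fully symmetrized invariant $\langle[\varphi]_{\vec s},\tau^\prime,\dots,\tau^\prime,[1]_{\vec{\mathbf b}},[\on{pt}]_{-\vec{\mathbf b}+\vec k}\bar{\psi}^{l}\rangle$. A secondary point needing care is the generalization of the point-class-forcing step of Proposition \ref{prop-prod-theta} to the positive-degree insertion $[\varphi]_{\vec s}$: here one re-runs the virtual dimension computation for the split invariants, using the pairing identity $\deg^0([\varphi_\alpha]_{\vec r})+\deg^0([\varphi_\alpha^\vee]_{-\vec r})=n$, to conclude that the genuine $\tau^\prime$-count on the two factors always sums to $d_\varphi$, which is what makes the final count $d_\varphi+l$ come out correctly and independently of the splitting.
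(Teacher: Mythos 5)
Your toolkit is the right one (TRR, the mid-age forcing, the log Calabi--Yau dimension count, the divisor equation for $\tau_{0,2}$), but your route has a genuine gap, located exactly at the two lemmas you treat as routine generalizations. Both the multi-factor product rule for $\check{\varphi}_{\tau,\vec s}\star\vartheta_{\tau,\vec e_{i_1}}\star\cdots\star\vartheta_{\tau,\vec e_{i_l}}$ (which you claim is ``established exactly as Proposition \ref{prop-prod-theta} by repeated TRR'') and your gluing step (``the generalization of Proposition \ref{prop-coeff-k}'') rest on the point-class-forcing argument, and that argument breaks precisely when the positive-degree insertion $[\varphi]_{\vec s}$ is present. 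In the proofs of Propositions \ref{prop-prod-theta} and \ref{prop-coeff-k}, the virtual dimension constraint forces $|S_1|=0$ and $\tilde T_{-\vec s,\alpha}=[\on{pt}]_{-\vec s_1}$ only because every explicit insertion on that factor has $\deg^0=0$; once $[\varphi]_{\vec s}$ with $d_\varphi>0$ sits on that factor, the same count only gives $|S_1|\le d_\varphi$, and the node class can be any class whose $\deg^0$ compensates the deficit, so splittings with migrating theta-insertions and non-point nodes survive and the chain structure is not forced. Your own write-up contains the symptom of this as an internal inconsistency: in your step 2 the node classes are general dual-basis elements $[\varphi_\alpha^\vee]_{-\vec r}$ (they must pair against the degree of $\varphi$, so they cannot all be point classes), while in step 3 you assert that the glued class is forced to be $[\on{pt}]_{-\vec s_1}$; both cannot hold, and in fact neither forcing mechanism is available on the side carrying $[\varphi]_{\vec s}$.

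The paper's proof avoids the mixed multi-factor product rule altogether, by a different choice of TRR markings. It computes the left-hand side as a product of two explicit series: the expansion of $\frac{1}{l!}W_\tau^{\star l}$ from Proposition \ref{prop-coeff-k} (legitimate there, since only theta-insertions appear) times the defining series of $\check{\varphi}_{\tau,\vec s}$ from Definition \ref{def-hat-varphi}, with $\tau_{0,2}$ removed by the divisor equation. It then applies TRR \emph{once} to the right-hand invariant, taking the three distinguished markings to be $[\on{pt}]_{-\vec{\mathbf b}+\vec k}\bar{\psi}^{l}$, $[\varphi]_{\vec s}$ and $[1]_{\vec{\mathbf b}}$, i.e.\ it deliberately keeps $[\varphi]_{\vec s}$ on the same side as the mid-age marking. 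The node adjacent to $[1]_{\vec{\mathbf b}}$ is then forced to be the complementary mid-age class $[\on{pt}]_{-\vec{\mathbf b}+\vec c}$ purely by the pairing of mid-ages and the $0$-dimensionality of $D_{\vec{\mathbf b}}$ --- no nefness or degree count enters, so the forcing is insensitive to $d_\varphi$ --- and the factor containing $[\varphi]_{\vec s}$ is literally the defining invariant of $\check{\varphi}_{\tau,\vec s}$, carrying no descendant. The condition $\deg^0(\tau^\prime)=0$ is used only afterwards, to force exactly $a=l$ of the $d_\varphi+l$ many $\tau^\prime$-insertions onto the descendant side; the binomial coefficient $\binom{d_\varphi+l}{d_\varphi}$ and the shift $\vec k\mapsto\vec k-\vec c$ then match the series product computed for the left-hand side. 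If you repaired your two lemmas, the repair would have to proceed by exactly this kind of marking choice, at which point you would have reproduced the paper's proof with extra intermediate steps.
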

\begin{proof}
    The $x^{-\vec k}$-coefficient of the product of the theta functions is given in Proposition \ref{prop-coeff-k}. Therefore, we have
    \begin{align*}
    &\frac{1}{l!}\underbrace{W_\tau\star\cdots\star W_\tau}_{l-\text{times}}(p)\\
    =&\sum_{\beta}\sum_{\vec k\in \mathbb Z^m} \sum_{l,j\geq 0} \frac{1}{l!j!}\langle \tau^\prime,\cdots,\tau^\prime,\tau_{0,2},\cdots,\tau_{0,2}, [1]_{\vec{\mathbf b}},[\on{pt}]_{-\vec {\mathbf b}+\vec k}\bar{\psi}^{l-1}\rangle_{0,l+j+2,\beta}^{(X,D)}t^\beta x^{-\vec k}\\
    =&\sum_{\beta}\sum_{\vec k\in \mathbb Z^m} \sum_{l\geq 0}\frac{1}{l!}\langle \tau^\prime,\cdots,\tau^\prime, [1]_{\vec{\mathbf b}},[\on{pt}]_{-\vec {\mathbf b}+\vec k}\bar{\psi}^{l-1}\rangle_{0,l+2,\beta}^{(X,D)}t^\beta e^{\tau_{0,2}\cdot\beta} x^{-\vec k},
    \end{align*}
    where $\tau^\prime=\sum_{i=1}^m [1]_{\vec e_i}$ comes from $W$ and $\tau_{0,2}$ comes from the degree two part of the quantum parameter $\tau$. The third line follows from the divisor equation.

    On the other hand, By the definition of $\check{\varphi}_{\tau,\vec s}$ in Definition \ref{def-hat-varphi} when $\tau^\prime=\sum_{i=1}^m [1]_{\vec e_i}$, we have
    \begin{align*}
   \check{\varphi}_{\tau,\vec s}(p)&:=
   \sum_{\vec k\in {\mathbb Z}^m, d_{\varphi}\geq 0} \sum_{\beta} \frac{1}{d_{\varphi}!}\langle [\varphi]_{\vec s},\tau^\prime,\cdots,\tau^\prime,[1]_{\vec{\mathbf b}}, [\on{pt}]_{-\vec{\mathbf{b}}+\vec k}\rangle_{0,3+d_{\varphi},\beta} e^{\tau_{0,2}\cdot\beta}t^{\beta}x^{-\vec k},
    \end{align*}
where the number of markings is $3+d_\varphi$ by the virtual dimension constraint.

Therefore, the LHS of (\ref{quan-prod-phi-l}) is
\begin{align*}
    \sum_{\beta_1,\beta_2}\sum_{\vec k_1,\vec k_2\in \mathbb Z^m}\sum_{l,d_{\varphi}\geq 0} &\frac{1}{l!d_{\varphi}!}\langle \tau^\prime,\cdots,\tau^\prime, [1]_{\vec{\mathbf b}},[\on{pt}]_{-\vec {\mathbf b}+\vec k_1}\bar{\psi}^{l-1}\rangle_{0,l+2,\beta_1}^{(X,D)}t^{\beta_1}e^{ \tau_{0,2}\cdot\beta_1} x^{-\vec k_1}\\
    &\cdot\langle [\varphi]_{\vec s},\tau^\prime,\cdots,\tau^\prime,[1]_{\vec{\mathbf b}}, [\on{pt}]_{-\vec{\mathbf{b}}+\vec k_2}\rangle_{0,3+d_{\varphi},\beta_2} t^{\beta_2}e^{\tau_{0,2}\cdot\beta_2}x^{-\vec k_2}.
\end{align*}

Now for the RHS of (\ref{quan-prod-phi-l}), we apply the TRR to the invariants with the first three markings having insertions
\[
[\on{pt}]_{-\vec {\mathbf b}+\vec k}\bar{\psi}^{l}, \quad [\varphi]_{\vec s}, \quad [1]_{\vec{\mathbf b}},
\]
then the RHS of (\ref{quan-prod-phi-l}) becomes
\begin{align*}
 \sum_{\beta_1,\beta_2}\sum_{\vec k,\vec r\in \mathbb Z^m,\alpha}\sum_{l,d_{\varphi}\geq 0} &\binom{d_{\varphi}+l}{a}\langle [\on{pt}]_{-\vec {\mathbf b}+\vec k}\bar{\psi}^{l-1},\tau^\prime,\cdots,\tau^\prime, T_{-\vec r,\alpha}\rangle_{0,a+2,\beta_1}^{(X,D)} \\
 &\cdot\langle T_{\vec r}^\alpha,[\varphi]_{\vec s}, [1]_{\vec{\mathbf b}},\tau^\prime,\cdots,\tau^\prime\rangle_{0,3+d_{\varphi}+l-a,\beta_2} t^\beta e^{ \tau_{0,2}\cdot\beta}x^{-\vec k}.
\end{align*}

There are only finitely many ways of splitting $\beta=\beta_1+\beta_2$ such that $\beta_1$ and $\beta_2$ are effective. As $[1]_{\vec{\mathbf b}}$ is a mid-age marking, the marking with insertion $T_{\vec r}^\alpha$ must also be a mid-age marking with mid-ages along $D_{\mathbf b_i}$. Since we assume that $D_{\vec {\mathbf b}}$ is a $0$-dimensional stratum, we must have 
\[
T_{\vec r}^\alpha=[1]_{-\vec{\mathbf b}+\vec c}=[\on{pt}]_{-\vec{\mathbf b}+\vec c},
\]
for some $\vec c\in \mathbb Z^m$. Then we have
\[
T_{-\vec r,\alpha}=[1]_{\vec {\mathbf b}-\vec c}.
\]
By the virtual dimension constraint and $\deg^0(\tau^\prime)=0$, we need to have $a=l$.

Hence, the RHS of (\ref{quan-prod-phi-l}) is
\begin{align*}
 \sum_{\beta_1,\beta_2}\sum_{\vec k,\vec c\in \mathbb Z^m}\sum_{l,d_{\varphi}\geq 0} &\frac{1}{(d_{\varphi}+l)!}\binom{d_{\varphi}+l}{d_{\varphi}}\langle [\on{pt}]_{-\vec {\mathbf b}+\vec k}\bar{\psi}^{l-1},\tau^\prime,\cdots,\tau^\prime,[1]_{\vec {\mathbf b}-\vec c}\rangle_{0,l+2,\beta_1}^{(X,D)} \\
 &\cdot\langle [\on{pt}]_{-\vec{\mathbf b}+\vec c},[\varphi]_{\vec s}, [1]_{\vec{\mathbf b}},\tau^\prime,\cdots,\tau^\prime\rangle_{0,d_{\varphi}+3,\beta_2} t^\beta e^{ \tau_{0,2}\cdot\beta}x^{-\vec k}\\
 = \sum_{\beta_1,\beta_2}\sum_{\vec k,\vec c\in \mathbb Z^m}\sum_{l,d_{\varphi}\geq 0} &\frac{1}{(d_{\varphi})!(l)!}\langle [\on{pt}]_{-\vec {\mathbf b}+\vec k}\bar{\psi}^{l-1},\tau^\prime,\cdots,\tau^\prime,[1]_{\vec {\mathbf b}-\vec c}\rangle_{0,l+2,\beta_1}^{(X,D)} \\
 &\cdot\langle [\on{pt}]_{-\vec{\mathbf b}+\vec c},[\varphi]_{\vec s}, [1]_{\vec{\mathbf b}},\tau^\prime,\cdots,\tau^\prime\rangle_{0,d_{\varphi}+3,\beta_2} t^\beta e^{\tau_{0,2}\cdot\beta}x^{-\vec k}\\
=\sum_{\beta_1,\beta_2}\sum_{\vec k,\vec c\in \mathbb Z^m}\sum_{l,d_{\varphi}\geq 0} &\frac{1}{(d_{\varphi})!(l)!}\langle [\on{pt}]_{-\vec {\mathbf b}+\vec k-\vec c}\bar{\psi}^{l-1},\tau^\prime,\cdots,\tau^\prime,[1]_{\vec {\mathbf b}}\rangle_{0,l+2,\beta_1}^{(X,D)}  t^{\beta_1} e^{\tau_{0,2}\cdot\beta_1}x^{-\vec k+\vec c}\\
 &\cdot\langle [\on{pt}]_{-\vec{\mathbf b}+\vec c},[\varphi]_{\vec s}, [1]_{\vec{\mathbf b}},\tau^\prime,\cdots,\tau^\prime\rangle_{0,d_{\varphi}+3,\beta_2} t^{\beta_2} e^{\tau_{0,2}\cdot\beta_2}x^{-\vec c}.
\end{align*}

This agrees with our computation for the LHS of (\ref{quan-prod-phi-l}). Hence, we conclude the proof.
\end{proof}

Now we consider $\varphi\in \mathfrak H_{\vec 0}=H^*(X)$ and $d_\varphi:=\frac{1}{2}\deg(\varphi)$. With Proposition \ref{prop-prod-phi-theta}, we can consider the Taylor series expansion
\[
\exp\left(-\sum_{i=1}^m\vartheta_{\tau,[D_i]}(p)/z\right)=\sum_{l_i\geq 0}\frac{\prod_{i=1}^m(-1)^{l_i}\vartheta_{\tau,[D_i]}^{l_i}(p)}{\prod_{i=1}^ml_i!z^{l_i}}.
\]
and compute
\begin{align*}
    &\check{\varphi}_\tau (p)\exp\left(-\sum_{i=1}^m\vartheta_{\tau,[D_i]}(p)/z\right)\\
    =&\sum_{l_i\geq 0}\frac{\prod_{i=1}^m(-1)^{l_i}\check{\varphi}_{\tau}(p)\vartheta_{\tau,[D_i]}^{l_i}(p)}{\prod_{i=1}^ml_i!z^{l_i}}\\
    =& \sum_{l_i\geq 0,\sum l_i\geq d_{\varphi},\beta}\frac{(-1)^{-d_{\varphi}+\sum_{i=1}^m l_i}\langle [\varphi]_0,\prod[1]_{\vec e_i}^{l_i},[1]_{\vec {\mathbf b}},[\on{pt}]_{-\vec {\mathbf b}+\vec k}\bar{\psi}^{-d_{\varphi}+\sum l_i}\rangle t^\beta e^{\tau_{0,2}\cdot\beta} x^{-\vec k}}{z^{-d_{\varphi}}\prod_{i=1}^ml_i!z^{l_i}},
\end{align*}
where the third line follows from Proposition \ref{prop-prod-phi-theta}. %Furthermore, we also set $t=1$.

We can replace $\check{\varphi}$, by $\check{\varphi}(-z)$ which encodes the mirror map for $\varphi$, we have 
\begin{align*}
    &\check{\varphi}_\tau (-z) (p)\exp\left(-\sum_{i=1}^m\vartheta_{\tau,[D_i]}(p)/z\right)\\
    =& \sum_{l_i\geq 0,\sum l_i\geq d_{\varphi},\beta}\frac{(-1)^{-d_{\varphi}+\sum_{i=1}^m l_i}\langle \varphi(-\bar{\psi}),\prod[1]_{\vec e_i}^{l_i},[1]_{\vec {\mathbf b}},[\on{pt}]_{-\vec {\mathbf b}+\vec k}\bar{\psi}^{-d_{\varphi}+\sum l_i}\rangle t^\beta e^{\tau_{0,2}\cdot\beta} x^{-\vec k}}{z^{-d_{\varphi}}\prod_{i=1}^ml_i!z^{l_i}},
\end{align*}
where, to unify the notation, we write
\begin{align*}
&\langle \varphi(\bar{\psi}),\prod[1]_{\vec e_i}^{l_i},[1]_{\vec {\mathbf b}},[\on{pt}]_{-\vec {\mathbf b}+\vec k}\bar{\psi}^{-d_{\varphi}+\sum l_i}\rangle_{0,n+3,0}^{(X,D)}\\
:=&\langle \varphi(z),\prod[1]_{\vec e_i}^{l_i},[1]_{\vec {\mathbf b}},[\on{pt}]_{-\vec {\mathbf b}+\vec k}\bar{\psi}^{-d_{\varphi}+\sum l_i}\rangle_{0,n+3,0}^{(X,D)},
\end{align*}
when $\beta=0$.

\section{Functions from polyvector fields}\label{sec:polyvector}
In this section, we provide a different view of the functions in Definition \ref{def-hat-varphi} in terms of polyvector fields on the mirror. A class $\varphi\in \mathfrak H$ can induce a polyvector field on the mirror. When $\varphi$ is of degree zero, it is the theta function. When $\varphi$ is of degree two (e.g. a divisor class in $H^2(X)$), it induces a vector field. We may consider the product (\ref{quan-prod-phi-l}) as an action of a polyvector field on functions.

We start with a special case where $\deg^0(\varphi)=1$. In this case $\check{\varphi}$ induces a vector field on the mirror. This includes some interesting cases such as $\varphi=[D_0]_{\vec 0}\in H^2(X)$ for a divisor class $[D_0]$ and $\varphi=[1]_{-\vec e_1}$. Recall that we defined theta functions with a parameter $\tau$ in Definition \ref{def-theta-tau}, where $\deg^0(\tau)\leq 1$. Let $\tau_2$ be the part of $\tau$ such that $\deg^0(\tau_2)=1$. We write
\[
\tau_2=\sum_{\alpha}{\mathbf t}_\alpha\varphi_\alpha.
\]
We can regard $\check{\varphi}_\alpha$ as a vector field
\[
\frac{\partial}{\partial {\mathbf t}_\alpha}
\]
on the mirror.

The action of a vector field $\check{\varphi}_\alpha$ on a theta function $\vartheta_{\vec r}$ is
\[
\check{\varphi}_\alpha \vartheta_{\tau,\vec r}(p)= \sum_{\vec k\in {\mathbb Z}^m} \sum_{\beta}N_{\tau,\vec r, \vec{\mathbf b}, -\vec{\mathbf{b}}+\vec k,\varphi_\alpha}^\beta t^{\beta} x^{-\vec k}
\] 
where
\[
N_{\tau,\vec r, \vec{\mathbf b}, -\vec{\mathbf{b}}+\vec k,\varphi_\alpha}^\beta=\sum_{l\geq 0}\frac{1}{l!}\langle [1]_{\vec r},[\varphi_\alpha],\tau,\ldots,\tau, [1]_{\vec{\mathbf b}}, [\on{pt}]_{-\vec{\mathbf{b}}+\vec k}\rangle_{0,l+4,\beta}^{(X,D)}.
\]

\begin{remark}
For mirror symmetry of a log Calabi--Yau manifold $X\setminus D$, the symplectic cohomology of the log Calabi--Yau manifold should be isomorphic to the ring of polyvector fields on the mirror. One should also expect that the relative quantum cohomology of $(X,D)$ is isomorphic to a ring of polyvector fields on the mirror. This is related to an ongoing work of Gross--Pomerleano--Siebert. However, what we have here seems to be different. In that setting, the $H^1$-part of the cohomology should correspond to vector fields. While in our setting, a class in $H^2(X)$ induces a vector field. This is what we usually see in quantum cohomology. We then described how it acts on theta functions. 
\end{remark}

Now we can consider $\deg^0(\varphi)=d_{\varphi}>1$. This corresponds to a $d_{\varphi}$-polyvector field $\check{\varphi}$. Given functions $\vartheta_{\tau,\vec r_1},\ldots,\vartheta_{\tau,\vec r_{d_\varphi}}$, we can consider the action of a $d_{\varphi}$-polyvector field on these functions:
\begin{align}
\check{\varphi}(\vartheta_{\tau,\vec r_1},\ldots,\vartheta_{\tau,\vec r_{d_\varphi}})=\sum_{\vec k\in {\mathbb Z}^m} \sum_{\beta}N_{\tau,\vec r_1,\ldots,\vec r_{d_{\varphi}}, \vec{\mathbf b}, -\vec{\mathbf{b}}+\vec k,\varphi}^\beta t^{\beta} x^{-\vec k}
\end{align}
where
\[
N_{\tau,\vec r_1,\ldots,\vec r_{d_{\varphi}}, \vec{\mathbf b}, -\vec{\mathbf{b}}+\vec k,\varphi}^\beta=\sum_{l\geq 0}\frac{1}{l!}\langle [1]_{\vec r_1},\cdots,[1]_{\vec r_{d_{\varphi}}},[\varphi],\tau,\ldots,\tau, [1]_{\vec{\mathbf b}}, [\on{pt}]_{-\vec{\mathbf{b}}+\vec k}\rangle_{0,l+d_{\varphi}+3,\beta}^{(X,D)}.
\]

Therefore, for $\varphi\in \mathfrak H_{\vec s}$, the function $ \check{\varphi}_{\tau,\vec s}$ defined in Definition \ref{def-hat-varphi} can be considered as the action of a polyvector field on functions. For simplicity, we assume that $\deg^0(\tau^\prime)=0$. Then
\[
\check{\varphi}(\check{\tau}^\prime,\ldots,\check{\tau}^\prime)= \sum_{\beta}\sum_{\vec k\in {\mathbb Z}^m}  \frac{1}{d_{\varphi}!}\langle [\varphi]_{\vec s},\tau^\prime,\cdots,\tau^\prime,[1]_{\vec{\mathbf b}}, [\on{pt}]_{-\vec{\mathbf{b}}+\vec k}\rangle_{0,3+d_{\varphi},\beta}^{(X,D)} t^{\beta}x^{-\vec k}e^{ \tau_{0,2}\cdot\beta},
\]
here, the quantum parameter is $\tau_{0,2}\in H^2(X)\subset \mathfrak H_{\vec 0}$.
So we have
\[
  \check{\varphi}_{\tau,\vec s}=\check{\varphi}(\check{\tau}^\prime,\ldots,\check{\tau}^\prime),
\]
for $\tau=\tau_{0,2}+\tau^\prime$.

\section{The equivariant potential}\label{sec:equiv-potential}

Following Gross--Siebert's intrinsic mirror construction \cite{GS19}, the mirror of the pair $(X,D)$ is a Landau--Ginzburg model $(\check{\mathfrak X},W)$, where $\check{\mathfrak X}$ is constructed as the Spec of the degree zero part of the relative quantum cohomology of $(X,D)$. The invariants here can be either punctured log invariants of \cite{ACGS} or orbifold invariants of \cite{TY20c}. The potential is a sum of theta functions
\[
W:=\vartheta_{[D_1]}+\cdots+\vartheta_{[D_m]},
\]
where $\vartheta_{[D_i]}:=\vartheta_{\vec e_i}$.

Recall that the mirror is
\[
\check{\mathfrak X}\subset \mathbb C^m\widehat{[\NE(X)]}=\on{Spec}(\mathbb C[x_1,\ldots,x_m])\widehat{[\NE(X)]}.
\] 
We consider the real Lefschetz thimble
\[
\Gamma_{\mathbb R}:=\check{\mathfrak X}\cap \mathbb R_{>0}^m
\]
and the compact cycle
\[
\Gamma_{c}:=\check{\mathfrak X}\cap (S^1)^m.
\]

Recall that we replace $Q^\beta$ by $t^\beta x^{-\vec d}$, we have
\begin{align}
(t/Q)^\beta=x^{D\cdot\beta}=x_1^{D_1\cdot\beta}\cdots x_m^{D_m\cdot\beta}.
\end{align}
We can consider the holomorphic volume form  $\omega$ on $\check{\mathfrak X}$ such that its restriction to each maximal cone $\sigma$ is of the form $\frac{\prod_{j\in I_\sigma}d x_j}{\prod_{j\in I_\sigma} x_j}$.

Similarly to the toric case considered in \cite{Iritani09}, we can consider the equivariant potential:
\[
W_{\tau}^{\vec\lambda}=\sum_{i=1}^m \vartheta_{\tau,[D_i]}^{\vec\lambda},
\]
where the invariants in $\vartheta_{\tau,[D_i]}$ are replaced by the corresponding equivariant twisted invariants of a toric stack bundle $P_{m,D,\vec r}$ over $X$; $\vec \lambda=(\lambda_1,\ldots,\lambda_m)$ is the vector of the equivariant parameters; $\tau=\sum_{i=1}^m [1]_{\vec e_i}+ \tau_{0,2}$ and $\tau_{0,2}\in H^2(X)$.

For a class $\varphi\in H^*(X)$, we defined the function $\check{\varphi}_\tau$ on the mirror in Definition \ref{def-hat-varphi} which is considered as the mirror of $\varphi \in H^*(X)$. Then we replace $\varphi$ by $\varphi(z)$ which is the mirror map associated with $\varphi$. Let $\check{\varphi}_\tau(z)$ be the function on the mirror associated with $\varphi(z)$. We consider the equivariant version of the theta functions and the function $\check{\varphi}_\tau(z)$ by replacing the relevant invariants with the equivariant twisted invariants of the bundle. We consider the integral
\begin{align}\label{integral-omega}
\int_{\Gamma_{\mathbb R}}\check{\varphi}^{\vec\lambda}_\tau(-z) e^{-W_{\tau}^{\vec \lambda}/z}\omega
\end{align}
and the integral 
\begin{align}\label{integral-pt}
\int_{\Gamma_{c}}\check{\varphi}^{\vec \lambda}_\tau(-z) e^{-W_{\tau}^{\vec\lambda}/z}\omega.
\end{align}

Consider the Taylor series expansion
\[
\exp\left(W_{\tau}^{\vec\lambda}\right)=\exp\left(-\sum_{i=1}^m\vartheta_{\tau,[D_i]}^{\vec\lambda}/z\right)=\sum_{l_i\geq 0}\frac{\prod_{i=1}^m(-1)^{l_i}\vartheta_{\tau,[D_i]}^{l_i,{\vec\lambda}}}{\prod_{i=1}^ml_i!z^{l_i}}.
\]
We have
\begin{align*}
    &\check{\varphi}_\tau(-z)(p)\exp\left(-\sum_{i=1}^m\vartheta_{\tau,[D_i]}^{\vec\lambda}(p)/z\right)\\
    =&\sum_{l_i\geq 0}\frac{\prod_{i=1}^m(-1)^{l_i}\check{\varphi}_{\tau}(-z)(p)\vartheta_{\tau,[D_i]}^{l_i,\vec{\lambda}}(p)}{\prod_{i=1}^ml_i!z^{l_i}}\\
    =& \sum_{l_i\geq 0,\sum l_i\geq d_\varphi,\beta}\frac{(-1)^{-d_\varphi+\sum_{i=1}^m l_i}\langle \varphi(-\bar{\psi}),\prod[1]_{\vec e_i}^{l_i},[1]_{\vec {\mathbf b}},[\on{pt}]_{-\vec {\mathbf b}+\vec k}\bar{\psi}^{-d_{\varphi}+\sum l_i}\rangle t^\beta e^{\tau_{0,2}\cdot\beta} x^{-\vec k}}{z^{-d_{\varphi}}\prod_{i=1}^ml_i!z^{l_i}}.
\end{align*}

Note that the equivariant twisted relative quantum cohomology for the bundle is well-defined. The properties of theta functions also follow in a similar way.

\section{Mirror symmetric Gamma conjecture for $\mathcal O_{\on{pt}}$}\label{sec:gamma-pt-sheaf}

In this section, we prove the mirror symmetric Gamma conjecture for $\mathcal O_{\on{pt}}$. When the relative mirror map is trivial, we simply have the equality. When the relative mirror map is non-trivial, the computation is the same and we have the equality after a change of coordinates. We will explain how the relative mirror map can also be encoded in the definition of theta functions in Section \ref{sec:mirror-map}. 

Under homological mirror symmetry, $\mathcal O_{\on{pt}}$ is mirror to the compact cycle $\Gamma_c$. We have the following.

\begin{theorem}\label{thm-gamma-conj-pt}
Mirror symmetric Gamma conjecture for $\mathcal O_{\on{pt}}$ holds:

\begin{align}\label{gamma-conj-pt}
\int_{\Gamma_{c}}\check{\varphi}_\tau(-z) e^{-W_{\tau}/z}\omega=\int_X \left(z^{c_1}z^{\deg/2} \mathbb{J}_{X}(\tau_{0,2},-z)\varphi\right)\cup \hat{\Gamma}_X \on{Ch}(\mathcal O_{\on{pt}}),
\end{align}
where the equality holds under a possibly non-trivial mirror map. 

\end{theorem}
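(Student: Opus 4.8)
The plan is to collapse both sides of \eqref{gamma-conj-pt} to a single ``constant term'' and to bridge the two constant terms through the relative mirror theorem of Section~\ref{sec:rel-mirror-deg}. First I would expand the integrand on the left. Using the product formula of Proposition~\ref{prop-prod-phi-theta} and the resulting expansion of $\check{\varphi}_\tau(-z)\,e^{-W_\tau/z}$, this product becomes an explicit power series in the monomials $x^{-\vec k}$ whose coefficients are genus-zero relative invariants of $(X,D)$ of the shape $\langle \varphi(-\bar\psi),\prod_{i}[1]_{\vec e_i}^{l_i},[1]_{\vec{\mathbf b}},[\on{pt}]_{-\vec{\mathbf b}+\vec k}\bar\psi^{-d_\varphi+\sum l_i}\rangle$. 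I would then integrate term by term over the compact cycle $\Gamma_c=\check{\mathfrak X}\cap(S^1)^m$. Since the holomorphic volume form restricts to $\prod_{j\in I_\sigma}dx_j/x_j$ in the chart of a maximal cone $\sigma=\sigma_{\on{max}}$ (with $D_{\sigma}$ a point, so that $\#I_\sigma=n$), torus orthogonality gives $\oint_{\Gamma_c}x^{-\vec k}\,\omega=(2\pi\sqrt{-1})^n\,\delta_{\vec k,0}$. Thus only the terms with $\vec k=0$ survive, and the left-hand side becomes $(2\pi\sqrt{-1})^n$ times the sum over $\beta$ of the $\vec k=0$ invariants.

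Next I would simplify the right-hand side. Because $\on{Ch}(\mathcal O_{\on{pt}})=(2\pi\sqrt{-1})^n[\on{pt}]$ is concentrated in top cohomological degree, pairing against it extracts the $H^0$-component of the remaining factor; since $\hat\Gamma_X=1+(\text{positive degree})$ and since $z^{c_1}$ raises degree while $z^{\deg/2}$ only attaches powers of $z$, the Gamma class contributes trivially and the right-hand side equals $(2\pi\sqrt{-1})^n$ times the $H^0$-part of $\mathbb J_X(\tau_{0,2},-z)\varphi$, namely a generating series of two-point descendant invariants $\langle\varphi,[\on{pt}]/(-z-\psi)\rangle_{0,2,\beta}^X$ of the Fano variety $X$. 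This is precisely why the $\mathcal O_{\on{pt}}$ case is more elementary than the $\mathcal O_X$ case of Theorem~\ref{thm-gamma-struc-sheaf}, where the Gamma class genuinely enters.

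The heart of the argument is then to identify the two constant terms. Imposing $\vec k=0$ forces the last two markings to carry opposite contact orders $\vec{\mathbf b}$ and $-\vec{\mathbf b}$, so that there is no net tangency along $D$; I would use the TRR and divisor-equation reductions already exploited in Propositions~\ref{prop-coeff-k} and~\ref{prop-prod-phi-theta} (in the spirit of \cite{TY20c}*{Theorem 38}) to collapse the mid-age pair and the insertions $\prod_i[1]_{\vec e_i}^{l_i}$, and then invoke the relative mirror theorem (Theorem~\ref{thm-mirror-X} and Corollary~\ref{cor-extended-rel-I-func}) to rewrite the resulting sum of relative invariants in terms of the $J$-function of $X$. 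The hypergeometric factors $\prod_j\prod_{a\le 0}(D_j+az)/\prod_{a\le d_j}(D_j+az)$ produced by the mirror theorem have $H^0$-parts that are explicit products of $az$-terms, and I expect these, together with the factorials, binomials and powers of $z$ accumulated during the expansion, to assemble exactly into the two-point series obtained on the right. The mirror map $\varphi(z)$ encoded in $\check\varphi_\tau(-z)$, together with the relative mirror map of \eqref{mirror-map}, accounts for the ``possibly non-trivial mirror map'' clause: passing from invariants of $(X,D)$ to $\mathbb J_X(\tau_{0,2},-z)\varphi$ is exactly the change of slice in Givental's cone dictated by that map.

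The main obstacle I anticipate is the bookkeeping in this final identification: matching the powers of $z$ and the combinatorial factors coming from the $H^0$-restriction of the hypergeometric modification against the descendant structure $1/(-z-\psi)$ of $\mathbb J_X$, while simultaneously tracking how the relative mirror map intervenes. The conceptual steps (the torus residue, the triviality of $\hat\Gamma_X$ for the point, and the relative mirror theorem) are clean; the real work is to verify that the numerology produced by the $\vec k=0$ specialization coincides with that of the right-hand two-point function, for which the TRR reductions of the preceding sections are the essential tool.
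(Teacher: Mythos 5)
Your proposal follows essentially the same route as the paper's proof: expand $\check{\varphi}_\tau(-z)e^{-W_\tau/z}$ via the product rule of Proposition \ref{prop-prod-phi-theta}, take the torus residue over $\Gamma_c$ to isolate the $\vec k=\vec 0$ terms (which forces $l_i=D_i\cdot\beta$), and then identify the resulting sum of relative invariants, via the extended relative mirror theorem with $S=\{\vec e_i\}_{i=1}^m\cup\{\vec{\mathbf b}\}$ and $\tau'=\varphi(-z)x_\varphi+\sum_i[1]_{\vec e_i}x_{\vec e_i}+[1]_{\vec{\mathbf b}}x_{\vec{\mathbf b}}$, with the corresponding $I$-function coefficient, which is exactly the two-point descendant series $\sum_\beta\langle\varphi,[\on{pt}]\psi^{-K_X\cdot\beta-1-d_\varphi}\rangle_{0,2,\beta}^X$ giving the right-hand side. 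The only cosmetic differences are that you make explicit the simplification of the right-hand side (the $\on{Ch}(\mathcal O_{\on{pt}})$ pairing and triviality of $\hat\Gamma_X$ in degree zero), which the paper leaves implicit, and that the paper needs no further TRR collapse at the final stage since the constant term is recognized directly as a $J$-function coefficient.
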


\begin{proof}
Recall that,  for a maximal cone $\sigma$, we have
\begin{align*}
&\check{\varphi}_\tau (-z)e^{-W_{\tau}/z}\\
=& \sum_{\beta,l_i\geq 0} \sum_{\vec k\in \mathbb Z^m} (-z)^{d_\varphi}\frac{\prod_{i=1}^m(-1)^{l_i}}{\prod_{i =1}^ml_i!z^{l_i}}\langle \varphi(-\bar{\psi}), \prod_{i=1}^m[1]_{\vec e_i}^{l_i}, [1]_{\vec{\mathbf b}},[\on{pt}]_{-\vec {\mathbf b}+\vec k}\bar{\psi}^{\sum_{i=1}^m l_i-d_{\varphi}}\rangle^{(X,D)}_{0,\sum_{i=1}^m l_i+3,\beta}t^\beta x^{-\vec k}e^{\tau_{0,2}\cdot\beta}.
\end{align*}
Now, the LHS of (\ref{gamma-conj-pt}) is 
\[
\int_{|x_i|=1}\check{\varphi}_\tau(-z) e^{-W_{\tau}/z} \frac{\prod_{j\in I_\sigma}dx_j}{\prod_{j\in I_\sigma}x_j}.
\]
Computing the residue, we have
\begin{align}\label{coeff-x-0}
\sum_{\substack{\beta,l_i\geq 0 \\ \sum l_i \geq d_\varphi}} (-z)^{d_\varphi} \frac{\prod_{i=1}^m(-1)^{l_i}}{\prod_{i =1}^ml_i!z^{l_i}}\langle \varphi(-\bar{\psi}), \prod_{i=1}^m[1]_{\vec e_i}^{l_i}, [1]_{\vec{\mathbf b}},[\on{pt}]_{-\vec {\mathbf b}}\bar{\psi}^{\sum_{i=1}^m l_i-d_{\varphi}}\rangle^{(X,D)}_{0,\sum_{i=1}^m l_i+3,\beta} t^\beta e^{\tau_{0,2}\cdot\beta}x^{\vec 0}.
\end{align}
Note that we have $\vec k=\vec 0$. Therefore, we have
\[
d_i=D_i \cdot\beta=l_i\geq 0 \text{ for all } i=1,\ldots,m
\]
and
\[
-K_X\cdot\beta=\sum_{i=1}^m l_i.
\]

We consider the relative mirror theorem with the following extended data
\[
S=\{\vec e_i\}_{i=1}^m\cup \{\vec{\mathbf b}\}.
\]
In addition, for the $J$-function $J_{X,\beta}(\tau,z)$ that appears in the $I$-function of $(X,D)$, we take $\tau=\tau_{0,2}+\tau^\prime$ and $\tau^\prime=\varphi x_{\varphi}$.  Then the residue (\ref{coeff-x-0}) is $(-z)$-times the sum of the $x_{\varphi}x_{\vec{\mathbf b}}\prod_{i=1}^m x_{\vec e_i}^{d_i}\cdot H^0(D_{\vec{\mathbf b}})$-coefficients of the relative $J$-function with
\[
\tau^\prime=\varphi(-z)x_{\varphi}+\sum_{i=1}^m [1]_{\vec e_i}x_{\vec e_i}+[1]_{\vec{\mathbf b}}x_{\vec{\mathbf b}}.
\]
Note that we need to multiply by $(-z)$ because we do not introduce a factor $\frac{1}{-z}$ for the insertion $[1]_{\vec {\mathbf b}}$ in (\ref{coeff-x-0}) while the extended $I$-function comes with a factor $\frac{1}{-z}$. By the relative mirror theorem, the corresponding coefficient in the extended relative $I$-function is
\[
\sum_{\beta: -K_X\cdot\beta\geq 1+d_{\varphi} }\langle \varphi, [\on{pt}]\psi^{-K_X\cdot\beta-1-d_{\varphi}}\rangle_{0,2,\beta}^Xe^{\tau_{0,2}\cdot\beta}t^\beta \frac{(-1)^{-K_X\cdot\beta-d_\varphi}}{z^{-K_X\cdot\beta-d_\varphi}}.
\]
That is precisely the RHS of (\ref{gamma-conj-pt}).

\end{proof}

We can specialize to the case where $\varphi=1$, then the RHS of (\ref{gamma-conj-pt}) is precisely the quantum period for the Fano variety $X$: 
\[
\sum_{\beta\in \NE(X),-K_X\cdot\beta\geq 2}\langle [\on{pt}]\psi^{-K_X\cdot\beta-2}\rangle_{0,1,\beta}^X t^\beta,
\]
after the change of variables $-z\mapsto z$.
We can also consider the classical period of $W$:  
\[
\int_{\Gamma_{c}}\frac{1}{1-W}\omega.
\]
Under a similar computation, we obtain the regularized quantum period
\[
\sum_{\beta\in \NE(X),-K_X\cdot\beta\geq 2}(-K_X\cdot\beta)!\langle [\on{pt}]\psi^{-K_X\cdot\beta-2}\rangle_{0,1,\beta}^X t^\beta.
\]
Therefore, we conclude that the regularized quantum period of $X$ coincides with the classical period of $W$, where $W:=\sum_{i=1}^m \vartheta_{[D_i]}$ is defined in terms of orbifold invariants of $(X,D)$.
This was proved in \cite{TY20b} with the assumption that all the $D_i$'s are nef. We remove this assumption by applying the more general version of the mirror theorem for snc pairs in Section \ref{sec:rel-mirror-deg} to compute the invariants. Furthermore, we have

\begin{theorem}\label{theorem-theta=x}
Let $X$ be a Fano variety and $D\subset X$ be an snc anticanonical divisor of $X$ such that $B$ is pure-dimensional with $\dim_{\mathbb R}B=\dim_{\mathbb C}X=n$. Then the regularized quantum period of $X$ coincides with the classical period of $W$.
When the relative mirror map is trivial, we have
\[
\vartheta_{\vec e_i}=x_i.
\]
 Hence the Landau--Ginzburg potential $W:=\sum_{i=1}^m \vartheta_{[D_i]}$, which is mirror to $(X,D)$, is a Laurent polynomial. 
\end{theorem}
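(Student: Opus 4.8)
The three assertions may be handled in turn, and the first one requires essentially no new work. That the regularized quantum period of $X$ equals the classical period $\int_{\Gamma_c}\frac{1}{1-W}\omega$ follows from the residue computation already carried out in the proof of Theorem \ref{thm-gamma-conj-pt} with $\varphi=1$: expanding $\frac{1}{1-W}=\sum_{l\ge 0}W^l$ and extracting the $x^{\vec 0}$-coefficient as there produces $\sum_{\beta}\langle[\on{pt}]\psi^{-K_X\cdot\beta-2}\rangle^X_{0,1,\beta}t^\beta$ but now \emph{without} the $\frac{1}{l!}$ normalization coming from $e^{-W/z}$, so that the coefficient of $t^\beta$ is multiplied by $(-K_X\cdot\beta)!$. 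This is exactly the regularized quantum period, so I would simply record the computation and pass to the substantive claim.

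For the identity $\vartheta_{\vec e_i}=x_i$, the plan is to analyze the defining expansion in Definition \ref{def-theta},
\[
\vartheta_{\vec e_i}(p)=\sum_{\vec k}\ \sum_{\beta:\,D\cdot\beta=\vec k+\vec e_i}N^\beta_{\vec e_i,\vec{\mathbf b},-\vec{\mathbf b}+\vec k}\,t^\beta x^{-\vec k},
\]
separating the leading term from the quantum corrections. First I would isolate the leading term: since $D=-K_X$ is ample, $D\cdot\beta=0$ forces $\beta=0$, so the only degree-zero contribution comes from $\vec k=-\vec e_i$, yielding the monomial $x^{\vec e_i}=x_i$ with coefficient the classical three-point pairing $\langle[1]_{\vec e_i},[1]_{\vec{\mathbf b}},[\on{pt}]_{-\vec{\mathbf b}-\vec e_i}\rangle_{0,3,0}=1$ (here $D_{\vec{\mathbf b}}$ is a point, so $[1]_{\vec{\mathbf b}}=[\on{pt}]_{\vec{\mathbf b}}$ and the unit $[1]_{\vec e_i}$ pairs $[1]_{\vec{\mathbf b}}$ against its dual). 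Every remaining term has $\beta\neq 0$. The key step is then to show that these corrections are governed by the relative mirror map \eqref{mirror-map}: applying the TRR/divisor-equation reduction of the mid-age three-point invariants exactly as in Proposition \ref{prop-coeff-k}, together with the relative mirror theorem (Corollary \ref{cor-extended-rel-I-func}) which rewrites the relevant coefficients in terms of the one-point invariants $\langle[\on{pt}]_{D_{\beta,-}}\psi^{k-2}\rangle^X_{0,1,\beta}$, I would match the $\beta\neq 0$ contributions to $\vartheta_{\vec e_i}$ with precisely the correction terms of the relative mirror map $\tau(Q,q)$. Triviality of the latter then annihilates every correction, leaving $\vartheta_{\vec e_i}=x_i$; this is the precise form of the heuristic, noted after Theorem \ref{thm-gamma-struc-sheaf}, that $\vartheta_{\vec e_i}-x_i$ is the inverse relative mirror map.

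Granting $\vartheta_{\vec e_i}=x_i$ for every $i$, the potential becomes $W=\sum_{i=1}^m\vartheta_{[D_i]}=\sum_{i=1}^m x_i$. On the chart of $\check{\mathfrak X}$ attached to a maximal cone $\sigma$ with $I_\sigma=\{i_1,\dots,i_n\}$, the monomials $x_{i_1},\dots,x_{i_n}$ are the local coordinates, and each $x_j$ with $j\notin I_\sigma$ is a Laurent monomial in them determined by the structure relations of the mirror algebra; hence $W$ restricts to a Laurent polynomial in $n$ variables of Hori--Vafa type, which is the final assertion.

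The main obstacle is the middle step: making precise the identification between the mid-age three-point invariants defining $\vartheta_{\vec e_i}$ and the one-point invariants of $X$ that constitute the relative mirror map. This requires careful bookkeeping of contact orders and descendant insertions through the reduction of Proposition \ref{prop-coeff-k} and the relative mirror theorem, and in particular verifying that the combinatorial prefactors $\frac{(d_i-1)!}{\prod_{j:d_j<0}(-d_j-1)!}$ in \eqref{mirror-map} coincide with those produced by the hypergeometric factors of the $I$-function \eqref{extended-rel-I-func}. Once this matching is in place, the triviality hypothesis applies termwise and the vanishing of the corrections is immediate.
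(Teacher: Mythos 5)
Your first part (deducing the equality of the regularized quantum period and the classical period by regularizing Theorem \ref{thm-gamma-conj-pt} at $\varphi=1$, $\tau_{0,2}=0$) matches the paper, and your overall strategy for the second part (compute the theta functions through the relative mirror theorem and invoke triviality of the mirror maps) is also the paper's route. But there is a genuine gap in your middle step: you treat all $i$ uniformly, isolating a $\beta=0$ ``leading term'' equal to $x_i$ and claiming that triviality of the mirror map annihilates every $\beta\neq 0$ contribution. This is only correct on charts $\sigma$ with $i\in I_\sigma$, and the paper's proof is correspondingly split into two cases. When $D_\sigma\subset D_i$, your picture is right: the $\beta=0$ term gives $x_i$ and the corrections are controlled by the mirror map $\tau_\sigma$ of (\ref{mirror-map-b}) attached to the mid-age variable $x_{\vec{\mathbf b}}$ --- note this is the \emph{extended} mirror map, not the non-extended map (\ref{mirror-map}) you cite, a conflation that matters below. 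When $i\notin I_\sigma$, however, your leading term vanishes identically: taking $\beta=0$ forces $\vec k=-\vec e_i$, so the last insertion $[\on{pt}]_{-\vec{\mathbf b}-\vec e_i}$ lies in $H^*(D_{I_\sigma\cup\{i\}})$, and $D_\sigma\cap D_i=\emptyset$ because an snc divisor on an $n$-fold admits no $(n+1)$-fold intersections; hence $\langle[1]_{\vec e_i},[1]_{\vec{\mathbf b}},[\on{pt}]_{-\vec{\mathbf b}-\vec e_i}\rangle_{0,3,0}=0$, not $1$.

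On such a chart the entire identity $\vartheta_{\vec e_i}=x_i$ is carried by a \emph{nonzero} curve class: the relevant $I$-function coefficient (\ref{coeff-I-b}) is supported on classes with $d_i=1$, $d_j=0$ for $j\notin I_\sigma\cup\{i\}$, and $d_j$ unconstrained for $j\in I_\sigma$. These classes belong neither to the non-extended mirror map (\ref{mirror-map}) (which requires $d_j\leq 0$ for all $j\neq i$) nor to the mid-age mirror map (\ref{mirror-map-b}) (which requires $d_j=0$ for all $j\notin I_\sigma$), so the triviality hypothesis does not kill them --- and it must not: the paper shows there is a unique surviving class $\beta_0$, and its contribution is exactly the Laurent monomial expressing $x_i$ in the chart coordinates via the identification (\ref{Q=x}), namely $x_i\prod_{j\in I_\sigma}x_j^{D\cdot\beta_0}=(t/Q)^{\beta_0}$. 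If your claim ``triviality annihilates every correction'' were applied on this chart, you would conclude $\vartheta_{\vec e_i}\vert_\sigma=0$, contradicting the theorem. So the case distinction, together with identifying precisely \emph{which} mirror map governs which terms, is not deferred bookkeeping: it is where the statement actually lives for the variables $x_i$ with $i\notin I_\sigma$, and the matching-of-corrections step you propose fails there as stated.
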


\begin{proof}

As discussed above, the equality between the regularized quantum period of $X$ and the classical period of $W$ follows from the regularization of Theorem \ref{thm-gamma-conj-pt} with $\varphi=1$ and $\tau_{0,2}=0$. It remains to show that the theta functions $ \vartheta_{\vec e_i}$ are trivial.

Recall that 
\begin{align}\label{theta-func-e-i}
   \vartheta_{\vec e_i}(p)&:=
   \sum_{\vec k\in {\mathbb Z}^m} \sum_{\substack{\beta: D_i\cdot \beta=k_i+1,\\D_j\cdot\beta=k_j, \text{ for }j\neq i}}N_{\vec e_i, \vec{\mathbf b}, -\vec{\mathbf{b}}+\vec k}^\beta t^{\beta} x^{-\vec k}.
    \end{align}
If $D_{\vec{\mathbf b}}\in D_i$, the curve class $\beta$ satisfies
\[
\beta: D_j\cdot\beta=0, \text{ for } j\not\in I_\sigma.
\]
The only possibility is when $\beta=0$. So
\[
\vartheta_{\vec e_i}(p)=x.
\]
Now we also explain how to see this from the mirror theorem. The invariants in (\ref{theta-func-e-i}) can be computed via the relative mirror theorem. 

 We need to extract the coefficient of $x_{\vec e_i}x_{\vec{\mathbf b}}/z$ in the $J$-function and the $I$-function.We assumed that the mirror map for the variable $x_{\vec e_i}$  and the mirror map for $x_{\vec{\mathbf b}}$ are  trivial. The $x_{\vec e_i}x_{\vec{\mathbf b}}/z$-coefficient of the $J$-function is
\[
\tau_{\sigma}\sum_{\vec k\in {\mathbb Z}^m} \sum_{\substack{\beta: D_i\cdot \beta=k_i+1,\\D_j\cdot\beta=k_j, \text{ for }j\neq i}}N_{\vec e_i, \vec{\mathbf b}, -\vec{\mathbf{b}}+\vec k}^\beta t^{\beta} x^{-D\cdot\beta},
\]
where the factor 
\begin{align}\label{mirror-map-b}
\tau_{\sigma}:=\left(1+\sum_{\substack{\beta:d_i= D_i\cdot\beta= 0, \text{ for } i\not\in I_\sigma, \\ k\geq 2}} \langle [\on{pt}]_{D_{\beta,-}}\psi^{k-2}\rangle_{0,1,\beta}^X \frac{\prod_{i: d_i>0} d_i!}{\prod_{j: d_j<0}(-d_j-1)!}Q^\beta q^{\vec d}\right)
\end{align}
comes from the mirror map associated with $x_{\vec{\mathbf b}}$ which is trivial by our assumption.

For $D_{\vec{\mathbf b}}\in D_i$, the corresponding coefficient of the $I$-function is $1$ (plus the factor (\ref{mirror-map-b}) which is zero). Therefore, we have 
\[
\sum_{\vec k\in {\mathbb Z}^m} \sum_{\substack{\beta: D_i\cdot \beta=k_i+1,\\D_j\cdot\beta=k_j, \text{ for }j\neq i}}N_{\vec e_i, \vec{\mathbf b}, -\vec{\mathbf{b}}+\vec k}^\beta t^{\beta} x^{-D\cdot\beta}=1.
\]

Therefore, 
\[
 \vartheta_{\vec e_i}(p)=x_i.
\]

For $D_{\vec{\mathbf b}}\not\in D_i$, the coefficient of the $I$-function is 
\begin{align}\label{coeff-I-b}
\sum_{\substack{\beta: d_j= 0, \text{ for } j\not\in I_\sigma\cup \{i\}, \\d_i= 1, k\geq 2}} \langle [\on{pt}]_{D_{\beta,-}}\psi^{k-2}\rangle_{0,1,\beta}^X \frac{\prod_{j: d_j>0} d_j!}{\prod_{j: d_j<0} (-d_j-1)!}Q^\beta q^{\vec d}.
\end{align}
The curve classes $\beta$ that appear in (\ref{coeff-I-b}) is indeed unique, and we denote it by $\beta_0$, such that
\[
x_i\prod_{j\in I_\sigma}x_j^{D\cdot\beta_0}=(t/Q)^{\beta_0}.
\]

Therefore, when $D_{\vec{\mathbf b }}\not \in D_i$, we also have
\[
 \vartheta_{\vec e_i}(p)=x_i.
\]
This concludes the proposition.
\end{proof}

\begin{remark}
Theorem \ref{theorem-theta=x} aligns with the general expectation that the mirrors of Fano varieties are Laurent polynomials. Furthermore, we have a monomial that corresponds to an irreducible component of the snc anticanonical divisor. There is a similar result in \cite{johnston25}.  %The Newton polytope of the Laurent polynomial induces a toric degeneration of the Fano variety $X$. 
\end{remark}

Recall that theta functions satisfy the product rule
\[
\vartheta_{\vec p_1}\star \vartheta_{\vec p_2}=\sum_{\beta}\sum_{\vec r\in B(\mathbb Z)}N_{\vec p_1,\vec p_2, -\vec r}^\beta t^\beta \vartheta_{\vec r}.
\]
We can compute these theta functions recursively and conclude that
\begin{proposition}\label{theorem-theta=x-p}
Let $X$ be a Fano variety and $D\subset X$ be an snc anticanonical divisor of $X$ such that $B$ is pure-dimensional with $\dim_{\mathbb R}B=\dim_{\mathbb C}X=n$. 
When the relative mirror map is trivial, we have
\begin{align}
\vartheta_{\vec p}=x^{\vec p}.
\end{align}
and the structure constants are
\[
N_{\vec p_1,\vec p_2, -\vec r}^\beta=1,
\]
for $D\cdot\beta=\vec p_1+\vec p_2-\vec r$.
\end{proposition}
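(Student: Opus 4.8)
The plan is to argue by induction on $|\vec p|:=\sum_i p_i$, bootstrapping from the base case $\vartheta_{\vec e_i}=x_i$ supplied by Theorem \ref{theorem-theta=x} and propagating it through the product rule (\ref{theta-func-prod}). Throughout I would work on the mirror and use the defining relation (\ref{Q=x}), $t^\beta=Q^\beta x^{D\cdot\beta}$, together with the elementary fact that every $\vec p\in B(\mathbb Z)$ has support contained in a single simplex $I_\sigma$; thus $\vec p$ lies in one maximal cone and can be written $\vec p=\vec e_{i_0}+\vec p'$ with $i_0\in I_\sigma$ and $\vec p'\in B(\mathbb Z)$ in the same cone, $|\vec p'|=|\vec p|-1$. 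The compatibility between the pointwise product of the Laurent realizations of theta functions and the star product, which is exactly the content of Proposition \ref{prop-coeff-k} (applicable since $-K_X-D=0$ is both nef and anti-nef), is what makes the recursion run.

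For the inductive step I would feed $\vartheta_{\vec e_{i_0}}\star\vartheta_{\vec p'}$ into the product rule. By the induction hypothesis the two factors realize as the monomials $x_{i_0}$ and $x^{\vec p'}$, so the left-hand side realizes as $x^{\vec p}$. On the right-hand side a class $\beta\neq 0$ carries the index $\vec r=\vec p-D\cdot\beta$; since $[D]=-K_X$ is ample on the Fano $X$ we have $-K_X\cdot\beta=\sum_i D_i\cdot\beta>0$, so $|\vec r|=|\vec p|-(-K_X\cdot\beta)<|\vec p|$ and $\vartheta_{\vec r}=x^{\vec r}$ is already known by induction. Substituting $t^\beta=Q^\beta x^{D\cdot\beta}$ collapses every balanced term $t^\beta\vartheta_{\vec r}$ to $x^{\vec p}$, so the product rule becomes $x^{\vec p}=\vartheta_{\vec p}+x^{\vec p}\sum_{\beta\neq 0}N^\beta_{\vec e_{i_0},\vec p',-(\vec p-D\cdot\beta)}Q^\beta$. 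This isolates $\vartheta_{\vec p}$ and reduces the first assertion $\vartheta_{\vec p}=x^{\vec p}$ to the vanishing of the positive-degree structure constants appearing in the sum.

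To obtain the structure-constant formula $N^\beta_{\vec p_1,\vec p_2,-\vec r}=1$, once $\vartheta_{\vec q}=x^{\vec q}$ is in hand, I would expand $\vartheta_{\vec p_1}\star\vartheta_{\vec p_2}$, which realizes as the single monomial $x^{\vec p_1+\vec p_2}$, back in the theta basis. Writing $x^{\vec p_1+\vec p_2}=x^{\vec r+D\cdot\beta}=Q^{-\beta}t^\beta\vartheta_{\vec r}$ for $\vec r=\vec p_1+\vec p_2-D\cdot\beta$ exhibits, at $Q=1$, the coefficient of $t^\beta\vartheta_{\vec r}$ as $1$; since the invariants do not depend on $Q$ this yields $N^\beta_{\vec p_1,\vec p_2,-\vec r}=1$ for precisely the balanced classes $D\cdot\beta=\vec p_1+\vec p_2-\vec r$. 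The point requiring care here is that each monomial admits a \emph{unique} representative $t^\beta\vartheta_{\vec r}$ with $\vec r\in B(\mathbb Z)$, which I expect to follow from the pure-dimensionality of $B$ and the snc combinatorics of the fan $\Sigma(X)$.

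The hard part will be the vanishing of the quantum corrections in the inductive step, namely $\sum_{\beta\neq 0}N^\beta_{\vec e_{i_0},\vec p',-(\vec p-D\cdot\beta)}Q^\beta=0$; everything else is bookkeeping once this is known. This is exactly the information encoded in the triviality of the relative mirror map. As in the proof of Theorem \ref{theorem-theta=x}, these three-point mid-age invariants are computed from the coefficients of the relative $I$-function of Corollary \ref{cor-extended-rel-I-func}, and under the trivial-mirror-map hypothesis the relevant $I$-function coefficients reduce to the single combinatorial contribution $1$ carried by $\beta=0$, the genuinely positive-degree classes dropping out. I would therefore reduce the required vanishing to the mirror theorem of Section \ref{sec:rel-mirror-deg}, match the product of theta functions with the direct mid-age count via Proposition \ref{prop-coeff-k}, and then read off that the positive-$\beta$ terms cancel, completing both assertions simultaneously.
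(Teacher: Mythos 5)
Your overall strategy --- induction on $|\vec p|$ through the product rule, seeded by Theorem \ref{theorem-theta=x} --- is the same recursion the paper invokes (its entire ``proof'' is the sentence ``we can compute these theta functions recursively''), so the architecture is not the issue. The issue is that your recursion, as set up, does not close. Writing $t^\beta=Q^\beta x^{D\cdot\beta}$ and using the induction hypothesis on the lower-order terms, your inductive step yields the single identity $x^{\vec p}=\vartheta_{\vec p}+x^{\vec p}\sum_{\beta\neq 0}N^{\beta}Q^{\beta}$, while Definition \ref{def-theta} gives $\vartheta_{\vec p}=x^{\vec p}\bigl(1+\sum_{\beta\neq 0}M^{\beta}Q^{\beta}\bigr)$, where $M^{\beta}$ are the mid-age coefficients defining $\vartheta_{\vec p}$. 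Comparing $Q^{\beta}$-coefficients only produces $M^{\beta}=-N^{\beta}$ for each $\beta\neq 0$: a relation between two families of unknowns, never the vanishing of either. So the entire content sits in the ``hard part'' you defer, and your proposed source for it is not adequate as stated: to extract $N^{\beta}_{\vec e_{i_0},\vec p',-\vec r}$ from Corollary \ref{cor-extended-rel-I-func} you must use extended data containing $\vec p'$ (or a mid-age vector $\vec{\mathbf b}$), and the mirror maps attached to the insertions $[1]_{\vec p'}$ and $[1]_{\vec{\mathbf b}}$ --- formulas (\ref{extended-mirror-map}) and (\ref{mirror-map-r}) --- are different objects from the non-extended relative mirror map (\ref{mirror-map}) whose triviality is the hypothesis; the paper devotes Section \ref{sec:mirror-map} and Proposition \ref{prod-theta-tau} precisely to this discrepancy. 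Without an argument that these extended corrections vanish under the hypothesis (as the paper arranges in the proof of Theorem \ref{theorem-theta=x} for the case $\vec p=\vec e_i$), the induction does not go through.

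The structure-constant part contains an error, not merely a gap. Once $\vartheta_{\vec q}=x^{\vec q}$ is known, two distinct balanced pairs $(\beta_1,\vec r_1)\neq(\beta_2,\vec r_2)$ with $\vec r_i\in B(\mathbb Z)$ realize to monomials that differ only by $Q$-powers and coincide at $Q=1$, so you cannot ``read off'' a coefficient $1$ for each such pair from the realized product $x^{\vec p_1+\vec p_2}$. The uniqueness you hope follows from pure-dimensionality of $B$ is false in general: whenever two distinct effective classes have the same intersection numbers with every $D_i$ --- which happens as soon as the number $m$ of components is smaller than the Picard number, e.g.\ a low-degree del Pezzo surface with a two- or three-component anticanonical cycle --- both classes are balanced against the same $\vec r$. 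Worse, ``all balanced constants equal $1$'' contradicts the vanishing your first part needs: if $\vec p_1+\vec p_2\in B(\mathbb Z)$ and some $\beta\neq 0$ is also balanced with $\vec r\in B(\mathbb Z)$, the realized product rule forces $\sum N^{\beta}Q^{\beta}=1$ over balanced pairs, so those quantum constants must vanish rather than equal $1$. The statement that actually holds --- made precise only later in the paper, in Lemmas \ref{lemma-p1-p2-+} and \ref{lemma-p1-p2-+-2} --- is that for fixed $\vec p_1,\vec p_2$ there is exactly one non-vanishing structure constant, namely the classical one $(\beta,\vec r)=(0,\vec p_1+\vec p_2)$ when $D_{\vec p_1+\vec p_2}\neq\emptyset$ and a unique quantum one when $D_{\vec p_1+\vec p_2}=\emptyset$, and that this one equals $1$. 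Your argument neither distinguishes these two cases nor establishes uniqueness of the non-vanishing constant, so both halves of the proposition remain unproved as written.
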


Recall that we have a mirror theorem for the Fano variety $X$ in (\ref{iden-mirror-theorem-X}):
\begin{equation*}
\begin{split}
J_{X}(\tau_{0,2},z)
=e^{-\sum_{i=1}^m D_i\log x_i}\sum_{\beta\in \NE(X)}&\langle \prod_{i=1}^{m_+}[1]_{\vec e_i}^{d_i}, \tau_{0,2},\ldots,\tau_{0,2},[\gamma]_{\sum_{i=m_++m_0+1}^m  d_i\vec e_i}\psi^{k} \rangle_{0,1+l+\sum_{i=1}^{m_+}d_i,\beta}^{(X,D)}\\
&\cdot [\gamma^\vee]\frac{z^{\sum_{i=1}^{m_+} d_i}}{l!}\prod_{j=1}^m  x_i^{-d_i}\prod_{j=1}^m \frac{\prod_{a\leq 0}(D_j+az)}{\prod_{a\leq d_j}(D_j+az)}.
\end{split}
\end{equation*}
It is easy to see that if there are additional markings with insertions $[1]_{\vec e_i}$ for $i=m_++1,\ldots,m$ when $\cap_{i=m_++1}^m D_i\neq \emptyset$, we have
\begin{align*}
&\langle \prod_{i=1}^{m_+}[1]_{\vec e_i}^{d_i}, \tau_{0,2},\ldots,\tau_{0,2},[\gamma]_{\sum_{i=m_++m_0+1}^m  d_i\vec e_i}\psi^{k} \rangle_{0,1+l+\sum_{i=1}^{m_+}d_i,\beta}^{(X,D)}\\
&=\langle \prod_{i=1}^{m_+}[1]_{\vec e_i}^{d_i}, \tau_{0,2},\ldots,\tau_{0,2},\prod_{i=m_++1}^m [1]_{\vec e_i}^{a_i}, [\gamma]_{\sum_{i=m_++m_0+1}^m  (d_i-a_i)\vec e_i}\psi^{k+\sum_{i=m_++1}^ma_i} \rangle_{0,1+l+\sum_{i=1}^{m_+}d_i+\sum_{i=m_++1}^m a_i,\beta}^{(X,D)}
\end{align*}
as both invariants can be computed by the extended relative mirror theorem and their corresponding coefficients of the $I$-function are equal.

Now we consider the invariants
\[
\langle \prod_{i=1}^l[1]_{\vec p_i}, [\gamma]_{-\vec r} \bar{\psi}^k\rangle_{0,l+1,\beta}^{(X,D)},
\]
where $\vec p_i, \vec r\in B(\mathbb Z)$. We ignore the class $\tau_{0,2}\in H^2(X)$ for now. It is easy to see that the results still hold when we include some markings with the class $\tau_{0,2}\in H^2(X)$.

\begin{lemma}\label{lemma-p1-p2-+}
If $D_{\vec p_1}\neq \emptyset$, $D_{\vec p_2}\neq \emptyset$ and $D_{\vec p_1+\vec p_2}\neq \emptyset$, then 
\begin{align}\label{iden-p1-p2+}
 \langle [\gamma]_{-\vec r} \bar{\psi}^k,  \prod_{i=1}^l[1]_{\vec p_i}\rangle_{0,l+1,\beta}^{(X,D)}=\langle [1]_{\vec p_1+\vec p_2}, [\gamma]_{-\vec r} \bar{\psi}^{k-1}, \prod_{i=3}^l[1]_{\vec p_i}\rangle_{0,l,\beta}^{(X,D)}.
\end{align}
\end{lemma}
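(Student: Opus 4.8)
The plan is to deduce the identity from the extended relative mirror theorem (Corollary~\ref{cor-extended-rel-I-func}), exactly in the spirit of the computation carried out just before the statement: both invariants in (\ref{iden-p1-p2+}) are coefficients of the extended $J$-function of $(X,D)$, hence, through the relation (\ref{rel-J-I}), of the explicit extended $I$-function (\ref{extended-rel-I-func}), and I would show that the two coefficients coincide. The point is that the mirror theorem reduces both invariants to the \emph{same} hypergeometric data built from $J_{X,\beta}$, so the identity becomes a bookkeeping comparison inside (\ref{extended-rel-I-func}).

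First I would choose the extended data $S$ to contain $\vec p_1,\ldots,\vec p_l$ together with $\vec p_1+\vec p_2$; the hypotheses $D_{\vec p_1}\neq\emptyset$, $D_{\vec p_2}\neq\emptyset$ and $D_{\vec p_1+\vec p_2}\neq\emptyset$ guarantee that these are admissible contact orders, so that the classes $[1]_{\vec p_i}$, $[1]_{\vec p_1+\vec p_2}$ and the extended variables $x_{\vec p_i}$, $x_{\vec p_1+\vec p_2}$ all make sense. The left-hand invariant of (\ref{iden-p1-p2+}) is then read off from (\ref{rel-J-I}) as the coefficient of $\prod_{i=1}^l x_{\vec p_i}$, paired against $[\gamma^\vee]$ and sitting at the power of $z$ produced by the descendant expansion $[\gamma]_{-\vec r}/(z-\bar\psi)=\sum_a[\gamma]_{-\vec r}\bar\psi^a z^{-a-1}$ of the special marking. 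The right-hand invariant is read off in the same way as the coefficient of $x_{\vec p_1+\vec p_2}\prod_{i=3}^l x_{\vec p_i}$, now with the special marking carrying $\bar\psi^{k-1}$.

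The comparison is then visible directly in (\ref{extended-rel-I-func}). Setting $l_i=1$ for the two slots $\vec p_1,\vec p_2$ on the left, and $l_i=1$ for the single slot $\vec p_1+\vec p_2$ on the right, both choices contribute the same total shift $\sum_i l_i\vec a_i=\sum_{i=1}^l\vec p_i$. Consequently the factor $J_{X,\beta}(\tau,z)Q^\beta q^{D\cdot\beta}$, the hypergeometric modification $\prod_j(\cdots)$, and the contact-order class $[1]_{-D\cdot\beta+\sum_{i=1}^l\vec p_i}$ agree for the two readings, since all of these depend only on $\beta$ and on the total shift, not on whether it is partitioned as $\vec p_1+\vec p_2$ or kept together. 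The only discrepancy is the prefactor $z^{-\sum_i l_i}$, which is $z^{-l}$ on the left and $z^{-(l-1)}$ on the right. Tracking the total power of $z$, the left-hand invariant sits at $z^{-k-1}$ while the right-hand invariant with $\bar\psi^{k-1}$ sits at $z^{-k}$; in both cases the surviving $J_{X,\beta}$-and-hypergeometric factor must supply the \emph{same} power $z^{-k-1+l}$. As that factor is identical, the two $I$-coefficients are equal, and (\ref{iden-p1-p2+}) follows. This is the same cancellation --- one unit of $z$ per merged pair of insertions --- that drives Proposition~\ref{prop-coeff-k}.

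The main obstacle I anticipate is organizational rather than conceptual: one must keep track simultaneously of the monomial in the extended variables and of the power of $z$, handle possible coincidences among the $\vec p_i$ through the multiplicity factors $\prod l_i!$, and verify that the (possibly non-trivial) relative mirror map enters identically on both sides so that it cancels in the comparison. In the log Calabi--Yau setting considered here the mirror maps in $D$ are trivial, and allowing divisor insertions $\tau_{0,2}\in H^2(X)$ only multiplies both coefficients by the same $e^{\tau_{0,2}\cdot\beta}$, so these refinements do not affect the equality.
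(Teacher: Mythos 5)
Your route is genuinely different from the paper's: the paper proves Lemma \ref{lemma-p1-p2-+} by applying the TRR to the three markings $[\gamma]_{-\vec r}\bar\psi^k$, $[1]_{\vec p_1}$, $[1]_{\vec p_2}$, using the virtual dimension constraint to force a unique splitting, and then invoking the fact (Proposition \ref{theorem-theta=x-p}) that the only nonzero structure constant $\langle [1]_{\vec p_1},[1]_{\vec p_2},[\on{pt}]_{-\vec s}\rangle$ is the classical one with $\vec s=\vec p_1+\vec p_2$, of value $1$ --- which is exactly where the hypothesis $D_{\vec p_1+\vec p_2}\neq\emptyset$ enters. Your proposal instead tries to read both invariants off the extended $I$-function; the $z$-bookkeeping you describe (one unit of $z$ per merged pair of insertions, same hypergeometric factor and same class $[1]_{-D\cdot\beta+\sum_i\vec p_i}$) is correct as far as it goes.

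The gap is in the step where you identify the invariants with coefficients of the $I$-function via (\ref{rel-J-I}). That identification requires the \emph{extended} relative mirror maps attached to the slots $x_{\vec p_i}$ and $x_{\vec p_1+\vec p_2}$ to be trivial, and for general $\vec p\in B(\mathbb Z)$ (as opposed to the unit vectors $\vec e_i$) they are not: this is precisely the content of (\ref{mirror-map-r}) and the reason Section \ref{sec:mirror-map} introduces theta functions with quantum corrections $\tilde{\vartheta}_{\tau,\vec r}$ built from the corrected insertions $\tilde{[1]}_{\vec r}$. Your justification --- ``in the log Calabi--Yau setting the mirror maps in $D$ are trivial'' --- conflates two different objects: the mirror map in $D$ (the $J_{(Y,D_0)}$ versus $I_{(Y,D_0)}$ comparison coming from the degeneration, which is indeed trivial for Fano $X$) and the extended relative mirror map of (\ref{extended-mirror-map})--(\ref{mirror-map-r}), which survives even when the non-extended relative mirror map (\ref{mirror-map}) vanishes. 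So what the $J$-function coefficient of $\prod_i x_{\vec p_i}$ actually computes is an invariant with insertions $\tilde{[1]}_{\vec p_i}$, not $[1]_{\vec p_i}$. Nor do these corrections ``enter identically on both sides'': on the left they attach to $\vec p_1$ and $\vec p_2$ separately (curve classes $\beta'$ with $D\cdot\beta'\leq\vec p_1$, respectively $\leq\vec p_2$), while on the right they attach to $\vec p_1+\vec p_2$ (curve classes with $D\cdot\beta'\leq\vec p_1+\vec p_2$, a strictly larger set), so there is no automatic cancellation. To close the gap you would either have to prove the matching of corrections --- essentially re-deriving Proposition \ref{prod-theta-tau} --- or abandon the mirror-theorem extraction for general contact orders and argue as the paper does, via TRR and the already-established structure constants.
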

\begin{proof}
We apply the TRR to the LHS of (\ref{iden-p1-p2+}) such that the first three markings have insertions
\[
 [\gamma]_{-\vec r} \bar{\psi}^k, \quad [1]_{\vec p_1}, \quad [1]_{\vec p_2}.
\]
As the first marking carries all the virtual dimension, the TRR has to be of the following form:
\begin{align*}
  \langle [\gamma]_{-\vec r} \bar{\psi}^k, \prod_{i=1}^l[1]_{\vec p_i}\rangle_{0,l+1,\beta}^{(X,D)}
=\sum\langle [1]_{\vec p_1}, [1]_{\vec p_2},[\on{pt}]_{-\vec s}\rangle \langle [1]_{\vec s}, [\gamma]_{-\vec r} \bar{\psi}^{k-1}, \prod_{i=3}^l[1]_{\vec p_i}\rangle,
\end{align*}
where the sum is over $\vec s\in B(\mathbb Z)$ and all  possible splittings of $\beta$. There is only one way to split the markings because of the virtual dimension constraint. The invariant $\langle [1]_{\vec p_1}, [1]_{\vec p_2},[\on{pt}]_{-\vec s}\rangle $ is a structure constant. Given $\vec p_1,\vec p_2$, there is only one non-zero structure constant which equals $1$. In this case, it has to be $\vec s=\vec p_1+\vec p_2$. Therefore, we obtain the RHS of (\ref{iden-p1-p2+}). 
\end{proof}

\begin{lemma}\label{lemma-p1-p2-+-2}
If $D_{\vec p_1}\neq \emptyset$, $D_{\vec p_2}\neq \emptyset$ and $D_{\vec p_1+\vec p_2}= \emptyset$, then 
\begin{align}\label{iden-p1-p2+2}
 \langle [\gamma]_{-\vec r} \bar{\psi}^k,  \prod_{i=1}^l[1]_{\vec p_i}\rangle_{0,l+1,\beta}^{(X,D)}=\langle [1]_{\vec s}, [\gamma]_{-\vec r} \bar{\psi}^{k-1}, \prod_{i=3}^l[1]_{\vec p_i}\rangle_{0,l,\beta-\beta^\prime}^{(X,D)},
\end{align}
where $\vec s\in B(\mathbb Z)$ is a unique vector such that $\vec p_1+\vec p_2-\vec s=D\cdot\beta^\prime$, for some $\beta^\prime\in\NE(X)$.
\end{lemma}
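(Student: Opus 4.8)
The plan is to imitate the proof of Lemma~\ref{lemma-p1-p2-+}, applying the genus-zero TRR to split off a three-point structure constant on the $\vec p_1,\vec p_2$ side, and then to use the triviality of the relative mirror map (Proposition~\ref{theorem-theta=x-p}) to isolate the unique surviving term. First I would apply the TRR to the left-hand side of (\ref{iden-p1-p2+2}) with the first three markings carrying the insertions $[\gamma]_{-\vec r}\bar{\psi}^k$, $[1]_{\vec p_1}$, and $[1]_{\vec p_2}$. Exactly as in Lemma~\ref{lemma-p1-p2-+}, the fact that the distinguished marking $[\gamma]_{-\vec r}\bar{\psi}^k$ carries all of the virtual dimension forces the splitting to produce a three-point invariant on the $\vec p_1,\vec p_2$ side, giving
\[
\langle [\gamma]_{-\vec r}\bar{\psi}^k, \textstyle\prod_{i=1}^l[1]_{\vec p_i}\rangle_{0,l+1,\beta}^{(X,D)}=\sum \langle [1]_{\vec p_1},[1]_{\vec p_2},[\on{pt}]_{-\vec s}\rangle_{0,3,\beta'}^{(X,D)}\,\langle [1]_{\vec s},[\gamma]_{-\vec r}\bar{\psi}^{k-1},\textstyle\prod_{i=3}^l[1]_{\vec p_i}\rangle_{0,l,\beta-\beta'}^{(X,D)},
\]
where the sum ranges over $\vec s\in B(\mathbb Z)$ and all effective splittings $\beta=\beta'+(\beta-\beta')$.

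The essential new point, compared with Lemma~\ref{lemma-p1-p2-+}, is the analysis of the structure constant $\langle [1]_{\vec p_1},[1]_{\vec p_2},[\on{pt}]_{-\vec s}\rangle_{0,3,\beta'}^{(X,D)}$. Because $D_{\vec p_1+\vec p_2}=\emptyset$, the term with $\beta'=0$ that appeared in Lemma~\ref{lemma-p1-p2-+} (which forced $\vec s=\vec p_1+\vec p_2$) no longer exists, since $\vec p_1+\vec p_2\notin B(\mathbb Z)$ and there is no theta function $\vartheta_{\vec p_1+\vec p_2}$. Hence every nonzero contribution must have $\beta'\neq 0$, and by Proposition~\ref{theorem-theta=x-p} each such nonzero structure constant equals $1$ and is subject to the balancing condition $\vec p_1+\vec p_2-\vec s=D\cdot\beta'$. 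I would then identify the unique surviving pair $(\vec s,\beta')$ by observing that, under the trivial relative mirror map, $\vartheta_{\vec p_1}\star\vartheta_{\vec p_2}$ is the single monomial $x^{\vec p_1+\vec p_2}$ re-expressed on a valid chart via the relation $(t/Q)^{\beta'}=x^{D\cdot\beta'}$ of (\ref{Q=x}); this writes $\vartheta_{\vec p_1}\star\vartheta_{\vec p_2}=t^{\beta'}\vartheta_{\vec s}$ for exactly one $\vec s$ lying in a maximal cone. Substituting this single term into the TRR expansion collapses the sum to the right-hand side of (\ref{iden-p1-p2+2}).

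The main obstacle I anticipate is making the uniqueness of $(\vec s,\beta')$ rigorous. The virtual dimension constraint for the three-point constant is automatically satisfied once $D\in|-K_X|$ (the contributions of $c_1(TX)$ and $[D]$ cancel, and $\deg^0([\on{pt}]_{-\vec s})=\dim_{\mathbb C}X$ for every $\vec s$), so uniqueness cannot be read off from a naive dimension count and must instead be deduced from the monomial structure of the mirror algebra established in Proposition~\ref{theorem-theta=x-p}: the product of two theta monomials is a single monomial, which admits a unique presentation as $t^{\beta'}\vartheta_{\vec s}$ with $\vec s\in B(\mathbb Z)$. I would also record the degenerate case in which $\beta-\beta'$ fails to be effective, where the residual invariant on the right-hand side vanishes and the asserted identity holds with both sides equal to zero.
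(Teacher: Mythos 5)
Your proposal is correct and follows essentially the same route as the paper: the paper's proof also applies the TRR in exactly this configuration (first three insertions $[\gamma]_{-\vec r}\bar{\psi}^k$, $[1]_{\vec p_1}$, $[1]_{\vec p_2}$, as in Lemma \ref{lemma-p1-p2-+}) and then invokes the fact that the only nonzero structure constant is $N^{\beta'}_{\vec p_1,\vec p_2,-\vec s}=1$ for the unique pair $(\vec s,\beta')$ with $\vec p_1+\vec p_2-\vec s=D\cdot\beta'$, which is the content of Proposition \ref{theorem-theta=x-p}. Your added remarks — that the dimension count alone cannot single out $\vec s$ and that uniqueness must come from the monomial structure $\vartheta_{\vec p}=x^{\vec p}$ of the mirror algebra — simply make explicit what the paper's terse proof leaves implicit.
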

\begin{proof}
The proof follows from the same process as the proof of Lemma \ref{lemma-p1-p2-+}. The vector $\vec s\in B(\mathbb Z)$ is the unique vector that satisfies our assumption in the lemma and the structure constant $N_{\vec p_1,\vec p_2,-\vec s}=1$.
\end{proof}

Now we can apply Lemma \ref{lemma-p1-p2-+-2} to see that, given a class $[\gamma^\vee]$ pullback from $H^*(X)$, the invariants of $(X,D)$ that appear in the coefficient of $[\gamma^\vee]$ in (\ref{iden-mirror-theorem-X}) all equal.

\section{Differential equations}\label{sec:diff-equ}

We consider the integral
\begin{align}\label{integral-non-equiv}
\int_{\Gamma_{\mathbb R}}e^{-W_{\tau}/z}\omega.
\end{align}
We would like to show that the integral (\ref{integral-non-equiv}) and the $J$-function of $X$ satisfies the same differential equation. We explain it when the relative mirror map is trivial. When the mirror map is not trivial, the following statements hold after a change of variables.

By Theorem \ref{theorem-theta=x}, we have the Landau--Ginzburg potential 
\[
W=\sum_{i=1}^m \vartheta_{\vec e_i}=\sum_{i=1}^m x_i.
\]
Following Theorem \ref{theorem-theta=x}, we can consider the classical period of $W$ which is equal to the regularized quantum period of $X$. Under \cite{CCGGK}*{Definition 4.9}, this implies that the Laurent polynomial $W$ is the mirror-dual of the Fano variety. The oscillatory integral
\[
\int_{\Gamma_{c}} e^{-W_{\tau}/z}\omega
\]
 of $W$ is a solution of the GKZ system that comes from the Newton polytope $P_W$ of the Laurent polynomial $W$. The rank of the system is the normalized volume of the Newton polytope $P_W$ which is the number of maximal cones $\sigma \in \Sigma(X)$, hence the number of zero-dimensional strata of D. 

The quantum period is the $H^0(X)$-coefficient of the $J$-function of $X$. As the mirror map is trivial, the quantum period is equal to the classical period. Hence, it is a solution of the differential system. Using the expression of the $J$-function of $X$ in (\ref{iden-mirror-theorem-X}), Lemma \ref{lemma-p1-p2-+}, and Lemma \ref{lemma-p1-p2-+-2}, we see that the components of the $J$-function of $X$ are also a solution of this differential system (cf. \cite{Iritani09}*{Lemma 4.6}). Furthermore, the components of the  $J$-function span a basis of solution to the differential system.

By a standard computation, the integral
\[
\int_{\Gamma_{\mathbb R}} e^{-W_{\tau}/z}\omega
\]
is also a solution to the same differential system.

We may loosen the condition of $D$ so that the mirror maps are not trivial. Then we have a different Landau--Ginzburg potential that is not a Laurent polynomial. We may also consider differential equations, but we will need to take into account of the non-trivial relative mirror maps.  For example, we will have the proper Landau--Ginzburg potential when the divisor $D$ is smooth. The proper Landau--Ginzburg potential just has one term: $\vartheta_1$, and the proper potential as computed in \cite{You22} already encodes all the information about the quantum period:
\[
W=\vartheta_1=x\exp\left(g(Q(t,x))\right),
\]
where
\[
g(Q)=\sum_{\substack{\beta\in \on{NE}(X)\\ D\cdot \beta \geq 2}}\langle [\on{pt}]\psi^{D\cdot \beta-2}\rangle_{0,1,\beta}^XQ^\beta (D\cdot \beta-1)!
\]
and $Q=Q(t,x)$ is the inverse of the relative mirror map. Therefore, $\vartheta_1$ and $x$ differs by the relative mirror map for the smooth log Calabi--Yau pair $(X,D)$.

We have presented a description of the differential equations from the GKZ system of the Laurent polynomial. We can also consider it in terms of the quantum differential equation. We recall from (\ref{iden-mirror-theorem-X}) that the small $J$-function can be written in terms of invariants of $(X,D)$:
\begin{equation}\label{iden-mirror-theorem-X-1}
\begin{split}
&J_{X}(\tau_{0,2},x,z)\\
=&e^{-\sum_{i=1}^m D_i\log x_i/z}\sum_{\beta\in \NE(X),[\gamma]}\langle \prod_{i=1}^{m_+}[1]_{\vec e_i}^{d_i}, \tau_{0,2},\ldots,\tau_{0,2},[\gamma]_{\sum_{i=m_++m_0+1}^m  d_i\vec e_i}\psi^{k} \rangle_{0,1+l+\sum_{i=1}^{m_+}d_i,\beta}^{(X,D)}\\
&\cdot [\gamma^\vee]\frac{z^{\sum_{i=1}^{m_+} d_i}}{l!}\prod_{j=1}^m  x_i^{-d_i}\frac{\prod_{a\leq 0}(D_j+az)}{\prod_{a\leq d_j}(D_j+az)}.
\end{split}
\end{equation}

Now we consider a quantum differential equation in terms of variables $x_i$. The quantum differential operator is a polynomial differential operator of $x_1\frac{\partial}{\partial x_1},\ldots, x_m\frac{\partial}{\partial x_m}$ of the lowest order that annihilates the $J$-function. With the expression (\ref{iden-mirror-theorem-X-1}), it is straightforward to check that a differential operator $\prod \left(x_i\frac{\partial}{\partial x_i}\right)^{a_i}$ has the same effect on the $J$-function and the oscillatory integral $\int_{\Gamma} e^{-W_{\tau}/z}\omega$ for a non-compact cycle $\Gamma$. Hence, the oscillatory integral is also a solution to the quantum differential equation. Similarly, we can consider the quantum differential operator for $\mathbb{J}_{X}(\tau_{0,2},-z)\varphi$, then the oscillatory integral (with the insertion of the mirror map for $\varphi$) is also a solution to the quantum differential equation.

We do not give an explicit description of the quantum differential operator here. Instead, our set-up naturally ensures that the oscillatory integrals are also solutions to the quantum differential equations for the $J$-functions.

The same is true if we consider the equivariant potential $W_{\tau}^{\vec\lambda}$ and the integral $\int_{\Gamma_{\mathbb R}}\check{\varphi}^{\vec\lambda}_\tau(-z) e^{-W_{\tau}^{\vec\lambda}/z}\omega$ in Section \ref{sec:equiv-potential}. In this case, the $J$-function of $X$ is replaced by the equivariant $J$-function of the twisted Gromov--Witten theory of the bundle $P_m$ whose non-equivariant limit is the $J$-function of $X$. Note that the bundle $P_m$ does not have additional orbifold structures, so the non-equivariant limit of the twisted theory is the Gromov--Witten theory of $X$ not of $(X,D)$.

\section{Computing invariants via the mirror theorem}\label{sec:comp-inv-gamma-conj-phi}

In this section, we consider the $S$-extended $I$-function that will be used to compute the invariants: 
\begin{align}\label{coeff-J-n-i-phi}
\sum_{\beta}\sum_{l_i\geq 0,\sum l_i\geq d_\varphi}\frac{(-1)^{d_{\varphi}}\prod_{i=1}^m(-1)^{l_i}}{z^{-d_{\varphi}}\prod_{i=1}^m l_i!z^{l_i}}\langle \varphi(-\bar{\psi}),\prod[1]_{\vec e_i}^{l_i},[1]_{\vec {\mathbf b}},[\on{pt}]_{-\vec {\mathbf b}+\vec k}\bar{\psi}^{-d_{\varphi}+\sum l_i}\rangle t^{\beta}e^{\tau_{0,2}\cdot\beta}x^{-\vec k},
\end{align}
where there are $l_i$ markings with contact order $1$ to $D_i$ and contact order $0$ to $D_j$ for $j\neq i$, and two mid-age markings with contact orders $\vec {\mathbf b}$ and $-\vec{\mathbf b}+\vec k$. For a fixed $\beta$, the numbers $l_i$ are fixed for $i\not\in I_\sigma$. Set 
\[
d_i:=D_i\cdot\beta,
\]
then 
\[
l_i=d_i \text{ for } i\not\in I_\sigma.
\]
 In particular, $d_i\geq 0$ for $i\not\in I_\sigma$. Furthermore, we set
\[
t=1.
\]

We can rewrite (\ref{coeff-J-n-i-phi}) as
\begin{align*}
&\sum_{\beta}(-z)^{d_{\varphi}}\frac{\prod_{i\not\in I_\sigma}(-1)^{d_i}}{\prod_{i\not\in I_\sigma} d_i!z^{d_i}}\\
&\cdot\sum_{l_i\geq 0,i\in I_\sigma, \sum_{i=1}^m l_i\geq d_\varphi}\frac{\prod_{i\in I_\sigma}(-1)^{l_i}}{\prod_{i\in I_\sigma} l_i!z^{l_i}}\langle \varphi(-\bar{\psi}),\prod[1]_{\vec e_i}^{l_i},[1]_{\vec {\mathbf b}},[\on{pt}]_{-\vec {\mathbf b}+\vec k}\bar{\psi}^{-d_{\varphi}+\sum l_i}\rangle e^{\tau_{0,2}\cdot\beta}x^{-\vec k},
\end{align*}
where $l_i=d_i$ for $i\not\in I_\sigma$.

These invariants are $(-z)$ times the coefficients of $x_{\varphi}x_{\vec{\mathbf b}}\prod_{i\not\in I_\sigma}x_{\vec e_i}^{d_i}$ of the $J$-function $J_{(X,D)}(\tau, Q, -z)$ that take values in $\mathfrak H_{\vec{\mathbf b}-\vec k}$ and with 
\[
\tau^\prime=\varphi(-z)x_{\varphi}+\sum_{i=1}^m [1]_{\vec e_i}x_{\vec e_i}+[1]_{\vec{\mathbf b}}x_{\vec{\mathbf b}}.
\]
Note that we need to multiply by $(-z)$ because we do not introduce a factor $\frac{1}{-z}$ for the insertion $[1]_{\vec {\mathbf b}}$ in  (\ref{coeff-J-n-i-phi}).

Note that the invariants here are equivariant twisted invariants of $(P_m,X_\infty)$ and the $J$-function that we consider here is the equivariant twisted $J$-function of $(P_m,X_\infty)$:
\[
J_{(P_m,X_\infty)}^{\on{tw}}\cup_{i=1}^m h_i.
\]
We consider the part that takes values in the twisted sector $\mathfrak H_{\vec{\mathbf b}-\vec k,{\vec\lambda}}$ for the pair $(P_m,X_\infty)$. Therefore, the corresponding invariants in $J_{(P_m,X_\infty)}^{\on{tw}}$ take values in $H^{2n}(X)$. This gives the integral over $X$.

\begin{remark}
In (\ref{coeff-J-n-i-phi}), $\beta$ runs over $\NE(X)$ or $\NE(X)\setminus I$ for a monoid idea $I\subset \NE(X)$. The computation in this section works for both $\beta\in \NE(X)$ and $\beta\in\NE(X)\setminus I$. We do not specify it here. In the proof of the mirror symmetric Gamma conjecture in Section \ref{sec:proof-main}, we will work with either $\beta\in \NE(X)$ or $\beta\in \NE(X)\setminus I$. As we work with mid-age invariants, it is perhaps more clear to first consider $\beta\in \NE(X)\setminus I$ so that we only deal with finitely many invariants at a time and then take the limit.

  We would like to recall that it is indeed more subtle here if we consider $\beta \in \NE(X)$ because we have mid-age invariants. A more precise way to extract these invariants is that, for each $\{N_i\}$, we choose a sufficiently large $\vec{\mathbf b}$ such that we have a finite sum of mid-age invariants for $l_i\leq N_i$. Then we take $\{N_i\}\rightarrow \infty$. If we consider $\beta \in \NE(X)\setminus I$, then there are only finitely many $\beta$ and we can choose one sufficiently large $\vec{\mathbf b}$ for all $\beta \in \NE(X)\setminus I$.
\end{remark}

For the $S$-extended $I$-function, we need
\[
S=\{\vec e_i\}_{i=1}^m\cup \{\vec{\mathbf b}\},
\]
where $\vec e_i$ represents the contact order $1$ with $D_i$ and $0$ with $D_j$, for $j\neq i$.

We recall that (multi-)root stacks are complete intersections of (iterative)-$\mathbb P^1[r_i]$-bundles over $X$. The $I$ functions of the root stacks are obtained by the twisted $I$-functions of the toric stack bundles $P_{m,X_{\infty},\vec r}$. Then we take $\vec r\rightarrow \infty$. Here we are using the equivariant $I$-function of $(P_m,X_\infty)$ and taking the cup product $\cup_{i=1}^m (h_i+\lambda_i)$. So, the equivariant $I$-function is restricted to $H^*_{\vec{\lambda}}(D_\sigma)$.

We take $\vec{\mathbf b}$ to be sufficiently large and only consider the part of the $I$-function that takes value in $\mathfrak H_{\vec{\mathbf b}-\vec k,\vec{\lambda}}$ with $\vec{\mathbf b}\gg\vec k$. Note that we have 
\[
\mathfrak H_{\vec{\mathbf b}-\vec k,\vec\lambda}=\mathfrak H_{\vec{\mathbf b},\vec\lambda}=H^*_{\vec\lambda}(D_\sigma).
\]
The equivariant $I$-function takes values in $\mathfrak H_{\vec\lambda}$. Recall that we also have $h_i=D_i$ in $H^*_{\on{CR}}(X_{D,\vec r})$. In $\mathfrak H_{\vec{\mathbf b}}:=H^*(D_{\vec{\mathbf b}})=H^*(D_\sigma)\hookrightarrow H^*(X)$, set
\[
I_\sigma:=I_{\sigma_{\vec{\mathbf b}}}:=\{i| \mathbf b_i\neq 0\}\subset \{1,\ldots,m\}.
\]

Recall that the changes of variables for $x_{\vec e_i}$ are trivial in this case, so we use the same $x_{\vec e_i}$ for both the $I$-function and the $J$-function.  We denote this part of the equivariant $I$-function as $I^S_{(P_m,X_\infty)}(x,\vec\lambda,x_{\vec a},z)$:
\begin{align}\label{coeff-equiv-I}
\int_X e^{-\sum_{i=1}^m D_i^{\vec\lambda}(\sigma)\log x_i/z}\sum_{\beta}\sum_{l_i,c_0\geq 0}& J_{X,\beta}([\varphi]_0x_{\varphi}+\tau_{0,2},z)x^{-\vec d} \frac{\prod_{i=1}^m x_{\vec e_i}^{l_i}}{z^{\sum_{i=1}^m l_i}\prod_{i=1}^ml_i!}\\
\notag & \cdot\frac{x_{\vec{\mathbf b}}^{c_0}}{z^{c_0}c_0!}\left(\prod_{i=1}^{m}\frac{\prod_{a_i\leq d_i} (D_i^{\vec\lambda}(\sigma)+a_iz)}{\prod_{a_i\leq 0} (D_i^{\vec\lambda}(\sigma)+a_iz)}\right) \cup_{i\in I_\sigma}h_i^{\vec\lambda}.
\end{align}

\begin{remark}
Recall that we have $D_i=h_i$ in $H^*(X)$. In (\ref{coeff-J-n-i-phi}), we have $x^{\vec k}=x^{\vec d}x^{-\vec l}$. In the non-equivariant setting, we consider $x^{-\vec d}$ as coming from $-\sum_{i=1}^mD_i\log x_i$ whose equivariant lift is $-\sum_{i=1}^m h_i^{\vec\lambda}\log x_i$. Here $x_i^{-1}$ is $q_i$ in Corollary \ref{cor-extended-rel-I-func} and we set $t=1$. This part of the $I$-function (\ref{coeff-equiv-I}) actually takes value in the cohomology of some trivial $\mathbb P^1$-bundle of rank $(m-n)$ over $X$ and we need to cup with $\cup_{i=1}^m h_i$ instead of $\cup_{i\in I_\sigma}h_i^{\vec\lambda}$. But it is still equal to the above formula (\ref{coeff-equiv-I}).
\end{remark}

The $x_{\varphi}x_{\vec {\mathbf b}}$-coefficient of $I_{(P_m,X_\infty)}^S(x,\vec\lambda,x_{\vec a},-z)$ is
\begin{align*}
\int_X e^{\sum_{i=1}^m D_i^{\vec\lambda}(\sigma)\log x_i/z}\sum_{\beta}\sum_{l_i\geq 0}& \mathbb{J}_{X,\beta}(\tau_{0,2},-z)\varphi x^{-\vec d} \frac{\prod_{i=1}^m x_{\vec e_i}^{l_i}}{(-z)^{\sum_{i=1}^m l_i}\prod_{i=1}^ml_i!}\\
&\cdot\frac{1}{-z}\left(\prod_{i=1}^{m}\frac{\prod_{a_i\leq d_i} (D_i^{\vec\lambda}(\sigma)-a_iz)}{\prod_{a_i\leq 0} (D_i^{\vec\lambda}(\sigma)-a_iz)}\right) \cup_{i\in I_\sigma}h_i^{\vec\lambda}.
\end{align*}

We can write it in terms of Gamma functions
\begin{align}\label{coeff-I-Gamma}
\int_X e^{\sum_{i=1}^m D_i^{\vec\lambda}(\sigma)\log x_i/z}\sum_{\beta}\sum_{l_i\geq 0}& \mathbb{J}_{X,\beta}(\tau_{0,2},-z)\varphi x^{-\vec d} \frac{\prod_{i=1}^m x_{\vec e_i}^{l_i}}{(-z)^{\sum_{i=1}^m l_i}\prod_{i=1}^ml_i!}\\
\notag &\cdot\frac{1}{(-z)^{1-\sum_{i=1}^m d_i}}\left(\frac{\prod_{i=1}^{m}\Gamma(1-\frac{D_i^{\vec\lambda}(\sigma)}{z}+d_i)}{\prod_{i=1}^{m}\Gamma(1-\frac{D_i^{\vec\lambda}(\sigma)}{z})}\right)\cup_{i\in I_\sigma}h_i^{\vec\lambda}.
\end{align}

We have 
\[
D_i^{\vec\lambda}(\sigma)=0 \text{ for } i\not\in I_\sigma
\]
 and 
\[
l_i=d_i:=D_i\cdot\beta, k_i=0 \text{ for } i\not\in I_\sigma.
\]
\[
k_i=d_i-l_i, \text{ for } i \in I_\sigma.
\]

\begin{align*}
 \int_X e^{\sum_{i\in I_\sigma} D_i^{\vec\lambda}(\sigma)\log x_i/z}\sum_{\beta}\sum_{l_i\geq 0}& \mathbb{J}_{X,\beta}(\tau_{0,2},-z)\varphi x^{-\vec d} \frac{ \prod_{i\in I_\sigma} x_{\vec e_i}^{l_i}}{(-z)^{\sum_{i\in I_\sigma} l_i}\prod_{i\in I_\sigma}l_i!}\prod_{i\not\in I_\sigma}x_{\vec e_i}^{d_i}\\
&\cdot\frac{1}{(-z)^{1-\sum_{i\in I_\sigma} d_i}}\left(\frac{\prod_{i\in I_\sigma}\Gamma(1-\frac{D_i^{\vec\lambda}(\sigma)}{z}+d_i)}{\prod_{i\in I_\sigma}\Gamma(1-\frac{D_i^{\vec\lambda}(\sigma)}{z})}\right)\cup_{i\in I_\sigma}h_i^{\vec\lambda},
\end{align*}
where the factor $\prod_{i\not\in I_\sigma}(-z)^{l_i}l_i!$ is canceled.
Note that $x_i$ and $x_{\vec e_i}$ are different variables here.

On the other hand, we recall the gamma function identity:
\[
\Gamma(1-c)\Gamma(1+c)=\frac{2\pi \sqrt{-1}ce^{-\pi \sqrt{-1}c} }{1-e^{-2\pi \sqrt{-1}c }}.
\]

Therefore, we have
\begin{align*}
&\prod_{i\in I_\sigma}\frac{\Gamma(1-\frac{D_i^{\vec\lambda}(\sigma)}{z}+d_i)}{\Gamma(1-\frac{D_i^{\vec\lambda}(\sigma)}{z})}h_i^{\vec\lambda}\\
=& \prod_{i\in I_\sigma} \frac{2\pi \sqrt{-1}(D_i^{\vec\lambda}(\sigma)/z-d_i)e^{-\pi \sqrt{-1}(D_i^{\vec\lambda}(\sigma)/z-d_i)} }{(1-e^{-2\pi \sqrt{-1}(D_i^{\vec\lambda}(\sigma)/z-d_i) })\Gamma(1+D_i^{\vec\lambda}(\sigma)/z-d_i)}\\
&\cdot \frac{z\Gamma(1+D_i^{\vec\lambda}(\sigma)/z)(1-e^{-2\pi \sqrt{-1}D_i^{\vec\lambda}(\sigma)/z})}{2\pi \sqrt{-1} e^{-\pi \sqrt{-1} D_i^{\vec\lambda}(\sigma)/z}}\\
=& \prod_{i\in I_\sigma} \frac{\Gamma(1+D_i^{\vec\lambda}(\sigma)/z)}{\Gamma(1+D_i^{\vec\lambda}(\sigma)/z-d_i)} z(D_i^{\vec\lambda}(\sigma)/z-d_i)(-1)^{d_i} \\
=& (-1)^{d_i}\prod_{i\in I_\sigma} \frac{z\Gamma(1+D_i^{\vec\lambda}(\sigma)/z)}{\Gamma(D_i^{\vec\lambda}(\sigma)/z-d_i)}.
\end{align*}

In conclusion, the $x_{\varphi}x_{\vec {\mathbf b}}$-coefficient of the $I$-function is
\begin{align*}
\int_X e^{\sum_{i\in I_\sigma} D_i^{\vec\lambda}(\sigma)\log x_i/z}\sum_{\beta}\sum_{l_i\geq 0}&\mathbb{J}_{X,\beta}(\tau_{0,2},-z)\varphi x^{-\vec d}\frac{\prod_{i\in I_\sigma}x_{\vec e_i}^{l_i}}{(-z)^{\sum_{i\in I_\sigma} l_i}\prod_{i\in I_\sigma}l_i!}\prod_{i\not\in I_\sigma}x_{\vec e_i}^{d_i}\\
&\cdot\frac{1}{(-z)\cdot z^{-\sum_{i\in I_\sigma} d_i}}\prod_{i\in I_\sigma} \frac{z\Gamma(1+D_i^{\vec\lambda}(\sigma)/z)}{\Gamma(D_i^{\vec\lambda}(\sigma)/z-d_i)}.
\end{align*}

Recall that in (\ref{coeff-J-n-i-phi}) we have  
\[
\vec x^{\vec k}=\prod_{i=1}^m x_i^{k_i}=\prod_{i\in I_\sigma} x_i^{k_i}=\prod_{i\in I_\sigma}x_i^{d_i-l_i}.
\]
 The coefficient of $x_{\varphi}x_{\vec {\mathbf b}}$ of the $I$-function is precisely (\ref{coeff-J-n-i-phi}) after setting
\[
x_{\vec e_i}=x_i.
\]
Therefore, (\ref{coeff-J-n-i-phi}) is
\begin{align*}
\int_X e^{\sum_{i\in I_\sigma} D_i^{\vec\lambda}(\sigma)\log x_i/z}\sum_{\beta}\sum_{l_i\geq 0}&\mathbb{J}_{X,\beta}(\tau_{0,2},-z)\varphi \frac{\prod_{i\in I_\sigma}x_{i}^{l_i}}{(-z)^{\sum_{i\in I_\sigma} l_i}\prod_{i\in I_\sigma}l_i!}\\
&\cdot\frac{\prod_{i\in I_\sigma} x_i^{-d_i}}{ z^{-\sum_{i\in I_\sigma} d_i}}\prod_{i\in I_\sigma} \frac{z\Gamma(1+D_i^{\vec\lambda}(\sigma)/z)}{\Gamma(D_i^{\vec\lambda}(\sigma)/z-d_i)} \\
=\int_X e^{\sum_{i\in I_\sigma} D_i^{\vec\lambda}(\sigma)\log x_i/z}\sum_{\beta}\sum_{l_i\geq 0}&\mathbb{J}_{X,\beta}(\tau_{0,2},-z)\varphi \frac{\prod_{i\in I_\sigma}x_{i}^{l_i}}{(-z)^{\sum_{i\in I_\sigma} l_i}\prod_{i\in I_\sigma}l_i!}\\
&\cdot\frac{\prod_{i\in I_\sigma} x_i^{-d_i}}{ z^{-\sum_{i\in I_\sigma} d_i}}\prod_{i\in I_\sigma} \frac{z\Gamma(1+D_i^{\vec\lambda}(\sigma)/z)}{\Gamma(D_i^{\vec\lambda}(\sigma)/z-d_i)}.
\end{align*}

\section{Mirror symmetric Gamma conjecture for $\Gamma_{\mathbb R}$}\label{sec:proof-main}

In this section, we will prove Theorem \ref{thm-gamma-struc-sheaf}. We do not mention whether the relative mirror map is trivial here. In Section \ref{sec:mirror-map}, we will explain how to define theta functions with corrections when the mirror map is not trivial. The computation we present in this section works when the relative mirror map is not trivial.

 Recall that 
\[
D_\sigma=\cap_{i\in I_\sigma}D_i.
\]
Let $\sigma$ be a maximal dimensional cone of $\Sigma(X)$. We assume that the strata of $D$ is connected. Therefore, $D_\sigma$ is a point. 

Recall that the values of the functions depend on the chamber that contains the end point of the broken line. For a given maximal cone $\sigma$, the integral (\ref{integral-omega}) becomes
\[
\int_{(\mathbb R_{>0})^n}\check{\varphi}^{\vec\lambda}_\tau(-z) (p) e^{-W_{\tau}^{\vec\lambda}(p)/z}\frac{\prod_{j\in I_\sigma}d x_j}{\prod_{j\in I_\sigma} x_j},
\]
where $p\in\sigma$. We compute the integral for a fixed maximal cone $\sigma$, the computations for other $\sigma$ are the same.

Now we need to compute
\begin{align}\label{integral-sigma}
\int_{(\mathbb R_{>0})^n} \check{\varphi}^{\vec\lambda}_\tau(-z)(p) \exp\left(-\sum_{i=1}^m\vartheta_{\tau,[D_i]}^{\vec\lambda}(p)/z\right)\frac{\prod_{j\in I_\sigma}dx_j}{\prod_{j\in I_\sigma}x_j}.
\end{align}
We recall the Taylor expansion:
\begin{align}\label{W-taylor-expansion}
\exp\left(-\sum_{i=1}^m\vartheta_{\tau,[D_i]}^{\vec\lambda}(p)/z\right)=\sum_{l_i\geq 0}\frac{\prod_{i=1}^m(-1)^{l_i}\vartheta_{\tau,[D_i]}^{l_i,\vec\lambda}(p)}{\prod_{i=1}^ml_i!z^{l_i}}.
\end{align}

By Proposition \ref{prop-prod-theta}, which is a result of the product rule of the theta functions, we can write (\ref{W-taylor-expansion}) as
\[
\exp\left(-\sum_{i=1}^m\vartheta_{\tau,[D_i]}^{\vec\lambda}(p)/z\right)=\sum_{\beta, l_i\geq 0, \vec r}\frac{\prod_{i=1}^m(-1)^{l_i}N^{\beta,\vec\lambda}_{\tau,\vec e_i^{l_i},-\vec r}\vartheta_{\tau,\vec r}^{\vec\lambda}(p)}{\prod_{i =1}^ml_i!z^{l_i}},
\]
where $\vec r\in B(\mathbb Z)$ and $\beta$ is an effective curve class.

We recall the definition of theta functions in Definition \ref{def-theta-tau} in terms of orbifold invariants with mid-ages:
\begin{align*}
   \vartheta_{\tau,\vec r}^{\vec\lambda}(p)&:=
   \sum_{\vec k\in {\mathbb Z}^m} \sum_{\beta: D_i\cdot \beta=k_i+r_i}N_{\tau,\vec r, \vec{\mathbf b}, -\vec{\mathbf{b}}+\vec k}^{\beta,\vec\lambda} t^{\beta} x^{-\vec k},
    \end{align*}
where $p\in \sigma$ and $D_{\vec{\mathbf b}}=D_\sigma$. Here we take $\beta\in \NE(X)\setminus I$ in the definition of the theta functions. As mentioned in Remark \ref{rmk-theta}, there are only finitely many terms in the definition of the theta functions. Therefore, the Taylor expansion (\ref{W-taylor-expansion}) is also a finite sum and we take large complex numbers $\lambda_i$ such that the gamma function $\Gamma(D_i^{\vec\lambda}(\sigma)/z-d_i)$ is well-defined.

By Proposition \ref{prop-prod-phi-theta}, we have the following:

\begin{equation}\label{iden-integrand}
    \begin{split}
    &\check{\varphi}_\tau^{\vec\lambda} (-z)(p)\exp\left(-\sum_{i=1}^m\vartheta_{\tau,[D_i]}^{\vec\lambda}(p)/z\right)\\
    =&\sum_{\beta,l_i\geq 0} \sum_{\vec k\in \mathbb Z^m} (-z)^{d_{\varphi}}\frac{\prod_{i=1}^m(-1)^{l_i}}{\prod_{i =1}^ml_i!z^{l_i}}\langle \varphi(-\bar{\psi}), \prod_{i=1}^m[1]_{\vec e_i}^{l_i}, [1]_{\vec{\mathbf b}},[\on{pt}]_{-\vec {\mathbf b}+\vec k}\bar{\psi}^{\sum_{i=1}^m l_i}\rangle_{0,d_{\varphi}+\sum_{i=1}^m l_i+3,\beta}^{\on{tw},\vec\lambda}t^\beta x^{-\vec k}e^{\tau_{0,2}\cdot\beta}.
    \end{split}
    \end{equation}

For each $\beta$, the number $l_i$, for $i\not\in I_\sigma$, is fixed. In fact $l_i=d_i:=D_i\cdot\beta$ for $i\not\in I_\sigma$. Recall that we also set $t=1$. We need to sum over $l_i$ for $i\in I_\sigma$. These invariants
\begin{align*}
&\sum_{\beta}(-z)^{d_\varphi}\left(\prod_{i\not\in I_\sigma}\frac{(-1)^{l_i}}{l_i!z^{l_i}}\right)\sum_{l_i:i\in I_\sigma}\left(\prod_{i\in I_\sigma}\frac{(-1)^{l_i}}{l_i!z^{l_i}}\right)\\
&\quad \quad \cdot\langle \varphi(-\bar{\psi}), \prod_{i=1}^m[1]_{\vec e_i}^{l_i}, [1]_{\vec{\mathbf b}},[\on{pt}]_{-\vec {\mathbf b}+\vec k}\bar{\psi}^{\sum_{i=1}^m l_i}\rangle_{0,d_{\varphi}+\sum_{i=1}^m l_i+3,\beta}^{\on{tw},\vec\lambda}x^{-\vec k} e^{\tau_{0,2}\cdot\beta}
\end{align*}
 are computed via the relative mirror theorem in Section \ref{sec:comp-inv-gamma-conj-phi}.

 Now (\ref{iden-integrand}) becomes
\begin{align*}
\int_X e^{\sum_{i\in I_\sigma} D_i^{\vec\lambda}(\sigma)\log x_i/z}\sum_{\beta} & \mathbb{J}_{X,\beta}(\tau_{0,2},-z)\varphi \frac{1}{(z)^{-\sum_{i\in I_\sigma} d_i}}\left(\sum_{l_i\geq 0:i\in I_\sigma}\prod_{i\in I_\sigma}\frac{(-1)^{l_i}x_i^{l_i}}{l_i!z^{l_i}}\right)\\
&\cdot \prod_{i\in I_\sigma} x_i^{-d_i}\prod_{i\in I_\sigma} \frac{z\Gamma(1+D_i^{\vec\lambda}(\sigma)/z)}{\Gamma(D_i^{\vec\lambda}(\sigma)/z-d_i)}\\
= \int_X e^{\sum_{i\in I_\sigma} D_i^{\vec\lambda}(\sigma)\log x_i/z}\sum_{\beta} &\mathbb{J}_{X,\beta}(\tau_{0,2},-z)\varphi \frac{1}{(z)^{-\sum_{i\in I_\sigma} d_i}}e^{\sum_{i\in I_\sigma}-x_i/z}\\
& \cdot \prod_{i\in I_\sigma} x_i^{-d_i}\prod_{i\in I_\sigma} \frac{z\Gamma(1+D_i^{\vec\lambda}(\sigma)/z)}{\Gamma(D_i^{\vec\lambda}(\sigma)/z-d_i)},
\end{align*}
where $k_i=d_i-l_i$ for $i\in I_\sigma$ and $d_i:=D_i\cdot\beta$ for $i\in\{1,\ldots,m\}$.

Extracting the factors in (\ref{integral-sigma}) that are relevant to $x_i$ together with the factor $\prod_{i\in I_\sigma} \frac{1}{\Gamma(D_i^{\vec\lambda}(\sigma)/z-d_i)}$, we consider the integral
\[
\prod_{i\in I_\sigma} \frac{1}{\Gamma(D_i^{\vec\lambda}(\sigma)/z-d_i)}\int_{(\mathbb R_{>0})^n}\prod_{i\in I_\sigma}x_i^{-d_i} \prod_{i\in I_\sigma}x_i^{D_i^{\vec\lambda}(\sigma)/z}e^{-x_i/z}\frac{\prod_{i\in I_\sigma}d x_i}{\prod_{i\in I_\sigma} x_i}.
\]

Considering a change of variables $x_i\mapsto zx_i$, we have
\begin{align*}
&\prod_{i\in I_\sigma} \frac{1}{\Gamma(D_i^{\vec\lambda}(\sigma)/z-d_i)}\int_{(\mathbb R_{>0})^n}\prod_{i\in I_\sigma}x_i^{-d_i} \prod_{i\in I_\sigma}x_i^{D_i^{\vec\lambda}(\sigma)/z}e^{-x_i/z}\frac{\prod_{i\in I_\sigma}d x_i}{\prod_{i\in I_\sigma} x_i}\\
=&\prod_{i\in I_\sigma} \frac{1}{\Gamma(D_i^{\vec\lambda}(\sigma)/z-d_i)}\int_{(\mathbb R_{>0})^n}\prod_{i\in I_\sigma}(zx_i)^{-d_i} \prod_{i\in I_\sigma}(zx_i)^{D_i^{\vec\lambda}(\sigma)/z}e^{-x_i}\frac{\prod_{i\in I_\sigma}d x_i}{\prod_{i\in I_\sigma} x_i}\\
   =&\prod_{i\in I_\sigma} \frac{1}{\Gamma(D_i^{\vec\lambda}(\sigma)/z-d_i)}\prod_{i\in I_\sigma}z^{-d_i}z^{D_i^{\vec\lambda}(\sigma)/z}\Gamma(D_i^{\vec\lambda}(\sigma)/z-d_i)\\
   =&\prod_{i\in I_\sigma}z^{-d_i}z^{D_i^{\vec\lambda}(\sigma)/z}.
\end{align*}

Therefore,
\begin{align*}
    &\int_{(\mathbb R_{>0})^n} \check{\varphi}_\tau^{\vec\lambda}(-z) (p)\exp\left(-\sum_{i=1}^m\vartheta_{\tau,[D_i]}^{\vec\lambda}(p)/z\right)\frac{\prod_{i\in I_\sigma}dx_i}{\prod_{i\in I_\sigma}x_i}\\
    =&\int_X \sum_{\beta}z^{-\sum_{i\in I_\sigma}d_i}z^{c_1/z}\frac{\mathbb{J}_{X,\beta}(\tau_{0,2},-z)\varphi}{(z)^{-\sum_{i\in I_\sigma}d_i}}\prod_{i\in I_\sigma} z\Gamma(1+D_i^{\vec\lambda}(\sigma)/z)\\
    =&  \int_X \sum_{\beta} \left(z^{c_1}z^{\deg/2}\mathbb{J}_{X,\beta}(\tau_{0,2},-z)\varphi\right)\cup \prod_{i\in I_\sigma}\Gamma(1+D_i^{\vec\lambda}(\sigma)).
\end{align*}
where $c_1:=c_1(TX)$ and $p\in\sigma$. Note that it takes values in $H^*_{\vec\lambda}(D_\sigma)$. The third line is straightforward by checking the power of $z$.  As we mentioned earlier, the same computation works for each maximal cone $\sigma$.

As studied in Section \ref{sec:diff-equ}, the oscillatory integral $\int_{\Gamma_{\mathbb R}}\check{\varphi}^{\vec\lambda}_\tau(-z) e^{-W^{\vec\lambda}_{\tau}/z}\omega$ is a solution to a differential equation and the equivariant $\mathbb J$-function is a basis of the solution of the differential equation. The oscillatory integral $\int_{\Gamma_{\mathbb R}}\check{\varphi}^{\vec\lambda}_\tau(-z) e^{-W^{\vec\lambda}_{\tau}/z}\omega$ is a linear combination of the components of the $\mathbb J$-function. We must have
\[
\int_{\Gamma_{\mathbb R}}\check{\varphi}_\tau(-z) e^{-W_{\tau}/z}\omega=\int_X \left(z^{c_1}z^{\deg/2} \mathbb{J}_{X}(\tau_{0,2},-z)\varphi\right)\cup \hat{\Gamma}_X,
\]
where $\beta\in \NE(X)\setminus I$. Taking the limit over $I$, we prove Theorem \ref{thm-gamma-struc-sheaf}.

\section{Theta functions with quantum corrections}\label{sec:mirror-map}

Now we consider Theorem \ref{thm-gamma-struc-sheaf} and Theorem \ref{thm-gamma-pt} when the relative mirror map is not trivial. The proofs in Section \ref{sec:gamma-pt-sheaf}-\ref{sec:proof-main} still work and the identities hold after including the mirror maps. In this section, we would like to include mirror maps in the definition of the theta functions. In other words, instead of considering $\vartheta_{\vec e_i}$, we consider theta functions that encode the relative mirror map. Recall that, in Definition \ref{def-theta-tau}, we defined
 \begin{align*}
   \vartheta_{\tau,\vec e_i}(p)&:=
   \sum_{\vec k\in {\mathbb Z}^n} \sum_{\beta: D_i\cdot \beta=k_i+e_i}N_{\tau,\vec e_i, \vec{\mathbf b}, -\vec{\mathbf{b}}+\vec k}^\beta t^{\beta} x^{\vec k},
    \end{align*}
with
\[
N_{\tau,\vec e_i, \vec{\mathbf b}, -\vec{\mathbf{b}}+\vec k}^\beta=\sum_{l\geq 0}\frac{1}{l!}\langle [1]_{\vec r},\tau,\ldots,\tau, [1]_{\vec{\mathbf b}}, [\on{pt}]_{-\vec{\mathbf{b}}+\vec k}\rangle_{0,l+3,\beta}^{(X,D)},
\]
where $\tau \in \mathfrak H$ and we assume that $\deg^0(\tau)\leq 1$.
Now, let 
\[
\tau=\tau(Q,q)
\]
 be the non-extended relative mirror map (\ref{mirror-map}). Note that $$\deg^0(\tau(Q,q))=1$$. Then we define the superpotential as
\[
W_\tau=\sum_{i=1}^{m} \vartheta_{\tau,\vec e_i}.
\]

To define the theta function for $\vartheta_{\tau,\vec r}$ for $\vec r\in B(\mathbb Z)$ we need an additional mirror map associated with $[1]_{\vec r}$. As we computed in (\ref{extended-mirror-map}), the extended mirror map associated with $[1]_{\vec e_i}$ is trivial. One can compute that the extended mirror map for $[1]_{\vec r}$ is
\begin{align}\label{mirror-map-r}
\tilde{[1]}_{\vec r}x_{\vec r}:= \left(\sum_{\substack{\beta:d_i= D_i\cdot\beta\leq r_i, \\ k\geq 2, \vec r-\vec d\in B(\mathbb Z)}} \langle [\on{pt}]_{D_{\beta,-}}\psi^{k-2}\rangle_{0,1,\beta}^X \frac{\prod_{i: d_i> 0} d_i!}{\prod_{j: d_j<0}(-d_j-1)!}Q^\beta q^{\vec d}[1]_{\vec r-\vec d}\right)x_{\vec r}.
\end{align}
Then we can define the theta functions that encode these mirror maps, we may call them the theta functions with quantum corrections. If we increase the contact orders $r_i$ to be sufficiently large, then the mirror map (\ref{mirror-map-r}) becomes the mirror map for the corresponding mid-age marking.

\begin{definition}\label{def-theta-tau-mirror}
For a Fano variety $X$ with an snc anticanonical divisor $D$, let $\tau\in \mathfrak H$ with $\deg^0(\tau)\leq 1$ be the non-extended mirror map (\ref{mirror-map}).  The theta functions with quantum corrections are defined as follows: Fix $\vec r \in B(\mathbb Z)\setminus \{0\}$. Let $\sigma_{\on{max}}\in \Sigma(X)$ be a maximal cone of $\Sigma(X)$ such that $p\in \sigma_{\on{max}}$, then
    \begin{align}\label{iden-theta-tau}
   \tilde{\vartheta}_{\tau,\vec r}(p)&:=
   \sum_{\vec k\in {\mathbb Z}^m} \sum_{\beta: D_i\cdot \beta=k_i+r_i}N_{\tau,\vec r, \vec{\mathbf b}, -\vec{\mathbf{b}}+\vec k}^\beta t^{\beta} x^{-\vec k},
    \end{align}
with
\[
N_{\tau,\vec r, \vec{\mathbf b}, -\vec{\mathbf{b}}+\vec k}^\beta=\sum_{l\geq 0}\frac{1}{l!}\langle \tilde{[1]}_{\vec r},\tau,\ldots,\tau, [1]_{\vec{\mathbf b}}, [\on{pt}]_{-\vec{\mathbf{b}}+\vec k}\rangle_{0,l+3,\beta}^{(X,D)},
\]
where the last two markings are mid-age markings along $D_i$ for $i\in I_\sigma$.  
\end{definition}

The theta functions with quantum corrections satisfy the following simple relation.

\begin{proposition}\label{prod-theta-tau}
The theta functions with quantum corrections $\tilde{\vartheta}_{\tau,\vec r}$ satisfy
\[
\tilde{\vartheta}_{\tau,\vec r}=\prod_{i\in I_{\vec r}}\tilde{\vartheta}_{\tau,\vec e_i}^{r_i}.
\]  
\end{proposition}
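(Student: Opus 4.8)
The plan is to deduce the multiplicativity from the relative mirror theorem, which converts the quantum-corrected theta functions into coefficients of the explicit $I$-function where the product structure is manifest. The starting observation is that, by Definition \ref{def-theta-tau-mirror}, the function $\tilde{\vartheta}_{\tau,\vec r}$ is assembled from invariants carrying the corrected insertion $\tilde{[1]}_{\vec r}$ of (\ref{mirror-map-r}) while the parameter $\tau$ is taken to be the non-extended relative mirror map (\ref{mirror-map}). By construction, $\tilde{[1]}_{\vec r}$ is exactly the extended mirror map attached to $[1]_{\vec r}$ (computed in (\ref{extended-mirror-map}) and (\ref{mirror-map-r})), so that $\tau$ together with $\tilde{[1]}_{\vec r}$ reconstitutes the \emph{full} extended relative mirror map. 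Consequently, the comparison (\ref{rel-J-I}) between the $J$-function and the $I$-function applies, and the generating series $\tilde{\vartheta}_{\tau,\vec r}$ is identified with the $x_{\vec r}x_{\vec{\mathbf b}}$-coefficient of the $S$-extended $I$-function $I_{(X,D),\on{amb}}^S$ of Corollary \ref{cor-extended-rel-I-func}, rather than of the $J$-function. This is the step that removes all nontrivial mirror-map contributions, placing us in the situation treated in Theorem \ref{theorem-theta=x} and Proposition \ref{theorem-theta=x-p}.

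Next I would extract the relevant coefficient directly from the explicit expression (\ref{extended-rel-I-func}). The decisive structural feature is that the hypergeometric factors there appear as a product $\prod_{j=1}^m$ over the components of $D$, while the classes $[1]_{(\cdots)}\cup_{i=1}^m h_i$ factorize over the indices $i\in I_\sigma$. Taking $D_{\vec{\mathbf b}}=D_\sigma$ to be a zero-dimensional stratum and reading off the contribution for the contact-order vector $\vec r$, the sum decouples into a product of single-component contributions, each of which is precisely the defining coefficient of $\tilde{\vartheta}_{\tau,\vec e_i}$ taken to the power $r_i$. In parallel, I would record the product rule for these corrected functions: applying Proposition \ref{prop-prod-theta} with $\tau$ the mirror map, together with the triviality of the structure constants established exactly as in Proposition \ref{theorem-theta=x-p}, shows that $\tilde{\vartheta}_{\tau,\vec e_1}^{r_1}\star\cdots\star\tilde{\vartheta}_{\tau,\vec e_m}^{r_m}$ telescopes. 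The uniqueness of the surviving splitting is governed by Lemma \ref{lemma-p1-p2-+} and Lemma \ref{lemma-p1-p2-+-2}, since the mid-age marking $[1]_{\vec{\mathbf b}}$ forces every intermediate insertion to be a point class when $D_{\vec{\mathbf b}}$ is a point. Combining the two computations yields $\tilde{\vartheta}_{\tau,\vec r}=\prod_{i\in I_{\vec r}}\tilde{\vartheta}_{\tau,\vec e_i}^{r_i}$.

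The hard part will be the bookkeeping of the two layers of correction simultaneously: verifying that the insertion-level correction $\tilde{[1]}_{\vec r}$ of (\ref{mirror-map-r}) and the parameter-level mirror map $\tau$ of (\ref{mirror-map}) combine to reproduce the extended mirror map appearing in (\ref{rel-J-I}), with no residual terms. One must check in particular that (\ref{mirror-map-r}) is compatible with the factorization of the $I$-function coefficients and generates no cross-terms mixing distinct components $i\neq i'$; this is exactly where the hypothesis that $D_{\vec{\mathbf b}}$ is a zero-dimensional stratum is used, forcing the dual class $T_{\vec r}^\alpha=[\on{pt}]_{-\vec{\mathbf b}+\vec c}$ in the WDVV/TRR reductions just as in the proof of Proposition \ref{theorem-theta=x}. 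Once this compatibility is confirmed, the identity follows by induction on $\sum_i r_i$ from the trivial structure constants, and the remaining manipulations are routine.
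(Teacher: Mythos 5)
You have the right overall strategy---it is in fact the paper's: convert $\tilde{\vartheta}_{\tau,\vec r}$ into a coefficient of an $S$-extended $I$-function via the relative mirror theorem. But two of your steps fail as stated. First, the claimed factorization of the $I$-function coefficient is false. The $x_{\vec r}x_{\vec{\mathbf b}}$-coefficient in question is the single sum
\[
\sum_{\beta}\langle [\on{pt}]_{D_{\beta,-}}\bar{\psi}^{k-2}\rangle_{0,1,\beta}^{X}\,\frac{\prod_{j:d_j>0}d_j!}{\prod_{j:d_j<0}(-d_j-1)!}\,Q^{\beta}q^{D\cdot\beta},
\]
whose summands involve one-point invariants of $X$ that do not decompose over the components $D_i$; while the hypergeometric factor in (\ref{extended-rel-I-func}) is a product over $j$ for each fixed $\beta$, a sum of products is not a product of sums, so this series does not ``decouple into single-component contributions'' equal to the defining coefficients of $\tilde{\vartheta}_{\tau,\vec e_i}$ raised to the powers $r_i$. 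Second, your fallback route---telescoping the star product using ``trivial structure constants established exactly as in Proposition \ref{theorem-theta=x-p}'' and inducting on $\sum_i r_i$---is circular: Proposition \ref{theorem-theta=x-p} is proved under the hypothesis that the relative mirror map is trivial, which is exactly the hypothesis that fails in the setting of Proposition \ref{prod-theta-tau} (otherwise there would be nothing to correct). You also drop the extended mirror map $\tau_{\vec{\mathbf b}}$ of the mid-age insertion (\ref{mirror-map-b}): the relevant $J$-function coefficient equals $\tau_{\vec{\mathbf b}}\,\tilde{\vartheta}_{\tau,\vec r}$, not $\tilde{\vartheta}_{\tau,\vec r}$ itself, so your identification with the $I$-coefficient holds only up to this factor.

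The repair is the paper's actual argument, which avoids all three issues: use Proposition \ref{prop-prod-theta} (legitimately, since the extended mirror maps attached to the insertions $[1]_{\vec e_i}$ are trivial by (\ref{extended-mirror-map})) to rewrite $\prod_{i\in I_{\vec r}}\tilde{\vartheta}_{\tau,\vec e_i}^{r_i}$ as a single generating series of invariants with insertions $\prod_i[1]_{\vec e_i}^{r_i}$, one mid-age pair, and copies of $\tau$; then run the mirror theorem a second time with the extended data $S=\{\vec e_i\}_{i\in I_{\vec r}}\cup\{\vec{\mathbf b}\}$. The point is not that either $I$-coefficient factors, but that the two extractions---for $S=\{\vec r,\vec{\mathbf b}\}$ and for $S=\{\vec e_i\}_{i\in I_{\vec r}}\cup\{\vec{\mathbf b}\}$---produce literally the same series displayed above, up to the factor $\prod_{i\in I_{\vec r}}r_i!$ coming from the extended variables, and with the same prefactor $\tau_{\vec{\mathbf b}}$ on the $J$-side, which therefore cancels. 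That single comparison yields $\tilde{\vartheta}_{\tau,\vec r}=\prod_{i\in I_{\vec r}}\tilde{\vartheta}_{\tau,\vec e_i}^{r_i}$ with no induction and no factorization claim.
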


\begin{proof}
Let $\tau_{\vec{\mathbf b}}$ be the extended mirror map for $[1]_{\vec{\mathbf b}}$. We consider the $S$-extended $I$-function with the extended data
\[
S=\{\vec r, \vec{\mathbf b}\}.
\]
Then, we sum over the $\frac{x_{\vec r}x_{\vec{\mathbf b}}}{z}$-coefficient of the $J$-function that takes value in $[1]_{-\vec {\mathbf b}+\vec k}$, we have
\[
\tau_{\vec{\mathbf b}} \sum_{\vec k\in {\mathbb Z}^m} \sum_{\beta: D_i\cdot \beta=k_i+r_i}\sum_{l\geq 0}\frac{1}{l!}\langle \tilde{[1]}_{\vec r},\tau,\ldots,\tau, [1]_{\vec{\mathbf b}}, [\on{pt}]_{-\vec{\mathbf{b}}+\vec k}\rangle_{0,l+3,\beta}^{(X,D)}Q^\beta q^{\vec d}.
\]
This is precisely,
\[
\tau_{\vec{\mathbf b}} \tilde{\vartheta}_{\tau,\vec r}(p).
\]
On the other hand, the corresponding coefficient of the $S$-extended $I$-function is
\[
\sum_{\beta} \langle [\on{pt}]_{D_{\beta,-}}\bar{\psi}^{k-2} \rangle_{0,1,\beta}^{X}\frac{\prod_{j:d_j>0}d_j!}{\prod_{j:d_j<0}(-d_j-1)!}Q^\beta q^{D\cdot\beta}.
\]

For $\prod_{i\in I_{\vec r}}\tilde{\vartheta}_{\tau,\vec e_i}^{r_i}$, the mirror maps associated with $[1]_{\vec e_i}$ are trivial.  The product rule of theta functions with parameter $\tau$ (Proposition \ref{prop-prod-theta}) implies that: 
\[
\prod_{i\in I_{\vec r}}\tilde{\vartheta}_{\tau,\vec e_i}^{r_i}(p)=  \sum_{\vec k\in {\mathbb Z}^m} \sum_{\beta: D_i\cdot \beta=k_i+r_i}\sum_{l\geq 0}\frac{1}{l!}\langle \prod_{i\in I_{\vec r}}[1]_{\vec e_i}^{r_i},\tau,\ldots,\tau, [1]_{\vec{\mathbf b}}, [\on{pt}]_{-\vec{\mathbf{b}}+\vec k}\rangle_{0,l+3+\sum r_i,\beta}^{(X,D)}Q^\beta q^{\vec d}.
\]
To compute $\prod_{i\in I_{\vec r}}\tilde{\vartheta}_{\tau,\vec e_i}^{r_i}$, we need to consider the $S$-extended $I$-function with the extended data
\[
S=\{\vec e_i\}_{i\in I_{\vec r}}\cup\{\vec{\mathbf b}\}.
\]
The summation of the $\frac{x_{\vec{\mathbf b}}\prod_{i\in I_{\vec e_i}}x_{\vec e_i}^{r_i}}{z^{\sum_{i\in I_{\vec r}} r_i}}$-coefficient of the $J$-function that takes value in  $[1]_{-\vec {\mathbf b}+\vec k}$ is
\[
\frac{1}{\prod_{i\in I_{\vec r}}r_i!} \tau_{\vec{\mathbf b}} \sum_{\vec k\in {\mathbb Z}^m} \sum_{\beta: D_i\cdot \beta=k_i+r_i}\sum_{l\geq 0}\frac{1}{l!}\langle \prod_{i\in I_{\vec r}}[1]_{\vec e_i}^{r_i},\tau,\ldots,\tau, [1]_{\vec{\mathbf b}}, [\on{pt}]_{-\vec{\mathbf{b}}+\vec k}\rangle_{0,l+2+\sum r_i,\beta}^{(X,D)}Q^\beta q^{\vec d}.
\]
The corresponding coefficient of the $S$-extended $I$-function is
\[
\frac{1}{\prod_{i\in I_{\vec r}}r_i!}\sum_{\beta} \langle [\on{pt}]_{D_{\beta,-}}\bar{\psi}^{k-2} \rangle_{0,1,\beta}^{X}\frac{\prod_{j:d_j>0}d_j!}{\prod_{j:d_j<0}(-d_j-1)!}Q^\beta q^{D\cdot\beta}.
\]
After canceling the factor $\frac{1}{\prod_{i\in I_{\vec r}}r_i!}$, we have 
\[
\tilde{\vartheta}_{\tau,\vec r}=\prod_{i\in I_{\vec r}}\tilde{\vartheta}_{\tau,\vec e_i}^{r_i}.
\]
\end{proof}

\begin{remark}
When we use the relative mirror theorem to compute theta functions, different $S$-extended $I$-functions have different extended mirror maps. As we already encode the extended mirror map in Definition \ref{def-theta-tau-mirror}, we have a simple relation for theta functions in Proposition \ref{prod-theta-tau}.
\end{remark}

Now we consider the integrals. We start with the classical period of $W$.  The terms in the classical periods are the $\vartheta_{\vec 0}$-coefficient of $W_\tau^d$ if the coordinates are $\vartheta_{\vec e_i}$. They will be the $x^0$-coefficient of $W_\tau^d$ if the coordinates are chosen as $x_i$. Now, we examine the difference. On one hand, we have
\begin{align}\label{coeff-theta-0}
[W_\tau^d]_{\vartheta_{\vec 0}}=\sum_{\beta, \sum_{i=1}^m d_i=d} \frac{1}{k!\prod_{i=1}^md_i!}\langle \prod_{i=1}^m [1]_{\vec e_i}^{d_i}, \tau,\ldots,\tau,[\on{pt}]_{\vec 0}\bar{\psi}^{d-2}\rangle_{0,d+k+1,\beta}^{(X,D)}
\end{align}
On the other hand, we have
\begin{align}\label{coeff-x-0-tau}
[W_\tau^d(p)]_{x^{\vec 0}}=\sum_{\beta, \sum_{i=1}^m d_i=d} \frac{1}{k!\prod_{i=1}^md_i!}\langle \prod_{i=1}^m [1]_{\vec e_i}^{d_i}, \tau,\ldots,\tau,[1]_{\vec{\mathbf b}},[\on{pt}]_{-\vec{\mathbf b}}\bar{\psi}^{d-1}\rangle_{0,d+k+2,\beta}^{(X,D)},
\end{align}
for $p$ in a maximal cone $\sigma$.
 
Both coefficients (\ref{coeff-theta-0}) and (\ref{coeff-x-0-tau}) can be computed using the relative mirror theorem. In particular, (\ref{coeff-theta-0}) is equal to the corresponding coefficient of the regularized quantum period. While (\ref{coeff-x-0-tau}) is equal to the corresponding coefficient of the regularized quantum period after an additional mirror map for $[1]_{\vec{\mathbf b}}$ which was computed in (\ref{extended-mirror-map}).  We consider this extended mirror map as a function $\tau$ such that its restriction $\tau_\sigma$ to a maximal cone $\sigma$ is:
\[
\tau_\sigma:= \tau_{\vec{\mathbf b}}:=1+\sum_{\substack{\beta: D_j\cdot\beta=0, \text{ for } j\not\in I_\sigma, \\ k\geq 2}} \langle [\on{pt}]_{D_{\beta,-}}\psi^{k-2}\rangle_{0,1,\beta}^X \frac{\prod_{j: d_j>0} d_j!}{\prod_{j:d_j<0} (-d_j-1)!}Q^\beta q^{\vec d}.
\] 
When integrating over the coordinates $x_i$, we replace $\frac{\prod_{i\in I_\sigma}dx_i}{\prod_{i\in I_\sigma}x_i}$ by $\tau_\sigma\frac{\prod_{i\in I_\sigma}dx_i}{\prod_{i\in I_\sigma}x_i}$. Then the same proof of Theorem \ref{thm-gamma-struc-sheaf} and Theorem \ref{thm-gamma-pt} in Section \ref{sec:gamma-pt-sheaf} and Section \ref{sec:proof-main} when the mirror maps are trival still holds in this setting.

We explain a bit more about the extended mirror map $\tau_\sigma$. When computing the integrals at different coordinates, the difference becomes from whether we consider the coefficient of $\vartheta_{\vec r}$ or the coefficient of $x^{\vec r}$. Now, to compute $\vartheta_{\vec r}$ using the relative mirror theorem, we need an additional marking with insertion $[1]_{\vec r}$. When $[1]_{\vec r}=[1]_{\vec e_i}$ the extended mirror map associated with this insertion is trivial. In general, the mirror map associated with $[1]_{\vec r}$ is not trivial. The extended mirror map $\tau_\sigma$ associated with a mid-age marking can be considered as a limit of the mirror map for $[1]_{\vec r}$ when $D_{\vec {\mathbf b}}\in D_{\vec r}$ as we may consider the mid-ages as having infinite contact orders along the corresponding divisors. 

When $D$ is a smooth anticanonical divisor of $X$. By \cite{You22}, we have
\[
W=\vartheta_1=x\exp\left(g(Q(t,x))\right),
\]
where
\[
g(Q)=\sum_{\substack{\beta\in \on{NE}(X)\\ D\cdot \beta \geq 2}}\langle [\on{pt}]\psi^{D\cdot \beta-2}\rangle_{0,1,\beta}^X t^\beta x^{-D\cdot\beta} (D\cdot \beta-1)!
\]
and $Q=Q(t,x)$ is the inverse of the relative mirror map. In other words,
\[
\vartheta_1\exp\left(-g(Q(t,x))\right)=x,
\]
where the LHS is equal to the theta function $\tilde{\vartheta}_1$ that encodes the relative mirror map. The mirror map for the mid-age marking is
\[
\left([1]_{\mathbf b}+\sum_{\substack{\beta\in \on{NE}(X)\\ D\cdot \beta \geq 2}}\langle [\on{pt}]\psi^{D\cdot \beta-2}\rangle_{0,1,\beta}^X t^\beta x^{-D\cdot\beta} (D\cdot \beta)![1]_{\mathbf b-D\cdot\beta}\right),
\]
which contains all the information about the regularized quantum period. While,
the integral $\int_{|x|=1}e^x \frac{dx}{x}$ does not encode any useful information of the Gromov--Witten invariants.

\section{More general cases}\label{sec:relative-case}

Theorem \ref{thm-gamma-pt} and Theorem \ref{thm-gamma-struc-sheaf} also hold when $X$ is not Fano after imposing additional non-trivial mirror maps in $D$, because the only place where the Fano assumption is used is when we apply the relative mirror theorem in the computation.

Theorem \ref{thm-gamma-struc-sheaf} is based on the assumption that one can find an snc anticanonical divisor $D$ that has zero-dimensional strata. If there is no such $D$ in a Fano variety $X$, then we need to do extra work.

When the Fano variety $X$ is a complete intersection in another variety $Y$ that has a divisor $E$ that satisfies the assumption of Theorem \ref{thm-gamma-struc-sheaf}. Then one can apply the intrinsic mirror construction to the pair $(Y,E)$ and the mirror symmetric Gamma conjecture holds for $Y$ by Theorem \ref{thm-gamma-struc-sheaf}. If $Y$ is not Fano, the equality holds after imposing additional mirror maps. Following \cite{Iritani23gamma}, mirror symmetric Gamma conjecture for $X$ follows from that of $Y$ via a Laplace transform. 

In general, there are at least two ways in which Theorem \ref{thm-gamma-struc-sheaf} can be applied. One can try to find another birational model $(X^\prime,D^\prime)$ such that $X^\prime \setminus D^\prime\cong X\setminus D$. Then study the conjecture for the pair $(X^\prime,D^\prime)$. Some results for the birational invariance of the invariants are probably required here. Another way is to consider a maximally unipotent monodromy degeneration of the log Calabi--Yau manifold $X\setminus D$. This is called the relative case in intrinsic mirror construction \cite{GS19}. 

We explain the case where there is a semi-stable degeneration of $X\setminus D$ over a curve $S$
\[
\mathcal X \rightarrow S
\]
 If it is a maximally unipotent monodromy of $X\setminus D$, then we can apply the intrinsic mirror construction to obtain the mirror of $X\setminus D$.

Now we just consider a semi-stable degeneration $\pi: \mathcal X\rightarrow S$, where the central fiber is snc. Let $[X]$ be the divisor class for a smooth fiber. Then the divisor $[X]$ is nef. We assume that $\mathcal X$ has an snc anticanonical divisor $\mathcal D$ that has zero-dimensional strata and the restriction of $(\mathcal X,\mathcal D)$ to a smooth fiber is $(X,D)$. Although $\mathcal X$ is not Fano in general, the proof of the mirror symmetric Gamma conjecture (Theorem \ref{thm-gamma-struc-sheaf}) still holds for $\mathcal X$ (possibly with mirror maps). More precisely, we have

\begin{align}\label{gamma-conj-deg}
\int_{\tilde \Gamma_{\mathbb R}} e^{-\tilde W_{\tau}/z}\tilde \omega=\int_{\mathcal X} \left(z^{c_1}z^{\deg/2}J_{\mathcal X}(\tau_{0,2},-z)\right)\cup \hat{\Gamma}_{\mathcal X},
\end{align}
where the equality holds after a possibly non-trivial change of variables (i.e. mirror maps). Then, following \cite{Iritani11} (see also \cite{Iritani23gamma}), after applying a Laplace transform, one obtains the mirror symmetric Gamma conjecture for $X$:
\begin{align}
\int_{\Gamma_{\mathbb R}} e^{- W_{\tau}/z} \omega=\int_{X} \left(z^{c_1}z^{\deg/2}J_{X}(\tau_{0,2},-z)\right)\cup \hat{\Gamma}_{X}.
\end{align}
This is Theorem \ref{thm-mum-deg}.

\bibliographystyle{amsxport}

\bibliography{main}

\end{document}